\numberwithin{equation}{section}
\theoremstyle{definition}
\newtheorem{dfn}{Definition}[section]
\newtheorem{thm}[dfn]{Theorem}
\newtheorem{pop}[dfn]{Proposition}
\newtheorem{lem}[dfn]{Lemma}
\newtheorem{cor}[dfn]{Corollary}
\newtheorem{cmkl}[dfn]{Folklore}
\newenvironment{introtheorem}[1]{%
  \manualtheoreminner
}{\endmanualtheoreminner}
\newenvironment{introcor}[1]{%
  \manualcorinner
}{\endmanualcorinner}
\theoremstyle{remark}
\newtheorem{rem}[dfn]{Remark}
\newtheorem{ex}[dfn]{Example}
\newcommand{\R}{\mathbb R}
\newcommand{\lm}[1]{\mathbb{L}^2(#1)}
\newcommand{\enumlabelformat}{\roman}
\newcommand{\enumlabelfont}[1]{#1}
\newlength{\thelabelsep}
\setlist{labelsep=\thelabelsep}
\setlist[enumerate,1]{font=\enumlabelfont,label=(\enumlabelformat*),leftmargin=2.5em}
\setlist[itemize]{leftmargin=2.5em,label=$-$}
\newcounter{inlineenum}
\renewcommand{\theinlineenum}{\enumlabelformat{inlineenum}}
\newcommand{\mb}[1]{\mathbb{#1}}
\newcommand{\diff}{\ensuremath{\mathrm{d}}}
\DeclareMathOperator{\id}{id}
\DeclareMathOperator{\oldAdS}{AdS}
\newcommand{\AdS}{\ensuremath{\oldAdS} }
\newcommand{\AdSn}{\ensuremath{\oldAdS}}
\DeclareMathOperator{\arcosh}{arcosh}
\newcommand{\LpLS}{Lorentzian pre-length space }
\newcommand{\LpLSn}{Lorentzian pre-length space}
\newcommand{\ma}{\measuredangle}
\let\save@mathaccent\mathaccent
\newcommand*\if@single[3]{%
  \setbox0\hbox{${\mathaccent"0362{#1}}^H$}%
  \setbox2\hbox{${\mathaccent"0362{\kern0pt#1}}^H$}%
  \ifdim\ht0=\ht2 #3\else #2\fi
  }
\newcommand*\rel@kern[1]{\kern#1\dimexpr\macc@kerna}
\newcommand*\widebar[1]{\@ifnextchar^{{\wide@bar{#1}{0}}}{\wide@bar{#1}{1}}}
\newcommand*\wide@bar[2]{\if@single{#1}{\wide@bar@{#1}{#2}{1}}{\wide@bar@{#1}{#2}{2}}}
\newcommand*\wide@bar@[3]{%
  \begingroup
  \def\mathaccent##1##2{%
%Enable nesting of accents:
    \let\mathaccent\save@mathaccent
%If there's more than a single symbol, use the first character instead (see below):
    \if#32 \let\macc@nucleus\first@char \fi
%Determine the italic correction:
    \setbox\z@\hbox{$\macc@style{\macc@nucleus}_{}$}%
    \setbox\tw@\hbox{$\macc@style{\macc@nucleus}{}_{}$}%
    \dimen@\wd\tw@
    \advance\dimen@-\wd\z@
%Now \dimen@ is the italic correction of the symbol.
    \divide\dimen@ 3
    \@tempdima\wd\tw@
    \advance\@tempdima-\scriptspace
%Now \@tempdima is the width of the symbol.
    \divide\@tempdima 10
    \advance\dimen@-\@tempdima
%Now \dimen@ = (italic correction / 3) - (Breite / 10)
    \ifdim\dimen@>\z@ \dimen@0pt\fi
%The bar will be shortened in the case \dimen@<0 !
    \rel@kern{0.6}\kern-\dimen@
    \if#31
      \overline{\rel@kern{-0.6}\kern\dimen@\macc@nucleus\rel@kern{0.4}\kern\dimen@}%
      \advance\dimen@0.4\dimexpr\macc@kerna
%Place the combined final kern (-\dimen@) if it is >0 or if a superscript follows:
      \let\final@kern#2%
      \ifdim\dimen@<\z@ \let\final@kern1\fi
      \if\final@kern1 \kern-\dimen@\fi
    \else
      \overline{\rel@kern{-0.6}\kern\dimen@#1}%
    \fi
  }%
  \macc@depth\@ne
  \let\math@bgroup\@empty \let\math@egroup\macc@set@skewchar
  \mathsurround\z@ \frozen@everymath{\mathgroup\macc@group\relax}%
  \macc@set@skewchar\relax
  \let\mathaccentV\macc@nested@a
%The following initialises \macc@kerna and calls \mathaccent:
  \if#31
    \macc@nested@a\relax111{#1}%
  \else
%If the argument consists of more than one symbol, and if the first token is
%a letter, use that letter for the computations:
    \def\gobble@till@marker##1\endmarker{}%
    \futurelet\first@char\gobble@till@marker#1\endmarker
    \ifcat\noexpand\first@char A\else
      \def\first@char{}%
    \fi
    \macc@nested@a\relax111{\first@char}%
  \fi
  \endgroup
}
\title{A Bonnet-Myers rigidity theorem for globally hyperbolic Lorentzian length spaces}
\author{Tobias Beran\footnote{Department of Mathematics, University of Vienna, Oskar-Morgenstern-Platz 1, 1090 Wien, Austria, \newline
tobias.beran@univie.ac.at}\\ %\footnotemark[\value{footnote}],
}
\begin{document}

\date{\today}

%\date{Received: date /Accepted: date}

\maketitle

\begin{abstract}
We prove a synthetic Bonnet-Myers rigidity theorem for globally hyperbolic Lorentzian length spaces with global curvature bounded below by $K<0$ and an open distance realizer of length $L=\frac{\pi}{\sqrt{|K|}}$: It states that the space necessarily is a warped product with warping function $\cos:(-\frac{\pi}{2},\frac{\pi}{2})\to\mb{R}_+$. From this, one also sees that a globally hyperbolic spacetime with curvature bounded above by $K<0$ and an open distance realizer of length $L=\frac{\pi}{\sqrt{|K|}}$ is a warped product with warping function $\cos$.
%We construct length $L$ timelike asymptotes which, via triangle comparison, can be shown to fit together to give length $L$ timelike distance realizer. To get a control on their behaviour, we introduce the notion of parallelism of length $L$ timelike distance realizer and show that asymptotic lines are all parallel. This helps to establish a warped product structure of the timelike diamond defined by the given line. In the end we prove this neighbourhood is actually the whole space.
\vspace{1em}

\noindent
\emph{Keywords:} Lorentzian length spaces, synthetic curvature bounds, warped product spaces
\medskip

\noindent
\emph{MSC2020:} 53C50, 53C23, 53B30

\end{abstract}
\newpage
\tableofcontents
\newpage

\section{Introduction}\label{sec:intro}
In the setting of Riemannian manifolds, the Bonnet-Myers theorem states that a complete Riemannian manifold $M$ with sectional curvature bounded below by some $K>0$ has diameter less than $\frac{\pi}{\sqrt{K}}$. As a Riemannian manifold with sectional curvature bounded below by $K$ has Ricci curvature bounded below by $(n-1)K$, it is a special case of the Myers theorem: It states that a complete $n$-dimensional Riemannian manifold $M$ with Ricci curvature bounded below by some $(n-1)K>0$ has diameter less than $\frac{\pi}{\sqrt{K}}$ (see \cite[Thm.\ I]{myersTheorem}).

The Bonnet-Myers theorem has been generalized to complete geodesic metric spaces, see \cite[Thm.\ 3.6]{BGP}: 
\begin{thm}
Let $(X,d)$ be a complete metric space with Alexandrov curvature bounded below by $K>0$. 

Then its diameter is bounded above by $\frac{\pi}{\sqrt{K}}$
\end{thm}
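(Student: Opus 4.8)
\medskip
\noindent\emph{Proof strategy.}
The plan is to argue by contradiction using the four-point (``$(1{+}3)$-point'') reformulation of the lower curvature bound, which keeps everything purely metric: no geodesics, no local compactness, no finite dimension. First I would rescale the metric so that $K=1$; then the model surface is the unit sphere $\mb{S}^2_1$, of diameter $\pi$, and for points $a,b,c$ the comparison angle $\widetilde{\measuredangle}(a\,b\,c)$ at $b$ is the angle at the corresponding vertex of a (possibly degenerate) triangle in $\mb{S}^2_1$ with the same three side lengths, with the convention $\widetilde{\measuredangle}(a\,b\,c):=\pi$ whenever no such comparison triangle exists -- i.e.\ whenever one of the side lengths exceeds $\pi$ or the perimeter exceeds $2\pi$. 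With this convention the curvature bound says exactly that $\widetilde{\measuredangle}(a\,p\,b)+\widetilde{\measuredangle}(b\,p\,c)+\widetilde{\measuredangle}(c\,p\,a)\le 2\pi$ for all $p,a,b,c$, first locally and then -- via the Toponogov globalisation theorem, which is where completeness is used -- globally.

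Next, assuming for contradiction that $\diam(X)>\pi$, I would pick $p,q$ with $d(p,q)>\pi$ and run the following three-step chain. \emph{(i)} For every $x\in X\setminus\{p,q\}$ one has $\widetilde{\measuredangle}(p\,q\,x)=\pi$, simply because the triple $p,q,x$ already has the side $d(p,q)>\pi$, so no spherical comparison triangle exists. \emph{(ii)} Feeding this into the four-point inequality with apex $q$ and points $p,x,y$, for arbitrary distinct $x,y\in X\setminus\{p,q\}$, gives $\pi+\widetilde{\measuredangle}(x\,q\,y)+\pi\le 2\pi$, hence $\widetilde{\measuredangle}(x\,q\,y)=0$. \emph{(iii)} But $X$ (which has more than one point, else there is nothing to prove) is a length space, so $p$ is not isolated and there are in fact infinitely many points $x$ with $0<d(p,x)<d(p,q)-\pi$; such points are automatically different from $q$, and the reverse triangle inequality gives $d(x,q)>\pi$, so choosing two distinct ones $x,y$ we get $\widetilde{\measuredangle}(x\,q\,y)=\pi$ exactly as in \emph{(i)} -- contradicting \emph{(ii)}. (The round sphere, with antipodal points at distance precisely $\pi$ and triples of perimeter $2\pi$, shows that the bound $\le\pi$ is sharp and cannot be strengthened to a strict inequality.)

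The one genuinely delicate point -- and exactly where the hypothesis $K>0$ enters -- is the bookkeeping around non-existence of comparison triangles, i.e.\ the convention $\widetilde{\measuredangle}:=\pi$ in that case: it is this convention that turns ``the side $d(p,q)$ is too long'' into usable information. Without it the theorem is false (a closed interval of length $10$ satisfies the four-point inequality \emph{locally} for every $K$). So in a careful write-up the two things to get right are (a) stating the lower curvature bound in four-point form with the correct convention, and (b) the globalisation step from the local hypothesis to the global four-point inequality used in \emph{(ii)}; everything else is just the (reverse) triangle inequality.
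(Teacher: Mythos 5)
First, a point of reference: the paper does not prove this statement at all --- it is quoted verbatim with a citation to \cite[Thm.\ 3.6]{BGP} --- so there is no in-paper argument to compare against, and your proposal has to stand on its own. Within the framework you set up, the contradiction scheme (i)--(iii) is fine: \emph{if} the hypothesis is the global four-point inequality in which every comparison angle of a triple violating the size bounds (a side $\geq\pi$ or perimeter $\geq 2\pi$ after rescaling to $K=1$) is declared to be $\pi$, then the existence of $p,q$ with $d(p,q)>\pi$ together with two nearby points immediately violates it, and the diameter bound follows.

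The genuine gap is the claim that this convention-strengthened condition ``is exactly'' what Alexandrov curvature bounded below by $K>0$ means, and in particular that it is reachable from the usual (local, respectively size-restricted) formulation via Toponogov globalisation. Globalisation upgrades local comparison to comparison for configurations \emph{within} the size bounds; it says nothing about triples with a side longer than $\pi$, which is precisely what steps (i) and (iii) exploit. Moreover, no argument can bridge that gap from the hypotheses you allow yourself, and your own example shows it: a segment of length $10$ (likewise $\mathbb{R}$, or a circle of circumference $>2\pi$) is a complete, geodesic length space which is \emph{locally} of curvature $\geq 1$ --- small subintervals are arcs of a great circle, hence convex subsets of the round sphere --- yet it has diameter $>\pi$ and visibly violates your strengthened four-point inequality (take the midpoint as apex and the two endpoints plus one more point: all three comparison angles become $\pi$ by your convention). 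So ``local bound $+$ completeness $\Rightarrow$ global four-point inequality with the convention'' is false, while with the size-restricted reading of the hypothesis your quadruples carry no information and the contradiction evaporates. Said differently, the whole content of the theorem is that completeness and the intrinsic-metric structure force every triple to have perimeter $\leq 2\pi$ (so that the convention never has anything to bite on), and that is exactly the step you dismiss as bookkeeping about when comparison triangles exist; it needs a genuine geometric argument (as in \cite{BGP}, or the perimeter argument in the standard Alexandrov-geometry texts, which applies comparison at a carefully chosen fourth point rather than at two points near $p$), together with a careful global formulation of the curvature bound that excludes the one-dimensional examples above. A telling symptom: in your reading, completeness and the length-space property are never really used --- steps (i)--(iii) work in an arbitrary metric space satisfying your strengthened condition --- whereas they are essential hypotheses of the theorem you are proving.
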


The corresponding rigidity theorems talk about the case where the bound on the diameter is achieved by a minimizing geodesic. A Myers rigidity result was proven in \cite[Thm.\ 3.1]{ChengMyersRigidity}: 
\begin{thm}
Let $M$ be a complete Riemannian manifold of dimension $n$ with Ricci curvature bounded below by $(n-1)K$ (for some $K>0$) and diameter equal to $\frac{\pi}{\sqrt{K}}$. Then $M$ is isometric to $\frac{1}{\sqrt{K}}S^n$.
\end{thm}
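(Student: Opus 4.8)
The plan is to reduce to $K=1$ by rescaling and then run the classical volume-comparison argument. After rescaling the metric by the constant $K$ we may assume $\mathrm{Ric}\geq (n-1)g$ and $\mathrm{diam}(M)=\pi$, and the task is to show $M$ is isometric to the round unit sphere $S^{n}$. By the Bonnet--Myers theorem $M$ is compact, so by completeness the diameter is attained: fix $p,q\in M$ with $d(p,q)=\pi$. The engine of the proof is the Bishop--Gromov volume comparison theorem: since $\mathrm{Ric}\geq(n-1)g$, for every $x\in M$ the ratio $r\mapsto \mathrm{Vol}(B_{r}(x))/V(r)$ is non-increasing in $r\in(0,\pi]$, where $V(r)$ is the volume of a geodesic ball of radius $r$ in $S^{n}$; moreover, if this ratio takes the same value at two radii $0<r_{1}<r_{2}$, then $B_{r_{2}}(x)$ is isometric to the geodesic ball of radius $r_{2}$ in $S^{n}$.

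First I would establish a volume rigidity statement. The open balls $B_{\pi/2}(p)$ and $B_{\pi/2}(q)$ are disjoint, since a common point would give $d(p,q)<\tfrac{\pi}{2}+\tfrac{\pi}{2}=\pi$. As $\mathrm{diam}(M)=\pi$, the level set $\{d(p,\cdot)=\pi\}$ is contained in the cut locus of $p$ and hence has measure zero, so $\mathrm{Vol}(B_{\pi}(p))=\mathrm{Vol}(M)$, and likewise for $q$. Using $V(\tfrac{\pi}{2})=\tfrac12\mathrm{Vol}(S^{n})$ and $V(\pi)=\mathrm{Vol}(S^{n})$, Bishop--Gromov applied between the radii $\tfrac{\pi}{2}$ and $\pi$ gives
\[
\mathrm{Vol}(B_{\pi/2}(p))\;\geq\;\frac{V(\pi/2)}{V(\pi)}\,\mathrm{Vol}(B_{\pi}(p))\;=\;\tfrac12\,\mathrm{Vol}(M),
\]
and the same bound for $q$. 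Summing these and invoking disjointness, $\mathrm{Vol}(M)\geq \mathrm{Vol}(B_{\pi/2}(p))+\mathrm{Vol}(B_{\pi/2}(q))\geq \mathrm{Vol}(M)$, so every inequality is an equality; in particular the Bishop--Gromov ratio of $p$ takes the same value at $\tfrac{\pi}{2}$ and at $\pi$ (and similarly for $q$).

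Next I would upgrade this infinitesimal rigidity to a global isometry. The equality case of Bishop--Gromov for $p$ (between radii $\tfrac{\pi}{2}$ and $\pi$) forces the area element $\mathcal A(r,\theta)$ of geodesic polar coordinates at $p$, which obeys $\mathcal A(r,\theta)\leq(\sin r)^{n-1}$ by the Ricci bound via the Riccati comparison along radial geodesics, to equal the model value $(\sin r)^{n-1}$ with no cut point before radius $\pi$; integrating the radial Riccati/Jacobi equation then identifies the metric on $B_{\pi}(p)$ with the warped product $\mathrm{d}r^{2}+\sin^{2}r\,g_{S^{n-1}}$, so $B_{\pi}(p)$ is isometric to $S^{n}$ minus a point and the cut locus of $p$ is the single point $q$. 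Since $\{d(p,\cdot)=\pi\}$ has empty interior, $B_{\pi}(p)$ is dense in $M$, and the isometry of these dense subsets of the complete metric spaces $M$ and $S^{n}$ extends uniquely to an isometry $M\to S^{n}$.

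The main obstacle is precisely this equality-case analysis: converting equality in an integrated volume identity into the pointwise conclusion that the radial curvatures are $\equiv 1$ and the metric is the model warped product, while controlling the cut locus and the merely Lipschitz distance function. A technically smoother route to the same step replaces the volume comparison by the Laplacian comparison $\Delta d_{p}\leq(n-1)\cot d_{p}$ in the barrier sense: since $\cot s+\cot t=\sin(s+t)/(\sin s\sin t)\leq 0$ whenever $s,t\in(0,\pi)$ and $s+t\geq\pi$, the non-negative function $h:=d_{p}+d_{q}-\pi$ is distributionally superharmonic on $M\setminus\{p,q\}$, hence on all of $M$ since the point singularities are removable for bounded superharmonic functions ($n\geq 2$); the strong minimum principle on the compact manifold $M$ together with $h(p)=0$ gives $h\equiv 0$, so $d_{p}+d_{q}\equiv\pi$, which forces equality in the Laplacian comparison for both $d_{p}$ and $d_{q}$; integrating the radial Riccati equation — equivalently, applying Obata's theorem to $u=\cos d_{p}$, which then satisfies $\mathrm{Hess}\,u=-u\,g$ — identifies $M$ with $S^{n}$, and undoing the rescaling identifies the original manifold with $\tfrac{1}{\sqrt K}S^{n}$.
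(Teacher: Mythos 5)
Your argument is correct, but note that the paper does not actually prove this statement: it is quoted as background from the cited reference (Cheng's maximal diameter theorem), so there is no in-paper proof to match. Measured against the literature, your two routes are the standard modern proofs rather than Cheng's original one: Cheng argued via his eigenvalue comparison theorem, comparing the first Dirichlet eigenvalues of the disjoint balls $B_{\pi/2}(p)$, $B_{\pi/2}(q)$ with that of a model hemisphere and invoking Lichnerowicz--Obata, whereas you use Bishop--Gromov volume rigidity (your first route) and, alternatively, the Laplacian comparison applied to $h=d_p+d_q-\pi$ together with Obata's equation for $u=\cos d_p$ (your second route). All the key steps you state are sound: the disjointness of the two half-balls, $\mathrm{Vol}(B_\pi(p))=\mathrm{Vol}(M)$ since the level set $\{d_p=\pi\}$ lies in the cut locus, the two-radius equality case of Bishop--Gromov (which indeed forces the ratio to be $\equiv 1$, since it tends to $1$ at $0$ and the angular area ratio is monotone), and in the second route the identity $\cot s+\cot t=\sin(s+t)/(\sin s\sin t)\le 0$, removability of the point singularities for bounded superharmonic functions, and the fact that $d_p+d_q\equiv\pi$ puts every point on a minimizing $p$--$q$ geodesic, so $d_p$ is smooth off $\{p,q\}$ and the comparison equality can be integrated. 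The only places deserving slightly more care are routine: extending the Riemannian isometry of $B_\pi(p)\cong S^n\setminus\{\mathrm{pt}\}$ to the metric completions (a Riemannian isometry of open dense sets preserves curve lengths, hence induces an isometry of the length completions), and checking that $\mathrm{Hess}\,u=-u\,g$ holds across $p$ and $q$ before applying Obata; both are standard. Either of your routes, or Cheng's eigenvalue argument, yields the theorem, and the rescaling back to general $K>0$ is immediate.
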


As sectional curvature bounds imply a Ricci curvature bound, this can be seen as a Bonnet-Myers rigidity theorem, assuming sectional curvature bounds instead of Ricci curvature bounds. 

But when removing the completeness or replacing complete Riemannian manifolds by metric length spaces, one immediately faces a counterexample:

\begin{ex}
Let $r>0$ and set $X$ to be the warped product $X= [-\frac{\pi}{2},\frac{\pi}{2}]\times_{\cos} rS^1$. Note at $t=\pm\frac{\pi}{2}$, $\cos(t)$ is $0$, so we need to take the quotient identifying $\{-\frac{\pi}{2}\}\times rS^1$ and $\{+\frac{\pi}{2}\}\times rS^1$ to a point, respectively. These points are called the \emph{poles} and will be denoted by $\pm\frac{\pi}{2}$.

Only for $r= 1$ this is a Riemannian manifold ($X\cong S^2$), otherwise only $\tilde{X}=X\setminus \{\pm\frac{\pi}{2}\}$ is a Riemannian manifold, and the neighbourhood of $\pm\frac{\pi}{2}$ infinitesimally looks like a cone (to be precise $Cone(rS^1)$, a ``conical singularity''). $\tilde{X}$ has Ricci curvature bounded below by $1$, and synthetically, only for $r\leq 1$ the Ricci curvature of $X$ is bounded below by $1$ at $\pm\frac{\pi}{2}$. 

It has diameter $\pi$, thus $X$ is a counterexample for the length space case (for $r<1$ and using synthetic Ricci curvature bounds, see \cite[Thm.\ 1.4]{RicciSphericalCones}) and $X\setminus\{\pm\frac{\pi}{2}\}$ is a counterexample for the incomplete Riemannian manifold case (for any $r\neq 1$).
\end{ex}
With additional assumptions, one can recover the rigidity result for length spaces, see \cite[Thm.\ 1.4 and Cor.\ 1.6]{kettererSynthMyers}.

Without additional assumptions and assuming Alexandrov curvature bounds, a well-known result usually attributed to Grove and Petersen (see \cite[Lem.\ 29]{ref_to_metricBRig} for a statement and \cite[Lem.\ 2.5]{refUse1_to_metricBRig} for a reference) states that the following holds:%\cite[p.\ 181]{refUse2_to_metricBRig)
\begin{cmkl}
Let $X$ be a complete geodesic metric space with global Alexandrov curvature bounded below by $K=1$ and assume there exist $p,q\in X$ with $d(p,q)=\pi$. Then $X$ is a spherical suspension of a complete geodesic metric space $Y$ with global Alexandrov curvature bounded below by $K=1$, i.e.\ $X$ is the warped product $[-\frac{\pi}{2},\frac{\pi}{2}]\times_{\cos} Y$.
\end{cmkl}
A proof of this can be derived from the proof in this paper.

\medskip

Similarly, in the setting of Lorentzian manifolds, there is a Myers style theorem which can be found in \cite[Thm.\ 3.4.2]{treudeDipl}:
\begin{thm}
Let $M$ be a globally hyperbolic time-oriented $n$-dimensional Lorentzian manifold with  timelike Ricci curvature bounded from below by $K>0$ (i.e.\ $Ric(v, v) \geq (n - 1)K$).

Then its timelike diameter is bounded above by $\frac{\pi}{\sqrt K}$.
\end{thm}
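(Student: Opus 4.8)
The plan is to carry out the Lorentzian version of the Bonnet--Myers argument, based on the second variation of the length functional for timelike curves. Suppose, for contradiction, that the timelike diameter of $M$ exceeds $\frac{\pi}{\sqrt K}$. Since $M$ is globally hyperbolic, the time separation $\tau$ is finite and continuous, so there exist $p\ll q$ with $\frac{\pi}{\sqrt K}<\tau(p,q)<\infty$; and by the Avez--Seifert theorem there is a maximal, future-directed, unit-speed timelike geodesic $\gamma\colon[0,L]\to M$ with $L=\tau(p,q)>\frac{\pi}{\sqrt K}$. The whole proof then takes place along $\gamma$.

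Because $\gamma$ is maximal and ``being timelike'' is an open condition, the second variation of the length functional is $\le 0$ for every fixed-endpoint variation of $\gamma$ through timelike curves; equivalently, the index form $I$ of $\gamma$ is negative semidefinite on the space of piecewise smooth vector fields along $\gamma$ that are orthogonal to $\gamma'$ and vanish at $0$ and $L$. I would then plug in the classical Bonnet--Myers test fields: choose a parallel orthonormal frame $E_1,\dots,E_{n-1}$ of spacelike fields along $\gamma$ with $E_i\perp\gamma'$, and set $V_i(t)=\sin\!\big(\tfrac{\pi t}{L}\big)E_i(t)$. Using that the $E_i$ are parallel, that $\sum_{i=1}^{n-1}\langle R_{E_i\gamma'}\gamma',E_i\rangle=\mathrm{Ric}(\gamma',\gamma')$, and the sign convention under which a maximal timelike geodesic has negative semidefinite index form, a direct computation gives
\[
\sum_{i=1}^{n-1}I(V_i,V_i)=\int_0^L\Big(\sin^2\!\big(\tfrac{\pi t}{L}\big)\,\mathrm{Ric}\big(\gamma'(t),\gamma'(t)\big)-\frac{(n-1)\pi^2}{L^2}\cos^2\!\big(\tfrac{\pi t}{L}\big)\Big)\,dt .
\]
Now invoke the hypothesis $\mathrm{Ric}(\gamma',\gamma')\ge (n-1)K$ for the unit timelike field $\gamma'$, together with $\int_0^L\sin^2(\tfrac{\pi t}{L})\,dt=\int_0^L\cos^2(\tfrac{\pi t}{L})\,dt=\tfrac{L}{2}$, to conclude
\[
\sum_{i=1}^{n-1}I(V_i,V_i)\ \ge\ \frac{(n-1)L}{2}\Big(K-\frac{\pi^2}{L^2}\Big)\ >\ 0,
\]
the last inequality holding because $L>\frac{\pi}{\sqrt K}$. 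This contradicts $I\le 0$, so the timelike diameter cannot exceed $\frac{\pi}{\sqrt K}$.

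Apart from the routine integral, the only real subtlety is pinning down the sign conventions in the second variation formula and in the curvature/Ricci tensor; I would fix these at the outset following a standard reference (e.g.\ O'Neill, or Beem--Ehrlich--Easley) so that ``maximal $\Rightarrow$ index form $\le 0$'' and ``$\sum_i\langle R_{E_i\gamma'}\gamma',E_i\rangle=\mathrm{Ric}(\gamma',\gamma')$'' are both literally correct. The ingredients from causality theory -- finiteness and continuity of $\tau$ on a globally hyperbolic spacetime, and Avez--Seifert -- are standard and can be quoted. A completely equivalent alternative, closer to how \cite{treudeDipl} proceeds, replaces the index-form estimate by the Raychaudhuri equation for the congruence of timelike geodesics emanating from $\gamma(0)$: from $\dot\theta\le-\frac{\theta^2}{n-1}-\mathrm{Ric}(\gamma',\gamma')\le-\frac{\theta^2}{n-1}-(n-1)K$ together with Riccati comparison against $(n-1)\sqrt K\cot(\sqrt K\,t)$ (which has the correct point-source initial behaviour) one obtains a point conjugate to $\gamma(0)$ at proper time $\le\frac{\pi}{\sqrt K}<L$, and since a timelike geodesic ceases to be maximal beyond its first conjugate point, this again contradicts the maximality of $\gamma$.
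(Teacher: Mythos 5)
Your proposal is correct. Note that the paper does not prove this statement itself: it is quoted as background, with the proof deferred to \cite[Thm.\ 3.4.2]{treudeDipl}, so there is no in-paper argument to match. The cited proof is essentially the Raychaudhuri/Riccati-comparison route you sketch in your last paragraph: the strengthened timelike convergence condition $\mathrm{Ric}(\gamma',\gamma')\ge (n-1)K$ forces a conjugate point along any unit-speed timelike geodesic within proper time $\frac{\pi}{\sqrt K}$, and a maximizer (which exists by global hyperbolicity and Avez--Seifert, with $\tau$ finite and continuous) cannot contain interior conjugate points, bounding $\tau(p,q)$ and hence the timelike diameter. Your primary argument is instead the classical index-form transplant of Bonnet--Myers: maximality of $\gamma$ among nearby timelike curves (using that timelikeness is open) makes the index form negative semidefinite on orthogonal fields vanishing at the endpoints, and plugging in $V_i=\sin(\pi t/L)E_i$ with a parallel orthonormal frame and tracing to $\mathrm{Ric}(\gamma',\gamma')$ gives $L\le\frac{\pi}{\sqrt K}$ exactly as you compute; the only care needed is the sign conventions you flag, which can be fixed once and for all following O'Neill or Beem--Ehrlich--Easley. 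The two routes are equivalent in strength for this statement; the index-form proof is shorter and self-contained, while the Riccati/Raychaudhuri proof additionally produces the conjugate-point estimate and the comparison machinery (mean curvature of $\tau$-level sets, volume comparison) that the cited thesis develops further.
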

%see chp 3.3.1: he defines timelike Ricci curvature bounded from below by κ by Ric(v, v) ≥ (n - 1)κ
In the case of synthetic Ricci curvature bounds, there is a synthetic Myers theorem (see \cite[Prop.\ 5.10]{CavalettiMondino}, \cite[Thm.\ 5.5]{McCannVariableCurvatureBounds}, \cite[Cor.\ 3.14]{BraunRenyiEntropy}), and this has recently been used to solve the low regularity manifold Myers theorem (see \cite[Cor.\ 3.10]{BraunRicciInLowRegularity}). 
To the best of our knowledge, there is no known result giving strict Myers rigidity in both the smooth and synthetic Lorentzian Ricci comparison (i.e.\ a result implying that the space is higher dimensional anti-deSitter space).

As timelike sectional curvature bounds imply timelike Ricci curvature bounds in the smooth case, the spacetime Myers style theorem \cite[Thm.\ 3.4.2]{treudeDipl} implies a sectional curvature version. 

In the setting of synthetic sectional curvature bounds (see \cite{saemLorLen}), there also is a Bonnet-Myers theorem \cite[Thm.\ 4.11, Rem.\ 4.12]{alexPatchAndBonnet}:
\begin{thm}[Synthetic Lorentzian Bonnet-Myers]
\label{thm: lor meyers}
Let $X$ be a strongly causal, locally causally closed, regular, and geodesic Lorentzian pre-length space which has global curvature bounded below by $K$. Assume $K<0$. Assume that $X$ possesses the following non-degeneracy condition: for each pair of points $x\ll z$ in $X$ we find $y \in X$ such that $\Delta(x,y,z)$ is a non-degenerate timelike triangle.
 
Then $\tau(p,q)>D_K$ can only hold if $\tau(p,q)=+\infty$.
\end{thm}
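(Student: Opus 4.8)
The plan is to argue by contradiction and to carry out a synthetic version of the classical fact that a maximizing timelike geodesic longer than $D_K=\pi/\sqrt{|K|}$ must contain a conjugate point, with triangle comparison against the two-dimensional model space $M_K$ of constant curvature $K$ — a rescaled piece of two-dimensional anti-de~Sitter space, whose timelike diameter is exactly $D_K$ — in the role of the second variation formula. Fix $p,q\in X$ and suppose, towards a contradiction, that $D_K<\tau(p,q)<\infty$; in particular $\tau(p,q)>0$, so $p\ll q$. Since $X$ is geodesic there is a maximizing timelike geodesic $\gamma\colon[0,L]\to X$ from $p$ to $q$, parametrized by $\tau$-arclength, with $L=\tau(p,q)>D_K$. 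As $\gamma$ is a distance realizer on every subinterval, replacing $q$ by $\gamma(t_*)$ for a suitable $t_*\in(D_K,\min\{2D_K,L\}]$ we may moreover assume $D_K<L<2D_K$, and then $m:=\gamma(L/2)$ satisfies $\tau(p,m)=\tau(m,q)=L/2<D_K$, with each half of $\gamma$ a short distance realizer.

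The next step is to use the non-degeneracy hypothesis to leave $\gamma$ and produce genuinely two-dimensional triangles, small enough to be comparable, straddling it. Applying non-degeneracy to $p\ll m$ yields $y$ for which $\Delta(p,y,m)$ is a non-degenerate timelike triangle; its longest side being $\tau(p,m)<D_K$, it admits a comparison triangle $\bar\Delta(\bar p,\bar y,\bar m)$ in $M_K$ to which the global curvature bound applies. By sliding $y$ along maximizers towards $p$ or $m$ when necessary — using openness of chronological futures and pasts and lower semicontinuity of $\tau$ to keep the triangle non-degenerate and to arrange the causal position of $y$ relative to $m$ — and by performing the symmetric construction over the second half, one also obtains a non-degenerate comparable timelike triangle $\Delta(m,y',q)$. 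The points $y,y'$ are the synthetic stand-in for a variation field transverse to $\gamma$, and the non-degeneracy hypothesis is precisely what makes them honestly two-dimensional, a degenerate triangle carrying no hinge information.

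One then transports the geometry of $M_K$ through the comparison maps and \emph{patches} the two small pictures along $\gamma$. Invoking monotonicity of timelike comparison hinges at the common vertex $m$, together with the regularity hypothesis — which ensures $\gamma$ does not branch at $m$ and that the relevant hinges are well defined and ``straight'' along the geodesic — one shows that the comparison configurations over the two halves glue, inside $M_K$, into a single configuration joining $\bar p$ to $\bar q$ through $\bar m$ by a timelike curve of $\tau$-length at least $L/2+L/2=L$. But $L>D_K$ is strictly larger than the timelike diameter of $M_K$, so no two timelike related points of $M_K$ are $\tau$-distance $L$ apart; this contradiction shows $\tau(p,q)$ cannot be simultaneously finite and greater than $D_K$. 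Equivalently, the same comparison says that $\gamma$ must acquire a synthetic conjugate point by proper time $D_K$ and hence cannot be a distance realizer on all of $[0,L]$, contradicting its construction.

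The crux — and, I expect, the technical heart of \cite{alexPatchAndBonnet} — is this patching step. The curvature hypothesis only ever yields comparison for timelike triangles of size below $D_K$, while the geodesic $\gamma$ one wishes to control has length above $D_K$; the difficulty is to assemble the many local small-triangle comparisons into one coherent global statement, making the hinge quantities and the $\tau$-distances in $M_K$ add up correctly along $\gamma$ without ever comparing an oversized triangle. Subordinate to this is the combinatorial care needed in choosing the auxiliary points $y,y'$ through the non-degeneracy condition so that every intermediate triangle stays simultaneously timelike, non-degenerate, and of size below $D_K$; here strong causality, local causal closedness and regularity are exactly what keep maximizers, the causal and chronological relations, and the time separation function well enough behaved for the limiting and gluing arguments to succeed.
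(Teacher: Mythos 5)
First, a point of reference: the paper does not prove Theorem \ref{thm: lor meyers} at all --- it is imported from the literature (cited as Thm.\ 4.11 and Rem.\ 4.12 of the reference \texttt{alexPatchAndBonnet}), so there is no in-paper proof to compare yours against. Judged on its own terms, your proposal has a genuine gap, located exactly at the step you yourself call ``the crux''.

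The contradiction you aim for does not follow from what you actually construct. Gluing the two comparison triangles at $\bar m$ in $\lm{K}$ produces at best a (possibly broken) timelike curve of length $L/2+L/2=L>D_K$ joining $\bar p$ to $\bar q$ --- but that is not a contradiction: the model space contains timelike curves, indeed timelike geodesics, of every length, and the only thing that fails for $L>D_K$ is that the endpoints of such a curve then satisfy $\bar\tau(\bar p,\bar q)=+\infty$ rather than $\bar\tau(\bar p,\bar q)=L$. To reach a contradiction you would need the additional statement that $\bar\tau(\bar p,\bar q)$ is \emph{finite} and controlled by $\tau(p,q)$, i.e.\ a comparison inequality for the oversized configuration $\Delta(p,m,q)$ --- precisely what the hypothesis (comparison only for triangles below the size bound $D_K$) does not give, and what the ``patching'' must manufacture. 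Your appeal to ``monotonicity of timelike comparison hinges at the common vertex $m$'' plus regularity does not supply this: monotonicity comparison concerns two realizers issuing from a common endpoint inside a single admissible comparison configuration, and you derive no inequality coupling the two half-configurations across the \emph{interior} point $m$ (one would need, say, an adjacent-angle/angle-sum rigidity at $m$ together with a quantitative model-space fact --- e.g.\ via the law of cosines of Lemma \ref{lem:LOC}, that timelike triangles in $\lm{K}$ must degenerate as their longest side tends to $D_K$ --- which is also where the non-degenerate triangles furnished by the hypothesis would have to enter). As written, the auxiliary points $y,y'$ never appear in any inequality, the closing ``synthetic conjugate point'' reformulation is undefined in this setting, and the central gluing step is asserted rather than proved --- indeed you explicitly defer it to the cited paper, which is an accurate admission that the essential argument is missing.
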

Note the non-degeneracy condition is there to ensure the space is not locally one-dimensional.

We will prove a rigidity result for the synthetic Lorentzian Bonnet-Myers theorem, stating the following:
\begin{introtheorem}{\ref{thm:globalsplitting}}
Let $(X,d,\ll,\leq,\tau)$ be a connected, regularly localisable, globally hyperbolic Lorentzian length space with proper metric $d$ and global timelike curvature bounded below by $K=-1$ satisfying timelike geodesic prolongation and containing a $\tau$-arclength parametrized distance realizer $\gamma:(-\frac{\pi}{2},\frac{\pi}{2}) \to X$. Assume that for each pair of points $x\ll z$ in $X$ we find $y \in X$ such that $\Delta(x,y,z)$ is a non-degenerate timelike triangle.

Then there is a proper (hence complete), strictly intrinsic metric space  $S$ such that the Lorentzian warped product $(-\frac{\pi}{2},\frac{\pi}{2}) \times_{\cos} S$ is a path-connected, regularly localisable, globally hyperbolic Lorentzian length space and there is a map $f:(-\frac{\pi}{2},\frac{\pi}{2}) \times_{\cos} S \to I(\gamma)$ which is a $\tau$- and $\leq$-preserving homeomorphism.
\end{introtheorem}

Additionally, we get:

\begin{introcor}{\ref{cor:formCauchySets}}
The sets $S_t:=f(\{t\} \times S)$ are Cauchy sets in $X$ that are all homeomorphic to $S$. Moreover, let $\varphi:(-\frac{\pi}{2},\frac{\pi}{2})\to\R$ be a monotonically increasing bijection, then the map $\varphi\circ pr_1 \circ f^{-1}$ is a Cauchy time function. Moreover, all Cauchy sets in $X$ are homeomorphic to $S$.
\end{introcor}

\begin{introcor}{\ref{cor:SAlexCurv}}
$(S,d_S)$ has Alexandrov curvature bounded below by $-1$.
\end{introcor}

\begin{introcor}{\ref{cor:spacetimes}}
Let $(M,g)$ be a connected globally hyperbolic spacetime of dimension $n\geq 2$ with smooth timelike sectional curvature bounded above\footnote{This might seem the wrong direction, but this is due to the signature of Lorentzian metrics.} by $K=-1$ and containing a timelike distance realizer of length $\pi$. Furthermore assume along each timelike distance realizer, there are no conjugate points of degree $n-1$. Then there is a spacelike Cauchy surface $S$ in $M$, endowed by a metric from the Riemannian metric $g|_S$ and a map $f:(-\frac{\pi}{2},\frac{\pi}{2})\times_{\cos} S\to M$ which is an isometry and a $C^1$-diffeomorphism, restricting to the identity $\{0\}\times S\to S$.
\end{introcor}

\bigskip

The proof of the Lorentzian Bonnet-Myers rigidity theorem will roughly follow the guide given by the proof of the splitting theorem for Lorentzian length spaces with non-negative timelike curvature, see \cite{triSplitting}, in particular the outline is in parts very similar to there.

The paper is organised as follows: In section \ref{sec:LLS}, we review the concepts of timelike curvature bounds, angles and comparison angles, with their basic implications. In section \ref{sec:warpedproducts}, we give an alternate description of warped product spaces which were first introduced in \cite{generalizedCones} which is better suited for our needs. In subsection \ref{subsec:warpedproducts:AdSasWarpedProd}, we give a description of a part of Anti-deSitter spacetime as such a warped product, and define \AdSn-suspensions. In subsection \ref{subsec:lines:lines}, we discuss \AdSn-lines and asymptotes (including the heart of the proof, the stacking principle), and in subsection \ref{subsec:lines:parallel}, we introduce the concept of \AdSn-parallelism and relate asymptotes with parallelism. Finally, in section \ref{sec:proof} we piece the parts together to a proof and further implications.

\subsection*{Notation and conventions}
\label{subsec:notationconventions}

Let us collect some notation and conventions that will be used throughout the paper.

A \emph{proper} metric space $(X,d)$ is a metric space such that all closed balls are compact. 

For Lorentzian manifolds, we choose the signature $(-,+,\cdots,+)$ (but this only has side effects and is not directly used in this paper). 
For \LpLSn, as we work with strongly causal spaces, geodesics can be defined either by the definition for localizable spaces or by requiring them to be covered by domains which are globally distance realizing. Mostly, we will only use distance realizers anyway. \AdS is two-dimensional anti-deSitter spacetime, which will be introduced in Def.\ \ref{def:AdS}, and $\AdS'$ is the warped product inside it, see Lem.\ \ref{lem:AdS'}. $\overline{\tau}$ denotes the time separation on $\AdS'$ (and $\AdS$). $\tilde{\ma}_x(y,z)=\tilde{\ma}^{-1}_x(y,z)$ is the comparison angle calculated in \AdSn. 

\section{Preliminaries}
\subsection{Basic theory of Lorentzian (pre-)length spaces}\label{sec:LLS}
We follow the definitions of Lorentzian (pre-)length spaces in \cite{triSplitting}, deviating only in curvature bounds and not using timelike geodesic completeness and extensibility. One can also find these definitions in \cite{saemLorLen}. For self-containedness, I include the basic definitions here:
\begin{dfn}\label{def:LpLS}
A \emph{causal space} is a set $X$ together with a reflexive and transitive relation $\leq$ and a transitive relation $\ll$ contained in $\leq$. We call $\leq$ the \emph{causal} relation and $\ll$ the \emph{timelike} relation. For $x\leq y$ we say $x$ is causally before $y$, similarly if $x\ll y$ we say $x$ is timelike before $y$.

A \emph{Lorentzian pre-length space} is a causal space $(X,\ll,\leq)$ together with a metric $d$ on $X$ and a lower semicontinuous function $\tau:X\times X\to[0,\infty]$, the \emph{time separation function}, satisfying:
\begin{itemize}
\item timelikeness: $\tau(x,y)>0$ if and only if $x\ll y$
\item the reverse triangle inequality: if $x\leq y\leq z$, $\tau(x,z)\geq\tau(x,y)+\tau(y,z)$
\end{itemize}
\end{dfn}

\subsection{Timelike curvature bounds}\label{subsec:LLS:tlCurvBds}

The timelike curvature bounds were first introduced in \cite{saemLorLen}, and slightly modified in \cite{alexPatchAndBonnet}. To describe timelike curvature bounds, we will compare certain distances to distances in comparison spaces: the \emph{Lorentzian model spaces} $\lm{K}$ of constant sectional curvature $K$. 

We will work with timelike curvature bounds below by $K<0$ exclusively. As other negative curvature bounds follow easily by scaling the space, we only need $K=-1$. For self-containedness, we include the case $K<0$ here. For more details on the other cases, see \cite[Def.\ 4.5]{saemLorLen}.

A word of warning though: In the Lorentzian case (in our signature) having sectional curvature bounded below by $K$ as in \cite[Chp.\ 1]{semiRiemAlexBounds} actually requires an \emph{upper} bound on the sectional curvature of timelike planes. Our timelike curvature bounds stem from bounds of sectional curvature for timelike planes and thus \emph{the inequalities intuitively point in the wrong direction}, e.g.\ flat Minkowski space does not have timelike curvature bounded below by $K=-1$, but above. Keeping the inequalities in this direction makes the behaviour mostly match the metric case (e.g.\ lower curvature bounds prohibit branching, some lower curvature bounds make the diameter finite in some way). One could of course then change the sign of $K$, but following \cite{saemLorLen} we will not do that.

\begin{dfn}\label{def:AdS}
Let $(\mb{R}^{1,2},b)$ be the $3$-dimensional semi-Riemannian space of signature $-,-,+$. We define \emph{anti-deSitter space} $\lm{-1}=\AdS$ as the universal cover of the set $\{p\in\mb{R}^{1,2}:b(p,p)=-1\}$. Equipping the tangent space with the restriction of $b$ makes this a $2$-dimensional Lorentzian manifold, with appropriately chosen time orientation. Making this space into a Lorentzian pre-length space, we get the anti-deSitter time separation function $\bar{\tau}$. 
Scaling this space, we get the Lorentzian model space of constant sectional curvature $K<0$: $\lm{K}=(\AdS,\frac{1}{\sqrt{-K}}\bar{\tau})$.

In any of the $\lm{K}$ ($K<0$) there are points of infinite $\tau$-distance. We define $D_{K}=\frac{\pi}{\sqrt{-K}}$, this is the maximal value $\tau$ will take before it becomes infinite. 
\end{dfn}

We call three points $x_1,x_2,x_3 \in X$ that are timelike related together with maximisers between them a \emph{timelike triangle}. We denote such triangles by $\Delta(x_1,x_2,x_3)$. 

\begin{dfn}
Let $X$ be a \LpLS and $\Delta(x_1,x_2,x_3)$ be a timelike triangle. We say it satisfies the \emph{size bounds for $K$} if $\tau(x_i,x_j)<D_K$ (note by the timelike relations, this only needs to be checked for one pair). If they do, a timelike triangle $\Delta(\bar{x}_1,\bar{x}_2,\bar{x}_3)$ in $\lm{K}$ such that $\tau(x_i,x_j)=\bar{\tau}(\bar{x}_i,\bar{x}_j)$ is called a \emph{comparison triangle} for $\Delta(x_1,x_2,x_3)$. It always exists if the size bounds are satisfied and is unique up to isometries of $\lm{K}$.
\end{dfn}

\begin{dfn}[Timelike curvature bounds by triangle comparison]
\label{def:triComp}
Let $X$ be a \LpLSn. An open subset $U$ is called a timelike $(\geq K)$-comparison neighbourhood in the sense of \emph{triangle comparison} if:
\begin{enumerate}
\item $\tau$ is continuous on $(U\times U) \cap \tau^{-1}([0,D_K))$, and this set is open.
\item For all $x,y \in U$ with $x \ll y$ and $\tau(x,y) < D_K$ there exists a geodesic connecting them which is contained entirely in $U$.
\item Let $\Delta (x,y,z)$ be a timelike triangle in $U$ satisfying size bounds for $K$, 
with $p,q$ two points on the sides of $\Delta (x,y,z)$. Let 
$\bar\Delta(\bar{x}, \bar{y}, \bar{z})$ be a comparison triangle in $\lm{K}$ for $\Delta (x,y,z)$ and $\bar{p},\bar{q}$ comparison points for $p$ and $q$, respectively. Then
\begin{equation}
\tau(p,q) \leq \tau(\bar{p},\bar{q}).
\end{equation}
\end{enumerate}

We say $X$ has timelike curvature bounded below by $K$ (in the sense of \emph{triangle comparison}) if it is covered by timelike $(\geq K)$-comparison neighbourhoods (in the sense of \emph{triangle comparison}). 

We say $X$ has global timelike curvature bounded below by $K$ (in the sense of \emph{triangle comparison}) if $X$ itself is a $(\geq K)$-comparison neighbourhood (in the sense of \emph{triangle comparison}).
\end{dfn}

\begin{rem}[Continuous triangles vs.\ Lipschitz triangles]
\label{rem:contvsLipschitztriangles}
If $X$ is a globally hyperbolic Lorentzian length space with global timelike curvature bounded below / above by $K$, then in fact curvature comparison even holds for timelike triangles where the maximisers are only continuous: Indeed, suppose $\Delta:=\Delta_{C^0}(x,y,z)$ is a continuous timelike triangle, and let $p,q \in \Delta$. Due to the second condition of timelike curvature bounds, we find (Lipschitz) maximisers from the endpoints of $\Delta$ to $p,q$, respectively. The concatenations at $p$ resp.\ $q$ of two maximisers each are again maximisers because the sides on $\Delta$ are (continuous) maximisers. Hence we have realised a Lipschitz triangle $\Delta(x,y,z)$ with $p,q$ on its sides. (But this does not help to get Lipschitz maximisers if one would define the curvature bounds with continuous maximizers.)
\end{rem}

One of the most commonly used implications of lower (timelike) curvature bounds is the prohibition of branching of distance-realisers. A formulation of this result for Lorentzian pre-length spaces was first given in \cite[Thm.\ 4.12]{saemLorLen}. However, with the introduction of hyperbolic angles in \cite{AngLLS} it was possible to generalise this result by omitting some of the additional assumptions:

\begin{thm}[Timelike non-branching]\label{thm:non-branching}
Let $X$ be a strongly causal Lorentzian pre-length space with timelike curvature bounded below. Then timelike distance realisers cannot branch, i.e., if $\alpha, \beta: [-\varepsilon, \varepsilon] \to X$ are timelike distance realisers such that there exists $t_0 \in \R$ with $\alpha|_{[-\varepsilon,t_0]}=\beta|_{[-\varepsilon,t_0]}$, then $\alpha$ is a reparametrization of a part of $\beta$ or conversely.
\end{thm}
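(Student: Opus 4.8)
The plan is to argue by contradiction using hyperbolic angles and the triangle comparison inequality. Suppose $\alpha,\beta\colon[-\varepsilon,\varepsilon]\to X$ are timelike distance realisers agreeing on $[-\varepsilon,t_0]$ but with $\alpha$ not a subreparametrization of $\beta$ (nor conversely). After passing to subintervals and reparametrizing (by $\tau$-arclength, which is legitimate since distance realisers have a well-defined length), we may assume $t_0$ is an interior branching time: there is a point $p=\alpha(t_0)=\beta(t_0)$ and times $s>t_0$ with $\alpha(s)\neq\beta(s)$ for all such $s$, while $\alpha=\beta$ on $[-\varepsilon,t_0]$. Pick a point $x=\alpha(t_0-\delta)=\beta(t_0-\delta)$ strictly in the timelike past of $p$ along the common part, and two points $y=\alpha(t_0+\delta)$, $z=\beta(t_0+\delta)$ on the two branches, with $\delta$ small enough that everything lives inside a single timelike $(\ge K)$-comparison neighbourhood $U$ and all the relevant $\tau$-values are below $D_K$.

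First I would form, for small parameters, the comparison angle $\tilde\ma_p(x,y)$ between the past branch $p\to x$ and the future branch $p\to y$ along $\alpha$, and likewise $\tilde\ma_p(x,z)$ along $\beta$. Because $x,p,y$ all lie on the single distance realiser $\alpha$ (so $\tau(x,y)=\tau(x,p)+\tau(p,y)$, i.e.\ the triangle $\Delta(x,p,y)$ is degenerate), the comparison triangle in $\lm{K}$ degenerates to a timelike geodesic segment and the comparison angle at $\bar p$ is exactly $0$; the same holds for $\tilde\ma_p(x,z)$. The hyperbolic angle $\ma_p$ between two timelike distance realisers issuing from $p$ is defined as a limit of such comparison angles, and the key monotonicity/existence statement for angles under a lower timelike curvature bound (from \cite{AngLLS}) guarantees this limit exists and is bounded below by the comparison angle of the ``outer'' triangle. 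Concretely, I would apply triangle comparison to the triangle $\Delta(x,y,z)$ (or $\Delta(p,y,z)$ together with the past point $x$), with $p$ sitting on the side $x\to y$ say: then the time-separation $\tau(p,z)$ — or the relevant distances in the comparison configuration — force the comparison angle at $x$ (respectively $p$) between the two branches to be controlled by the angles of the subtriangles, which we have just shown are $0$. Pushing $\delta\to0$, this yields that the angle between the two future branches at $p$ is $0$.

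The heart of the matter — and the step I expect to be the main obstacle — is upgrading ``angle zero between the two future branches'' to ``the two future branches coincide on a common subinterval'', which contradicts the assumption that $\alpha(s)\neq\beta(s)$ for all $s>t_0$. This requires a \emph{first variation}-type or rigidity argument for the angle in $\lm{K}$: if $\ma_p(\alpha^+,\beta^+)=0$ then the comparison angles $\tilde\ma_p(\alpha(t_0+\delta),\beta(t_0+\delta))\to0$, and one must convert this into an estimate showing $d(\alpha(t_0+\delta),\beta(t_0+\delta))=o(\delta)$ or directly that $\tau(x,\,\cdot\,)$ is realised along both branches in a way that, by the uniqueness of comparison triangles and monotonicity of comparison angles, forces $\tau(\alpha(t_0+\delta),\beta(t_0+\delta))$ or the defect $\tau(x,\alpha(t_0+\delta))+\tau(\alpha(t_0+\delta),\beta(t_0+2\delta)\text{-type point})$ to vanish. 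The cleanest route is probably: use the angle-zero condition to realise, in the limit, a degenerate triangle $\Delta(x,p,w)$ for points $w$ on the $\beta$-branch, so that $x\to p\to w$ is again a distance realiser; but $x\to p$ extends along $\alpha$ to a distance realiser $x\to p\to \alpha(s)$ as well, and now two distance realisers from $x$ through $p$ exist — invoking that a distance realiser between two fixed points (here obtained in a limit, with a compactness argument from strong causality / local compactness, or more elementarily from the push-up and concatenation properties) through an interior point is locally unique because any two such would again branch with positive separation, contradicting the angle computation. I would likely streamline this by appealing directly to the monotonicity of comparison angles along the branches (a one-sided comparison-angle version of Toponogov), concluding that the comparison angle $\tilde\ma_{\alpha(t_0+\delta)}$ at the tip between the direction back to $p$ and the direction to $\beta(t_0+\delta)$ must be $\pi$ (a ``straightening''), which by the rigidity of the degenerate case in $\lm{K}$ means $\tau(x,\beta(t_0+\delta))=\tau(x,\alpha(t_0+\delta))+\tau(\alpha(t_0+\delta),\beta(t_0+\delta))$ — hence $x\to\alpha(t_0+\delta)\to\beta(t_0+\delta)$ is maximising, so it is a distance realiser agreeing with $\beta$ near its end and with $\alpha$ before $\alpha(t_0+\delta)$; iterating / taking the supremum of agreement times then shows $\alpha$ and $\beta$ coincide past $t_0$ up to reparametrization, the desired contradiction.
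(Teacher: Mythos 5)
The paper does not prove this theorem itself (it simply cites \cite[Thm.\ 4.7]{AngLLS}), so I am comparing your argument against the known proof mechanism, which is exactly the one this paper records later as the strong causality trick (Lem.\ \ref{lem:strongCausTrick}) combined with monotonicity comparison (Def.\ \ref{def:monotonicityComp}, Thm.\ \ref{thm:equivTriCompAndMonComp}). Your opening observation is fine: triangles with all vertices on one realizer are degenerate, so the corresponding comparison angles vanish. But the rest of your scheme has a genuine gap: it repeatedly assumes that points on the two branches \emph{after} the branch point are timelike related --- you need this to form $\Delta(x,y,z)$ or $\Delta(p,y,z)$ with $y=\alpha(t_0+\delta)$, $z=\beta(t_0+\delta)$, to make the angle at $p$ between the two future branches a non-vacuous limsup, and to write the ``straightening'' identity $\tau(x,\beta(t_0+\delta))=\tau(x,\alpha(t_0+\delta))+\tau(\alpha(t_0+\delta),\beta(t_0+\delta))$. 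This is precisely what fails at a branch: if both curves are parametrized by $\tau$-arclength along the common segment, then $\tau(x,\alpha(t_0+\delta))=\tau(x,\beta(t_0+\delta))$, and the reverse triangle inequality shows $\alpha(t_0+\delta)\ll\beta(t_0+\delta)$ is impossible (it would make $\beta$ non-maximal); more generally, in the contradiction setup you may not assume \emph{any} timelike related pairs on the two branches exist, so ``the angle between the branches at $p$ is $0$'' may be a statement about an empty limit and cannot carry the argument. Your proposed upgrade is also not sound as stated: hyperbolic angles take values in $[0,\infty)$ with a sign, so there is no ``angle $\pi$'' rigidity to invoke, and the alternative route (``two realizers from $x$ through the interior point $p$ must locally coincide, since otherwise they would branch'') is circular --- it assumes the non-branching you are proving. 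Finally, strong causality, an essential hypothesis, plays no real role in your argument, which is a sign the last step cannot close.

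The actual argument bases everything at a point $x$ on the common part strictly before the branch and never needs the branches to see each other: by the monotonicity form of the lower curvature bound, the signed comparison angle $\tilde{\ma}^{\mathrm{S}}_x(\alpha(s),\beta(t))$ is monotonically increasing in $(s,t)$; since it vanishes whenever both parameters lie in the common segment (degenerate triangles) and the sign at $x$ is $\sigma=-1$, it follows that $\tilde{\ma}_x(\alpha(s),\beta(t))=0$ for \emph{all} timelike related pairs. The law of cosines then forces $\tau(\alpha(s),\beta(t))>0$ exactly for $s<t$, so each $\alpha(t)$ lies in every diamond $I(\beta(s_-),\beta(s_+))$ with $s_-<t<s_+$, and strong causality (timelike diamonds form a neighbourhood basis) plus Hausdorffness gives $\alpha(t)=\beta(t)$ --- this is Lem.\ \ref{lem:strongCausTrick}. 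So your degenerate-angle computation is the right starting point, but the comparison must be anchored at the past basepoint with monotonicity, and the coincidence of the curves must come from strong causality, not from a straightening or uniqueness claim at the branch tips.
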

\begin{proof}
See \cite[Thm.\ 4.7]{AngLLS}.
\end{proof}

The non-branching of timelike distance realisers is a key property of spaces with lower curvature bounds and will appear in various forms in this proof of Bonnet-Myers rigidity.

\subsection{Angles and comparison angles}\label{subsec:LLS:angles}

Hyperbolic angles in Lorentzian pre-length spaces were introduced in \cite{AngLLS} and \cite{didierAngles}, where the latter puts a bigger focus on comparison results. We will follow the conventions of the former reference.

\begin{lem}[The law of cosines ($K=-1$)]\par
\label{lem:LOC}
Let $X=\AdS$ be anti-deSitter space and $x_1,x_2,x_3$ be three points which are timelike related (an (unordered) timelike triangle). Let $a_{ij}=\max(\bar{\tau}(x_i,x_j),\bar{\tau}(x_j,x_i))$ (note one of these is zero anyway). Let $\omega$ be the hyperbolic angle between the distance realizers $x_1x_2$ and $x_2x_3$ at $x_2$. Set $\sigma=1$ if $x_2$ is not a time endpoint of the triangle (i.e., $x_1\ll x_2\ll x_3$ or $x_3\ll x_2\ll x_1$) and $\sigma=-1$ if $x_2$ is a time endpoint of the triangle (i.e., $x_2\ll x_1,x_3$ or $x_1,x_3\ll x_2$). Then we have: 
\begin{equation*}
\cos(a_{13}) = \cos(a_{12}) \cos(a_{23}) - \sigma  \sin(a_{12}) \sin(a_{23})\cosh(\omega).
\end{equation*}
In particular, when only changing one side-length, the angle $\omega$ is a monotonically increasing function of the longest side-length and monotonically decreasing in the other side-lengths.
\end{lem}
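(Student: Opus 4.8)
The plan is to reduce everything to the standard hyperbolic trigonometry of the ambient $3$-dimensional space $(\mb{R}^{1,2},b)$ of signature $(-,-,+)$, inside which $\AdS$ sits as (the universal cover of) the quadric $\{b(p,p)=-1\}$. First I would recall that, just as the round sphere $S^2\subset\mb{R}^3$ has its law of cosines derived from the Euclidean inner product of unit vectors, the quadric here is a ``pseudo-sphere'' of radius $i$, so its intrinsic distances are governed by $b$ via the formula $\cos(\bar\tau(x,y)) = -b(x,y)$ for timelike-related points $x,y$ on the quadric (the sign and the use of $\cos$ rather than $\cosh$ coming precisely from the two negative directions in $b$ and from $b(p,p)=-1$). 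This identity is the Lorentzian analogue of $\cos(d_{S^2}(x,y))=\langle x,y\rangle$, and it can be checked by writing the (unique up to isometry) timelike geodesic through $x$ in the form $s\mapsto \cos(s)\,x + \sin(s)\,v$ with $b(v,v)=+1$, $b(x,v)=0$, so that $\bar\tau$ along it is exactly the parameter $s$; then $-b(x,\gamma(s)) = \cos(s)$. Passing to the universal cover is harmless here since the triangle lies in a simply connected region and all the quantities $a_{ij}<\pi$.

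Next I would set up the triangle: pick a timelike-geodesic realizer from $x_1$ to $x_2$ and from $x_2$ to $x_3$, and let $v_{21}, v_{23}$ be the corresponding $b$-unit initial tangent vectors at $x_2$ (pointing along each side, $b(v,v)=+1$, $b(x_2,v)=0$), so that $x_1$ and $x_3$ are obtained from $x_2$ by the geodesic flow of time $a_{12}$ and $a_{23}$ respectively, up to the sign bookkeeping encoded in $\sigma$. Concretely $x_1 = \cos(a_{12})x_2 \pm \sin(a_{12})v_{21}$ and $x_3 = \cos(a_{23})x_2 \pm \sin(a_{23})v_{23}$, where the signs depend on whether $x_2$ is to the future or the past of the other two vertices; the product of these two signs is exactly $\sigma$. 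Then I would compute $-b(x_1,x_3)$ by bilinearity, using $b(x_2,x_2)=-1$, $b(x_2,v_{2j})=0$, $b(v_{2j},v_{2j})=1$, and $b(v_{21},v_{23})=\cosh(\omega)$ — this last equality being the definition of the hyperbolic angle $\omega$ between two $b$-unit spacelike vectors spanning a timelike plane (two future-directed-type unit vectors in a Lorentzian plane have inner product $\cosh$ of the hyperbolic angle between them, with the appropriate sign). Carrying out the expansion gives
\[
-b(x_1,x_3) = \cos(a_{12})\cos(a_{23}) - \sigma\,\sin(a_{12})\sin(a_{23})\cosh(\omega),
\]
and the left side equals $\cos(a_{13}) = \cos(\bar\tau(x_1,x_3))$ by the pseudo-sphere identity above. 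That is the claimed law of cosines; the case $x_2$ a time endpoint ($\sigma=-1$) versus interior vertex ($\sigma=+1$) is handled uniformly by the sign bookkeeping, though I would double-check the orientation conventions so that $\omega\ge 0$ and $\cosh(\omega)\ge 1$ in all cases.

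For the monotonicity statement, I would solve the law of cosines for $\cosh(\omega)$:
\[
\cosh(\omega) = \sigma\,\frac{\cos(a_{12})\cos(a_{23}) - \cos(a_{13})}{\sin(a_{12})\sin(a_{23})},
\]
and then differentiate with respect to one side-length at a time on the range where all $a_{ij}\in(0,\pi)$ and the triangle inequalities hold, checking the sign of the derivative; since $\cosh$ is increasing on $[0,\infty)$, the sign of $\partial_{a_{13}}\cosh(\omega)$ transfers to $\partial_{a_{13}}\omega$, and similarly for the other variables. The clean way is to note that $a_{13}$ is the ``longest'' (the side opposite $x_2$, the one whose endpoints are the extreme points of the triangle), and that increasing $a_{13}$ while fixing $a_{12},a_{23}$ makes $\cos(a_{13})$ decrease, hence the numerator increase, hence $\cosh(\omega)$ increase; the dependence on $a_{12}$ (resp.\ $a_{23}$) is obtained by the same computation and is decreasing. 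The main obstacle I anticipate is not any single hard step but rather getting all the sign and orientation conventions consistent — the two negative directions in $b$, the time-orientation choice on $\AdS$, the placement of $\sigma$, and the sign in $b(v_{21},v_{23})=\cosh(\omega)$ versus $-\cosh(\omega)$ — so that the final formula comes out with exactly the signs stated and so that the monotonicity claim holds verbatim on the relevant domain (in particular one must stay away from the degenerate configurations where some $\sin(a_{ij})=0$). Modulo that careful bookkeeping, everything reduces to elementary computations with the bilinear form $b$.
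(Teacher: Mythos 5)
The paper does not actually prove this lemma: it is quoted from the literature, the proof being a citation of \cite[Appendix A]{AngLLS}. Your proposal therefore does something genuinely different and more self-contained: you derive the formula directly in the ambient $(\mb{R}^{1,2},b)$ model, using the identity $\cos(\bar\tau(x,y))=-b(x,y)$ (which is indeed the formula the paper itself invokes later, in the proof of Lem.~\ref{lem:causalBdryAtPi}, citing \cite[(2.7)]{tauInModelSpaces}) and expanding $-b(x_1,x_3)$ by bilinearity after writing $x_1,x_3$ via the geodesic flow from $x_2$. This is the standard and correct route, and modulo conventions it does reproduce the stated law of cosines; what the paper's citation buys is precisely the careful handling of the angle conventions of \cite{AngLLS}, which is where your sketch currently has concrete errors.

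Two points need fixing. First, with the induced metric being the restriction of $b$ (signature $(-,+)$ on the tangent spaces), a \emph{timelike} unit tangent satisfies $b(v,v)=-1$, not $+1$: the curve $s\mapsto\cos(s)x+\sin(s)v$ lies on the quadric $\{b(p,p)=-1\}$ and is a unit-speed timelike geodesic exactly when $b(v,v)=-1$ and $b(x,v)=0$ (your choice $b(v,v)=+1$ gives the \emph{spacelike} geodesics $\cosh(s)x+\sinh(s)v$, and your curve would leave the quadric). Consequently $b(v_{21},v_{23})=-\cosh(\omega)$ when both unit tangents are future directed (and $+\cosh(\omega)$ when they have opposite time orientation), so the sign bookkeeping is: equal signs in $x_1=\cos(a_{12})x_2\pm\sin(a_{12})v_{21}$, $x_3=\cos(a_{23})x_2\pm\sin(a_{23})v_{23}$ occur exactly when $x_2$ is a time endpoint and then produce $+\sin(a_{12})\sin(a_{23})\cosh(\omega)$, matching $\sigma=-1$; opposite signs give $-\sin\sin\cosh(\omega)$, matching $\sigma=+1$. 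So the product of your two signs is $-\sigma$, not $\sigma$, once the inner product sign is corrected; as written the two errors would have to cancel, and one must also check that the hyperbolic angle of \cite{AngLLS} at a non-endpoint vertex agrees with the ambient angle between $-v_{21}$ and $v_{23}$. Second, the monotonicity in the \emph{shorter} sides is not quite a one-line sign check: differentiating $\cosh(\omega)=\sigma\frac{\cos(a_{12})\cos(a_{23})-\cos(a_{13})}{\sin(a_{12})\sin(a_{23})}$ in $a_{12}$ gives (up to a positive factor) $-(\cos(a_{23})-\cos(a_{12})\cos(a_{13}))$, and showing this is $\leq 0$ requires substituting the law of cosines back in to rewrite it as $-\sin(a_{12})\bigl(\sin(a_{12})\cos(a_{23})+\cos(a_{12})\sin(a_{23})\cosh(\omega)\bigr)$ and arguing positivity of the bracket (easy, but not automatic when $a_{12}>\pi/2$). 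Neither issue breaks the strategy, but both need to be carried out for the proof to be complete.
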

\begin{proof}
See \cite[Appendix A]{AngLLS}.
\end{proof}

For $a_{ij}>0$ satisfying a reverse triangle inequality and choosing an appropriate $\sigma=\pm 1$, we can always solve this equation for $\omega$.

\begin{dfn}[Comparison angles]
Let $X$ be a \LpLS and $x_1,x_2,x_3$ three timelike related points. Let $\bar{x}_1,\bar{x}_2,\bar{x}_3\in \AdS$ be a comparison triangle for $x_1,x_2,x_3$. We define the \emph{comparison angle} $\tilde{\ma}_{x_2}^{-1}(x_1,x_3)$ as the hyperbolic angle between the straight lines $\bar{x}_1\bar{x}_2$ and $\bar{x}_2\bar{x}_3$ at $\bar{x}_2$. It can be calculated with the law of cosines using $a_{ij}=\max(\tau(x_i,x_j),\tau(x_j,x_i))$ and $\sigma$, where we set $\sigma=1$ if $x_2$ is not a time endpoint of the triangle and $\sigma=-1$ if $x_2$ is a time endpoint of the triangle. $\sigma$ is called the \emph{sign} of the comparison angle (even though we always have $\tilde{\ma}_{x_2}^{-1}(x_1,x_3)>0$). For a reduction of the number of case distinctions we also define the \emph{signed comparison angle} $\tilde{\ma}_{x_2}^{\mathrm{S},-1}(x_1,x_3)=\sigma\tilde{\ma}_{x_2}^{-1}(x_1,x_3)$.

The $-1$ in the exponent stands for the $K=-1$ in $M_{-1}=\AdS$. We will drop it throughout this document (note that \cite{AngLLS} drops it if $K=0$ instead).
\end{dfn}

\begin{dfn}[Angles]
Let $X$ be a \LpLS and $\alpha,\beta:[0,\varepsilon)\to X$ be two timelike curves (future or past directed or one of each) with $x:=\alpha(0)=\beta(0)$. Then we define the \emph{upper angle} 
\begin{equation*}
\ma_x(\alpha,\beta)=\limsup_{(s,t)\in D, s,t\to 0}\tilde{\ma}_x(\alpha(s),\beta(t))\,,
\end{equation*}
where $D=\{(s,t):s,t>0,\,\alpha(s),\beta(t)\text{ timelike related}\}\cap\{(s,t):\alpha(s),\beta(t),x \\\text{ satisfies size bounds for }K=-1\}$. If the limes superior is a limit and finite, we say \emph{the angle exists} and call it an \emph{angle}.

Note that the sign of the comparison angle is independent of $(s,t)\in D$. We define the \emph{sign} of the (upper) angle $\sigma$ to be that sign, and define the \emph{signed (upper) angle} to be $\ma_x^{\mathrm{S}}(\alpha,\beta)=\sigma \ma_x(\alpha,\beta)$.

If maximisers between any two timelike related points are unique (as e.g.\ in $\AdS'$ (defined in Lem.\ \ref{lem:AdS'})), then we simply write $\ma_p(x,y)$ for the angle at $p$ between the maximisers from $p$ to $x$ and $p$ to $y$.
\end{dfn} 

For giving an alternative definition of timelike curvature bounds in the case $K=-1$ we need:
\begin{dfn}[Regularity]
A \LpLS $X$ is called \emph{regular} if every distance realizer between timelike related points is timelike, i.e., does not contain a null segment.
\end{dfn}
\begin{dfn}[Timelike curvature bounds by monotonicity comparison]
\label{def:monotonicityComp}
Let $X$ be a regular \LpLSn. An open subset $U$ is called a timelike $(\geq K)$-comparison neighbourhood in the sense of \emph{monotonicity comparison} if:
\begin{enumerate}
\item $\tau$ is continuous on $(U\times U) \cap \tau^{-1}([0,D_K))$, and this set is open.
\item For all $x,y \in U$ with $x \ll y$ and $\tau(x,y) < D_K$ there exists a geodesic connecting them which is contained entirely in $U$. 
\item Let $\alpha:[0,a]\to U,\beta:[0,b]\to U$ be distance realizers such that $x:=\alpha(0)=\beta(0)$ and such that $L(\alpha),L(\beta)<D_K$. Define the function $\theta:D\to[0,+\infty)$ by $\theta(s,t):=\tilde{\ma}_x^{K,\mathrm{S}}(\alpha(s),\beta(t))$ ($D\subseteq (0,a]\times(0,b]$ is the set where this is defined). Then $\theta$ is monotonically increasing.
\end{enumerate}
\medskip

We say $X$ has timelike curvature bounded below by $K$ (in the sense of \emph{monotonicity comparison}) if it is covered by timelike $(\geq K)$-comparison neighbourhoods (in the sense of \emph{monotonicity comparison}). 

We say $X$ has global timelike curvature bounded below by $K$ (in the sense of \emph{monotonicity comparison}) if $X$ itself is a $(\geq K)$-comparison neighbourhood (in the sense of \emph{monotonicity comparison}). 
\end{dfn}

\begin{thm}[Timelike curvature: Equivalence of definitions]
\label{thm:equivTriCompAndMonComp}
Let $X$ be a regular \LpLSn. Then $X$ has timelike curvature bounded below (above) by $K=-1$ in the sense of Def.\ \ref{def:triComp} if and only if it has timelike curvature bounded below (above) by $K=-1$ in the sense of monotonicity comparison (see Def.\ \ref{def:monotonicityComp}).
\end{thm}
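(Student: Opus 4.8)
I would run the Alexandrov--Bishop-type argument that, in the metric setting, establishes the equivalence of the triangle-comparison and the angle-monotonicity formulations of a curvature bound, with the two-dimensional model $\AdS$ and its law of cosines (Lemma~\ref{lem:LOC}) playing the role of the Euclidean model and its law of cosines. Since conditions (i) and (ii) of Definitions~\ref{def:triComp} and~\ref{def:monotonicityComp} literally coincide, it suffices to fix an open $U\subseteq X$ satisfying (i) and (ii) and show that $U$ satisfies condition (iii) of triangle comparison if and only if it satisfies condition (iii) of monotonicity comparison; this at once transfers to both the covered and the global ($X=U$) versions. Regularity is used only to guarantee that all distance realizers occurring below are timelike, so that the (signed) comparison angles and Lemma~\ref{lem:LOC} apply to them. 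I treat the lower-bound case; the upper-bound case is obtained by reversing every inequality in (iii) (and replacing ``monotonically increasing'' by ``monotonically decreasing'').

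\emph{Triangle comparison $\Rightarrow$ monotonicity comparison.} Let $\alpha\colon[0,a]\to U$, $\beta\colon[0,b]\to U$ be distance realizers with $x=\alpha(0)=\beta(0)$ and $L(\alpha),L(\beta)<D_K$; we must show $\theta(s,t)=\tilde{\ma}_x^{\mathrm{S}}(\alpha(s),\beta(t))$ is monotone. It is enough to prove monotonicity in each variable separately (the slices of the domain $D$ are intervals in the configurations that actually arise, so componentwise monotonicity assembles to the full statement) and compose. Fixing $s$ and taking $t_1<t_2$, the four points $x,\alpha(s),\beta(t_1),\beta(t_2)$ form a causal chain (whose shape depends on $\sigma$ and the time-orientations of $\alpha,\beta$), and in each arrangement one of them lies on a side of a timelike triangle spanned by the other three; e.g.\ for $x\ll\alpha(s)\ll\beta(t_1)\ll\beta(t_2)$ the point $\beta(t_1)$ lies on the side $x\beta(t_2)$ of $\Delta(x,\alpha(s),\beta(t_2))$, which exists in $U$ by (ii). Condition (iii) of triangle comparison, applied with vertex $\alpha(s)$ and side point $\beta(t_1)$, gives $\tau(\alpha(s),\beta(t_1))\le\bar{\tau}(\overline{\alpha(s)},\overline{\beta(t_1)})$ in the comparison triangle. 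As $\overline{\beta(t_1)}$ lies on the segment $\bar x\,\overline{\beta(t_2)}$, the angle at $\bar x$ in the sub-triangle $(\bar x,\overline{\alpha(s)},\overline{\beta(t_1)})$ equals $\tilde{\ma}_x(\alpha(s),\beta(t_2))$; this sub-triangle has the same two sides at $\bar x$ as the comparison triangle defining $\tilde{\ma}_x(\alpha(s),\beta(t_1))$ but a longer side opposite $\bar x$, and since $\bar x$ is a time-endpoint this opposite side is not the longest, so Lemma~\ref{lem:LOC} forces $\tilde{\ma}_x(\alpha(s),\beta(t_2))\le\tilde{\ma}_x(\alpha(s),\beta(t_1))$, i.e.\ $\theta(s,t_1)\le\theta(s,t_2)$ after multiplying by $\sigma=-1$. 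The remaining chain arrangements, the symmetric case (fixing $t$ and varying $s$), and the case $\sigma=+1$ (where $x$ is the middle vertex, the opposite side is the longest, the inequality from Lemma~\ref{lem:LOC} flips, and this flip compensates the absent sign change) are treated identically.

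\emph{Monotonicity comparison $\Rightarrow$ triangle comparison.} Let $\Delta(x_1,x_2,x_3)$ be a timelike triangle in $U$ with size bounds, $p,q$ points on its sides, and $\bar\Delta$ a comparison triangle. If $p,q$ lie on the same side then $\tau(p,q)=\bar{\tau}(\bar p,\bar q)$, and if they are timelike unrelated then $\tau(p,q)=0$; in either case there is nothing to prove. Otherwise $p$ and $q$ lie on two sides sharing a common vertex $v$ (any two sides of a triangle do), say with $p\ll q$; parametrize these two sides from $v$ by distance realizers $\alpha,\beta$ with $p=\alpha(s_p)$, $q=\beta(t_q)$. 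Condition (iii) of monotonicity comparison gives $\theta(s_p,t_q)\le\theta(L(\alpha),L(\beta))$, i.e.\ $\tilde{\ma}_v^{\mathrm{S}}(p,q)\le\tilde{\ma}_v^{\mathrm{S}}(w_1,w_2)$, where $w_1,w_2$ are the two vertices of $\Delta$ other than $v$; the right-hand side equals the angle at $\bar v$ in $\bar\Delta$, hence the angle at $\bar v$ in the $\AdS$-sub-triangle $(\bar v,\bar p,\bar q)$, which has sides $\tau(v,p),\tau(v,q)$ at $\bar v$ and opposite side $\bar{\tau}(\bar p,\bar q)$. Comparing this with the triangle defining $\tilde{\ma}_v(p,q)$ (same two sides at $\bar v$, opposite side $\tau(p,q)$) through Lemma~\ref{lem:LOC}, and distinguishing whether $v$ is a time-endpoint of the small triangle $(v,p,q)$ (so whether that opposite side is the longest), the angle inequality becomes exactly $\tau(p,q)\le\bar{\tau}(\bar p,\bar q)$.

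The main obstacle is the sign bookkeeping: in every causal arrangement there are two independent sources of sign reversal --- the factor $\sigma=\pm1$ in the signed comparison angle, and whether the relevant side is the longest one in the monotonicity clause of Lemma~\ref{lem:LOC} --- and one must verify in each case that these combine to yield precisely the asserted direction of the inequality in (iii). The rest is routine: the degenerate cases where $p$ or $q$ is a vertex or $\tau(p,q)=0$, and checking that in the relevant configurations the domain $D$ of $\theta$ has interval slices so that componentwise monotonicity gives joint monotonicity; continuity of $\tau$ from (i) together with the size bounds ensures that all comparison triangles used actually exist and depend continuously on the data.
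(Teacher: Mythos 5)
The paper does not prove this theorem itself: its ``proof'' is a citation to \cite[Thm.\ 4.12]{AngLLS} and \cite[Thm.\ 5.1]{equivDefs}. Your sketch reconstructs essentially the argument behind those results: conditions (i)--(ii) of Definitions \ref{def:triComp} and \ref{def:monotonicityComp} coincide, and the two versions of (iii) are converted into one another by hinge comparisons in $\AdSn$, i.e.\ by the monotone dependence (Lemma \ref{lem:LOC}) of the side opposite a vertex on the angle at that vertex, with the direction of monotonicity governed by whether the vertex is a time endpoint, together with the trick of reading the comparison point of the third point off a side of the larger comparison triangle. Your two conversions and the sign bookkeeping (the $\sigma=\pm1$ factor versus whether the varied side is the longest) are sound; regularity enters exactly where you say it does.

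Two steps need repair before this is a proof. First, the parenthetical justification of joint monotonicity is wrong: the slices of $D$ are \emph{not} intervals in general. If $\alpha$ and $\beta$ have the same time orientation, then for fixed $s$ one typically has $\beta(t)\ll\alpha(s)$ for small $t$ and $\alpha(s)\ll\beta(t)$ for large $t$, with a gap of causally unrelated pairs in between. The correct assembly is: for $(s_1,t_1),(s_2,t_2)\in D$ with $s_1\le s_2$, $t_1\le t_2$, one of the corners $(s_1,t_2)$ or $(s_2,t_1)$ always lies in $D$ (by transitivity along $\beta$ resp.\ $\alpha$; note the size bounds are automatic because the longest side lies along $\alpha$ or $\beta$), and one then needs the one-variable comparison also \emph{across} a change of configuration, e.g.\ $\alpha(s_1)\ll\beta(t)$ but $\beta(t)\ll\alpha(s_2)$. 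That cross-configuration step is provable by exactly your mechanism (put $\alpha(s_1)$ on the side from $x$ to $\alpha(s_2)$ of $\Delta(x,\beta(t),\alpha(s_2))$ and compare hinges at $\bar x$ with $\sigma=-1$), but it is not covered by ``slices are intervals'' and must be written out. Second, the upper-bound case is not obtained by ``reversing every inequality'': with curvature bounded above, the triangle-comparison inequality $\tau(p,q)\ge\bar\tau(\bar p,\bar q)$ is \emph{not} vacuous when $\tau(p,q)=0$ (one must show $\bar\tau(\bar p,\bar q)=0$), and the reversed monotonicity statement says nothing directly about causally unrelated pairs, since the comparison angle is then undefined; this degenerate configuration requires its own limiting/contradiction argument. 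Neither issue changes the overall strategy, which is the one used in the cited references, but both are places where the sketch as written would fail.
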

\begin{proof}
See \cite[Thm.\ 4.12]{AngLLS} or a more complete picture in \cite[Thm.\ 5.1]{equivDefs}.
\end{proof}
For the relation of local and global curvature bounds, note we only need weak additional assumptions to get they are equivalent:
\begin{lem}[Equivalence of local and global lower curvature bounds]
Let $X$ be a connected, globally hyperbolic, regular Lorentzian length space with a time function $T$ and curvature bounded below by $K \in\R$ in the sense of triangle comparison such that any timelike related points are connected by a distance realizer. Then it has global curvature bounded below by $K$ in the sense of triangle comparison.
\end{lem}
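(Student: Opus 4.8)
The statement is the Lorentzian analogue of Toponogov's globalisation (``patchwork'') theorem for lower curvature bounds, so the plan is to reduce global triangle comparison to the local comparison hypothesis by a subdivision-and-gluing argument of Alexandrov type. This globalisation has in fact already been carried out for Lorentzian length spaces in \cite{alexPatchAndBonnet}, so the cleanest route is to verify that the present hypotheses --- connectedness, global hyperbolicity, regularity, existence of a time function $T$, local triangle comparison, and the property that timelike related points are joined by a distance realiser --- are exactly those under which that result applies. Below I sketch the argument underlying such a patchwork theorem.

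First I would fix an arbitrary timelike triangle $\Delta(x,y,z)$ in $X$ satisfying the size bounds for $K$ together with points $p,q$ on its sides; the goal is $\tau(p,q)\le\tau(\bar p,\bar q)$ for a comparison triangle in $\lm{K}$. Its three sides are distance realisers contained in a compact causal diamond (global hyperbolicity), hence covered by finitely many timelike $(\geq K)$-comparison neighbourhoods, so by a Lebesgue-number argument I would subdivide each side into short consecutive subsegments whose endpoints are joined --- using the joinability hypothesis --- by distance realisers lying inside a single comparison neighbourhood, while the time function $T$ keeps the inserted vertices and these ``chords'' coherently time-ordered. This exhibits $\Delta$ as a finite ``fan'' of small timelike triangles, each satisfying triangle comparison by the local hypothesis. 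Passing via regularity and Theorem~\ref{thm:equivTriCompAndMonComp} to the monotonicity-comparison form --- in which the property that propagates is monotonicity of the signed comparison angle $\theta(s,t)=\tilde{\ma}^{\mathrm S}_x(\alpha(s),\beta(t))$ along pairs of realisers, a statement that concatenates well --- I would then repeatedly apply a Lorentzian ``Alexandrov lemma'': two comparison triangles in $\lm{K}$ glued along a common side, with the angles at the shared vertex satisfying the appropriate inequality, may be replaced by a single comparison triangle over their union with the distances controlled in the correct direction, the monotonicity of the angle $\omega$ in the side-lengths recorded in Lemma~\ref{lem:LOC} supplying the quantitative step inside $\AdS$. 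Iterating this along the fan collapses the subdivision back to one comparison triangle for $\Delta(x,y,z)$ and yields the desired inequality; since $\Delta$ was arbitrary and $X$ is connected, $X$ itself is a $(\geq K)$-comparison neighbourhood.

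The hard part will be the bookkeeping of the Lorentzian sign conventions. Each timelike triangle has a distinguished time endpoint, and the sign $\sigma=\pm1$ appearing in the law of cosines depends on whether the vertex under consideration is that time endpoint; when a side of $\Delta$ is subdivided, the newly inserted vertices switch roles between ``time endpoint'' and ``interior vertex'' from one small subtriangle to the next, so the Alexandrov-lemma gluing and the monotonicity statements have to be set up and chained together with the signs tracked correctly at every vertex. A secondary technical obstacle is ensuring that the chords of the fan can always be chosen inside comparison neighbourhoods and that the auxiliary causal diamonds remain compact with the size bounds preserved under subdivision --- which is precisely where global hyperbolicity and the time function $T$ enter, while connectedness guarantees the conclusion propagates to all of $X$. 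Both points are handled in \cite{alexPatchAndBonnet}, to which the proof ultimately appeals.
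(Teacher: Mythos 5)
Your overall strategy coincides with the paper's: the lemma is not reproved from scratch but reduced to an already established Lorentzian local-to-global theorem, with the stated hypotheses (global hyperbolicity, regularity, time function, geodesic connectedness) serving only to match that theorem's assumptions. The paper's proof is exactly such a citation, namely to the Lorentzian Toponogov globalisation theorem \cite[Thm.\ 3.6]{toponogovGlob} together with \cite[Thm.\ 5.1 and Prop.\ 4.18]{equivDefs} to translate between the formulations of the curvature bounds.

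The genuine gap is that you appeal to \cite{alexPatchAndBonnet} for the globalisation, but the patchwork theorem in that paper globalises \emph{upper} timelike curvature bounds (the Lorentzian analogue of Alexandrov's patchwork for CAT-type bounds); its Bonnet--Myers theorem \emph{assumes} a global lower bound rather than deriving it from a local one. The present lemma concerns \emph{lower} timelike curvature bounds, and their globalisation is the Lorentzian analogue of Toponogov's globalisation theorem, which is a separate and substantially harder result, proved in \cite{toponogovGlob}. Your fallback sketch does not close this gap: subdividing the triangle into a fan of small triangles and regluing via the Alexandrov lemma is precisely the mechanism that works for upper bounds, whereas for lower bounds the inequalities supplied by the local hypothesis and by the Alexandrov lemma do not chain across the subdivision in this naive way; the known proofs proceed by induction on the size of triangles with limiting arguments, and in the Lorentzian case one must additionally control the size bound $D_K$ (triangles approaching the threshold, existence and non-degeneracy of the comparison configurations in $\lm{K}$). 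So the real difficulty is not the bookkeeping of the signs $\sigma=\pm1$, as you suggest, but the globalisation mechanism itself, which your proposal attributes to a reference that does not contain it for lower bounds. Replacing the citation by \cite[Thm.\ 3.6]{toponogovGlob} (plus \cite{equivDefs} for the equivalence of the comparison formulations for general $K$, since Thm.\ \ref{thm:equivTriCompAndMonComp} in this paper is only stated for $K=-1$) repairs the argument and makes it essentially the paper's proof.
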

\begin{proof}
This follows from \cite[Thm.\ 3.6]{toponogovGlob} and \cite[Thm.\ 5.1 and Prop.\ 4.18]{equivDefs}.
\end{proof}

For technical reasons, we need to assume the timelike geodesic prolongation property. This is unfortunate, as it is a quite strong assumption, among other things it is not satisfied in manifolds with boundary.

\begin{dfn}
Let $X$ be a \LpLSn. It satisfies \emph{timelike geodesic prolongation} if each timelike distance realizer $\gamma:[a,b]\to X$ can be extended as a geodesic to an open domain, i.e.\ there is a $\varepsilon>0$, an extension $\gamma':(a-\varepsilon,b+\varepsilon)\to X$ of $\gamma$ such that both $\gamma'|_{(a-\varepsilon,a+\varepsilon)}$ and $\gamma'|_{(b-\varepsilon,b+\varepsilon)}$ are timelike distance realizers.
\end{dfn}

This plays a technical role in our proof of the Bonnet-Myers rigidity theorem mainly because of the following result.

\begin{pop}[Continuity of angles in spaces with timelike curvature bounded below]
\label{pop:continuityOfAngles}
Let $X$ be a strongly causal, localisable, timelike geodesically connected and locally causally closed \LpLS with timelike curvature bounded below and which satisfies timelike geodesic prolongation. Let $\alpha_n,\alpha,\beta_n,\beta$ be future or past directed timelike geodesics all starting at $\alpha_n(0)=\alpha(0)=\beta_n(0)=\beta(0)=:x$ and with $\alpha_n\to \alpha$ and $\beta_n\to \beta$ pointwise (in particular, $\alpha_n$ is future directed if and only if $\alpha$ is, and similarly for $\beta_n$ and $\beta$). Then
\begin{equation*}
\ma_x(\alpha,\beta)=\lim_n\ma_x(\alpha_n,\beta_n)
\end{equation*}
\end{pop}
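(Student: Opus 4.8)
The statement is a continuity-of-angles result under pointwise convergence of geodesics. The plan is to exploit the two available curvature comparisons — the triangle comparison (Def.~\ref{def:triComp}) and the monotonicity comparison (Def.~\ref{def:monotonicityComp}), which are equivalent by Thm.~\ref{thm:equivTriCompAndMonComp} — together with monotonicity of the comparison angle in the side lengths (Lem.~\ref{lem:LOC}). First I would reduce to the case where the limiting geodesics $\alpha,\beta$ have a definite length: since everything here is local and the claim involves only the angle at $x$, and since timelike geodesic prolongation lets us extend $\alpha$ and $\beta$ a bit past their endpoints, I may assume $\alpha,\beta$ are defined on some fixed $[0,a]$, $[0,b]$ with $L(\alpha),L(\beta)<D_K$, and that for large $n$ the same holds for $\alpha_n,\beta_n$ (prolonging and restricting as needed). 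I would also fix a single comparison neighbourhood $U$ containing the relevant compact piece and work inside it.

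The heart of the argument is a two-sided estimate. For the \emph{lower bound} $\liminf_n \ma_x(\alpha_n,\beta_n) \ge \ma_x(\alpha,\beta)$: fix small $s,t$ with $\alpha(s),\beta(t)$ timelike related and size bounds satisfied; by lower semicontinuity of $\tau$ and continuity of $\tau$ on the relevant open set (first axiom of a comparison neighbourhood), and by pointwise convergence $\alpha_n(s)\to\alpha(s)$, $\beta_n(t)\to\beta(t)$, the side lengths $\tau(x,\alpha_n(s))$, $\tau(x,\beta_n(t))$, $\tau(\alpha_n(s),\beta_n(t))$ converge to those of the limit configuration; hence the comparison angle $\tilde\ma_x(\alpha_n(s),\beta_n(t))\to\tilde\ma_x(\alpha(s),\beta(t))$ by continuity of the law of cosines (Lem.~\ref{lem:LOC}). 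Now monotonicity comparison applied to $\alpha_n,\beta_n$ gives $\ma_x(\alpha_n,\beta_n)=\inf_{s,t}\tilde\ma_x(\alpha_n(s),\beta_n(t))\le\tilde\ma_x(\alpha_n(s),\beta_n(t))$, wait — one must be careful about the direction: monotonicity says $\theta(s,t)=\tilde\ma^{\mathrm{S}}_x(\alpha_n(s),\beta_n(t))$ is monotonically increasing in $(s,t)$, so the angle (the limit as $s,t\to0$) is the infimum of the $\theta(s,t)$; thus $\ma_x(\alpha_n,\beta_n)\le\tilde\ma_x(\alpha_n(s),\beta_n(t))$, and taking $\limsup_n$ then $\inf_{s,t}$ would give the \emph{wrong} inequality. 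The right move: for the \emph{upper bound} $\limsup_n\ma_x(\alpha_n,\beta_n)\le\ma_x(\alpha,\beta)$, use that $\ma_x(\alpha_n,\beta_n)\le\tilde\ma_x(\alpha_n(s),\beta_n(t))$ for every admissible $(s,t)$; fixing $(s,t)$, pass to the limit in $n$ using the convergence of comparison angles above to get $\limsup_n\ma_x(\alpha_n,\beta_n)\le\tilde\ma_x(\alpha(s),\beta(t))$; then take the infimum over $(s,t)$, which by monotonicity comparison for $\alpha,\beta$ equals $\ma_x(\alpha,\beta)$. For the \emph{lower bound} I would instead use that the angle $\ma_x(\alpha,\beta)$ genuinely exists (this is where timelike curvature bounded below is used — it forces the $\limsup$ in the definition of the upper angle to be an honest limit, via monotonicity comparison) and argue by a diagonal/contradiction scheme: if $\ma_x(\alpha_n,\beta_n)\to c<\ma_x(\alpha,\beta)$ along a subsequence, pick $(s,t)$ with $\tilde\ma_x(\alpha(s),\beta(t))<\ma_x(\alpha,\beta)$ — impossible, since that quantity is an infimum bounded below by $\ma_x(\alpha,\beta)$; the contradiction must instead come from comparing $\ma_x(\alpha_n,\beta_n)$ with $\tilde\ma_x(\alpha_n(s_n),\beta_n(t_n))$ for a carefully chosen sequence $(s_n,t_n)\to(0,0)$, using uniform control of the convergence $\alpha_n\to\alpha$, $\beta_n\to\beta$ on compact sets (which follows from limit-curve arguments in globally hyperbolic / localisable spaces).

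\textbf{Main obstacle.} The delicate point is the lower semicontinuity direction: $\ma_x$ is defined as a $\limsup$ over $(s,t)\to 0$, which under curvature bounded below becomes an $\inf$, so angles are naturally upper semicontinuous in $(\alpha,\beta)$ but lower semicontinuity is not automatic. Closing this gap is exactly where timelike geodesic prolongation enters: it lets one pass to comparison triangles with the \emph{endpoints} $\alpha(a),\beta(b)$ rather than shrinking $s,t$, so that the angle can be read off from a single, stable comparison triangle whose side lengths converge. Concretely, I expect to invoke prolongation to replace ``$\ma_x(\alpha,\beta)=\inf_{s,t}\tilde\ma_x(\alpha(s),\beta(t))$'' by the cleaner identity ``$\ma_x(\alpha,\beta)=\tilde\ma_x(\alpha(s),\beta(t))$ is constant in $(s,t)$'' — i.e. monotonicity is actually \emph{equality} along genuine geodesics that can be prolonged, because both $\alpha|_{[0,s]}$ and $\alpha|_{[0,a]}$ realise distance and the comparison angle cannot strictly increase without violating the global comparison from the extended triangle. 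With that rigidity in hand, both inequalities reduce to the continuity of the law of cosines in its arguments, and the proof closes. I would cite the limit curve theorem for globally hyperbolic Lorentzian length spaces to upgrade pointwise convergence of the $\alpha_n$ to locally uniform convergence wherever a uniform estimate is needed.
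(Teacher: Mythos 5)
The paper itself offers no proof here --- it simply cites \cite[Prop.\ 4.14]{AngLLS} --- so your proposal has to stand on its own, and it does not. First, there is a sign problem in the half you treat as routine. Monotonicity comparison (Def.\ \ref{def:monotonicityComp}) concerns the \emph{signed} comparison angle $\tilde{\ma}^{\mathrm{S}}_x(\alpha(s),\beta(t))=\sigma\,\tilde{\ma}_x(\alpha(s),\beta(t))$. When $\alpha_n,\beta_n$ have the \emph{same} time orientation (the case the paper actually uses, e.g.\ in Prop.\ \ref{pop:TCRCHoldsonI(gamma)}), the vertex $x$ is a time endpoint, so $\sigma=-1$, and ``signed comparison angle increasing'' means the unsigned comparison angle \emph{decreases} in $(s,t)$: the angle is then the supremum, not the infimum, of the comparison angles, and your inequality $\ma_x(\alpha_n,\beta_n)\leq\tilde{\ma}_x(\alpha_n(s),\beta_n(t))$ points the wrong way. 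Carried out with the signs in place, your limiting argument (continuity of $\tau$ on a comparison neighbourhood plus the law of cosines, Lem.\ \ref{lem:LOC}) yields exactly one semicontinuity direction in each orientation case --- lower semicontinuity for same-oriented hinges, upper semicontinuity for mixed ones. That is the easy half.

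The other half is the real content of the proposition, and your mechanism for it is false. Timelike geodesic prolongation does \emph{not} upgrade monotonicity to ``$\tilde{\ma}_x(\alpha(s),\beta(t))$ is constant in $(s,t)$'': constancy of comparison angles along geodesics is a rigidity phenomenon, which in this paper is obtained only when one side lies on a distance realizer of maximal length $\pi$ (the stacking principle, Prop.\ \ref{pop:compStack} and Prop.\ \ref{pop:angle=comparisonangle}). Already in a model space $\lm{K'}$ with $K'<-1$, which satisfies all hypotheses and in which every timelike geodesic is extendible, the $\AdS$-comparison angle of a hinge genuinely varies with the side lengths; were it constant for all hinges, the space would satisfy comparison with equality, i.e.\ be locally modelled on $\AdS$, which the hypotheses certainly do not force. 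Your fallback (``carefully chosen $(s_n,t_n)$, uniform control'', limit-curve theorems under a global hyperbolicity assumption that is not among the hypotheses) is a sketch of an intention, not an argument. Note also that you only prolong $\alpha,\beta$ past their \emph{far} endpoints, where prolongation is pointless; the place it is needed is at the vertex. The cited proof, as the prolongation hypothesis suggests, follows the classical Alexandrov scheme: extend the limit geodesics as distance realizers \emph{through} $x$, apply the easy semicontinuity to the hinges formed with these extensions, and play this off against the adjacent-angle relation valid under lower curvature bounds (in the model the signed angles of the two adjacent hinges sum to zero), which forces the missing semicontinuity for the original pair. Without an ingredient of this kind your proposal does not establish $\lim_n\ma_x(\alpha_n,\beta_n)=\ma_x(\alpha,\beta)$.
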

\begin{proof}
See \cite[Prop.\ 4.14]{AngLLS}.
\end{proof}

The following two results seem very similar to \cite[Prop.\ 2.42, Prop.\ 2.43]{triSplitting}, except that here the comparison triangles are created in \AdS instead of Minkowski space. For the proof, we refer the reader to the proof of \cite[Prop.\ 2.42, Prop.\ 2.43]{triSplitting} which is for $K=0$, but easily generalizes to general $K$ when additionally assuming size bounds for $K$.

\begin{pop}[Alexandrov lemma: across version]
\label{lem:alexlemAcross}
Let $X$ be a \LpLSn.
Let $\Delta:=\Delta(x, y, z)$ be a timelike triangle satisfying size bounds for $K=-1$ (in particular the distance realisers between the endpoints exist). Let $p$ be a point on the side $xz$ with $p\ll y$, such that the distance realiser between $p$ and $y$ exists. 
Then we can consider the smaller triangles $\Delta_1:=\Delta(x,p,y)$ and $\Delta_2:=\Delta(p,y,z)$. We construct a comparison situation consisting of a comparison triangle $\bar{\Delta}_1$ for $\Delta_1$ and $\bar{\Delta}_2$ for $\Delta_2$, with $\bar{x}$ and $\bar{z}$ on different sides of the line through $\bar{p}\bar{y}$ and a comparison triangle $\tilde{\Delta}$ for $\Delta$ with a comparison point $\tilde{p}$ for $p$ on the side $xz$. This contains the subtriangles $\tilde{\Delta}_1:=\Delta(\tilde{x},\tilde{y},\tilde{p})$ and $\tilde{\Delta}_2:=\Delta(\tilde{p},\tilde{y},\tilde{z})$, see Figure \ref{lem:alexlemAcross:figConcave}.

\begin{figure}
\begin{center}
\begin{tikzpicture}
\draw (-0.5693860319981044,1.9834819014638239)-- (0,0);
\draw (2.8896135929029714,2.2006721639230697)-- (4,0);
\draw (0,0)-- (-0.015287989182225509,1.3000898902049949);
\draw (-0.015287989182225509,1.3000898902049949)-- (-0.5693860319981044,1.9834819014638239);
\draw (-0.5693860319981044,1.9834819014638239)-- (1.0360567397613818,4.954583800321347);
\draw (1.0360567397613818,4.954583800321347)-- (-0.015287989182225509,1.3000898902049949);
\draw (2.8896135929029714,2.2006721639230697)-- (3.647225049614162,4.812946100427443);
\draw (3.647225049614162,4.812946100427443)-- (4,0);
\draw [dashed] (2.8896135929029714,2.2006721639230697)-- (3.904456784270502,1.303506235532433);
\begin{scriptsize}
\coordinate [circle, fill=black, inner sep=0.7pt, label=270: {$\bar{x}$}] (A1) at (0,0);
\coordinate [circle, fill=black, inner sep=0.7pt, label=0: {$\bar{p}$}] (A1) at (-0.015287989182225509,1.3000898902049949);
\coordinate [circle, fill=black, inner sep=0.7pt, label=180: {$\bar{y}$}] (A1) at (-0.5693860319981044,1.9834819014638239);
\coordinate [circle, fill=black, inner sep=0.7pt, label=90: {$\bar{z}$}] (A1) at (1.0360567397613818,4.954583800321347);
\coordinate [circle, fill=black, inner sep=0.7pt, label=270: {$\tilde{x}$}] (A1) at (4,0);
\coordinate [circle, fill=black, inner sep=0.7pt, label=180: {$\tilde{y}$}] (A1) at (2.8896135929029714,2.2006721639230697);
\coordinate [circle, fill=black, inner sep=0.7pt, label=90: {$\tilde{z}$}] (A1) at (3.647225049614162,4.812946100427443);
\coordinate [circle, fill=black, inner sep=0.7pt, label=0: {$\tilde{p}$}] (A1) at (3.904456784270502,1.303506235532433);
\end{scriptsize}
\end{tikzpicture}
\end{center}
\caption{A concave situation in the across version.}
\label{lem:alexlemAcross:figConcave}
\end{figure}
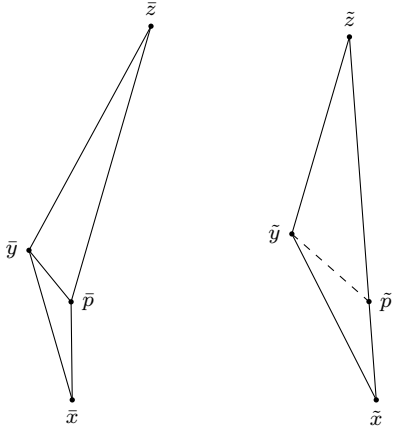

Then the situation $\bar{\Delta}_1$, $\bar{\Delta}_2$ is convex (concave) at $p$ (i.e.\ $\tilde{\ma}_p(x,y)=\ma_{\bar{p}}(\bar{x},\bar{y})\geq\ma_{\bar{p}}(\bar{y},\bar{z})=\tilde{\ma}_p(y,z)$ (or $\leq$)) if and only if $\tau(p,y)\leq\bar{\tau}(\bar{p},\bar{y})$ (or $\geq$).
If this is the case, we have that 
\begin{itemize}
\item each angle in the triangle $\bar{\Delta}_1$ is $\geq$ (or $\leq$) than the corresponding angle in the triangle $\tilde{\Delta}_1$,
\item each angle in the triangle $\bar{\Delta}_2$ is $\geq$ (or $\leq$) than the corresponding angle in the triangle $\tilde{\Delta}_2$.
\end{itemize}
In any case, we have that
\begin{itemize}
\item $\ma_{\bar{y}}(\bar{x},\bar{z}) \geq \ma_{\tilde{x}}(\tilde{x},\tilde{z})=\tilde{\ma}_y(x,z)$.
\end{itemize}
The same is true if $p$ is a point on the side $xz$ such that $y \ll p$.
Note that if $X$ has timelike curvature bounded below (above) by $K=-1$ and $\Delta$ is within a comparison neighbourhood, the condition is satisfied, i.e.\ $\tau(p,y)\leq\bar{\tau}(\tilde{p},\tilde{y})$ (or $\geq$).
\end{pop}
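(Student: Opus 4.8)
\emph{Proof idea.}
The plan is to follow the strategy of the classical Alexandrov lemma and of its Minkowski version \cite[Prop.\ 2.42, Prop.\ 2.43]{triSplitting}, only replacing the flat comparison triangles by comparison triangles in $\AdS=\lm{-1}$ and invoking the size bounds for $K=-1$ everywhere, so that all the comparison configurations genuinely exist. The crucial structural point is that the glued configuration $\bar\Delta_1\cup\bar\Delta_2$, the single comparison triangle $\tilde\Delta$ and its subtriangles $\tilde\Delta_1,\tilde\Delta_2$ all live inside one copy of $\AdS$, and in this two-dimensional model space a timelike triangle is rigid once its three side-lengths are prescribed; hence every angle and every ``diagonal'' $\bar\tau$-length below is an explicit function of side-lengths, governed by the law of cosines (Lemma \ref{lem:LOC}) together with its monotonicity clause.

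I would first fix the finitely many causal configurations: up to relabelling one may take $x$ to be the past endpoint of the side $xz$, and then distinguish whether $p\ll y$ or $y\ll p$ and, where it matters, the time order of $y$ and $z$; each is a ``chain'' triangle and the argument is the same in each, only with the $\sigma$-signs of Lemma \ref{lem:LOC} permuted among the vertices. In a chosen configuration I would treat the cevian length $b:=\tau(p,y)$ as a variable, keeping $\tau(x,p),\tau(p,z),\tau(x,y),\tau(y,z)$ fixed. The glued configuration is built by placing $\bar\Delta_1$ and $\bar\Delta_2$ in $\AdS$ sharing the segment $\bar p\bar y$ with $\bar x,\bar z$ on opposite spatial sides of the timelike line through $\bar p$ and $\bar y$; because $\AdS$ is two-dimensional, the relevant hyperbolic angles then add \emph{exactly}: $\ma_{\bar p}(\bar x,\bar z)=\ma_{\bar p}(\bar x,\bar y)+\ma_{\bar p}(\bar y,\bar z)$ and $\ma_{\bar y}(\bar x,\bar z)=\ma_{\bar y}(\bar x,\bar p)+\ma_{\bar y}(\bar p,\bar z)$. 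For the value $b=b_0$ at which $\bar x,\bar p,\bar z$ become collinear one has $b_0=\bar\tau(\tilde p,\tilde y)$ (the glued picture \emph{is} then $\tilde\Delta$ carrying the comparison point $\tilde p$) and $\ma_{\bar p}(\bar x,\bar y)=\ma_{\bar p}(\bar y,\bar z)$, since the two ``half-angles'' at an interior point of a timelike geodesic coincide (reversing a timelike vector does not change the normalised inner product that defines $\cosh\ma$).

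With this in place, the stated equivalence ``the configuration is convex (concave) at $p$ $\iff$ $\tau(p,y)\le\bar\tau(\tilde p,\tilde y)$ (resp.\ $\ge$)'' is read off by solving the law of cosines for the two angles at $\bar p$ in $\bar\Delta_1$ and $\bar\Delta_2$ and determining their behaviour as $b$ varies; here the reverse triangle inequality --- which forces the side opposite the ``middle'' vertex of a chain to be the longest --- is exactly what pins down the direction of each inequality, and, as a by-product, makes several of the conclusions point opposite to the Euclidean/Minkowski case. The subtriangle comparison (each angle of $\bar\Delta_i$ versus the corresponding angle of $\tilde\Delta_i$) is the same computation, since $\bar\Delta_i$ and $\tilde\Delta_i$ share two side-lengths and differ only in the cevian length, so each of their angles is a monotone function of that length whose sign is fixed by the convexity hypothesis; summing the two comparisons at $\bar y$ and invoking angle additivity yields the final inequality $\ma_{\bar y}(\bar x,\bar z)\ge\tilde{\ma}_y(x,z)$ in the convex case, and one can also see it directly from $\bar\tau(\bar x,\bar z)\ge\bar\tau(\bar x,\bar p)+\bar\tau(\bar p,\bar z)=\tau(x,p)+\tau(p,z)=\tau(x,z)$ (reverse triangle inequality in $\AdS$) together with the monotonicity of the angle opposite the longest side. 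The two closing remarks are then immediate: if $X$ has timelike curvature bounded below (above) by $-1$ and $\Delta$ lies in a comparison neighbourhood, triangle comparison applied to the cevian $py$ inside $\Delta$ forces $\tau(p,y)\le\bar\tau(\tilde p,\tilde y)$ (resp.\ $\ge$), i.e.\ the convex (concave) alternative; and the case $y\ll p$ is handled symmetrically.

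The step I expect to be the real work is not any individual identity but the disciplined sign bookkeeping: getting the $\sigma=\pm1$ assignments right at every vertex in every causal sub-case, checking that the cosine-of-side expressions produced by Lemma \ref{lem:LOC} are genuinely monotone over the \emph{whole} admissible range of $b$ (not only near the degenerate end), verifying that ``$\bar x,\bar z$ on opposite sides of the line through $\bar p,\bar y$'' is the gluing convention that yields exact angle additivity, and checking the size bounds for the glued configuration (in particular that $\bar\tau(\bar x,\bar z)<\pi$). This is precisely the part of \cite[Prop.\ 2.42, Prop.\ 2.43]{triSplitting} that has to be re-run with $\AdS$-trigonometry in place of flat trigonometry, which is why the statement can be deduced from that proof once the size bounds for $K=-1$ are added.
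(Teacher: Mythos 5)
Your proposal matches the paper's intended argument: the paper gives no independent proof of this proposition but simply refers to the Minkowski-space Alexandrov lemmas \cite[Prop.\ 2.42, Prop.\ 2.43]{triSplitting} with the remark that they generalize to $K=-1$ once the size bounds are assumed, and your sketch (gluing along $\bar{p}\bar{y}$, angle additivity in the two-dimensional model, monotonicity in the cevian length via the $\AdS$ law of cosines, and the sign/size-bound bookkeeping) is precisely that generalization. So the approach is essentially the same, and your outline is consistent with the statement.
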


\begin{pop}[Alexandrov lemma: future version]
\label{lem:alexlemFuture}
Let $X$ be a \LpLSn. 
Let $\Delta:=\Delta(x,y,z)$ be a timelike triangle satisfying size bounds for $K=-1$ (in particular the distance realisers between the endpoints exist). Let $p$ be a point on the side $xy$, such that the distance realiser between $p$ and $z$ exists. 
Then we can consider the smaller triangles $\Delta_1:=\Delta(x,p,z)$ and $\Delta_2:=\Delta(p,y,z)$. We construct a comparison situation consisting of a comparison triangle $\bar{\Delta}_1$ for $\Delta_1$ and $\bar{\Delta}_2$ for $\Delta_2$, with $\bar{x}$ and $\bar{y}$ on different sides of the line through $\bar{p}\bar{z}$ and a comparison triangle $\tilde{\Delta}$ for $\Delta$ with a comparison point $\tilde{p}$ for $p$ on the side $xy$. This contains the subtriangles $\tilde{\Delta}_1:=\Delta(\tilde{x},\tilde{p},\tilde{z})$ and $\tilde{\Delta}_2:=\Delta(\tilde{p},\tilde{y},\tilde{z})$, see Figure \ref{lem:alexlemFuture:figConvex}.

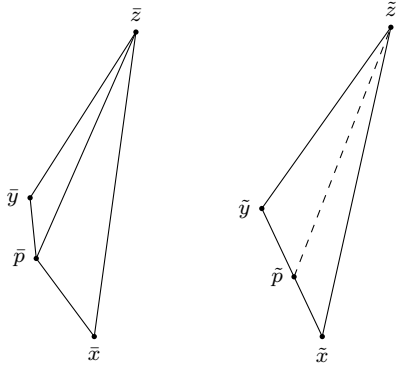
\begin{figure}
\begin{center}
\begin{tikzpicture}
\draw (7,0)-- (6.236686420076234,1.0356870286413933);
\draw (6.236686420076234,1.0356870286413933)-- (7.548286699820673,4.0374024205174575);
\draw (7.548286699820673,4.0374024205174575)-- (7,0);
\draw (6.15561793604921,1.839784099337744)-- (6.236686420076234,1.0356870286413933);
\draw (6.15561793604921,1.839784099337744)-- (7.548286699820673,4.0374024205174575);
\draw (9.204717676576246,1.6977850199451887)-- (10,0);
\draw (9.204717676576246,1.6977850199451887)-- (10.90284745202584,4.1006259914346685);
\draw (10.90284745202584,4.1006259914346685)-- (10,0);
\draw [dashed] (9.628868249068917,0.7922996759744166)-- (10.90284745202584,4.1006259914346685);
\begin{scriptsize}
\coordinate [circle, fill=black, inner sep=0.7pt, label=270: {$\tilde{x}$}] (A1) at (10,0);
\coordinate [circle, fill=black, inner sep=0.7pt, label=180: {$\tilde{p}$}] (A1) at (9.628868249068917,0.7922996759744166);
\coordinate [circle, fill=black, inner sep=0.7pt, label=270: {$\bar{x}$}] (A1) at (7,0);
\coordinate [circle, fill=black, inner sep=0.7pt, label=180: {$\bar{p}$}] (A1) at (6.236686420076234,1.0356870286413933);
\coordinate [circle, fill=black, inner sep=0.7pt, label=90: {$\bar{z}$}] (A1) at (7.548286699820673,4.0374024205174575);
\coordinate [circle, fill=black, inner sep=0.7pt, label=180: {$\bar{y}$}] (A1) at (6.15561793604921,1.839784099337744);
\coordinate [circle, fill=black, inner sep=0.7pt, label=180: {$\tilde{y}$}] (A1) at (9.204717676576246,1.6977850199451887);
\coordinate [circle, fill=black, inner sep=0.7pt, label=90: {$\tilde{z}$}] (A1) at (10.90284745202584,4.1006259914346685);
\end{scriptsize}
\end{tikzpicture}
\end{center}
\caption{A convex situation in the future version.}
\label{lem:alexlemFuture:figConvex}
\end{figure}

Then the situation $\bar{\Delta}_1$,$\bar{\Delta}_2$ is convex (concave) at $p$ (i.e.\
$\ma_{\bar{p}}(\bar{y},\bar{z})\leq\ma_{\bar{p}}(\bar{x},\bar{z})$ (or $\geq$)) if and only if $\tau(p,z)\leq\bar{\tau}(\bar{p},\bar{z})$ (or $\geq$). If this is the case, we have that
\begin{itemize}
\item each angle in the triangle $\bar{\Delta}_1$ is $\geq$ (or $\leq$) than the corresponding angle in the triangle $\tilde{\Delta}_1$,
\item each angle in the triangle $\bar{\Delta}_2$ is $\leq$ (or $\geq$) than the corresponding angle in the triangle, $\tilde{\Delta}_2$.
\end{itemize}
In any case, we have that
\begin{itemize}
\item $\ma_{\bar{z}}(\bar{x},\bar{y})\leq\ma_{\tilde{z}}(\tilde{x},\tilde{y})=\tilde{\ma}_z(x,y)$.
\end{itemize}
Note that if $X$ has timelike curvature bounded below (above) by $K=-1$ and $\Delta$ is within a comparison neighbourhood, the condition is satisfied, i.e.\ $\tau(p,z)\leq\bar{\tau}(\tilde{p},\tilde{z})$ (or $\geq$).
\end{pop}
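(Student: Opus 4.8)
The plan is to transcribe the proof of the Minkowski ($K=0$) Alexandrov lemmas \cite[Prop.\ 2.42, Prop.\ 2.43]{triSplitting} into the \AdS setting, replacing every Minkowski comparison triangle by an \AdS one and using the size bounds for $K=-1$ precisely where existence and uniqueness of comparison triangles — and the fact that $\cos$ is strictly decreasing on $(0,\pi)$ — are invoked. First I would set up the configuration: the side-lengths of $\Delta_1$, $\Delta_2$ and $\Delta$ are all $\le$ the largest side-length of $\Delta$ by the reverse triangle inequality, hence $<\pi$, so all three satisfy size bounds; one realises $\bar\Delta_1=\Delta(\bar x,\bar p,\bar z)$ in \AdSn, glues a realisation of $\bar\Delta_2=\Delta(\bar p,\bar y,\bar z)$ along the common side $\bar p\bar z$ on the side of the line through $\bar p\bar z$ opposite to $\bar x$, and independently realises a genuine comparison triangle $\tilde\Delta$ for $\Delta$ with $\tilde p$ on the side $\tilde x\tilde y$ at $\bar\tau$-distance $\tau(x,p)$ from $\tilde x$ (so that $\bar\tau(\tilde x,\tilde p)=\tau(x,p)$, $\bar\tau(\tilde p,\tilde y)=\tau(p,y)$, and $\bar\tau(\tilde p,\tilde z)=:c^\ast\in(0,\pi)$). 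The whole proof then turns on comparing the cevian length $\tau(p,z)$ of the glued figure with the cevian length $c^\ast$ of $\tilde\Delta$.

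For the equivalence, I would run a monotonicity argument in the one-parameter family of glued \AdS configurations obtained by fixing the four outer side-lengths $\tau(x,p),\tau(p,y),\tau(x,z),\tau(y,z)$ and letting the cevian length $c$ vary. The reverse triangle inequalities for the chains $\bar x\ll\bar p\ll\bar z$ and $\bar p\ll\bar y\ll\bar z$ force $\bar x\bar z$ to be the longest side of $\bar\Delta_1$ and $\bar p\bar z$ to be the longest side of $\bar\Delta_2$ throughout, so Lemma \ref{lem:LOC} applies with no case distinction and shows that $\ma_{\bar p}(\bar x,\bar z)$ is strictly decreasing in $c$ while $\ma_{\bar p}(\bar y,\bar z)$ is strictly increasing in $c$. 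At $c=c^\ast$ uniqueness of \AdS comparison triangles forces $\bar\Delta_1=\tilde\Delta_1$, $\bar\Delta_2=\tilde\Delta_2$, so $\bar p$ lies on the maximiser $\bar x\bar y$, and then a short law-of-cosines computation shows the two hinge angles coincide. Hence the sign of $\ma_{\bar p}(\bar x,\bar z)-\ma_{\bar p}(\bar y,\bar z)$ at $c=\tau(p,z)$ equals the sign of $c^\ast-\tau(p,z)$, which is the claimed equivalence. The final sentence of the statement follows immediately: if $X$ has curvature bounded below (above) and $\Delta$ lies in a comparison neighbourhood, then Definition \ref{def:triComp} applied to the pair $(p,z)$ against $(\tilde p,\tilde z)$ gives $\tau(p,z)\le c^\ast$ (resp.\ $\ge$).

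In the convex case $\tau(p,z)\le c^\ast$, the angle inequalities follow from a second application of Lemma \ref{lem:LOC}: $\bar\Delta_i$ and $\tilde\Delta_i$ share their outer side-lengths and differ only in the cevian, which is shorter in $\bar\Delta_i$. In $\bar\Delta_1$ the cevian is not the longest side, so shortening it increases every angle of $\bar\Delta_1$ relative to $\tilde\Delta_1$; in $\bar\Delta_2$ the cevian is the longest side, so shortening it decreases every angle relative to $\tilde\Delta_2$. The concave case is symmetric. For the ``in any case'' inequality, which uses no curvature bound, observe that the broken timelike path $\bar x\to\bar p\to\bar y$ has $\bar\tau$-length $\tau(x,p)+\tau(p,y)=\tau(x,y)$, so the reverse triangle inequality gives $\bar\tau(\bar x,\bar y)\ge\tau(x,y)$, and combined with $\bar\tau(\bar x,\bar z)<\pi$ this also yields $\bar\tau(\bar x,\bar y)<\pi$; thus the \AdS triangle $\Delta(\bar x,\bar y,\bar z)$ has its $\bar x\bar y$-side at least as long as that of $\tilde\Delta$ with the other two sides unchanged. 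Since $\bar z$ is a time endpoint of this triangle ($\bar x\ll\bar y\ll\bar z$), its sign is $\sigma=-1$, so by the law of cosines the angle at $\bar z$ is a \emph{decreasing} function of the opposite side-length, giving $\ma_{\bar z}(\bar x,\bar y)\le\ma_{\tilde z}(\tilde x,\tilde y)=\tilde\ma_z(x,y)$.

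The step I expect to be the main obstacle is not any new idea but the careful bookkeeping of the signs $\sigma=\pm1$ in the Lorentzian law of cosines: they reverse several of the monotonicities relative to the metric case (most visibly, the opposite behaviour of the two sub-triangles at the hinge and the reversed monotonicity of the angle at a time-endpoint vertex), and one must check that throughout the one-parameter deformation all intermediate \AdS triangles genuinely exist, i.e.\ stay within the size bounds — which is exactly where the hypothesis that $\Delta$ satisfies size bounds for $K=-1$ is used.
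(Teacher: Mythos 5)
Your proposal is correct and takes essentially the same route as the paper, which gives no separate proof but refers to the $K=0$ Alexandrov lemmas of \cite[Prop.\ 2.42, 2.43]{triSplitting} and notes they generalize once the size bounds for $K=-1$ are assumed --- exactly the transcription you carry out via the law of cosines in anti-deSitter space (Lem.\ \ref{lem:LOC}). Your sign and monotonicity bookkeeping (longest side versus the other sides, $\sigma=\pm1$ at time endpoints, and reading the condition as $\tau(p,z)\leq\bar{\tau}(\tilde{p},\tilde{z})$) is consistent with the paper's conventions, so no gap remains.
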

%%%begin-warpedproducts
\section{Lorentzian warped products}\label{sec:warpedproducts}
This section is dedicated to the treatment of Lorentzian warped product spaces. In the context of Lorentzian pre-length spaces, warped products were first introduced in \cite{generalizedCones}. Here, we give a different treatment which also works for non-intrinsic spaces, and is in addition more suited for our needs. First, for technical reasons, we define the following:

\begin{dfn}[Lorentzian warped product comparison space]
Let $I\subseteq\mb{R}$ be an open interval and $f:I\to(0,+\infty)$ be continuous. The \emph{Lorentzian warped product comparison space for $f$} or \emph{with warping function $f$} is the warped product $X=I\times_f \R$ as a spacetime, i.e.\ equipping the manifold $I\times \R$ with the continuous metric $g=-\diff t^2 + (f\circ t)^2 \diff x^2$ and the time orientation given by $\partial_t$. We will use this as a \LpLSn, with the product metric $D$ as metric. We set $t:X\to I$ the first and $x:X\to\mb{R}$ the second projection.
\end{dfn}

\begin{pop}[Properties of Lorentzian warped product comparison spaces]\label{pop:PropsLwpcs}
Let $f:I\to(0,+\infty)$ be continuous. Then the Lorentzian warped product comparison space $X=I\times_f \R$ has the following properties:
\begin{enumerate}
\item \label{pop:PropsLwpcs:descrNullGen} The reparametrized null distance realizers are given as:
\begin{equation*}
\gamma(t)=\left(t,\pm\int_{t_0}^t \frac{1}{f(\lambda)}\diff \lambda\right)
\end{equation*}
\item \label{pop:PropsLwpcs:descrLeq} In particular, $(s,x)\leq (t,y) \iff |x-y|\leq\int_s^t \frac{1}{f(\lambda)}\diff \lambda$ and $s\leq t$, and $(s,x)\ll (t,y)$ iff these inequalities are strict.
\item \label{pop:PropsLwpcs:causPlain} It is causally plain (see \cite[Def.\ 1.16]{chruscielGrant}). In particular it forms a Lorentzian length space. 
\item It is globally hyperbolic, globally causally closed, strongly localizable and strictly intrinsic. $\tau_{I\times_f \R}$ is continuous.
\item \label{pop:PropsLwpcs:regular} If $f$ is Lipschitz, it is even a regular Lorentzian length space.
\item \label{pop:PropsLwpcs:geodesiceqn} If $f$ is $C^1$, we can describe timelike geodesics with the ODE $\gamma=(\alpha,\beta)$, $\alpha''=g(\beta',\beta')(ff')\circ\alpha$, $\beta''=\frac{-2 (f\circ\alpha)'}{f\circ\alpha}$.
\end{enumerate}
\end{pop}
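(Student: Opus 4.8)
The plan is to trade the warped metric for a \emph{conformal} rescaling of a flat Minkowski strip, after which every assertion follows by transporting standard facts about the strip. Fix $t_0\in I$ and set $\phi(t):=\int_{t_0}^{t}f(\lambda)^{-1}\,\diff\lambda$. Since $1/f$ is continuous and positive, $\phi$ is a $C^1$-diffeomorphism of $I$ onto an open interval $\tilde I$, with $\phi'=1/f$. In the coordinates $(\phi,x)$ the metric reads $g=(f\circ t)^2(-\diff\phi^2+\diff x^2)$, so $(X,g)$ is conformal, through the positive continuous factor $(f\circ t)^2$, to the flat strip $(\tilde I\times\R,\,-\diff\phi^2+\diff x^2)$, with matching time orientations ($\partial_\phi=(f\circ t)\,\partial_t$ and $f>0$). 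The timelike and causal cones — hence $\ll$, $\le$ and the time orientation — depend only on the conformal class, so the causal structure of $X$ is exactly that of the flat strip pulled back along $(t,x)\mapsto(\phi(t),x)$. Substituting $\phi_i=\phi(t_i)$ into the flat causal relation $(\phi_1,x_1)\le(\phi_2,x_2)\iff\phi_1\le\phi_2,\ |x_2-x_1|\le\phi_2-\phi_1$ gives \ref{pop:PropsLwpcs:descrLeq} (with strict inequalities characterizing $\ll$), and the cone-boundary curves $x=x_0\pm(\phi-\phi_0)$, i.e.\ $\gamma(t)=(t,\,\pm\int_{t_0}^{t}f(\lambda)^{-1}\,\diff\lambda)$ up to an additive constant, are the reparametrized null distance realizers, which is \ref{pop:PropsLwpcs:descrNullGen}.

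For \ref{pop:PropsLwpcs:causPlain}: causal plainness in the sense of \cite[Def.\ 1.16]{chruscielGrant} refers only to the light cones and push-up, both of which coincide with those of the (smooth, hence causally plain) flat strip under the conformal identification; since causal plainness is manifestly invariant under a continuous positive conformal rescaling, $X$ is causally plain and therefore a Lorentzian length space \cite{saemLorLen}. The slices $\{\phi=c\}$, $c\in\tilde I$, are Cauchy hypersurfaces of the flat strip, so its causal structure — and thus that of $X$ — is globally hyperbolic, while global causal closedness of $\le$ is read off directly from \ref{pop:PropsLwpcs:descrLeq} because $(s,t)\mapsto\int_s^t f(\lambda)^{-1}\,\diff\lambda$ is continuous. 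Hence $X$ is a globally hyperbolic Lorentzian length space; in particular its causal diamonds are compact, $\tau_X$ is finite and continuous, and the limit curve theorem produces maximizing causal curves between causally related points, so $X$ is strictly intrinsic. Strong localizability follows by taking small causally convex diamond neighbourhoods, on which the localizing data is built from $\tau_X$ just as for generalized cones \cite{generalizedCones}.

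For \ref{pop:PropsLwpcs:regular}, if $f$ is Lipschitz then $g$ has locally Lipschitz coefficients, and I would prove regularity by the standard corner-rounding estimate: a distance realizer from $p\ll q$ containing a null sub-arc could be replaced on a short interval by a timelike arc of strictly larger $\tau$-length, the gain being controlled by a Lipschitz bound on $f$. Equivalently, one can first treat $f\in C^1$ by observing that a $\tau$-maximizer from $(s,x)$ to $(t,y)$, written as a graph $\lambda\mapsto(\lambda,\xi(\lambda))$, maximizes $\int_s^t\sqrt{1-f(\lambda)^2\xi'(\lambda)^2}\,\diff\lambda$ with fixed endpoints, so its Euler--Lagrange equation gives $f(\lambda)^2\xi'(\lambda)\,/\sqrt{1-f(\lambda)^2\xi'(\lambda)^2}=\mathrm{const}$, forcing $1-f(\lambda)^2\xi'(\lambda)^2>0$ everywhere, i.e.\ the maximizer is timelike; then one passes to Lipschitz $f$ by approximation. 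For \ref{pop:PropsLwpcs:geodesiceqn}, when $f\in C^1$ the metric is $C^1$, so timelike geodesics solve the Levi-Civita geodesic equation $\ddot\gamma^k+\Gamma^k_{ij}\dot\gamma^i\dot\gamma^j=0$; computing the only nonvanishing Christoffel symbols, $\Gamma^t_{xx}=(ff')\circ\alpha$ and $\Gamma^x_{tx}=\Gamma^x_{xt}=(f'/f)\circ\alpha$, and substituting $\gamma=(\alpha,\beta)$ yields the stated system.

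The genuine obstacle is the low-regularity bookkeeping: checking causal plainness and the finiteness and continuity of $\tau_X$ cleanly enough for a merely continuous $g$ (so that the theory of globally hyperbolic Lorentzian length spaces applies verbatim), and proving regularity \ref{pop:PropsLwpcs:regular} when $f$ is only Lipschitz rather than smooth, where the smooth geodesic equation is unavailable and one must argue by corner-rounding or by approximation. Everything else is a transcription of well-known properties of the flat Minkowski strip through the conformal identification.
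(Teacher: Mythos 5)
For items (i)--(iv) and (vi) your argument is correct but takes a genuinely different route from the paper's. The paper works directly in the $(t,x)$ coordinates: it exhibits the null curves, constructs an explicit cylindrical neighbourhood and an auxiliary smooth metric $-\diff t^2+\tilde f(t)^2\diff x^2$ with narrower cones to verify causal plainness by hand, bounds the $d$-length of causal curves in $[a,b]\times\R$ to get non-total imprisonment and compact diamonds, and quotes O'Neill for (vi); you instead transport the causal structure from the flat strip through the conformal factor $(f\circ t)^2$, which gives (i), (ii) and causal plainness almost for free because $\check I^\pm$, $J^\pm$, cylindrical neighbourhoods and the class of locally Lipschitz causal curves depend only on the cone distribution. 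Two things you should make explicit: the change of variables $(t,x)\mapsto(\phi(t),x)$ is only $C^1$/locally bi-Lipschitz, so you need a word on why the Chru\'sciel--Grant notions and the uniform $d$-length bounds (which are what give non-total imprisonment and $d$-compatibility, hence feed into strong localizability) are stable under such a map; and, exactly as in the paper, continuity and finiteness of $\tau$, strict intrinsicness and strong localizability are not read off the conformal picture ($\tau$ is not conformally invariant) but from the general theory of globally hyperbolic Lorentzian length spaces, which is indeed what you do. For (vi) the Christoffel computation is a fine substitute for citing \cite{ONeill}; note it actually yields $\alpha''=-\bigl((ff')\circ\alpha\bigr)(\beta')^2$ and $\beta''=-2\frac{(f\circ\alpha)'}{f\circ\alpha}\,\beta'$, i.e.\ O'Neill's system, of which the display in the statement is a loose transcription.

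The genuine gap is item (v). The paper's proof of (v) is a one-line appeal to \cite[Prop.\ 1.2]{LorentzLipschitz} (maximizers for Lipschitz metrics have a causal character), a nontrivial theorem, and neither of your substitutes reaches it. The approximation route fails structurally: smoothing $f$ and passing maximizers of the approximating metrics to the limit shows only that \emph{some} maximizer between timelike related points is timelike, whereas regularity demands that \emph{every} distance realizer contain no null segment, and an arbitrary maximizer for the Lipschitz metric need not arise as such a limit. The Euler--Lagrange sketch glosses over the constraint $f^2\xi'^2\le 1$: the $v$-derivative of $\sqrt{1-f^2v^2}$ blows up at the cone, so the DuBois--Reymond constancy of the momentum cannot simply be invoked on a putative null piece --- excluding such pieces is precisely the point at issue. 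And the corner-rounding sentence hides the actual work: the endpoints of a null segment of a maximizer are only null related (restrictions of maximizers are maximizing, so $\tau$ between them vanishes), so no timelike replacement between those endpoints exists; the strict gain must be produced across the junction with the timelike part, e.g.\ by the following variation, which is what a correct write-up needs: slow the curve down slightly on the null piece (gaining length at rate of order $\sqrt{\varepsilon}$) and compensate the $x$-displacement on a set of positive measure where $f|\xi'|\le 1-\delta$ (such a set exists because the maximizer has positive length), losing only at rate of order $\varepsilon$. Since your integrand $\sqrt{1-f(\lambda)^2\xi'(\lambda)^2}$ does not depend on $\xi$ itself, this can be carried out with only the local bounds on $f$; but as written, (v) must either be cited, as the paper does, or proved by such an explicit variation rather than by approximation.
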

\begin{proof}
(i): Note that these are null curves, and as the space is $2$-dimensional, they are reparametrized null geodesics. As any other causal curve parametrized w.r.t.\ $t$-coordinate has less speed in the $x$-coordinate, it is a distance realizer.

(ii): The description of the causal relation follows immediately (there is only this one causal curve between null related points), and one can easily modify $\gamma$ to be timelike for the strict inequality, see part (iii) for more details.

(iii): We now show that $X$ is causally plain: Let $p_0=(t_0,x_0)\in X$. As $f$ is continuous, there is a neighbourhood $(t_-,t_+)\ni t_0$ such that $f(s)\in (\frac{f(t_0)}{2},2f(t_0))$ for all $s\in(t_-,t_+)$. Using the coordinates $t$ and $\frac{x}{f(t_0)}$, we see that $U:=(t_-,t_+)\times\mb{R}$ is a cylindrical neighbourhood (as defined in \cite[Def.\ 1.8]{chruscielGrant}). One checks that $p_0\ll q_1=(t_1,x_1)$ if and only if $|x_1-x_0|<\int_{t_0}^{t_1} \frac{1}{f(\lambda)}\diff\lambda=:x(t_0,t_1)$. There now is a smooth function $\tilde{f}$ such that $f<\tilde{f}<\frac{x(t_0,t_1)}{|x_1-x_0|}f$. Then $\tilde{\gamma}(t)=(t,x_0\pm\int_{t_0}^t\frac{|x_1-x_0|}{x(t_0,t_1)}\frac{1}{f(\lambda)}\diff\lambda)$ connects $p_0$ with $p_1$ and is timelike with respect to the smooth metric $-\diff t^2+\tilde{f}(t)^2\diff x^2$ which has narrower lightcones than $g$. In particular, one sees that $\check{I}^+(p_0,U)=I(p_0)\cap U$. Now $\partial\check{I}(p,U)=\partial J(p,U)$ follows from $J(p,U)=\overline{I(p,U)}\cap U$.

(iv): Global causal closedness follows from the description of the causal relation (note that $f$ is locally bounded away from $0$, so the integral stays finite). 

It is non-totally imprisoning: the $d$-length of a causal curve $\gamma=(\id,\beta):[a,b]\to X$ is bounded above by $(b-a)\sqrt{1+\frac{1}{\min\{f(t):t\in[a,b]\}^2}}$, in particular we have a uniform bound on sets of the form $[a,b]\times\mb{R}$.

Causal diamonds are compact: A causal diamond $J(p_0,p_1)$ is closed by global causal closedness, and it is compact as $\min\{f(t):t\in[t_0,t_1]\}>0$, so it is contained in the compact $[t_0,t_1]\times[x_0-\frac{1}{\min\{f(t):t\in[t_0,t_1]\}},x_0+\frac{1}{\min\{f(t):t\in[t_0,t_1]\}}]$ (as $\min\{f(t):t\in[t_0,t_1]\}>0$). In particular, we have global hyperbolicity, continuity of $\tau$ and strict intrinsicness.

It is strongly localizable: every timelike diamond $U$ is strictly intrinsic, has continuous $\tau$, there are no $\ll$-isolated points in $U$ and it is a $d$-compatible neighbourhood.

(v): We have a globally hyperbolic Lorentzian manifold with Lipschitz metric, so can apply \cite[Prop.\ 1.2]{LorentzLipschitz}.

(vi): If $f$ is $C^1$, the geodesic equation follows from \cite[Prop.\ 7.38]{ONeill}.
\end{proof}

\begin{dfn}[Lorentzian warped product]
\label{def:lorWProduct}
Let $I\subseteq\mb{R}$ be an interval and $f:I\to(0,+\infty)$ be continuous. Let $(Y,d)$ be a metric space, and let $g:I\to\mb{R}$ be a homeomorphism. Define the space $X:= I\times Y$. Equip it with the product metric 
\begin{equation*}
D: X \times X \to \R, \ D((s,x),(t,y)):=\sqrt{|g(t)-g(s)|^2+d(x,y)^2}.
\end{equation*}
Define the timelike relation as $(s,x) \ll (t,y)$ if and only if in the Lorentzian warped product comparison space $I\times_f \R$, we have that $(s,0) \ll_{I\times_f \R} (t,d(x,y))$. Similarly, they are causally related if these points in $I\times_f \R$ are causally related, and set
\[
\tau((s,x),(t,y))=\tau_{I\times_f \R}((s,0),(t,d(x,y)))
\]

Then $(X,D,\ll,\leq,\tau)$ is called the Lorentzian warped product of $Y$ with $\R$ with warping function $f$. It will be denoted by $\R\times_f Y$. We set $t:X\to I$ the first and $x:X\to Y$ the second projection.
\end{dfn}

\begin{pop}[Properties of Lorentzian warped products]
\label{pop:PropsLwp}
Let $(Y,d)$ be a metric space and $f:I\to(0,+\infty)$ be continuous. Set $X:=I\times_f Y$ to be the Lorentzian warped product. Then:
\begin{enumerate}
\item \label{pop:PropsLwp:nonTotImpr} $X$ is a globally causally closed and non-totally imprisoning \LpLS with continuous $\tau$.
\item $X$ is strongly causal (the timelike diamonds even form a basis for the topology).
\item \label{pop:PropsLwp:Lip} All causal curves in $t$-parametrization on some $[s,t]$ are uniformly Lipschitz, in particular $X$ is $d$-compatible and any causal curve is locally Lipschitz.
\item \label{pop:PropsLwp:geodesics} A causal curve $\gamma=(\alpha,\beta):[0,L]\to X$ is a distance realizer if and only if $\beta$ is a distance realizer and $\tilde{\gamma}(\lambda)=(\alpha(\lambda),d(\beta(0),\beta(\lambda)))$, $\tilde{\gamma}:[0,L]\to I\times_f \mb{R}$ is a distance realizer into the warped product comparison space.
\item In particular, if $Y$ is strictly intrinsic, $X$ is strictly intrinsic. 
\item If $Y$ is strictly intrinsic, $X$ is strongly localizable. If this is the case and $f$ is Lipschitz, $X$ is regularly localizable.
\item \label{pop:PropsLwp:globHyp} If $Y$ is proper $X$ is globally hyperbolic, and the converse holds if $\int_I\frac{1}{f(\lambda)}\diff\lambda=+\infty$.
\item If $Y$ is proper, $X$ is proper as a metric space.
\end{enumerate}
\end{pop}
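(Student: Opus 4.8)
The plan is to reduce every item to the corresponding property of the warped product comparison space $I\times_f\R$ (Proposition \ref{pop:PropsLwpcs}) together with the metric geometry of $(Y,d)$. The bridge is that, by Definition \ref{def:lorWProduct}, the relations $\leq,\ll$ and the function $\tau$ on $X=I\times_f Y$ are pulled back along the continuous map
\[
\pi:X\times X\to (I\times_f\R)^2,\qquad \big((s,x),(t,y)\big)\longmapsto\big((s,0),(t,d(x,y))\big),
\]
which intertwines $\leq,\ll,\tau$ with the ones on $I\times_f\R$. Two facts about $I\times_f\R$ will be used throughout: $\tau_{I\times_f\R}$ is invariant under translation of the $x$-coordinate, and, by the description of $\leq,\ll$ in Proposition \ref{pop:PropsLwpcs}\ref{pop:PropsLwpcs:descrLeq}, for fixed $s\leq t$ the map $a\mapsto\tau_{I\times_f\R}((s,0),(t,a))$ is weakly decreasing in $a\geq 0$. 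First I would verify that $X$ is a \LpLSn: transitivity of $\leq,\ll$ and $\ll\subseteq\leq$ reduce to the triangle inequality for $d$ plus translation invariance and monotonicity; timelikeness and the (even continuous) lower semicontinuity of $\tau$ are immediate from $\pi$ and continuity of $\tau_{I\times_f\R}$; and for $(r,w)\leq(s,x)\leq(t,z)$ the reverse triangle inequality follows from the chain
\[
\tau((r,w),(s,x))+\tau((s,x),(t,z))=\tau_{I\times_f\R}((r,0),(s,d(w,x)))+\tau_{I\times_f\R}((s,d(w,x)),(t,d(w,x)+d(x,z)))
\]
\[
\leq\tau_{I\times_f\R}((r,0),(t,d(w,x)+d(x,z)))\leq\tau_{I\times_f\R}((r,0),(t,d(w,z)))=\tau((r,w),(t,z)),
\]
using translation invariance, then the reverse triangle inequality in $I\times_f\R$, then monotonicity with $d(w,z)\leq d(w,x)+d(x,z)$. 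Global causal closedness is then automatic: $\leq_X=\pi^{-1}(\leq_{I\times_f\R})$ is the preimage of a closed set under a continuous map.

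For the remainder of (i)--(iii): along any causal curve $\gamma=(\alpha,\beta)$ parametrized by $t$ one has $d(\beta(s),\beta(t))\leq\int_s^t\tfrac1f$ for $s<t$ (Proposition \ref{pop:PropsLwpcs}\ref{pop:PropsLwpcs:descrLeq}), so $\beta$ has finite $d$-length on every compact $t$-interval while $\alpha$ is monotone; hence $\gamma$ is $D$-rectifiable with a length bound depending only on its $t$-range, which gives non-total imprisonment, $d$-compatibility and local Lipschitzness (iii). Strong causality (ii) follows because the $t$-coordinate is strictly increasing along future causal curves, so such a curve cannot leave and re-enter a timelike diamond $I^+((t_0-\delta,y_0))\cap I^-((t_0+\delta,y_0))$, and these diamonds shrink to $(t_0,y_0)$ in $D$ as $\delta\to 0$ (again by $d(\beta,y_0)\leq\int\tfrac1f$ on the relevant slab), so they form a neighbourhood basis.

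The technical heart is (iv). A causal curve is a distance realizer iff $\tau$ is additive along it; writing $\tau(\gamma(s),\gamma(t))=\tau_{I\times_f\R}((\alpha(s),0),(\alpha(t),d(\beta(s),\beta(t))))$ and comparing, via translation invariance, with the values of $\tau_{I\times_f\R}$ along $\tilde\gamma(\lambda):=(\alpha(\lambda),d(\beta(0),\beta(\lambda)))$, one sees: if $\beta$ is a distance realizer in $Y$ (so $\lambda\mapsto d(\beta(0),\beta(\lambda))$ is affine and the spatial separation of $\tilde\gamma(s),\tilde\gamma(t)$ equals $d(\beta(s),\beta(t))$), then $\tau(\gamma(s),\gamma(t))=\tau_{I\times_f\R}(\tilde\gamma(s),\tilde\gamma(t))$, so $\gamma$ is a realizer iff $\tilde\gamma$ is. For the converse: if $\beta$ is not a metric geodesic, then on some subinterval the consecutive $d$-distances are strictly below the $d$-length of $\beta$ there, and feeding this into the monotonicity of $\tau_{I\times_f\R}$ in the spatial variable produces a strictly longer competitor curve in $X$; and if $\beta$ is a geodesic but $\tilde\gamma$ is not maximal in $I\times_f\R$, replacing $\tilde\gamma$ by a maximizer (whose spatial component is monotone) and lifting it via a proportionally parametrized $Y$-geodesic yields a longer curve in $X$. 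From (iv), strict intrinsicness (v) follows: for $p\ll q$, take a unit-speed $Y$-geodesic $\beta$ between the spatial components ($Y$ strictly intrinsic) and a maximal timelike geodesic $\tilde\gamma$ in $I\times_f\R$ from $(\pr_1 p,0)$ to $(\pr_1 q,d(\cdot,\cdot))$ (Proposition \ref{pop:PropsLwpcs}); since the spatial component of $\tilde\gamma$ is monotone, reparametrizing $\beta$ proportionally to it and assembling gives, by (iv), a maximizer in $X$.

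Finally, (vi) is obtained by the same construction performed inside timelike diamonds, which are strongly (and, for $f$ Lipschitz, regularly — Proposition \ref{pop:PropsLwpcs}\ref{pop:PropsLwpcs:regular}) localizing in $I\times_f\R$; one must shrink the diamond so the $Y$-geodesics and comparison-space maximizers used to build $X$-maximizers stay inside, and the regularity statement for $f$ Lipschitz follows because then the lift $\tilde\gamma$ of a realizer between timelike related points is timelike in $I\times_f\R$, forcing the spatial separations along $\gamma$ to stay strictly below $\int\tfrac1f$, i.e.\ $\gamma$ timelike. For (vii): if $Y$ is proper, any causal diamond $J((t_0,y_0),(t_1,y_1))$ lies in $[t_0,t_1]\times\bar B_d(y_0,\int_{t_0}^{t_1}\tfrac1f)$, a product of compacta, and is closed by global causal closedness, hence compact; with non-total imprisonment this is global hyperbolicity. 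Conversely, if $\int_I\tfrac1f=+\infty$, pick a subinterval on which $\int\tfrac1f>2R$ and split it at a point $t_1$ into two pieces of integral $>R$ each; then $\{t_1\}\times\bar B_d(y_0,R)$ is a closed subset of the compact causal diamond between the two endpoints, so $\bar B_d(y_0,R)$ is compact and $Y$ is proper. And (viii): if $Y$ is proper, the closed $D$-ball $\bar B_D((s_0,x_0),r)$ sits inside $g^{-1}([g(s_0)-r,g(s_0)+r])\times\bar B_d(x_0,r)$, a product of compacta (here it is essential that $g$ maps $I$ onto all of $\R$), and being closed it is compact. I expect the real work to be concentrated in (iv) and in the localizability part of (vi) — in particular ensuring that the auxiliary $Y$-geodesics and comparison maximizers can be kept inside a prescribed timelike diamond — while everything else is bookkeeping layered on top of Proposition \ref{pop:PropsLwpcs}.
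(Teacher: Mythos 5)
Your proposal is correct and follows essentially the same route as the paper: everything is reduced to the comparison space $I\times_f\R$ via the defining identification, the reverse triangle inequality and the characterisation of realizers in (iv) rest on the same spatial-rescaling/monotonicity trick, and (ii), (iii), (vi)--(viii) are handled by the same diamond, Lipschitz-bound and compactness arguments. The only cosmetic caveat is that the weak monotonicity of $a\mapsto\tau_{I\times_f\R}((s,0),(t,a))$ is not literally contained in the description of $\leq$ but requires rescaling the spatial component of a maximizer --- an argument you carry out explicitly in (iv) anyway, exactly as the paper does inline.
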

\begin{proof}
(i) $\tau$ is continuous since $\tau_{I\times_f \R}$ is. Similarly, $\leq$ is closed on $X \times X$. The other properties of \LpLS follow analogously, except for the transitivity of the relations and the reverse triangle inequality. The transitivity follows from the reverse triangle inequality. The reverse triangle inequality can either be seen directly from a $2+1$-dimensional Lorentzian warped product comparison space, or as follows:

For the reverse triangle inequality, let $p=(r,x)\leq q=(s,y)\leq r=(t,z)$. Then we have the triangle inequality of $Y$: $d(x,z)\leq d(x,y)+d(y,z)$. We find the time separations between the points using points in the Lorentzian warped product comparison space: $\tilde{p}=(r,0),\,\tilde{q}=(s,d(x,y)),\,\tilde{r}=(t,d(x,y)+d(y,z))$ and $\hat{r}=(t,d(x,z))$. Then we have 
\begin{align*}
\tau(p,q)&=\tau_{I\times_f \R}(\tilde{p},\tilde{q})\\
\tau(q,r)&=\tau_{I\times_f \R}(\tilde{q},\tilde{r})\\
\tau(p,r)&=\tau_{I\times_f \R}(\tilde{p},\hat{r})
\end{align*}
By the reverse triangle inequality we have $\tau_{I\times_f \R}(\tilde{p},\tilde{r})\geq\tau_{I\times_f \R}(\tilde{p},\tilde{q})+ \tau_{I\times_f \R}(\tilde{q},\tilde{r})$, and as the $x$-coordinate difference is smaller between $\tilde{p}\hat{r}$ than between $\tilde{p}\tilde{r}$ (with equal $t$-coordinates), we can take a distance realizer $\gamma=(\alpha,\beta)$ from $\tilde{p}$ to $\tilde{r}$ and convert it to a longer causal curve $\gamma_2=(\alpha,\frac{d(x,z)}{d(x,y)+d(y,z)}\beta)$ from $\tilde{p}$ to $\hat{r}$.

For non-total imprisonment, let $\tilde{K}$ be compact, so it is contained in some $K=[a,c]\times Y$. Set $\inf f|_{[a,c]}=:f_->0$, then for any two points $(s,y)\leq(t,z)$ we have $d(y,z)\leq \frac{t-s}{f_-}$, so $D((s,y),(t,z))=\sqrt{(t-s)^2+d(y,z)^2}\leq\sqrt{1+\frac{1}{f_-}}(t-s)$. In particular, for any causal curve $\gamma$ starting at $(s,y)$ and ending at $(t,z)$ we have $L_D(\gamma)\leq \sqrt{1+\frac{1}{f_-}}(t-s)$, and if $\gamma$ stays in $K$ this is less than $\sqrt{1+\frac{1}{f_-}}(c-a)$ and $X$ is non-totally imprisoning. 

(ii) To show that the diamonds form a basis, let $(b,y)\in O=(a,c)\times B_R(x)$ (for some $a<c \in I$, $R>0$, $x,y\in Y$) with $[a,c]\subseteq I$. Then we have that $\inf f|_{[a,c]}=:f_->0$. Take $0<\varepsilon=\min(b-a,c-b,(R-d(x,y))f_-/2)$, set $p:=(b-\varepsilon,y),q:=(b+\varepsilon,y) \in \bar{O}$. We claim $(b,y)\in I(p,q)\subseteq O$: As $\int_a^c\frac{1}{f(\lambda)}\diff\lambda\leq\frac{(c-a)}{f_-}=\frac{2\varepsilon}{f_-}\leq R-d(x,y)$ and this is the maximum distance a null related point can be away in the $Y$ coordinate, we get that $I(p,q)\subseteq O$.

(iii) Any causal curve in $t$-parametrization $\gamma(t)=(t,\beta(t))$ is locally Lipschitz: Note that by \ref{pop:PropsLwpcs}.\ref{pop:PropsLwpcs:descrLeq}, we get that $d(\beta(s),\beta(t))\leq\int_s^t\frac{1}{f(\lambda)}\diff\lambda$, thus we get that $D(\gamma(s),\gamma(t))\leq\sqrt{(t-s)^2+d(\beta(s),\beta(t))^2}\leq(t-s)\sqrt{1+\frac{1}{\min_{[s,t]}f}}$, and the Lipschitz constant only depends on $s,t$.

(iv) Now for the equivalent condition for distance realizers and their existence. First, let $\gamma=(\alpha,\beta):[0,L]\to X$ be a distance realizer. Set $p=(r,\beta(r)),\,q=(s,\beta(s)),\,r=(t,\beta(t))$. We are now in the same situation as in the proof for the reverse triangle inequality, and note that in he proof of the reverse triangle inequality, equality can only be achieved if $d(\beta(r),\beta(t))=d(\beta(r),\beta(s))+d(\beta(s),\beta(t))$, so $\beta$ has to be a distance realizer. \\
Now note that by definition of $\tau$, the homeomorphism from the two-dimensional subset $\{(t,\beta(\lambda)):t,\lambda\}\subseteq X$ to $I\times [0,L(\beta)]\subseteq I\times_f\mb{R}$ given by $(t,x)\mapsto (t,d(x,\beta(0)))$ is $\tau$-preserving, so they are in bijection as required. 

(v) As $I\times_f\mb{R}$ is globally hyperbolic, there is a distance realizer $\tilde{\gamma}$ between any causally related points (\cite[Prop.\ 6.4]{globHypC0Spt}), so the existence is established if $\beta$ exists.

(vi) For the strong localizability, we claim that every timelike diamond is a localizable neighbourhood, and the proof is as for the Lorentzian warped product comparison space in Prop.\ \ref{pop:PropsLwpcs}.\ref{pop:PropsLwpcs:causPlain}. $X$ being regular is also done in Prop.\ \ref{pop:PropsLwpcs}.\ref{pop:PropsLwpcs:regular}.

(vii) First let $Y$ be proper, for global hyperbolicity we still need to check the compactness of causal diamonds. This works as for the Lorentzian warped product comparison space, but instead of the closed interval in space, we now need a closed ball (which is compact by properness of $Y$).

Now let $X$ be globally hyperbolic and $\int_I\frac{1}{f(\lambda)}\diff\lambda=+\infty$. We need that $Y$ is proper. So let $x\in Y$ and $R>0$. Take $t_0<t_1<t_2\in I$ be such that $\int_{t_0}^{t_1}\frac{1}{f(\lambda)}\diff\lambda=\int_{t_1}^{t_2}\frac{1}{f(\lambda)}\diff\lambda=R$, then $x(J((t_0,x),(t_2,x)))=\bar{B}_R(x)\subseteq Y$: First note that $(t_0,x)\leq (t,y)$ if and only if $d(x,y)\leq\int_{t_0}^t\frac{1}{f(\lambda)}\diff\lambda$ by Prop.\ \ref{pop:PropsLwpcs}.\ref{pop:PropsLwpcs:descrLeq}, similarly for $(t,y)\leq (t_2,x)$. Thus one gets that $x(J((t_0,x),(t_2,x)))\subseteq \bar{B}_R(x)$, and for $t=t_1$ we have that $x(J((t_0,x),(t_2,x))\cap\{(t_1,y):y\in Y\})= \bar{B}_R(x)$.

(viii) As $g$ maps $I$ into $\mb{R}$ bijectively, $X$ seen as a metric space is isometric to the metric product $\mb{R}\times Y$, in particular it will be proper if $Y$ is.
\end{proof}

We want to compare this to the already known notion of Lorentzian warped products from \cite[Def.\ 3.2,3.4,3.17,3.18]{generalizedCones}
\begin{dfn}[Lorentzian warped products as in \cite{generalizedCones}]\label{def:wProdGenCones}
Let $I\subseteq\mb{R}$ be an open interval and $f:I\to(0,+\infty)$ continuous. Furthermore, let $Y$ be a metric space. The Lorentzian warped product $I\times_f Y$ in the sense of \cite{generalizedCones} is constructed as follows: as a set it is  $I\times Y$, together with the product metric $D((s,x),(t,y))=|s-t|+d(x,y)$. 

For an absolutely continuous curve $\gamma=(\alpha,\beta):J\to I\times_f Y$, we say it is \emph{future directed causal as in \cite{generalizedCones}} if $\alpha$ is strictly increasing and $-\dot\alpha^2+(f\circ\alpha)^2v_\beta^2\leq0$, where $v_\beta$ denotes the metric derivative of $\beta$ (existing almost everywhere). It is \emph{future directed timelike} if additionally $-\dot\alpha^2+(f\circ\alpha)^2v_\beta^2<0$.

For a future directed causal curve $\gamma:[a,b]\to I\times_f Y$, we define its length as in \cite{generalizedCones} as
\[
\tilde L(\gamma)=\int_a^b\sqrt{\dot\alpha^2-(f\circ\alpha)^2v_\beta^2}\,.
\]

The \emph{time separation function as in \cite{generalizedCones}} is defined as:
\[
\tilde\tau(p,q)=\sup\{\tilde L(\gamma): \gamma\text{ future directed causal curve from }p\text{ to }q\}
\]
and $\tilde\tau(p,q)=0$ if this set is empty.

The causal and timelike relation are defined as $p\mathrel{\tilde\leq}q$ if there exists a future directed causal curve as in \cite{generalizedCones} from $p$ to $q$, similarly $p\mathrel{\tilde\ll}q$ for timelike.
\end{dfn}

\begin{pop}[The warped product definition agrees with the known notion]
Let $(Y,d)$ be a metric space and $f:I\to (0,+\infty)$ be continuous. Then we can view $I\times_fY$ as a warped product in two ways: As in Def.\ \ref{def:lorWProduct}, or as in \cite{generalizedCones} (see \ref{def:wProdGenCones}). 

We have that $\tau\geq\tilde\tau$ (in particular, $\tilde\ll$ is contained in $\ll$) and $\tilde\leq$ is contained in $\leq$.

If $Y$ is intrinsic, the time separation and the timelike relation agree, i.e.\ $\tau=\tilde\tau$ and ${\ll}=\tilde\ll$.

Moreover, if $Y$ is strictly intrinsic the causal relation agrees, i.e.\ ${\leq}=\tilde\leq$. \footnote{Note that for both intrinsic and strictly intrinsic, it is enough to require it up to $\int_I\frac1f$ (i.e.\ all $x,y\in Y$ with distance $d(x,y)<\int_I\frac1f$ are connected by a $d$-distance realizer or a sequence whose length is converging to the distance): If $d(x,y)\geq\int_I\frac1f$ we have that $(s,x)\not\leq (t,y)$ for any $s,t$ by \ref{pop:PropsLwpcs}.\ref{pop:PropsLwpcs:descrLeq}.}
\end{pop}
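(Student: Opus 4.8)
The plan is to prove the four assertions in order of increasing strength, reusing in each step the structure already established in Prop.~\ref{pop:PropsLwpcs} and Prop.~\ref{pop:PropsLwp}. For the inequality $\tau\geq\tilde\tau$, I would take a future directed causal curve $\gamma=(\alpha,\beta)$ from $p=(r,x)$ to $q=(t,z)$ in the sense of \cite{generalizedCones}, so $-\dot\alpha^2+(f\circ\alpha)^2 v_\beta^2\leq 0$ almost everywhere. The associated curve $\tilde\gamma(\lambda)=(\alpha(\lambda),d(x,\beta(\lambda)))$ into $I\times_f\R$ has $t$-component $\alpha$ and a second component whose metric derivative is $\leq v_\beta$ almost everywhere (by the triangle inequality for $d$), hence $\tilde\gamma$ is again future directed causal in $I\times_f\R$; since $\tilde\gamma(0)=(r,0)$ and $\tilde\gamma$ ends at some $(t,y)$ with $y\leq d(x,z)$, Prop.~\ref{pop:PropsLwpcs}.\ref{pop:PropsLwpcs:descrLeq} gives $(r,0)\leq_{I\times_f\R}(t,d(x,z))$, i.e.\ $p\leq q$, and feeding the same $\tilde\gamma$ (or rather a reparametrization to the $t$-coordinate, stretching the $x$-component out to reach exactly $(t,d(x,z))$ as in the proof of the reverse triangle inequality in Prop.~\ref{pop:PropsLwp}(i)) into the definition of $\tau$ shows $\tau(p,q)\geq\tilde L(\gamma)$; taking the supremum over $\gamma$ yields $\tau(p,q)\geq\tilde\tau(p,q)$, and the timelike statement is the same argument with strict inequalities. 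Containment $\tilde\leq\,\subseteq\,\leq$ and $\tilde\ll\,\subseteq\,\ll$ is then immediate.

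For the reverse inequality $\tau\leq\tilde\tau$ under the assumption that $Y$ is intrinsic, I would argue that it suffices to produce, for any $p=(r,x)\ll_{I\times_f\R}(t,d(x,z))$ and any $\varepsilon>0$, a future directed timelike curve in $I\times_fY$ in the sense of \cite{generalizedCones} from $p$ to $q=(t,z)$ of length at least $\tau(p,q)-\varepsilon$. Take a timelike distance realizer $\gamma_0=(\alpha,\delta):[0,t-r]\to I\times_f\R$ from $(r,0)$ to $(t,d(x,z))$ in the warped product comparison space, parametrized by the $t$-coordinate; by Prop.~\ref{pop:PropsLwpcs} its $\delta$-component is (a reparametrization of) a distance realizer in $\R$ of length $d(x,z)$, so $\delta$ is monotone with total variation $d(x,z)$. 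Since $Y$ is intrinsic, I can choose a curve $\beta$ in $Y$ from $x$ to $z$ with $d$-length at most $d(x,z)+\varepsilon'$; reparametrizing $\beta$ by arclength and composing suitably with $\alpha$, I get $\tilde\gamma=(\alpha,\beta)$ with $v_\beta$ controlled by $|\dot\delta|/(1+\varepsilon'')$ almost everywhere, so $-\dot\alpha^2+(f\circ\alpha)^2 v_\beta^2<0$ a.e.\ (using that $\gamma_0$ was strictly timelike, which can be arranged by an arbitrarily small perturbation), hence $\tilde\gamma$ is future directed timelike in the \cite{generalizedCones} sense with $\tilde L(\tilde\gamma)$ within $\varepsilon$ of $L(\gamma_0)=\bar\tau(p,q)=\tau(p,q)$. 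Letting $\varepsilon\to 0$ gives $\tilde\tau(p,q)\geq\tau(p,q)$; combined with the first part, $\tau=\tilde\tau$, and since by Def.~\ref{def:lorWProduct} and timelikeness $p\ll q\iff\tau(p,q)>0\iff\tilde\tau(p,q)>0\iff p\mathrel{\tilde\ll}q$, the timelike relations agree.

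For the causal relation under the assumption that $Y$ is strictly intrinsic: if $p=(r,x)\leq q=(t,z)$, then by Prop.~\ref{pop:PropsLwpcs}.\ref{pop:PropsLwpcs:descrLeq} we have $r\leq t$ and $d(x,z)\leq\int_r^t\frac1{f(\lambda)}\diff\lambda$; strict intrinsicness of $Y$ provides an honest $d$-distance realizer $\beta:[0,d(x,z)]\to Y$ from $x$ to $z$, and combining it (reparametrized) with the null generator description in Prop.~\ref{pop:PropsLwpcs}.\ref{pop:PropsLwpcs:descrNullGen} of a causal curve in $I\times_f\R$ from $(r,0)$ to $(t,d(x,z))$ produces a genuine future directed causal curve $\tilde\gamma=(\alpha,\beta\circ(\text{reparam}))$ in $I\times_fY$ in the sense of \cite{generalizedCones}, so $p\mathrel{\tilde\leq}q$. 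Together with the already proven inclusion $\tilde\leq\,\subseteq\,\leq$ this gives equality. Finally I would address the footnote: in each of the three constructions above only points $x,y\in Y$ with $d(x,y)<\int_I\frac1f$ ever need to be joined (since otherwise $(s,x)\not\leq(t,y)$ for all $s,t$ and there is nothing to prove), so it suffices to require (strict) intrinsicness up to distance $\int_I\frac1f$. The main obstacle I anticipate is the $\tau\leq\tilde\tau$ direction: one must carefully match the monotone, $t$-parametrized $\delta$-component of the comparison-space distance realizer with an arclength-efficient curve in $Y$ while keeping the resulting curve timelike in the \cite{generalizedCones} sense — a reparametrization bookkeeping that is routine but where the strictness of the timelike inequality must be tracked; everything else reduces to the causal-relation description and the reverse-triangle-inequality argument already carried out in Prop.~\ref{pop:PropsLwp}.
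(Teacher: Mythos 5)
Your overall route mirrors the paper's: push \cite{generalizedCones}-causal curves into the comparison space $I\times_f\R$ to get $\tau\geq\tilde\tau$ and $\tilde\leq\subseteq\leq$, use an honest $d$-minimizer in $Y$ for the strictly intrinsic case, and almost-minimizers for the intrinsic case. The first and the strictly intrinsic steps are essentially fine (small slips: your curve $\lambda\mapsto(\alpha(\lambda),d(x,\beta(\lambda)))$ ends \emph{exactly} at $(t,d(x,z))$, not at some $(t,y)$ with $y\leq d(x,z)$; and monotonicity of the spatial part of a comparison-space maximizer comes from Prop.\ \ref{pop:PropsLwp}.\ref{pop:PropsLwp:geodesics} applied with $Y=\R$, not from Prop.\ \ref{pop:PropsLwpcs}). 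The footnote is also handled adequately.

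The gap is in the step you yourself flag as the main obstacle, the intrinsic case. You claim a parametrization of the almost-minimizer $\beta$ (of length $\leq d(x,z)+\varepsilon'$) with $v_\beta\leq|\dot\delta|/(1+\varepsilon'')$ a.e., where $\delta$ is the spatial part of the comparison maximizer. This is impossible whenever $x\neq z$: integrating over the parameter interval gives $d(x,z)\leq L_d(\beta)=\int v_\beta\leq\tfrac{1}{1+\varepsilon''}\int|\dot\delta|=\tfrac{d(x,z)}{1+\varepsilon''}<d(x,z)$. Since every curve from $x$ to $z$ has length at least $d(x,z)$ while $\delta$ has total variation exactly $d(x,z)$, the spatial speed must be allowed to \emph{exceed} $|\dot\delta|$ somewhere, and then neither $-\dot\alpha^2+(f\circ\alpha)^2v_\beta^2<0$ a.e.\ nor the claim that $\tilde L(\tilde\gamma)$ is within $\varepsilon$ of $L(\gamma_0)$ follows; this is precisely the delicate point. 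The paper resolves it quantitatively, distributing the excess speed via $(|\tilde\beta'|)^2-(|\beta'|)^2\leq\tfrac{\varepsilon}{(1-\varepsilon)(f\circ\alpha)^2}(\alpha')^2$ and accepting the loss $\tilde L(\tilde\gamma)\geq\sqrt{1-\varepsilon}\,L(\gamma)$; an alternative repair is to aim at the overshot comparison endpoint $(t,d(x,z)+\varepsilon')$ (admissible, with arbitrarily small loss of $\bar\tau$, by openness of $\ll$ and continuity of $\tau_{I\times_f\R}$), whose spatial part has total variation at least $L_d(\beta)$, so that $v_\beta\leq|\dot\delta|$ can genuinely be arranged. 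Your parenthetical ``perturb $\gamma_0$ to be strictly timelike'' neither fixes the direction of the inequality nor is itself justified for a merely continuous metric (same endpoints, uniformly timelike, almost no length loss needs an argument). Also, your final chain uses $\tilde\tau(p,q)>0\Rightarrow p\mathrel{\tilde\ll}q$, which is not part of Def.\ \ref{def:wProdGenCones} and deserves a word; the paper sidesteps this by producing timelike curves directly through its replacement argument.
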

\begin{proof}
If $X$ need not be intrinsic, for any \cite{generalizedCones}-causal curve $\gamma_0=(\alpha_0,\beta_0)$ from $(s,x)$ to $(t,y)$, we know $d(x,y)\leq \tilde L(\beta_0)$. Now set $\bar\beta_0(t)=\tilde L(\beta_0,0,t)$, then $\gamma_0$ has the same length as $\bar{\gamma}_0=(\alpha_0,\bar\beta_0)$ in the Lorentzian warped product comparison space for $f$. Setting $\bar\beta_1(t)=\frac{d(x,y)}{\tilde L(\beta_0)} \tilde L(\beta_0,0,t)$, we have that $\bar{\gamma}_1=(\alpha_0,\bar\beta_1)$ is causal and longer (strictly if $d(x,y)\leq \tilde L(\beta_0)$ was strict). In particular, $\tau\geq\tilde{\tau}$ and $\tilde{\leq}$ is contained in ${\leq}$.

We now prove two properties of the definition in \cite{generalizedCones}: First, the $\tilde\tau$-separation $\tilde\tau((s,x),(t,y))$ is given by the limit of lengths of a sequence of \cite{generalizedCones}-causal curves $\gamma_n:[0,b]\to X$, $\gamma_n(t)=(\alpha_n(t),\beta_n(t))$, where $\beta_n:[0,b]\to Y$ is independent of $n$ and a minimizer in $Y$, or, if that does not exist, a minimizing sequence (dependent on $n$).

For this, note that if $\beta:[0,b]\to Y$ is not the shortest curve between $x$ and $y$, we can construct a \cite{generalizedCones}-timelike curve from $(s,x)$ to $(t,y)$ which is strictly longer than $\gamma$: Let $\tilde L(t):=\tilde L(\beta|_{[0,t]})$. Let $\bar{\beta}$ be a shorter curve from $x$ to $y$ with parametrization such that $\tilde L(\bar{\beta}|_{[0,t]})=\frac{\tilde L(\bar\beta)}{\tilde L(\beta)}\tilde L(t)$. As $\frac{\tilde L(\bar\beta)}{\tilde L(\beta)}<1$, we have that also $\bar{\gamma}=(\alpha,\bar{\beta})$ is \cite{generalizedCones}-timelike from $(s,x)$ to $(t,y)$, and that $\bar{\gamma}$ is strictly longer. 

For the second property, by definition two points $(s,x),(t,y)$ are \cite{generalizedCones}-null related if and only if there is a \cite{generalizedCones}-null curve $\gamma:[0,b]\to X$, $\gamma(t)=(\alpha(t),\beta(t))$. By the above argument we can conclude $\beta:[0,b]\to Y$ has to minimize the length from $x$ to $y$. 

Now assume that $Y$ is strictly intrinsic, and take $\beta$ from $x$ to $y$ to be a $d$-distance realizer, and we want to prove $\tau$ and $\leq$ agree: by the above, we can restrict ourselves to causal curves of the form $(\alpha,\beta)$ for some $\alpha$. We define a curve in the Lorentzian warped product comparison space for $f$: let $\bar\beta(t)=d(x,\beta(t))$ be the curve $\bar{\beta}:[0,b]\to\mb{R}$, set $\bar{\gamma}(t)=(\alpha(t),\bar\beta(t))$. It is causal in the Lorentzian warped product comparison space and has the same length as $\gamma$ because $\bar\beta$ has the same metric speed as $\beta$ and the warped product construction agrees with the continuous Lorentzian manifold notion. Now we have $\tau=\tilde{\tau}$: both are the supremum of such $\bar{\gamma}$ in the Lorentzian warped product comparison space (and this supremum is achieved). The same argument also proves ${\leq}={\tilde{\leq}}$. 

If we have that $Y$ is only intrinsic and we have $L:=\tau((s,x),(t,y))>0$, we get $\gamma=(\alpha,\beta)$ in the Lorentzian warped product comparison space with $L(\gamma)=L$, and we can assume $\alpha(t)=t$ (so $\gamma$ is parametrized by $t$-coordinate and $\gamma$ is Lipschitz). We have that 
\begin{equation*}
0<L(\gamma)=\int\sqrt{(\alpha')^2-(f\circ\alpha)^2|\beta'|^2}
\end{equation*}
In particular, for all $\varepsilon>0$ there is a small enough $\delta>0$ and an almost distance realizer $\tilde\beta:x\leadsto y$ in $Y$ with $L_d(\tilde\beta)<(1+\delta)d(x,y)$ such that there is a parametrization of $\tilde\beta$ satisfying
\begin{equation*}
\frac{\varepsilon}{(1-\varepsilon)(f\circ\alpha)^2}(\alpha')^2\geq (|\tilde\beta'|)^2-(|\beta'|)^2
\end{equation*}
so
\begin{equation*}
(f\circ\alpha)^2(1-\varepsilon)(|\beta'|)^2)-(f\circ\alpha)^2(1-\varepsilon)(|\tilde\beta'|)^2+ \varepsilon(\alpha')^2\geq0
\end{equation*}
We add this inside the square root of the following:
\begin{align*}
\sqrt{(1-\varepsilon)}L(\gamma)&=\int\sqrt{(1-\varepsilon)\left((\alpha')^2-(f\circ\alpha)^2|\beta'|^2\right)}\\
&=\int\sqrt{(1-\varepsilon)(\alpha')^2-(f\circ\alpha)^2(1-\varepsilon)(|\beta'|)^2}\\
&\leq \int\sqrt{(\alpha')^2-(f\circ\alpha)^2(1-\varepsilon)(|\tilde\beta'|)^2}\leq L_{\tau}(\tilde\gamma)\\
\end{align*}
for the curve $\tilde\gamma=(\alpha,\tilde\beta)$, i.e.\ we have $\tilde L(\tilde\gamma)\geq \sqrt{1-\varepsilon}L((\alpha,\beta))$. In particular, the definitions of $\ll$ and $\tau$ agree. 
\end{proof}
\begin{dfn}
Let $I\times_f Y$ be a warped product with $0\in I$. \emph{Conformal time} is given by $\eta(t)=\int_0^t \frac{1}{f(s)}\diff s$. 
\end{dfn}
\begin{lem}[Conformal time]
We have that for $p,q\in Y$ and $s\in\eta^{-1}(I)$, $(\eta^{-1}(s),p)\leq (\eta^{-1}(s+d(p,q)),q)$ are null related.
\end{lem}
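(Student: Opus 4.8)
The plan is to reduce the claim to a direct computation in the Lorentzian warped product comparison space $I \times_f \R$, which is the natural setting since both the causal relation and $\tau$ on $I \times_f Y$ are defined by transporting to that model space. By Definition~\ref{def:lorWProduct}, we have $(\eta^{-1}(s),p) \leq (\eta^{-1}(s+d(p,q)),q)$ in $I \times_f Y$ if and only if $(\eta^{-1}(s),0) \leq_{I\times_f\R} (\eta^{-1}(s+d(p,q)), d(p,q))$, and these two points are $\tau$-related with separation $\tau((\eta^{-1}(s),p),(\eta^{-1}(s+d(p,q)),q)) = \tau_{I\times_f\R}((\eta^{-1}(s),0),(\eta^{-1}(s+d(p,q)),d(p,q)))$. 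So it suffices to show that in $I \times_f \R$ the two points $(\eta^{-1}(s),0)$ and $(\eta^{-1}(s+d(p,q)),d(p,q))$ are null related, i.e.\ causally related with zero time separation.

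The core computation uses Proposition~\ref{pop:PropsLwpcs}.\ref{pop:PropsLwpcs:descrLeq}: writing $t_0 = \eta^{-1}(s)$ and $t_1 = \eta^{-1}(s + d(p,q))$, causality of $(t_0,0)$ and $(t_1,x_1)$ with $x_1 = d(p,q)$ holds precisely when $t_0 \leq t_1$ and $|x_1 - 0| \leq \int_{t_0}^{t_1} \frac{1}{f(\lambda)}\,\diff\lambda$. Since $\eta$ is the (strictly increasing) conformal time $\eta(t) = \int_0^t \frac{1}{f(\sigma)}\,\diff\sigma$, we get $\eta(t_1) - \eta(t_0) = (s + d(p,q)) - s = d(p,q)$, and by additivity of the integral $\eta(t_1) - \eta(t_0) = \int_{t_0}^{t_1} \frac{1}{f(\lambda)}\,\diff\lambda$. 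Hence $\int_{t_0}^{t_1} \frac{1}{f(\lambda)}\,\diff\lambda = d(p,q) = x_1$, so the inequality holds with \emph{equality}, and $t_0 \leq t_1$ since $\eta$ is increasing and $d(p,q) \geq 0$. Thus the points are causally related but the strict inequality fails, so by Proposition~\ref{pop:PropsLwpcs}.\ref{pop:PropsLwpcs:descrLeq} they are not timelike related; equivalently, by the description of reparametrized null distance realizers in Proposition~\ref{pop:PropsLwpcs}.\ref{pop:PropsLwpcs:descrNullGen} the curve $\lambda \mapsto (\lambda, \int_{t_0}^\lambda \frac{1}{f(\mu)}\,\diff\mu)$ is exactly a null geodesic joining them. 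Transporting this back via Definition~\ref{def:lorWProduct} gives that $(\eta^{-1}(s),p)$ and $(\eta^{-1}(s+d(p,q)),q)$ are causally but not timelike related, i.e.\ null related, as claimed.

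There is essentially no obstacle here — the statement is almost definitional once one unwinds the construction. The only minor points to be careful about are: that $s$ and $s + d(p,q)$ both lie in $\eta^{-1}(I) = \eta(I)$ so that $\eta^{-1}$ is defined at both values (which is exactly the hypothesis $s \in \eta^{-1}(I)$ together with the implicit assumption that $s + d(p,q)$ is also in range, needed for the statement to be meaningful), and that the convention "null related" means "causally related but not timelike related" rather than "joined by a null curve" — but by Proposition~\ref{pop:PropsLwpcs}.\ref{pop:PropsLwpcs:descrNullGen} these coincide in this space. I would therefore present the proof in two short lines: invoke Definition~\ref{def:lorWProduct} to pass to the comparison space, then apply Proposition~\ref{pop:PropsLwpcs}.\ref{pop:PropsLwpcs:descrLeq} with the equality $\int_{\eta^{-1}(s)}^{\eta^{-1}(s+d(p,q))} \frac{1}{f} = d(p,q)$.
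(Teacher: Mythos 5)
Your proposal is correct and follows essentially the same route as the paper: both reduce the claim to the points $(\eta^{-1}(s),0)$ and $(\eta^{-1}(s+d(p,q)),d(p,q))$ in the comparison space $I\times_f\R$ and then invoke Prop.\ \ref{pop:PropsLwpcs} — the paper cites the explicit null distance realizer of part (\ref{pop:PropsLwpcs:descrNullGen}), while you primarily use the equivalent inequality description of part (\ref{pop:PropsLwpcs:descrLeq}), noting the equality $\int_{\eta^{-1}(s)}^{\eta^{-1}(s+d(p,q))}\frac{1}{f}=d(p,q)$. The two formulations are interchangeable here, and your added remark that $s+d(p,q)$ must lie in $\eta(I)$ is a reasonable precision.
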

\begin{proof}
We have to verify that the points $\tilde{p}=(\eta^{-1}(s),0)$ and $\tilde{q}=(\eta^{-1}(s+d(p,q)),d(p,q))$ are null related. For this, note that the curve $t\mapsto(\eta^{-1}(s+t),t)$ is a reparametrization of $\gamma(t)=(s+t,\int_s^{s+t} \frac{1}{f(\lambda)}\diff \lambda)$ which is a null distance realizer by \ref{pop:PropsLwpcs}.\ref{pop:PropsLwpcs:descrNullGen}, thus all points along it are null related.
\end{proof}

\subsection{A warped product in Anti-deSitter}\label{subsec:warpedproducts:AdSasWarpedProd}
First, we collect the following properties of a certain subset of anti-deSitter space:
\begin{lem}[A warped product inside Anti-deSitter]\label{lem:AdS'}
Take $\gamma:[-\pi,0]\to\AdS$ to be a lift of the curve $t\mapsto (\cos(t),\sin(t),0)$ and let $p_-$ and $p_+$ be the start and endpoint of $\gamma$. Then they are lifts of $(-1,0,0),(1,0,0)\in\mb{R}^{1,2}$ in such a way that there is a future-directed geodesic from $p_-$ to $p_+$, but it does not ``wrap around'' the space $\AdS/\mb{Z}$ (i.e.\ they are as close as possible). Set $X=I(p_-,p_+)$. Then $X$ is globally hyperbolic and maximal as such (i.e.\ there is no $X\subsetneq \tilde X \subseteq\AdS$ which is connected and globally hyperbolic). Furthermore, $X$ is isomorphic to the warped product (even warped product comparison space) $\AdS':=(-\frac{\pi}{2},\frac{\pi}{2})\times_{\cos}\mb{R}$. Timelike triangles (in any space) satisfying the size bounds for $K=-1$ are realizable in $\AdS'$, with longest side having constant second coordinate.
\end{lem}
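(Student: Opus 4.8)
The plan is to exhibit an explicit time-orientation-preserving isometric embedding of the warped product comparison space $\AdS'=(-\tfrac{\pi}{2},\tfrac{\pi}{2})\times_{\cos}\mb{R}$ into $\AdS$ whose image is exactly $I(p_-,p_+)$, and then read off the remaining claims from this picture together with the standard causal structure of anti-deSitter space. First I would settle the preliminary claims about $\gamma$: the curve $s\mapsto(\cos s,\sin s,0)$ lies on the quadric $\{b=-1\}$ and satisfies $b(\dot\gamma,\dot\gamma)=-1$, so it is a unit-speed timelike geodesic there; its lift (the given $\gamma$) is therefore an arclength-parametrised timelike geodesic of $\AdS$, on $[-\pi,0]$ it runs between lifts of $(-1,0,0)$ and $(1,0,0)$, and since the underlying timelike circle has period $2\pi$ these two lifts are as close as possible and are joined by the future-directed, non-wrapping geodesic $\gamma$. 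Reparametrising by proper time $t$ measured from the midpoint $\gamma(-\tfrac{\pi}{2})=(0,-1,0)$, the central geodesic becomes $t\mapsto\gamma(t-\tfrac{\pi}{2})=(\sin t,-\cos t,0)$, $t\in(-\tfrac{\pi}{2},\tfrac{\pi}{2})$.

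Next I would write down the embedding. I claim that
\[
\Psi:(-\tfrac{\pi}{2},\tfrac{\pi}{2})\times\mb{R}\longrightarrow\{b=-1\},\qquad \Psi(t,x)=\bigl(\sin t,\ -\cos t\cosh x,\ \cos t\sinh x\bigr),
\]
satisfies $b(\Psi,\Psi)=-1$ and $\Psi^{*}b=-\diff t^{2}+\cos^{2}t\,\diff x^{2}$; both reduce to a one-line computation using $\cosh^{2}-\sinh^{2}=1$. Since $\Psi^{*}b$ is nondegenerate $\Psi$ is an immersion; it is injective, because its first coordinate recovers $t$ and then $\cosh x,\sinh x$ recover $x$; and $\Psi(t,0)=\gamma(t-\tfrac{\pi}{2})$. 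As the domain is simply connected, $\Psi$ lifts to an injective isometric immersion $\tilde\Psi:\AdS'\to\AdS$ sending $\{x=0\}$ onto $\gamma|_{(-\pi,0)}$, which is time-orientation-preserving because $\partial_{t}\tilde\Psi$ agrees with $\dot\gamma$ along $\{x=0\}$. By invariance of domain $\tilde\Psi$ is then an open embedding onto some open $W\subseteq\AdS$, and being a time-orientation-preserving isometry $\AdS'\to W$ it is automatically a $\tau$- and $\leq$-preserving homeomorphism. It remains to identify $W$ with $I(p_-,p_+)$ and to establish the global properties.

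The inclusion $W\subseteq I(p_-,p_+)$ is easy: given $(t_{0},x_{0})\in\AdS'$, since $\int_{-\pi/2}^{t_{0}}\sec\lambda\,\diff\lambda=\int_{t_{0}}^{\pi/2}\sec\lambda\,\diff\lambda=+\infty$, Prop.~\ref{pop:PropsLwpcs} supplies timelike curves in $\AdS'$ from $(t_{-},0)$ to $(t_{0},x_{0})$ and from $(t_{0},x_{0})$ to $(t_{+},0)$ with $t_{\mp}$ arbitrarily close to $\mp\tfrac{\pi}{2}$; pushing them forward by $\tilde\Psi$ and using $\tilde\Psi(t_{\mp},0)=\gamma(t_{\mp}-\tfrac{\pi}{2})\in I^{\pm}(p_{\mp})$ gives $\tilde\Psi(t_{0},x_{0})\in I(p_-,p_+)$. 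For the reverse inclusion I would pass to standard conformal coordinates on $\AdS$, in which the metric is $\sec^{2}\rho\,(-\diff T^{2}+\diff\rho^{2})$ with $T\in\mb{R}$ and $\rho\in(-\tfrac{\pi}{2},\tfrac{\pi}{2})$; here $\gamma$ is $\{\rho=0\}$, one may arrange $p_{\pm}=(\pm\tfrac{\pi}{2},0)$, and a direct computation with $45^{\circ}$ null lines gives $I(p_-,p_+)=\{\,|T|+|\rho|<\tfrac{\pi}{2}\,\}$. Then I would analyse the limits of $\Psi(t_n,x_n)$ as $(t_n,x_n)$ tends to the boundary of its domain — fixing $x$ and letting $t\to\pm\tfrac{\pi}{2}$ gives $p_{\pm}$, fixing $t$ and letting $x\to\pm\infty$ escapes $\AdS$ through its conformal boundary $\{\rho=\pm\tfrac{\pi}{2}\}$, and the mixed limits trace out null segments — to conclude that the boundary of $W$ in $\AdS$ coincides with that of $I(p_-,p_+)$; since $W$ and $I(p_-,p_+)$ are open, connected, have the same $\AdS$-boundary and $W\subseteq I(p_-,p_+)$, they are equal. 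Global hyperbolicity of $X=W$ then follows from that of $\AdS'$ (Prop.~\ref{pop:PropsLwpcs}); maximality follows because $\AdS$ carries a timelike conformal boundary at $\rho=\pm\tfrac{\pi}{2}$ onto which those null segments accumulate, so any connected open superset of $X$ in $\AdS$ would admit an inextendible causal curve escaping to conformal infinity and hence fail to be globally hyperbolic.

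For the last assertion: a comparison triangle for a timelike triangle satisfying the size bounds for $K=-1$ lives in $\AdS$ with all side lengths $<D_{-1}=\pi$, and its longest side — the one joining the two time-extreme vertices, by the reverse triangle inequality — has length $\ell<\pi$; an isometry of $\AdS$ places this side along $\{x=0\}$ between $(-\tfrac{\ell}{2},0)$ and $(\tfrac{\ell}{2},0)$, both in $(-\tfrac{\pi}{2},\tfrac{\pi}{2})$, the third vertex then lies in $I^{+}((-\tfrac{\ell}{2},0))\cap I^{-}((\tfrac{\ell}{2},0))\subseteq I^{+}(p_-)\cap I^{-}(p_+)=\AdS'$, and since $\AdS'$ is causally convex in $\AdS$ the three maximising sides stay inside it as well, so the whole configuration is realised in $\AdS'$ with the longest side of constant second coordinate. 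The routine parts are the verification of $\Psi$ and the inclusion $W\subseteq I(p_-,p_+)$; the main obstacle is the reverse inclusion together with maximality, both of which hinge on a careful description of the global causal structure of $\AdS$ near its timelike conformal boundary — equivalently, on cleanly identifying $I(p_-,p_+)$ as the Cauchy development of the complete spacelike geodesic $\tilde\Psi(\{t=0\}\times\mb{R})$ — and this is where I would expect the argument to need the most care.
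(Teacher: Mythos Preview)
Your embedding $\Psi$ and the verification that it is an isometry onto an open subset $W\subseteq\AdS$ is essentially the paper's construction (the paper writes the map with the opposite sign in the second coordinate and checks it is an isometry by tracking two orthogonal families of geodesics rather than pulling back $b$ directly, but this is the same computation). The triangle realisability argument is also the same as the paper's: place the longest side on $\{x=0\}$ symmetrically about $t=0$ and use causal convexity of $I(p_-,p_+)$.

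Where you diverge is in the surjectivity $W=I(p_-,p_+)$ and in the maximality claim. For surjectivity, the paper argues geometrically via the description of the $\tau(p_-,\cdot)$-level sets and the fact that the timelike geodesics through $p_-$ foliate $I(p_-,p_+)$; your route through conformal coordinates and a boundary-matching argument is different but works, and has the advantage of making the causal diamond $\{|T|+|\rho|<\tfrac{\pi}{2}\}$ explicit.

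For maximality, however, your argument is only a sketch and does not quite close. Saying that an enlarged region ``would admit an inextendible causal curve escaping to conformal infinity'' is not by itself a contradiction to global hyperbolicity in the sense used here (compactness of causal diamonds and non-total imprisonment), since the conformal boundary is not part of $\AdS$; you would need to exhibit a concrete non-compact causal diamond or an imprisoned curve in the enlarged set, and the case analysis (whether the added boundary point is $p_\pm$ or lies on one of the null segments, and what $J^+\cap J^-$ looks like there) is exactly what is missing. The paper takes a completely different and perhaps unexpected route: it forward-references the synthetic Bonnet--Myers theorem (Thm.~\ref{thm: lor meyers}) together with Lem.~\ref{lem:causalBdryAtPi} (that the future causal boundary of $\AdS'$ sits at $\tau$-distance $\pi$ from the bottom) to force $\tau=+\infty$ between two points of any proper globally hyperbolic extension, contradicting finiteness of $\tau$. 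This is the same mechanism later used in Prop.~\ref{pop:globalizeSplitting}, so the maximality here is really a special case of the global splitting argument. If you want to complete your conformal-boundary approach instead, the cleanest way is to show directly that for any $q\in\partial X\cap\tilde X$ there exist $a\in\tilde X$ with $a\ll q$ such that $J(a,q)\cap\tilde X$ is non-compact (it stretches out to $|x|\to\infty$ in the warped product coordinates); this is doable but requires the care you anticipated.
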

\begin{figure}[htb]
\includegraphics[scale=0.3]{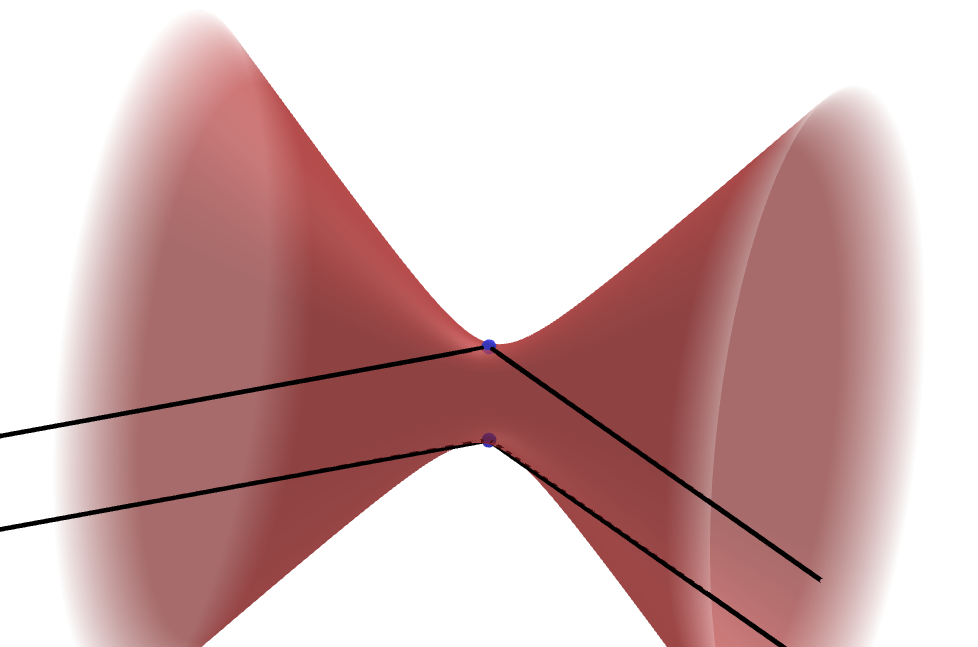}
\caption{The region between the lines is the maximal globally hyperbolic subset of \AdS. Note the two rays going to the left are parallel, similarly for those to the right.}
\end{figure}
\begin{proof}
In $\AdS$, the $c+\frac{\pi}{2}$-level set of $\tau(p_-,\cdot)$ is given by $\AdS\cap\{(s_1,s_2,z):s_1=\sin(c)\}$. All timelike geodesics through $p_-,p_+$ are given by \linebreak$t\mapsto(\sin(t),\cos(t)\cosh(x),\cos(t)\sinh(x))$ (the $x$ parametrizes the angles at $p$), and they are distance realizing, thus pass at right angles through the level sets. 

So we define $f:\AdS' \to X$ by setting 
\begin{equation*}
f(t,x)=(\sin(t),\cos(t)\cosh(x),\cos(t)\sinh(x))\,.
\end{equation*}
This is an isometry: The vertical unit speed distance realizers $t\mapsto (t,x_0)$ map to the unit speed distance realizers $t\mapsto(\sin(t),\cos(t)\cosh(x_0),\cos(t)\sinh(x_0))$ (which cover all $\AdS'$ resp.\ $X$), and the horizontal spacelike geodesic $x\mapsto (t_0,x)$ has speed $\cos(t_0)$ and maps to $x\mapsto(\sin(t_0),\cos(t_0)\cosh(x),\cos(t_0)\sinh(x))$ which is also a spacelike geodesic of speed $\cos(t_0)$ (these are parametrisations of the level sets of $\tau(p_-,\cdot)$, thus cover all $\AdS'$ resp.\ $X$), and the geodesics of the first family intersect the ones in the second family at right angles as described before.

That $X$ is globally hyperbolic can be seen in the warped product picture. 

For realizing timelike triangles satisfying the size bonds for $K=-1$, let $a+b\leq c<\pi$. Choose a realization with a timelike triangle $\Delta(p,q,r)$ in \AdSn. By applying a suitable isometry, we can w.l.o.g.\ assume that $p=(\cos(-c/2),\sin(-c/2),0),\,r=(\cos(c/2),\sin(c/2),0) \in X$ (these have the right $\tau$-distance). Then $q\in X$ by causal convexity of $X$, so $\Delta(p,q,r)$ is realized in $X$, or equivalently in $\AdS'$ where $p,r$ have the same $x$-coordinate ("vertical").

For the inextendibility of $X$, we indirectly assume an extension $X'$ of $X$ was still globally hyperbolic. Connect $p_-$ to $p_+$ by a timelike distance realizer $\gamma$. First, if there is a point $p\in I^+(\gamma)\setminus I^-(\gamma)$, we find a $t$ such that $\gamma(t)\ll p$, and a timelike distance realizer $\alpha_t$ connecting them. Inside a localizing neighbourhood of $p$, we have a point $p\ll p_+$ and set $\varepsilon=2\tau(p,p_+)>0$. $\alpha_t$ is initially contained in $I(\gamma)$, so we find a point $\alpha_t(s_t)\in\partial I(\gamma)$, i.e.\ in $\partial I^-(\gamma)$. 

By Lem.\ \ref{lem:causalBdryAtPi} we get that for all $\varepsilon>0$ there is $\delta>0$ small enough such that $\tau(\gamma(r),\alpha_t(s_t-\delta))\to\pi-\varepsilon$ as $r\to-\frac{\pi}{2}$. In particular, $\tau(\gamma(r),p_+)>\pi$. By the synthetic Bonnet-Myers theorem \ref{thm: lor meyers}, we know that this forces $\tau(\gamma(r),p_+)=+\infty$, contradicting finiteness of $\tau$ (see \cite[Thm.\ 3.28]{saemLorLen}).

The situation where there is a point $p\in I^-(\gamma)\setminus I^+(\gamma)$ is analogous. 

Finally, assume that all points in $X \setminus I(\gamma)$ are in neither $I^+(\gamma)$ nor in $I^-(\gamma)$. Take any such point $p$, then by strong causality, we have $p_-\ll p\ll p_+$. Then also $p_-,p_+$ are neither in $I^+(\gamma)$ nor in $I^-(\gamma)$. Thus, $I(p_-,p_+)\subseteq X \setminus I(\gamma)$ is an open neighbourhood of $p$. This works for any $p$, so both $I(\gamma)$ and $X \setminus I(\gamma)$ are open, contradicting connectedness of $X$.
\end{proof}
\begin{rem}
Note that this subset is invariant under some, but not all isometries of \AdSn: all isometries of $\AdS$ are described as Lorentz transformations of the ambient $\mb{R}^{1,2}$. Isometries fixing $X$ are those fixing $p_-$ (and thus automatically $p_+$). Those have dimension $1$ and codimension $2$, and are Lorentz boosts in the $s_2,z$ directions. In the warped product setting, they are described as translations in the $x$-direction. 
\end{rem}
For \AdS, we have an explicit solution of the geodesic equation \ref{pop:PropsLwpcs}.\ref{pop:PropsLwpcs:geodesiceqn}:
\begin{lem}\label{lem:geodInAdS'}
The timelike geodesics in $\AdS'$, parametrized by $\tau$-arclength and such that at parameter $\lambda=0$ they cross $t=0$, are given by:
\begin{equation*}
\alpha(\lambda)= \left(\arcsin(\sin(\lambda)\cosh(\omega)),\sinh^{-1}\left(\frac{\sin(\lambda)\sinh(\omega)}{\sqrt{1-\sin(\lambda)^2\cosh(\omega)^2}}\right) + c\right)
\end{equation*}
for $\omega,c\in\mb{R}$, and these are distance realizers on their domain. $c$ is the $x$-coordinate at $t=\lambda=0$, $\omega$ that of the angle to the vertical at $t=\lambda=0$.

For $\omega=0$, this is a vertical line $\lambda\mapsto(\lambda,c)$. For $\omega>0$, set $\lambda_{\pm}=\pm\arcsin(\frac{1}{\cosh(\omega)})$, then $\lim_{\lambda\to\lambda_-}\alpha(\lambda)=(-\frac{\pi}{2},-\infty)$ and $\lim_{\lambda\to\lambda_+}\alpha(\lambda)=(+\frac{\pi}{2},+\infty)$.
For $\omega<0$, the signs of the $x$-coordinates of the limits flip. In particular, the $x$-coordinate only stays bounded for $\omega=0$.
\begin{figure}
\begin{center}
\includegraphics{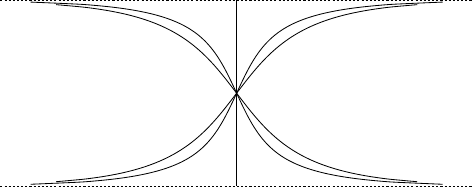}
\end{center}
\caption{Some geodesics in $\AdS'$.}
\label{lem:geodInAdS':fig}
\end{figure}

\end{lem}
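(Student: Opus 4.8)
The plan is to reduce the statement to integrating the geodesic ODE, which becomes easy once one exploits its two first integrals, and then to read the qualitative claims off the resulting closed form. Since $f=\cos$ is smooth on $I=(-\tfrac{\pi}{2},\tfrac{\pi}{2})$, Proposition~\ref{pop:PropsLwpcs}(\ref{pop:PropsLwpcs:geodesiceqn}) gives the geodesic equations for a timelike $\gamma=(\alpha,\beta)$; the $\beta$-equation is equivalent to $(\cos^2(\alpha)\,\dot\beta)'=0$, so $C:=\cos^2(\alpha)\,\dot\beta$ is conserved (this is the conserved quantity attached to the Killing field $\partial_x$), while being timelike and $\tau$-arclength parametrized gives $-\dot\alpha^2+\cos^2(\alpha)\,\dot\beta^2=-1$. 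Conversely, by uniqueness for the (smooth) geodesic ODE a geodesic is determined by $\gamma(0)$ and $\dot\gamma(0)$, and since $\dot\alpha>0$ each maximal future-directed geodesic meets $\{t=0\}$ exactly once; so it suffices to produce, for $\gamma(0)=(0,c)$ on $\{t=0\}$ and each admissible $\dot\gamma(0)$, one solution of these two relations. I would write $\dot\alpha(0)=\cosh\omega$, $\dot\beta(0)=\sinh\omega$ — possible and unique since $\cos\alpha(0)=1$ forces $\dot\alpha(0)^2-\dot\beta(0)^2=1$ — so that $C=\sinh\omega$, and then observe that in the orthonormal frame $(\partial_t,\partial_x)$ at $t=0$ one has $g(\partial_t,\dot\gamma(0))=-\cosh\omega$, which by the definition of hyperbolic angle (cf.\ Lemma~\ref{lem:LOC}) identifies $\omega$, up to the sign convention, with the hyperbolic angle between $\gamma$ and the vertical geodesic $\lambda\mapsto(\lambda,c)$ at $\lambda=0$, and $c$ with the $x$-coordinate there — matching the names in the statement.

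Next I would integrate. Eliminating $\dot\beta$ from the two first integrals yields $\dot\alpha^2=\dfrac{\cosh^2\omega-\sin^2\alpha}{\cos^2\alpha}$; the numerator is $\ge\sinh^2\omega\ge0$ and is positive at $\lambda=0$, so $\dot\alpha$ keeps its sign and one takes the positive root. Substituting $u=\sin\alpha$ (legitimate, as $\alpha$ stays in $(-\tfrac{\pi}{2},\tfrac{\pi}{2})$) turns this into $\dfrac{du}{\sqrt{\cosh^2\omega-u^2}}=d\lambda$, whence $\sin\alpha=\cosh\omega\sin\lambda$ using $\alpha(0)=0$. Feeding this back, $\dot\beta=\dfrac{\sinh\omega}{\cos^2\alpha}=\dfrac{\sinh\omega}{1-\cosh^2\omega\sin^2\lambda}$, and a one-line differentiation (using $\tfrac{d}{d\lambda}\tfrac{\sin\lambda}{\sqrt{1-a\sin^2\lambda}}=\tfrac{\cos\lambda}{(1-a\sin^2\lambda)^{3/2}}$, together with $\cosh(\beta-c)=\tfrac{\cos\lambda}{\cos\alpha}$) confirms that $\sinh(\beta-c)=\dfrac{\sin\lambda\sinh\omega}{\sqrt{1-\cosh^2\omega\sin^2\lambda}}$ has exactly this derivative and the value $0$ at $\lambda=0$. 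This is the claimed formula; for $\omega=0$ it collapses to the vertical line $(\lambda,c)$.

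Finally I would extract the qualitative statements. The formula is meaningful precisely where $1-\cosh^2\omega\sin^2\lambda>0$, i.e.\ on $(-\tfrac{\pi}{2},\tfrac{\pi}{2})$ for $\omega=0$ and on $(\lambda_-,\lambda_+)$ with $\lambda_\pm=\pm\arcsin\tfrac{1}{\cosh\omega}$ for $\omega\neq0$, where $\alpha,\beta$ are smooth, so this is the full maximal geodesic. As $\lambda\to\lambda_+$ we get $\sin\alpha=\cosh\omega\sin\lambda\to1$, hence $\alpha\to\tfrac{\pi}{2}$, while $\sinh(\beta-c)=\dfrac{\sin\lambda\sinh\omega}{\cos\alpha}$ has bounded nonzero numerator and denominator tending to $0^+$, giving $\beta\to+\infty$ when $\omega>0$ and $\beta\to-\infty$ when $\omega<0$; the limit $\lambda\to\lambda_-$ is symmetric with $\alpha\to-\tfrac{\pi}{2}$ and the opposite sign, and in particular the $x$-coordinate stays bounded only for $\omega=0$. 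For the distance-realizer claim I would invoke Lemma~\ref{lem:AdS'}: $\AdS'$ is isometric to the causally convex, globally hyperbolic region $X=I(p_-,p_+)\subseteq\AdS$, the full open domain of $\gamma$ has $\bar\tau$-length $\le\pi$, so any compact sub-arc has $\bar\tau$-length strictly less than $D_{-1}=\pi$ and is therefore a distance realizer in $\AdS$; by causal convexity of $X$ such a realizer stays inside $X$, so it is a distance realizer in $X\cong\AdS'$. I expect the only real obstacle to be the bookkeeping in the first step — matching the analytic normalisation $\dot\alpha(0)=\cosh\omega$ with the paper's signed-hyperbolic-angle convention; the two integrations and the limit computations are routine.
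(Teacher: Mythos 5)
Your proposal is correct, but it follows a genuinely different route from the paper. The paper's own proof is a one-liner: it transforms the classically known timelike geodesics of $\AdS$ (already written down in the proof of Lem.\ \ref{lem:AdS'} as $t\mapsto(\sin t,\cos t\cosh x,\cos t\sinh x)$ and their isometric images) through the explicit embedding $f(t,x)=(\sin t,\cos t\cosh x,\cos t\sinh x)$ into warped-product coordinates, so the formula, the length bound and the maximizing property are all inherited from the ambient hyperboloid picture. You instead work intrinsically in the warped product: you integrate the geodesic ODE of Prop.\ \ref{pop:PropsLwpcs} using the two first integrals (the Clairaut-type quantity $\cos^2(\alpha)\dot\beta$ from the Killing field $\partial_x$ and the unit-speed normalisation), which yields $\sin\alpha=\cosh\omega\sin\lambda$ and then the claimed expression for $\beta-c$ by a direct differentiation check; your computations (the elimination $\dot\alpha^2=(\cosh^2\omega-\sin^2\alpha)/\cos^2\alpha$, the substitution $u=\sin\alpha$, the identity $\sqrt{1+F^2}=\cos\lambda/\cos\alpha$, the domain $1-\cosh^2\omega\sin^2\lambda>0$ and the boundary limits) are all correct, as is the identification of $\omega$ with the hyperbolic angle to the vertical up to sign and of $c$ with the $x$-coordinate at $\lambda=0$. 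For the distance-realizer claim you do have to import one ambient fact that the paper gets for free, namely that timelike geodesic segments of $\bar\tau$-length strictly less than $D_{-1}=\pi$ in $\AdS$ are maximizing; combined with the causal convexity of $X=I(p_-,p_+)$ from Lem.\ \ref{lem:AdS'} this closes the argument, and it is a standard model-space fact consistent with how the paper uses $\AdS$ elsewhere, but it is the one place where your intrinsic approach still leans on the ambient picture. The trade-off: the paper's route is shorter and makes maximality transparent, while yours is self-contained at the level of the warped-product formalism and verifies the formula without needing the coordinate change.
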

\begin{proof}
This follows from transforming the geodesics in \AdS to warped product coordinates.
\end{proof}
We state the following lemma only for referencing and self-containedness of this document:
\begin{lem}[$\tau$ in $\AdS'$]\label{lem:tauInAdS'}
We have that 
\begin{align*}
(t_1,x_1)&\leq_{\AdS'}(t_2,x_2)\Leftrightarrow \\
&\sin(t_1)\sin(t_2)+\cos(t_1)\cos(t_2)\cosh(x_2-x_1)\leq1\text{ and }t_1\leq t_2
\end{align*}
and
\begin{align*}
\tau_{\AdS'}((t_1,x_1),(t_2,x_2))&=\\
\arccos(\sin(t_1)\sin(t_2)&+\cos(t_1)\cos(t_2)\cosh(x_2-x_1))&\text{if }t_1\leq t_2
\end{align*}
\end{lem}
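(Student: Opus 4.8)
The plan is to transport everything to the ambient hyperboloid model of $\AdS$ via the isometry $f(t,x)=(\sin t,\cos t\cosh x,\cos t\sinh x)$ of Lemma~\ref{lem:AdS'}, and then to use the closed form of the time separation on the model space $\lm{-1}$ in terms of the defining bilinear form $b$ of signature $(-,-,+)$ on $\mb{R}^{1,2}$.

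First I would record that for $P,Q$ on the hyperboloid $\{b(\cdot,\cdot)=-1\}$ (hence in $\AdS$) with $P\ll Q$ and without winding around, one has $\overline\tau(P,Q)=\arccos(-b(P,Q))$: the unique timelike distance realizer issuing from $P$ is $s\mapsto\cos(s)P+\sin(s)V$, where $V$ is the unit timelike vector with $b(P,V)=0$, $b(V,V)=-1$ pointing towards $Q$; along it the $b$-arclength equals the parameter $s$, so $-b(P,Q)=\cos\overline\tau(P,Q)$. The ``no winding'' hypothesis is exactly that $X=f(\AdS')$ sits inside the chronological diamond $I(p_-,p_+)$, where $\overline\tau(p_-,p_+)=D_{-1}=\pi$ (Lemma~\ref{lem:AdS'}); this also forces $\overline\tau<\pi$ on $X$, so that the principal branch of $\arccos$ is the relevant one. (Equivalently, the same formula drops out of pushing the explicit $\overline\tau$-arclength geodesics of Lemma~\ref{lem:geodInAdS'} through $f$, or of specialising the law of cosines of Lemma~\ref{lem:LOC} to a degenerate triangle.)

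Then the computation is one line: with $P=f(t_1,x_1)$ and $Q=f(t_2,x_2)$,
\[
b(P,Q)=-\sin t_1\sin t_2-\cos t_1\cos t_2\big(\cosh x_1\cosh x_2-\sinh x_1\sinh x_2\big)=-\big(\sin t_1\sin t_2+\cos t_1\cos t_2\cosh(x_2-x_1)\big),
\]
by the subtraction formula for $\cosh$. Since $f$ is an isometry, $\tau_{\AdS'}((t_1,x_1),(t_2,x_2))=\overline\tau(P,Q)=\arccos\big(\sin t_1\sin t_2+\cos t_1\cos t_2\cosh(x_2-x_1)\big)$ whenever the two points are timelike related, and timelike relatedness forces $t_1<t_2$ since the $t$-level sets are the $f$-images of the spacelike level sets of $\overline\tau(p_-,\cdot)$. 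For the causal relation, I would either pass to the closure of the chronological case — by continuity of $\tau$, $P\le Q$ with $P\neq Q$ iff $t_1<t_2$ and $-b(P,Q)\le1$, i.e.\ $\sin t_1\sin t_2+\cos t_1\cos t_2\cosh(x_2-x_1)\le1$ (the bound $-b(P,Q)>-1$ being automatic, as $\cos t_1\cos t_2>0$ and $\cos(t_1-t_2)>-1$ for $t_1,t_2\in(-\tfrac\pi2,\tfrac\pi2)$) — or match directly against $(t_1,x_1)\le(t_2,x_2)\iff t_1\le t_2\text{ and }|x_2-x_1|\le\int_{t_1}^{t_2}\tfrac{1}{\cos\lambda}\,\diff\lambda$ from Proposition~\ref{pop:PropsLwpcs}.\ref{pop:PropsLwpcs:descrLeq}, using the elementary identity $\cosh\!\big(\int_{t_1}^{t_2}\sec\lambda\,\diff\lambda\big)=\tfrac{1-\sin t_1\sin t_2}{\cos t_1\cos t_2}$; the borderline case $t_1=t_2$ then collapses to $x_1=x_2$, consistent with reflexivity.

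There is no real obstacle here — the lemma is included for reference. The only points deserving attention are bookkeeping: ensuring that the timelike distance realizers sitting in $X$ do not wrap around the hyperboloid, so that their proper time is the principal value $\arccos(-b)\in(0,\pi)$ rather than $\arccos(-b)+2k\pi$ (which is precisely why $X$ is the maximal globally hyperbolic diamond of Lemma~\ref{lem:AdS'}), and tracking the chronological/null/causal trichotomy together with the branch of $\arccos$; all the trigonometric and hyperbolic identities are routine.
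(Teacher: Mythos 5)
Your proposal is correct. Note the paper itself gives no proof of this lemma (it is stated ``only for referencing''), so there is nothing to deviate from; your route is exactly the one the paper implicitly relies on: push points through the isometry $f(t,x)=(\sin t,\cos t\cosh x,\cos t\sinh x)$ of Lem.\ \ref{lem:AdS'} and use $\bar\tau(P,Q)=\arccos(-\langle P,Q\rangle)$ in the ambient $\mb{R}^{1,2}$, which is precisely the formula invoked in the proof of Lem.\ \ref{lem:causalBdryAtPi}. Your bookkeeping is also sound: $\tau<\pi$ on $X=I(p_-,p_+)$ by the reverse triangle inequality, so the principal branch of $\arccos$ applies, and the cross-check of the causal relation against Prop.\ \ref{pop:PropsLwpcs}.\ref{pop:PropsLwpcs:descrLeq} via $\cosh\bigl(\int_{t_1}^{t_2}\sec\lambda\,\diff\lambda\bigr)=\frac{1-\sin t_1\sin t_2}{\cos t_1\cos t_2}$ is a correct and complete verification, including the degenerate case $t_1=t_2$.
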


We expect spaces satisfying the assumptions of the Theorem \ref{thm:globalsplitting} to be very similar to $\AdS'=(-\frac{\pi}{2},\frac{\pi}{2})\times_{\cos}\mb{R}$, namely of the form $(-\frac{\pi}{2},\frac{\pi}{2})\times_{\cos} Y$. By Lemma \ref{lem:geodInAdS'} and by Proposition \ref{pop:PropsLwp}.\ref{pop:PropsLwp:geodesics}, we have an explicit description of timelike distance realizers in such spaces.

\begin{lem}[Conformal time in $\AdS'$]
Conformal time for $\AdS'=(-\frac{\pi}{2},\frac{\pi}{2})\times_{\cos}\mb{R}$ is given by
\begin{equation*}
\eta(t)=\log\left(\tan\left(\frac{x}{2}+\frac{\pi}{4}\right)\right)
\end{equation*}
\begin{equation*}
\eta^{-1}(s)= 2 \arctan(e^s) - \frac{\pi}{2}
\end{equation*}
\end{lem}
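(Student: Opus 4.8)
The plan is a direct computation from the definition. By definition, conformal time for $\AdS'=(-\frac{\pi}{2},\frac{\pi}{2})\times_{\cos}\mb{R}$ is $\eta(t)=\int_0^t\frac{1}{\cos(s)}\,\diff s$; note the warping function here is $f=\cos$, which is $(0,+\infty)$-valued on $(-\frac{\pi}{2},\frac{\pi}{2})$, so the integrand is well defined and positive. The first step is to recall the standard antiderivative $\int\sec(s)\,\diff s=\log|\sec(s)+\tan(s)|$, so that $\eta(t)=\log(\sec(t)+\tan(t))-\log(\sec(0)+\tan(0))=\log(\sec(t)+\tan(t))$, using $\sec(0)+\tan(0)=1$ and the positivity of $\sec+\tan$ on the interval in question.

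The second step is to rewrite $\sec(t)+\tan(t)=\frac{1+\sin(t)}{\cos(t)}$ in half-angle form. Using $\sin(t)=2\sin(\frac{t}{2})\cos(\frac{t}{2})$ and $\cos(t)=\cos^2(\frac{t}{2})-\sin^2(\frac{t}{2})$ one gets
\[
\frac{1+\sin(t)}{\cos(t)}=\frac{\bigl(\cos(\tfrac{t}{2})+\sin(\tfrac{t}{2})\bigr)^2}{\bigl(\cos(\tfrac{t}{2})-\sin(\tfrac{t}{2})\bigr)\bigl(\cos(\tfrac{t}{2})+\sin(\tfrac{t}{2})\bigr)}=\frac{\cos(\tfrac{t}{2})+\sin(\tfrac{t}{2})}{\cos(\tfrac{t}{2})-\sin(\tfrac{t}{2})}=\frac{1+\tan(\tfrac{t}{2})}{1-\tan(\tfrac{t}{2})}=\tan\!\left(\frac{t}{2}+\frac{\pi}{4}\right),
\]
where the last equality is the tangent addition formula (and $\cos(\frac{t}{2})\neq 0$ on our interval). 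Hence $\eta(t)=\log\tan(\frac{t}{2}+\frac{\pi}{4})$ — the variable $x$ written in the statement is a typo for $t$.

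For the inverse, the final step is to solve $s=\log\tan(\frac{t}{2}+\frac{\pi}{4})$ for $t$. Since $t\in(-\frac{\pi}{2},\frac{\pi}{2})$ gives $\frac{t}{2}+\frac{\pi}{4}\in(0,\frac{\pi}{2})$, the tangent is positive and invertible there, so $e^s=\tan(\frac{t}{2}+\frac{\pi}{4})$ yields $\arctan(e^s)=\frac{t}{2}+\frac{\pi}{4}$, i.e.\ $t=2\arctan(e^s)-\frac{\pi}{2}$. One can optionally remark that $\eta$ is a strictly increasing bijection onto $\mb{R}$ (as $t\to\pm\frac{\pi}{2}$ one checks $\frac{1+\sin t}{\cos t}=\frac{\cos t}{1-\sin t}$ tends to $0^+$ resp.\ $+\infty$), which matches the range of the inverse formula. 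There is no real obstacle here: the argument is elementary trigonometry, and the only mild point of care is tracking that all quantities stay in the ranges where $\log$, $\tan$ and $\arctan$ act as genuine mutual inverses.
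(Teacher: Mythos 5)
Your computation is correct: the definition of conformal time with warping function $f=\cos$ gives $\eta(t)=\int_0^t\sec(s)\,\diff s$, and your evaluation via $\log(\sec t+\tan t)=\log\tan\bigl(\frac{t}{2}+\frac{\pi}{4}\bigr)$ together with the straightforward inversion is exactly the intended (and elementary) verification; you are also right that the $x$ in the displayed formula is a typo for $t$. The paper states this lemma without proof, so there is nothing further to compare --- your argument fills in precisely the routine calculation the author left implicit.
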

\begin{dfn}
A Lorentzian warped product $(-\frac{\pi}{2},\frac{\pi}{2})\times_{\cos}Y$ is called the \emph{anti-deSitter suspension} of $Y$ and is the Lorentzian analogue of spherical suspensions in metric spaces. The corresponding Lorentzian warped product comparison space is $\AdS'$.
\end{dfn}

\begin{lem}[The future causal boundary of $\AdS'$ is always at distance $\pi$ from the bottom point]\label{lem:causalBdryAtPi}
Let $p=(t_1,0)\in\AdS'$ and let $\alpha:[0,a)\to\AdS'$ be a future directed causal curve starting at $p$ with $\lim_{\lambda\to a}t(\alpha(\lambda))=\frac{\pi}{2}$. Then for all $\varepsilon>0$ there is a $t_0$ such that $\lim_{\lambda\to a}\tau((t_0,0),\alpha(\lambda))>\pi-\varepsilon$. 
\end{lem}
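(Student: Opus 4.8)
The plan is to reduce the statement to the closed-form expression for $\bar\tau$ on $\AdS'$ in Lemma~\ref{lem:tauInAdS'} and the description of the causal relation on warped product comparison spaces in Proposition~\ref{pop:PropsLwpcs}.\ref{pop:PropsLwpcs:descrLeq}. Write $\alpha(\lambda)=(s(\lambda),x(\lambda))$. First I would pick the basepoint $t_0$ somewhere in $(-\tfrac\pi2,t_1)$, to be pinned down only at the very end. Since $\alpha$ is a future directed causal curve issuing from $p=(t_1,0)$, we have $p\le\alpha(\lambda)$ for every $\lambda$, hence $t_1\le s(\lambda)$ and $|x(\lambda)|\le\int_{t_1}^{s(\lambda)}\tfrac{1}{\cos\mu}\,\diff\mu=\eta(s(\lambda))-\eta(t_1)$; because $\eta$ is strictly increasing and $t_0<t_1$ this also forces $(t_0,0)\ll\alpha(\lambda)$. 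Lemma~\ref{lem:tauInAdS'} then gives, for all $\lambda$,
\[
\tau\big((t_0,0),\alpha(\lambda)\big)=\arccos\!\big(\sin t_0\sin s(\lambda)+\cos t_0\cos s(\lambda)\cosh x(\lambda)\big).
\]

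Next I would note that $\lambda\mapsto\tau((t_0,0),\alpha(\lambda))$ is non-decreasing: for $\lambda_1\le\lambda_2$ one has $(t_0,0)\le\alpha(\lambda_1)\le\alpha(\lambda_2)$ and applies the reverse triangle inequality. Hence the limit as $\lambda\to a$ exists and lies in $[0,\pi]$, and by continuity of $\cos$ the argument of $\arccos$ above also converges; it therefore suffices to bound that argument from above along the curve and then let $t_0$ tend to $-\tfrac\pi2$.

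The heart of the proof is the estimate of $\cos s(\lambda)\cosh x(\lambda)$. Using $|x(\lambda)|\le\eta(s(\lambda))-\eta(t_1)$ and monotonicity of $\cosh$ on $[0,\infty)$, together with the elementary identity $\cos s=\dfrac{2e^{\eta(s)}}{1+e^{2\eta(s)}}$ (equivalently $\cos s=\sin\!\big(2(\tfrac s2+\tfrac\pi4)\big)$, recalling $e^{\eta(s)}=\tan(\tfrac s2+\tfrac\pi4)$ for $\AdS'$), one gets
\[
0\le\cos s(\lambda)\cosh x(\lambda)\le\cos s(\lambda)\cosh\!\big(\eta(s(\lambda))-\eta(t_1)\big)=\frac{e^{2\eta(s(\lambda))}e^{-\eta(t_1)}+e^{\eta(t_1)}}{1+e^{2\eta(s(\lambda))}}.
\]
As $\lambda\to a$ we have $s(\lambda)\to\tfrac\pi2$, so $\eta(s(\lambda))\to+\infty$ and the right-hand side tends to $e^{-\eta(t_1)}$; moreover $\sin s(\lambda)\to1$ while $\sin t_0<0$. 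Consequently
\[
\limsup_{\lambda\to a}\big(\sin t_0\sin s(\lambda)+\cos t_0\cos s(\lambda)\cosh x(\lambda)\big)\le\sin t_0+\cos t_0\,e^{-\eta(t_1)},
\]
and therefore $\lim_{\lambda\to a}\tau((t_0,0),\alpha(\lambda))\ge\arccos\!\big(\sin t_0+\cos t_0\,e^{-\eta(t_1)}\big)$. Since $\eta(t_1)$ is a fixed constant, $\sin t_0+\cos t_0\,e^{-\eta(t_1)}\to-1$ as $t_0\downarrow-\tfrac\pi2$, so given $\varepsilon>0$ I choose $t_0\in(-\tfrac\pi2,t_1)$ with $\sin t_0+\cos t_0\,e^{-\eta(t_1)}<\cos(\pi-\varepsilon)$, which yields $\lim_{\lambda\to a}\tau((t_0,0),\alpha(\lambda))>\pi-\varepsilon$.

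The only genuinely delicate point is this middle estimate: the factor $\cosh x(\lambda)$ may well blow up (the $x$-coordinate of $\alpha$ can run off to $\pm\infty$, and it need not even converge), and the claim is exactly that the causal constraint $|x(\lambda)|\le\eta(s(\lambda))-\eta(t_1)$ makes it grow no faster than $\cos s(\lambda)$ decays, so that the product stays bounded; phrasing the conclusion via $\limsup$ together with the monotonicity of $\lambda\mapsto\tau((t_0,0),\alpha(\lambda))$ (which also secures existence of the limit in the statement) handles the possible oscillation of $x(\lambda)$. Everything else is a routine computation with the formulas of Lemma~\ref{lem:tauInAdS'} and the conformal-time formula for $\AdS'$.
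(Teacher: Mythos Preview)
Your proof is correct and reaches exactly the same final bound as the paper, namely $\arccos\!\big(\sin t_0+\cos t_0\cdot\frac{1-\sin t_1}{\cos t_1}\big)$ (your $e^{-\eta(t_1)}$ equals $\frac{1-\sin t_1}{\cos t_1}$), but you get there by a genuinely different route. The paper first reduces to the extremal null curve, then \emph{leaves} the warped product picture: it passes to the ambient $\mb{R}^{1,2}$ of Def.~\ref{def:AdS}, computes the endpoint of the null geodesic as an intersection of two null lines in $\mb{R}^{1,2}$, and evaluates $\tau$ via the inner-product formula $\tau(p,q)=\arccos(-\langle p,q\rangle)$. You instead stay entirely inside the warped product coordinates, using Lemma~\ref{lem:tauInAdS'} together with the conformal-time identity $\cos s=\frac{2e^{\eta(s)}}{1+e^{2\eta(s)}}$ to control the product $\cos s(\lambda)\cosh x(\lambda)$ directly; the monotonicity of $\lambda\mapsto\tau((t_0,0),\alpha(\lambda))$ then lets you avoid even the w.l.o.g.\ reduction to the null case. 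Your approach is more self-contained within the toolkit already set up in Section~\ref{sec:warpedproducts}; the paper's is more geometric and makes the role of the future tip of $\AdS'$ explicit. One cosmetic slip: the aside ``while $\sin t_0<0$'' is not guaranteed for arbitrary $t_0\in(-\tfrac\pi2,t_1)$ and is in any case unused---the $\limsup$ estimate holds because $\sin s(\lambda)\to1$ and $\cos t_0>0$, so you can simply delete that clause.
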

\begin{proof}
W.l.o.g.\ assume $x(\alpha(\lambda))\geq0$ for all $\lambda$. Note that $\tau((t_0,0),(t,x))$ is monotonically decreasing in $|x|$, thus the worst case is attained when $\alpha$ is null, i.e.\ we can assume $\alpha(t)=\left(t,\int_{t_1}^t\frac{1}{\cos(t)}\diff t\right)$. 

Now we calculate $\lim_{t\to a}\tau((t_0,0),\alpha(t))$: For the calculation, we switch to the $3$-dimensional picture in Def.\ \ref{def:AdS}, there we can use the formula 
\begin{equation*}
\tau(p,q)=\arccos(-\left<p,q\right>)
\end{equation*}
where the inner product is in the ambient semi-Riemannian vector space, see \cite[(2.7)]{tauInModelSpaces}. The point $\lim_{t\to a}\alpha(t)$ exists and is given by the intersection of the two null geodesics:
\begin{equation*}
\{(0,1,0)+s(1,0,1):s\in\mb{R}\} \;\cap\; \{(\cos(t_1),\sin(t_1),0)+s(-\sin(t_1),\cos(t_1),1) :s\in\mb{R}\}
\end{equation*}
yielding the point
\begin{equation*}
\lim\alpha(t)=\left(\frac{1-\sin(t_1)}{\cos(t_1)},1,\frac{1-\sin(t_1)}{\cos(t_1)}\right)
\end{equation*}
so 
\begin{equation*}
\tau((\cos(t_0),\sin(t_0),0),\lim\alpha(t))=\arccos\left(\frac{\cos(t_0)(1-\sin(t_1))}{\cos(t_1)}+\sin(t_0)\right)
\end{equation*}
where we used the inner product formula for $\tau$.

as $t_0\to-\frac{\pi}{2}$, this goes to $\pi$:
\begin{equation*}
\arccos\left(-1\right)=\pi
\end{equation*}
\end{proof}

%%%end-warpedproducts

\section{Rays, lines, co-rays and asymptotes}\label{sec:lines}

\subsection{Timelike co-ray condition}\label{subsec:lines:lines}

In this subsection, we study causal rays and lines and show how to obtain them as limits of causal maximisers. An \AdSn-line is then a line of length $\pi$, parametrized by $\tau$-unit speed on $(-\frac{\pi}{2},\frac{\pi}{2})$. Then, we analyse triangles where one side is a segment on a timelike \AdSn-line and show that the angles adjacent to the \AdSn-line are equal to their comparison angles. This in fact follows from the more general principle that one can stack triangle comparisons of nested triangles with two endpoints on a timelike \AdSn-line. The latter situation arises in the construction of asymptotes, and via the stacking principle, one can show that any future directed and any past directed asymptote from a common point to a given timelike \AdSn-line fit together to give a (timelike) asymptotic \AdSn-line. This approach is nearly equal to the approach in \cite{triSplitting}, where the construction of co-rays and asymptotes is based on \cite{LorToponogovSplitting} and the stacking principle and equality of angles are based on \cite[Lem.\ 10.5.4]{BBI}.

\begin{dfn}[Rays, Lines, \AdSn-lines]\par
Let $(X,d,\ll,\leq,\tau)$ be a Lorentzian pre-length space. A \emph{(future directed) causal ray} is a future inextendible, future directed causal curve $c:I\to X$ that maximises the time separation between any of its points, where $I$ is either a closed interval $[a,b]$ or a half-open interval $[a,b)$. A \emph{(future directed) causal line} is a (doubly) inextendible, future directed causal curve $\gamma:I\to X$ that maximises the time separation between any of its points. Here $I$ can in general be open, closed, or half-open. It is a \emph{\AdSn-line} if $\gamma$ is timelike and $L(\gamma)=\pi$, parametrized in $\tau$-unit speed parametrization on $(-\frac{\pi}{2},\frac{\pi}{2})$. 
\end{dfn}

\begin{rem}
If $X$ is regularly localisable, then any causal ray/line $c$ is either timelike or null by Thm.\ \cite[Thm.\ 3.18]{saemLorLen}.
\end{rem}

\begin{lem}[Limit of distance realizers is distance realizer]\label{lem:limitRealizer}
Let $X$ be a localizable \LpLS and let $\gamma_n:[a_n,b_n]\to X$ be causal distance realizers converging pointwise to a causal curve $\gamma:[a,b]\to X$ (and $a_n\to a$, $b_n\to b$). Then $\gamma$ is a distance realizer, with $L_{\tau}(\gamma)=\lim_nL_{\tau}(\gamma_n)$. In particular, if both $\gamma_n$ and $\gamma$ are timelike and the convergence is uniform, a choice of $\tau$-arclength parametrizations converge to $\gamma$ too.
\end{lem}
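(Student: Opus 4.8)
The plan is to show that $\gamma$ maximizes $\tau$ between any two of its points, and that its $\tau$-length is the limit of the $\tau$-lengths of the $\gamma_n$. First I would fix $a \le s < t \le b$ and consider $\gamma(s), \gamma(t)$. Since $a_n \to a$ and $b_n \to b$, for $n$ large we have $s, t$ (or slightly shifted parameters $s_n \to s$, $t_n \to t$ that lie in $[a_n, b_n]$) in the domain of $\gamma_n$, and $\gamma_n(s_n) \to \gamma(s)$, $\gamma_n(t_n) \to \gamma(t)$ by pointwise convergence (using that causal curves in a localizable space are, by the standing assumptions, locally uniformly limits — more carefully, I would invoke lower semicontinuity of $\tau$ and the limit curve theorem machinery available in localizable Lorentzian pre-length spaces). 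The key inequality is
\begin{equation*}
\tau(\gamma(s), \gamma(t)) \ge \limsup_n \tau(\gamma_n(s_n), \gamma_n(t_n)) = \limsup_n L_\tau(\gamma_n|_{[s_n,t_n]}),
\end{equation*}
where the first step needs upper semicontinuity of $\tau$ along the converging sequence — this is where one uses that $\gamma$ is itself causal, so $\gamma(s) \le \gamma(t)$, together with the localizability (so that $\tau$ is continuous on suitable neighbourhoods and the reverse triangle inequality can be run): the standard argument is that any causal curve from $\gamma(s)$ to $\gamma(t)$ has $\tau$-length at most $\tau(\gamma(s),\gamma(t))$, while $\gamma|_{[s,t]}$ is a limit of the maximizers $\gamma_n|_{[s_n,t_n]}$, whose lengths converge, forcing $L_\tau(\gamma|_{[s,t]}) = \lim_n L_\tau(\gamma_n|_{[s_n,t_n]}) = \lim_n \tau(\gamma_n(s_n),\gamma_n(t_n))$.

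Next I would combine this with the reverse triangle inequality. On the one hand $L_\tau(\gamma|_{[s,t]}) \le \tau(\gamma(s),\gamma(t))$ always. On the other hand, lower semicontinuity of $\tau$ gives $\tau(\gamma(s),\gamma(t)) \le \liminf_n \tau(\gamma_n(s_n),\gamma_n(t_n)) = \liminf_n L_\tau(\gamma_n|_{[s_n,t_n]})$, since each $\gamma_n$ is a distance realizer. Putting these together yields $L_\tau(\gamma|_{[s,t]}) = \tau(\gamma(s),\gamma(t))$, i.e. $\gamma$ realizes the distance between $\gamma(s)$ and $\gamma(t)$; taking $s = a$, $t = b$ gives $L_\tau(\gamma) = \lim_n L_\tau(\gamma_n)$ as well. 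The subtle point throughout is justifying the convergence of the $\tau$-lengths, i.e. that $L_\tau$ is continuous along this particular converging sequence of maximizers — this reduces to the two semicontinuity estimates above plus the fact that for a distance realizer length equals $\tau$ of the endpoints, so no extra regularity of $L_\tau$ is needed.

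For the final assertion, suppose $\gamma_n$ and $\gamma$ are all timelike and the convergence $\gamma_n \to \gamma$ is uniform. Parametrize each $\gamma_n$ by $\tau$-arclength on $[0, L_n]$ with $L_n = L_\tau(\gamma_n) \to L := L_\tau(\gamma) > 0$, and similarly parametrize $\gamma$ by $\tau$-arclength on $[0,L]$. Because $\gamma$ is a timelike distance realizer and $\tau$ restricted to it is additive (by the equality $L_\tau(\gamma|_{[s,t]}) = \tau(\gamma(s),\gamma(t))$ just proved), the $\tau$-arclength reparametrization of $\gamma$ is a homeomorphism of $[0,L]$ with the image, and likewise for $\gamma_n$; uniform convergence of the curves together with convergence of the parameter intervals then forces the reparametrized curves to converge uniformly, since the reparametrization functions (inverse to $s \mapsto L_\tau(\gamma_n|_{[a_n,s]})$) converge uniformly by a standard Arzelà–Ascoli / monotone-convergence argument. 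The main obstacle I anticipate is not any single step but rather carefully setting up the shifted parameters $s_n, t_n$ and invoking the right continuity/limit-curve results for localizable Lorentzian pre-length spaces so that all the semicontinuity estimates apply simultaneously; the reverse-triangle-inequality sandwich itself is routine.
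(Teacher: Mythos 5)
There is a genuine gap at the heart of your argument: the step that the limit curve does not lose $\tau$-length. From your two displayed estimates you only obtain $L_\tau(\gamma|_{[s,t]}) \leq \tau(\gamma(s),\gamma(t)) \leq \liminf_n L_\tau(\gamma_n|_{[s_n,t_n]})$, and to close the sandwich you need the reverse bound $L_\tau(\gamma|_{[s,t]}) \geq \limsup_n L_\tau(\gamma_n|_{[s_n,t_n]})$, i.e.\ upper semicontinuity of the length functional along the converging sequence of maximizers. In your write-up this is exactly the phrase ``whose lengths converge, forcing $L_\tau(\gamma|_{[s,t]}) = \lim_n L_\tau(\gamma_n|_{[s_n,t_n]})$'' --- but that convergence of lengths is precisely the assertion being proved, so the argument is circular at its crucial point. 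The substitute you offer, ``upper semicontinuity of $\tau$ along the converging sequence,'' is not available under the stated hypotheses: in a Lorentzian pre-length space $\tau$ is only lower semicontinuous by definition, and localizability only gives a continuous \emph{local} time separation on localizing neighbourhoods; it does not make the global $\tau$ upper semicontinuous between points such as $\gamma(s)$ and $\gamma(t)$, which need not lie in a common such neighbourhood. (Upper semicontinuity of $\tau$ is the kind of property one gets from global hyperbolicity or causal closedness, neither of which is assumed here.)

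The correct route --- and the content of the result the paper cites ([triSplitting, Thm.\ 2.23], resting on the Kunzinger--S\"amann upper semicontinuity of the $\tau$-length functional, which is where localizability genuinely enters --- is the chain
\begin{equation*}
L_\tau(\gamma) \;\geq\; \limsup_n L_\tau(\gamma_n) \;\geq\; \liminf_n \tau\bigl(\gamma_n(a_n),\gamma_n(b_n)\bigr) \;\geq\; \tau\bigl(\gamma(a),\gamma(b)\bigr) \;\geq\; L_\tau(\gamma),
\end{equation*}
where the first inequality is upper semicontinuity of $L_\tau$ under convergence of causal curves, the second uses that each $\gamma_n$ is a realizer, the third is lower semicontinuity of $\tau$, and the last is the general bound of length by time separation; equality throughout gives both claims at once. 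Your proposal contains the lower-semicontinuity half and the trivial bound, but neither proves nor correctly invokes the length-functional semicontinuity, so as written the main claim is not established. The final statement about convergence of $\tau$-arclength parametrizations is a reasonable sketch, but it presupposes the convergence of lengths, so it inherits the same gap. (Minor additional care: with merely pointwise convergence and shifted parameters $s_n \to s$ you also need some equicontinuity, e.g.\ $d$-arclength parametrization, to get $\gamma_n(s_n) \to \gamma(s)$.)
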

\begin{proof}
This is a part of \cite[Thm.\ 2.23]{triSplitting}. 
\end{proof}

Note that if the limit curve is only defined on an open domain, ``we can apply this to each compact subinterval, but the limit curve can lose length''. We will address this in Prop.\ \ref{pop:TCRCHoldsonI(gamma)} and Prop.\ \ref{pop:asymptoticLines}.

\begin{dfn}[Corays, Asymptotes]
\label{def:asymptotes}
Let $c:[0,b)\to X$ be future causal and future inextensible. Let $x_n\to x$ and $t_n\nearrow b$ such that $x_n\in I^-(c(t_n))$. Let $\alpha_n:[0,a_n]\to X$ be a sequence of future directed, maximising causal curves from $x_n$ to $c(t_n)$ in $d$-arclength parametrisation. Any limit curve $\alpha$ of $\alpha_n$ (see \cite[Thm.\ 3.7]{saemLorLen}) is called a \emph{(future) coray} to $c$ at $x$. By Lem.\ \ref{lem:limitRealizer}, $\alpha$ is automatically maximizing, although it might be null. 

If we choose $x_n=x$ constant, $\alpha$ is called a \emph{(future) asymptote}.
\end{dfn}

A lot of the time, we will be working in the \emph{setup of the local splitting}, assuming $X$ to be a connected, regularly localisable, globally hyperbolic Lorentzian length space with proper metric $d$ and global timelike curvature bounded below by $K=-1$ satisfying timelike geodesic prolongation and containing an \AdSn-line $\gamma:(-\frac{\pi}{2},\frac{\pi}{2}) \to X$ in $\tau$-arclength parametrization.

We can construct past and future directed co-rays from all points on $I(\gamma):=I^+(\gamma) \cap I^-(\gamma)$. 

Since maximising causal curves have causal character, a future coray is either timelike or null, but in our case they will always be timelike:

\begin{pop}\label{pop:TCRCHoldsonI(gamma)}
In the setting of the local splitting, all co-rays from points in $I(\gamma)$ are timelike.
\end{pop}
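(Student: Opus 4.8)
The plan is to argue by contradiction, reducing a hypothetical null co-ray to a violation of regularity of $X$ together with the lower curvature bound. By time-duality it suffices to treat a future co-ray $\alpha$ from $x\in I(\gamma)$: recall $\alpha$ is a limit curve of maximisers $\alpha_n$ from $x_n\to x$ to $\gamma(t_n)$ with $t_n\nearrow\tfrac\pi2$ and $x_n\ll\gamma(t_n)$, so $\alpha$ is future inextendible with $\alpha(0)=x$, and — since $X$ is regularly localisable — it is either timelike or null; suppose the latter.

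First I would exploit \emph{both} defining inclusions $x\in I^+(\gamma)$ and $x\in I^-(\gamma)$: fix $s_-<t^*$ in $(-\tfrac\pi2,\tfrac\pi2)$ with $\gamma(s_-)\ll x\ll\gamma(t^*)$. As chronology is open in a globally hyperbolic Lorentzian length space, for $n$ large $\gamma(s_-)\ll x_n\ll\gamma(t^*)\ll\gamma(t_n)$. Put $a_n=\tau(\gamma(s_-),x_n)$, $b_n=\tau(x_n,\gamma(t_n))=L_\tau(\alpha_n)$, $c_n=\tau(\gamma(s_-),\gamma(t_n))=t_n-s_-$ (the last since $\gamma$ is a distance realiser). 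Then $a_n\to a:=\tau(\gamma(s_-),x)\in(0,\pi)$ and $c_n\to\tfrac\pi2-s_-<\pi$ by continuity of $\tau$. The crucial uniform estimate is that $b_n$ stays bounded away from $0$ and $\pi$: the reverse triangle inequality gives $a_n+b_n\le c_n$, whence $\limsup_n b_n\le\tfrac\pi2-s_--a<\pi$, while inserting $\gamma(t^*)$ gives $b_n\ge\tau(x_n,\gamma(t^*))+(t_n-t^*)>\tau(x_n,\gamma(t^*))\to\tau(x,\gamma(t^*))>0$. Hence, by the law of cosines (sign $+1$, as $x_n$ is not a time endpoint), $\cosh\tilde\ma_{x_n}(\gamma(s_-),\gamma(t_n))=\dfrac{\cos a_n\cos b_n-\cos c_n}{\sin a_n\sin b_n}$ remains bounded, so $\omega_n:=\tilde\ma_{x_n}(\gamma(s_-),\gamma(t_n))\le M$ for $n$ large and some $M<\infty$.

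Next, pick $s_0>0$ in the domain of $\alpha$. Then $\alpha_n(s_0)\to\alpha(s_0)$, and Lemma \ref{lem:limitRealizer} applied on $[0,s_0]$ gives $\beta_n:=\tau(x_n,\alpha_n(s_0))=L_\tau(\alpha_n|_{[0,s_0]})\to L_\tau(\alpha|_{[0,s_0]})=0$, while $\delta_n:=\tau(\gamma(s_-),\alpha_n(s_0))\to\delta_\infty:=\tau(\gamma(s_-),\alpha(s_0))$; push-up from $\gamma(s_-)\ll x\le\alpha(s_0)$ gives $\gamma(s_-)\ll\alpha(s_0)$, so $a\le\delta_\infty\le\tfrac\pi2-s_-<\pi$. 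Now a dichotomy. If $\delta_\infty=a$: the concatenation of a maximiser from $\gamma(s_-)$ to $x$ with the null segment $\alpha|_{[0,s_0]}$ is a causal curve from $\gamma(s_-)$ to $\alpha(s_0)$ of $\tau$-length $a+0=a=\tau(\gamma(s_-),\alpha(s_0))$, hence a distance realiser between the timelike related points $\gamma(s_-)\ll\alpha(s_0)$ that contains a null segment — impossible, as $X$ is regular. If $\delta_\infty>a$: for $n$ large $\gamma(s_-)\ll x_n\ll\alpha_n(s_0)$ (because $\alpha_n$ is timelike) and $\Delta(\gamma(s_-),x_n,\alpha_n(s_0))$ satisfies the size bounds ($\delta_n\le t_n-s_-<\pi$, $\beta_n\le b_n<\pi$), so the law of cosines gives $\cosh\tilde\ma_{x_n}(\gamma(s_-),\alpha_n(s_0))=\dfrac{\cos a_n\cos\beta_n-\cos\delta_n}{\sin a_n\sin\beta_n}\longrightarrow\dfrac{\cos a-\cos\delta_\infty}{\sin a\cdot 0^+}=+\infty$, the numerator being positive since $0\le a<\delta_\infty<\pi$; but by monotonicity comparison (Theorem \ref{thm:equivTriCompAndMonComp}, available since $X$ is regular with global curvature $\ge-1$), comparing the sub-triangle on $\alpha_n$ with the full one yields $\tilde\ma_{x_n}(\gamma(s_-),\alpha_n(s_0))\le\tilde\ma_{x_n}(\gamma(s_-),\gamma(t_n))=\omega_n\le M$ — a contradiction. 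Either way $\alpha$ cannot be null, so it is timelike.

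I expect the main obstacle to be the bookkeeping in this limiting argument: checking that the comparison triangles for $\Delta(\gamma(s_-),x_n,\alpha_n(s_0))$ are genuinely realisable in \AdS uniformly in $n$, that the law-of-cosines expressions pass to the stated limits, and above all that the two facts feeding the contradiction hold in full generality — the boundedness of $\omega_n$ (which is exactly why one must use the two-sided sandwich $\gamma(s_-)\ll x_n\ll\gamma(t^*)\ll\gamma(t_n)$, rather than only one inclusion of $x\in I(\gamma)$), and the angle-monotonicity inequality $\tilde\ma_{x_n}(\gamma(s_-),\alpha_n(s_0))\le\tilde\ma_{x_n}(\gamma(s_-),\gamma(t_n))$, which needs $\alpha_n(s_0)$ to precede $\gamma(t_n)$ along $\alpha_n$ and the monotonicity-comparison form of the curvature bound. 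The conceptual core, however, is the dichotomy: a null co-ray is forced either to splice with the past realiser to $\gamma(s_-)$ into a distance realiser with a null piece (contradicting regularity) or to make an infinite comparison angle with that realiser (contradicting the bounded one coming from the lower curvature bound).
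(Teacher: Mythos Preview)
Your argument is correct and follows essentially the same architecture as the paper's: assume a null co-ray, bound the comparison angle at $x_n$ in the big triangle $\Delta(\gamma(s_-),x_n,\gamma(t_n))$, then use monotonicity along $\alpha_n$ to bound the comparison angle in the sub-triangle $\Delta(\gamma(s_-),x_n,\alpha_n(s_0))$, and finally note that the law of cosines forces either the regularity contradiction (your $\delta_\infty=a$ branch, which is exactly the paper's conclusion $\tau(\mu(t),\sigma(s))=\tau(\mu(t),p)$) or an unbounded angle (your $\delta_\infty>a$ branch). The dichotomy is just a repackaging of the paper's ``denominator $\to 0$ forces numerator $\to 0$'' step.

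The one genuine difference is in how the big angle $\omega_n$ is bounded. The paper bounds it indirectly: it chooses a maximiser $\mu_n$ from the point on $\gamma$ to $x_n$, uses continuity of angles (Prop.~\ref{pop:continuityOfAngles}, which needs timelike geodesic prolongation) to bound the angle at the $\gamma$-vertex, and then uses the first law-of-cosines identity to conclude $b_n$ is bounded away from $0$ and $\pi$, hence $\bar\theta_n$ is bounded. You instead exploit \emph{both} inclusions $x\in I^\pm(\gamma)$ and bound $b_n$ from below directly via $b_n\ge\tau(x_n,\gamma(t^*))\to\tau(x,\gamma(t^*))>0$. This is more elementary and avoids invoking Prop.~\ref{pop:continuityOfAngles} altogether; the price is that you must remember to use $x\in I^-(\gamma)$, which the paper's route does not need. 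A minor point you leave implicit: the monotonicity inequality requires fixing a past-directed distance realiser from $x_n$ to $\gamma(s_-)$, whose existence is guaranteed by condition (ii) of the curvature bound (or global hyperbolicity).
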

\begin{proof}
Suppose there is a point $p \in I(\gamma)$ such that the claim does not hold at $p$, so w.l.o.g.\ there is a sequence $p_n \to p$, $t_n \to +\frac{\pi}{2}$ and maximal timelike curves $\sigma_n$ from $p_n$ to $y_n:=\gamma(t_n)$ such that $\sigma_n$ converge to a future directed null ray $\sigma$. Choose some $x \in \gamma \cap I^-(p)$ and let $\mu_n$ be maximal timelike curves from $x$ to $p_n$ (assuming $n$ to be large enough, $x\ll p_n$). Applying the limit curve theorem, it is easily seen that (up to a choice of subsequence) the $\mu_n$ converge locally uniformly to a maximising limit causal curve $\mu$ from $x$ to $p$. $\mu$ is timelike since $x \ll p$. Denote by $\gamma_n$ the piece of $\gamma$ that runs between $x$ and $y_n$. Set $a_n:=L_{\tau}(\mu_n)$, $b_n:=L_{\tau}(\sigma_n)$ and $c_n:=L_{\tau}(\gamma_n)$. Let $\beta_n:=\ma_x(\mu_n,\gamma_n)$ and $\theta_n:=\ma_{p_n}(\mu_n,\sigma_n)$. Then $a_n \to a:=\tau(x,p)$ and by the continuity of angles (cf.\ Prop.\ \ref{pop:continuityOfAngles}), $\beta_n \to \beta$, where $\beta$ is the angle between $\mu$ and $\gamma$. Consider the comparison triangles for $\Delta(x,p_n,y_n)$ in \AdS and call its sides $(\overline{\mu}_n,\overline{\sigma}_n,\overline{\gamma}_n)$, then the angle $\overline{\beta}_n$ between $\overline{\mu}_n$ and $\overline{\gamma}_n$ satisfies $\overline{\beta}_n \leq \beta_n$ and similarly $\overline{\theta}_n \geq \theta_n$. Since $\beta_n \to \beta$, there is some $C > 0$ such that $\overline{\beta}_n \leq C$ for all $n$. The hyperbolic law of cosines in $\AdS$ (see Lem.\ \ref{lem:LOC}) gives 
\begin{align*}
	&\cos(b_n)=\cos(a_n)\cos(c_n)+\sin(a_n)\sin(c_n)\cosh(\overline{\beta}_n),\\
	&\cos(c_n)=\cos(a_n)\cos(b_n)-\sin(a_n)\sin(b_n)\cosh(\overline{\theta}_n).
\end{align*}
Using the first equation, noting that $\overline{\beta}_n$ stays bounded and that $a_n$ and $c_n$ stay bounded away from zero and $\pi$, we get that also $b_n$ stays bounded away from zero and $\pi$. This means that also $\overline{\theta}_n$ stays bounded. 

From the monotonicity condition we get that 
\begin{equation*}
\tilde{\ma}_{p_n}(\sigma_n(s),\mu_n(t)))\leq\tilde{\ma}_{p_n}(x,y_n)=\bar{\theta}_n
\end{equation*}
for each $s$ and $t$, so this is bounded too. We will see that this is incompatible with $\sigma_n$ ``getting more and more null'': Note that we get the following estimate for some constant $C''$: 
\begin{align*}
C'' &\geq \cosh( \tilde{\measuredangle}_{p_n}(\sigma_n(s),\mu(t))) \\
&= \frac{\cos(\tau(\mu_n(t),p_n))\cos(\tau(p_n,\sigma_n(s)))-\cos(\tau(\mu_n(t),\sigma_n(s)))}{\sin(\tau(p_n,\sigma_n(s))\sin(\tau(\mu_n(t),p_n))}.
\end{align*}
Since the denominator goes to $0$ for $n \to \infty$ (as $\tau(p_n,\sigma_n(s)) \to \tau(p,\sigma(s)) = 0$ and $\tau(\mu_n(t),p_n) \to \tau(\mu(t),p) > 0$), the enumerator has to go to $0$ as well, in particular this means
\begin{align*}
    \tau(\mu(t),\sigma(s)) = \tau(\mu(t),p)
\end{align*}
for all $s,t$. This implies that running along $\mu$ from $\mu(t)$ to $p$ and then along $\sigma$ to $\sigma(s)$ gives a maximiser, but this curve has a timelike and a null piece, a contradiction to \cite[Thm.\ 3.18]{saemLorLen}.
\end{proof}

\begin{pop}\label{pop:asyInext}
Let $X$ be a $d$-compatible, locally causally closed and causal \LpLS with proper $d$ and $c:[0,b)\to X$ future inextensible, and let $\alpha:[0,a)\to X$ be a timelike asymptote to $c$. Then $\alpha$ is future inextensible.
\end{pop}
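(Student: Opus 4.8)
We argue by contradiction. Suppose $\alpha$ is future extendible, i.e.\ the future endpoint $q:=\lim_{s\to a^-}\alpha(s)$ exists in $X$. By definition $\alpha$ arises, via the limit curve theorem (see \cite[Thm.\ 3.7]{saemLorLen}), as a maximal limit curve of a sequence of $d$-arclength parametrised maximisers $\alpha_n:[0,a_n]\to X$ running from $x$ to $c(t_n)$, where $t_n\nearrow b$; after relabelling we may take $\alpha_n\to\alpha$ locally uniformly on $[0,a)$ with $a=\liminf_n a_n$. A $d$-arclength parametrised causal curve defined on an unbounded interval cannot converge, since $d$-compatibility bounds the $d$-length of causal curves lying inside a fixed neighbourhood of its would-be limit point; hence $a<\infty$. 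By local causal closedness $\alpha$ extends to a continuous causal curve $\bar\alpha:[0,a]\to X$ with $\bar\alpha(a)=q$, which by Lemma \ref{lem:limitRealizer} is again a distance realiser (timelike, as $x\ll q$ by the reverse triangle inequality).

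Next I transfer the endpoint to $c$. Pass to a subsequence with $a_{n_k}\to a$. Since the $\alpha_{n_k}$ are $1$-Lipschitz, converge locally uniformly to $\alpha$ on $[0,a)$, and $\alpha$ is $1$-Lipschitz with $d(\alpha(s),q)\le a-s$, an elementary estimate splitting $d(\alpha_{n_k}(a_{n_k}),q)$ through a point $\alpha_{n_k}(s_0)$ with $s_0$ close to $a$ gives $c(t_{n_k})=\alpha_{n_k}(a_{n_k})\to q$.

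Finally I derive the contradiction from causality. As $c$ is future directed, $c(t_{n_k})\le c(t_{n_l})$ whenever $k\le l$. Fix a causally closed neighbourhood $V$ of $q$; since $c(t_{n_k})\to q$, all $c(t_{n_k})$ with $k\ge K$ lie in $V$, and letting first $l\to\infty$ and then $k\to\infty$ in $c(t_{n_k})\le c(t_{n_l})$, using that $\le$ is closed on $\bar V\times\bar V$, yields $q\le c(t_{n_l})\le q$ for all $l\ge K$. Since $X$ is causal, i.e.\ $\le$ is antisymmetric, this forces $c(t_{n_l})=q$ for all $l\ge K$. Applying the same sandwiching to $c(t)$ for $t\in[t_{n_K},b)$ (via $c(t_{n_K})\le c(t)\le c(t_{n_l})$ for a suitable $t_{n_l}\ge t$, $l\ge K$) gives $c\equiv q$ on $[t_{n_K},b)$, so $c$ has the future endpoint $q$, contradicting that $c$ is future inextensible. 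Hence $\alpha$ is future inextensible.

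The step I expect to require the most care is the reduction in the first paragraph: one must use that ``asymptote'' is taken to mean a \emph{maximal} limit curve (so that $a=\liminf_n a_n$ and no longer limit curve can be formed, since otherwise the statement fails by truncation), combining the limit curve theorem with Lemma \ref{lem:limitRealizer} and a compactness argument; and one must genuinely invoke $d$-compatibility, not merely properness of $d$, to exclude a $d$-arclength curve on $[0,\infty)$ having a future endpoint. Once these are in place, the remaining manipulations are routine limit-curve and order-theoretic arguments.
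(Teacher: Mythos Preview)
Your argument is correct and the core causality step (sandwiching $c(t_{n_k})\le c(t_{n_l})$ inside a locally causally closed neighbourhood to force $c$ eventually constant) is in fact exactly the mechanism the paper uses in its footnote. The overall organisation, however, is genuinely different. The paper does not assume $\alpha$ extendible and work backwards; instead it first proves $a_n\to\infty$ directly --- if not, a subsequence of the endpoints $c(t_n)=\alpha_n(a_n)$ lies in a closed $d$-ball (properness), hence subconverges, and the same causality sandwich forces $c$ to have a future endpoint --- and then invokes the limit curve theorem for \emph{inextensible} curves \cite[Thm.\ 3.14]{saemLorLen} to conclude that any limit curve is inextensible. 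The advantage of the paper's order of operations is precisely the point you flag in your last paragraph: it never needs to identify $a$ with $\liminf_n a_n$, so it applies to \emph{any} limit curve produced by \cite[Thm.\ 3.7]{saemLorLen}, without an implicit maximality convention on what ``asymptote'' means. Your route, by contrast, spells out the endpoint-transfer $c(t_{n_k})\to q$ and the causality argument more carefully than the paper's terse footnote, but the price is the dependence on maximality that you correctly identify. Also note that your appeal to $d$-compatibility to rule out a convergent $d$-arclength curve on $[0,\infty)$ is not needed in the paper's route: there one only needs properness of $d$ to trap the endpoints in a compact set.
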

\begin{proof}
Let $\alpha_n:[0,a_n]\to X;\,p\leadsto c(t_n)$ be the sequence converging to $\alpha$, it is parametrized in $d$-arclength parametrization. As $d$ is proper, we have that $a_n\to+\infty$\footnote{Otherwise by properness, $c(t_n)$ would converge to some $P_1$, and if the whole curve didn't converge to $P_1$ other subsequence would converge to some other point $P_2$ within a local causal closed neighbourhood of $P_1$, and we get $P_1\leq P_2\leq P_1$, contradicting causality of $X$.}. Now we can apply the Limit curve theorem for inextensible curves (\cite[Thm.\ 3.14]{saemLorLen}) to get that $\alpha$ is inextensible.
\end{proof}

We will be in particular interested in future and past asymptotes of an \AdSn-line $c:(-\frac{\pi}{2},\frac{\pi}{2})\to X$.

We turn to the treatment of triangles adjacent to \AdSn-lines and begin with the aforementioned stacking principle. It will be an essential technical tool for controlling the behaviour of asymptotes to an \AdSn-line. The following two results are true for rather general Lorentzian pre-length spaces, the exact conditions are specified.

\begin{pop}[Comparison situations stack along an \AdSn-line]
\label{pop:compStack}
Let $X$ be a timelike geodesically connected \LpLS with global timelike curvature bounded below by $-1$ and $\gamma:(-\frac{\pi}{2},\frac{\pi}{2})\to X$ be a timelike \AdSn-line. Let $p\in X$ be a point not on $\gamma$. Let $t_1<t_2<t_3$ such that all $y_i:=\gamma(t_i)$ are timelike related to $p$, see Figure \ref{pop:compStack:figStacking_domain}. Let $\bar{\Delta}_{12}:=\Delta(\bar{p},\bar{y}_1,\bar{y}_2)$ be a comparison triangle for $\Delta_{12}:=\Delta(p,y_1,y_2)$ and extend the side $\bar{y}_1,\bar{y}_2$ to a comparison triangle $\bar{\Delta}_{23}:=\Delta(\bar{p},\bar{y}_2,\bar{y}_3)$ for $\Delta_{23}:=\Delta(p,y_2,y_3)$, all in \AdSn. We choose it in such a way that $\bar{y}_1$ and $\bar{y}_3$ lie on opposite sides of the line through $\bar{p},\bar{y}_2$. Then $\bar{y}_1,\bar{y}_2,\bar{y}_3$ are collinear. That makes $\bar{\Delta}_{13}:=\Delta(\bar{p},\bar{y}_1,\bar{y}_3)$ a comparison triangle for $\Delta_{13}:=\Delta(p,y_1,y_3)$. In particular, all this can be realized in $\AdS'$ such that $\bar{y}_i=(t_i,0)$.
\end{pop}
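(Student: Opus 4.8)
The plan is to first force the three triangles into the shape of a single Alexandrov‑lemma comparison situation, then to upgrade the inequality supplied by the Alexandrov lemma to an equality, the "wrong" half of the inequality being the only place where the maximality of the length $\pi$ of $\gamma$ really enters.

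\textbf{Setting up.} Since $\gamma$ is a $\tau$-arclength parametrised distance realiser on $(-\tfrac{\pi}{2},\tfrac{\pi}{2})$, one has $\tau(y_i,y_j)=t_j-t_i<\pi$ for $i<j$, so all of $\Delta_{12},\Delta_{23},\Delta_{13}$ satisfy the size bounds for $K=-1$ and the comparison triangles exist; moreover $\tau(y_1,y_3)=\tau(y_1,y_2)+\tau(y_2,y_3)$, i.e.\ $\gamma|_{[t_1,t_3]}$ is a maximiser from $y_1$ to $y_3$ passing through $y_2$, so $y_2$ lies on the side $y_1y_3$ of the timelike triangle $\Delta_{13}=\Delta(p,y_1,y_3)$.

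\textbf{Invoking the Alexandrov lemma.} I would apply the across version of the Alexandrov lemma (Prop.\ \ref{lem:alexlemAcross}) to $\Delta_{13}$ with the point $y_2$ on the side $y_1y_3$; since $p$ and $y_2$ are timelike related, one of the two variants ($p\ll y$, resp.\ $y\ll p$) applies. Its output is exactly the picture of the statement — a comparison triangle $\bar\Delta_{12}$ for $\Delta_{12}$ and one $\bar\Delta_{23}$ for $\Delta_{23}$, glued along $\bar p\bar y_2$ with $\bar y_1,\bar y_3$ on opposite sides — together with a comparison triangle $\tilde\Delta$ for $\Delta_{13}$ carrying a comparison point $\tilde y_2$ for $y_2$ on its side $\tilde y_1\tilde y_3$. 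Because $y_1\ll y_2\ll y_3$, the comparison angles $\tilde{\ma}_{y_2}(y_1,p)=\ma_{\bar y_2}(\bar y_1,\bar p)$ and $\tilde{\ma}_{y_2}(p,y_3)=\ma_{\bar y_2}(\bar p,\bar y_3)$ carry opposite signs, so — given the opposite-sides gluing — the points $\bar y_1,\bar y_2,\bar y_3$ are collinear if and only if these two (unsigned) angles coincide, equivalently $\tau(y_2,p)=\bar\tau(\tilde y_2,\tilde p)$.

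\textbf{The rigidity step (main obstacle).} The global curvature bound $K=-1$ gives, through the lemma's convexity alternative, one inequality: $\tau(y_2,p)\le\bar\tau(\tilde y_2,\tilde p)$, i.e.\ $\tilde{\ma}_{y_2}(y_1,p)\ge\tilde{\ma}_{y_2}(p,y_3)$. The reverse inequality is the heart of the proof and the step I expect to be genuinely delicate. The idea is to use that $\gamma$ is an \AdSn-line: sliding the outer vertices along $\gamma$ towards the ends, i.e.\ replacing $y_1,y_3$ by $\gamma(s),\gamma(u)$ with $s\searrow-\tfrac{\pi}{2}$, $u\nearrow\tfrac{\pi}{2}$, one has $\tau(\gamma(s),\gamma(u))=u-s\to\pi$, so in the comparison triangles $\Delta(\overline{\gamma(s)},\overline{\gamma(u)},\bar p)$ the two outer vertices approach antipodality in $\AdS$; but a pair of antipodal points in $\AdS$ sees every point at $\tau$-distances summing to exactly $\pi$, which forces the "bend" of the path $\overline{\gamma(s)}\to\bar p\to\overline{\gamma(u)}$ at $\bar p$ to vanish in the limit. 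Transporting this back through the monotonicity conclusions of Prop.\ \ref{lem:alexlemAcross} and continuity of angles (Prop.\ \ref{pop:continuityOfAngles}) pins the comparison distance $\bar\tau(\tilde y_2,\tilde p)$ down to $\tau(y_2,p)$. One must distinguish cases by the position of $p$: when $y_1\ll p\ll y_3$, the point $p$ is automatically timelike related to all of $\gamma$ and the extension above is available on both sides; the cases where $p$ lies entirely to the future or past of $\gamma|_{[t_1,t_3]}$ are treated by the same device using the one end of $(-\tfrac{\pi}{2},\tfrac{\pi}{2})$ that stays accessible (there $p$ is already related to a whole half-infinite portion of $\gamma$), keeping the enlarged triangles admissible via the size bounds, or else reduced to the middle case.

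\textbf{Conclusion.} Once $\bar y_1,\bar y_2,\bar y_3$ are collinear, the triangle $\Delta(\bar p,\bar y_1,\bar y_3)$ read off the glued picture has sides $\bar\tau(\bar y_1,\bar y_3)=\bar\tau(\bar y_1,\bar y_2)+\bar\tau(\bar y_2,\bar y_3)=\tau(y_1,y_2)+\tau(y_2,y_3)=\tau(y_1,y_3)$ and $\bar\tau(\bar p,\bar y_i)=\tau(p,y_i)$, hence it is a comparison triangle $\bar\Delta_{13}$ for $\Delta_{13}$. Finally, since $\tau_{\AdS'}((s,0),(t,0))=|t-s|$ by Lem.\ \ref{lem:tauInAdS'}, the collinear points may be placed at $\bar y_i=(t_i,0)$ in $\AdS'$, which is the last assertion.
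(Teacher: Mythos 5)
Your setup (size bounds, $y_2$ on the side $y_1y_3$, the Alexandrov lemma giving one inequality and collinearity being equivalent to equality) matches the first half of the paper's proof. The gap is in your ``rigidity step''. Your mechanism is: slide the outer vertices to $\gamma(s),\gamma(u)$ with $s\to-\frac{\pi}{2}$, $u\to\frac{\pi}{2}$, note $\tau(\gamma(s),\gamma(u))\to\pi$, and conclude from the antipodal rigidity of \AdSn that the bend at $\bar p$ vanishes in the limit. This does not work: the comparison triangle $\Delta(\overline{\gamma(s)},\bar p,\overline{\gamma(u)})$ has its sides equal to the \emph{actual} distances in $X$, and its hinge defect at $\bar p$ is $(u-s)-\tau(\gamma(s),p)-\tau(p,\gamma(u))$. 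The bend at $\bar p$ vanishes in the limit if and only if $\tau(\gamma(s),p)+\tau(p,\gamma(u))\to\pi$, i.e.\ $b_+(p)+b_-(p)=0$ --- but at this stage only $b_++b_-\geq 0$ is available, and the equality is proved in the paper (Prop.\ \ref{pop:asymptoticLines}) \emph{as a consequence of} this stacking proposition, so your argument is circular. The antipodal fact is not applicable because the outer comparison vertices are always at distance $u-s<\pi$; if the defect stays at some $\delta>0$, the law of cosines gives $\cosh\omega\to\frac{1+\cos\sigma_1\cos\sigma_2}{\sin\sigma_1\sin\sigma_2}>1$ (with $\sigma_1+\sigma_2=\pi-\delta$), so the hinge angle at $\bar p$ stays bounded away from $0$ while the comparison point for $p$ escapes to spatial infinity in $\AdS'$; nothing degenerates in the way you need. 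Moreover, even granting a vanishing bend at $\bar p$ in the limit, you do not explain how to transport this back to equality of the two comparison angles at $\bar y_2$ for the \emph{fixed} original $t_1,t_3$; ``monotonicity plus continuity of angles'' gives monotone comparison angles at $y_2$ along $\gamma$, not the required squeeze.

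For contrast, the paper's actual argument is a contradiction along a different route: assuming strict inequality at $y_2$, it first shows (via both Alexandrov lemmas) that $t\mapsto\tilde{\ma}_{y_2}(p,\gamma(t))$ is monotone on its whole domain and that the glued picture can be realized in $\AdS'$; then, moving the outer parameters $t_1^*\searrow$, $t_3^*\nearrow$, the comparison points for $\gamma(t_1^*),\gamma(t_3^*)$ (built on the fixed hinge $\bar p\bar y_2$) follow directions strictly to the left of non-vertical geodesics of $\AdS'$, whose $x$-coordinate diverges to $-\infty$ (Lem.\ \ref{lem:geodInAdS'}); by continuity there are parameters at which these comparison points are vertically aligned with $\bar p$, and then ``vertical segment maximises'' plus the strict reverse triangle inequality in \AdSn yields $\tau(\gamma(t_1^*),p)+\tau(p,\gamma(t_3^*))>\tau(\gamma(t_1^*),\gamma(t_3^*))$, contradicting that $\gamma$ is maximising. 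The length $\pi$ of $\gamma$ enters through the $\AdS'$ realization and the available range of $t_1^*,t_3^*$, not through an antipodal limit. So the key step of your proposal needs to be replaced by an argument of this kind (or by an independent proof that $b_++b_-=0$, which you cannot import from later in the paper).
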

\begin{proof}
We set $s_-=\sup(\gamma^{-1}(I^-(p)))$ and $s_+=\inf(\gamma^{-1}(I^+(p)))$. Then the set of $s$ where $\gamma(s)$ is timelike related to $p$ is $(-\infty,s_-)\cup(s_+,+\infty)$, see Figure \ref{pop:compStack:figStacking_domain}. We assume $p\ll y_2$, the other case $y_2\ll p$ can be reduced to this one by flipping the time orientation of the space.

%Setup Alexandrov
We create comparison situations for an Alexandrov situation: We take the triangles $\bar{\Delta}_{12}=\Delta(\bar{p},\bar{y}_1,\bar{y}_2)$ and $\bar{\Delta}_{23}=\Delta(\bar{p},\bar{y}_2,\bar{y}_3)$ as in the statement. 
We also create a comparison triangle $\tilde{\Delta}_{13}=\Delta(\tilde{p},\tilde{y}_1,\tilde{y}_3)$ for $(p,y_1,y_3)$ and get a comparison point $\tilde{y}_2$ for $y_2$ on the side $y_1y_3$. Then we can apply curvature comparison to get $\bar{\tau}(\tilde{p},\tilde{y}_2) \geq \tau(p,y_2)=\bar{\tau}(\bar{p},\bar{y}_2)$. By Lemmas \ref{lem:alexlemAcross} and \ref{lem:alexlemFuture}\footnote{If $p\ll y_1\ll y_2\ll y_3$, we use Lem.\ \ref{lem:alexlemFuture} and if $y_1\ll p\ll y_2\ll y_3$ we use Lem.\ \ref{lem:alexlemAcross}.}, this means the situation is convex, i.e.\ 
\begin{equation}\label{pop:compStack:eq:Convex}
\tilde{\ma}_{y_2}(p,y_1)\leq \tilde{\ma}_{y_2}(p,y_3)\,.
\end{equation} 
If $p\ll y_1$ we also get
\begin{equation}\label{pop:compStack:eq:Monotonous}
\tilde{\ma}_{y_1}(p,y_2)\leq\tilde{\ma}_{y_1}(p,y_3)
\end{equation}
and if $y_1\ll p$, inequality (\ref{pop:compStack:eq:Monotonous}) flips.

Note that inserting back $y_2=\gamma(t_2)$ and $y_3=\gamma(t_3)$ in (\ref{pop:compStack:eq:Monotonous}), varying $t_2,t_3$ and replacing $y_1$ by $y_2$ gives that $\tilde{\ma}_{y_2}(p,\gamma(t))$ is monotonically increasing in $t$ for $t>t_2$. Similarly, the time-reversed situation gives that for all $t$ with $p\ll \gamma(t)$, $\tilde{\ma}_{y_2}(p,\gamma(t))$ is monotonically increasing in $t$ and similarly for $t$ with $\gamma(t)\ll p$. Note that inserting the definitions of $y_1$ and $y_3$ in (\ref{pop:compStack:eq:Convex}) and varying $t_1,t_3$ gives us the last piece to say $\tilde{\ma}_{y_2}(p,\gamma(t))$ is monotonically increasing on its whole domain. We revisit (\ref{pop:compStack:eq:Convex}): $\tilde{\ma}_{y_2}(p,\gamma(t_1))\leq \tilde{\ma}_{y_2}(p,\gamma(t_3))$. Note that the left hand side is decreasing with decreasing $t_1$ and the right hand side is increasing with increasing $t_3$.
\begin{figure}
\begin{center}
\includegraphics{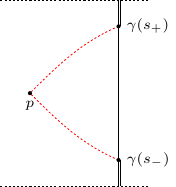}
\end{center}
\caption{The domain where the $y_i$ can lie in is doubled.}
\label{pop:compStack:figStacking_domain}
\end{figure}

\begin{figure}
\begin{center}
\includegraphics{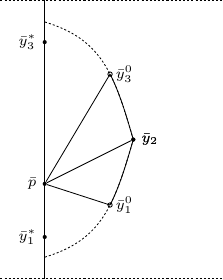}
\end{center}
\caption{We assume that the path $\bar{y}_1^0\bar{y}_2\bar{y}_3^0$ is longer than the path $\bar{y}_1^0\bar{p}\bar{y}_3^0$. If the angle at $\bar{y}_2$ is not straight, we extend some lines and as points further from $\bar{p}$ are more on the left, we find some $t_1^*$ and $t_3^*$ such that $\bar{y}_1^*\bar{p}\bar{y}_3^*$ are collinear. But then the path $\bar{y}_1^0\bar{y}_2\bar{y}_3^0$ is shorter than $\bar{y}_1^0\bar{p}\bar{y}_3^0$, which then yields a contradiction to $\gamma$ being maximising.}
\label{pop:compStack:figMainargument}
\end{figure}

\underline{Claim}: For each $t_1$ and $t_3$, equality holds in (\ref{pop:compStack:eq:Convex}).

\underline{Then}: The comparison situation is straight, i.e.\ $\bar{y}_1,\bar{y}_2,\bar{y}_3$ lie on a distance realizer, and by triangle equality along distance realizers we have $\tau(y_1,y_3)=\tau(y_1,y_2)+\tau(y_2,y_3)=\bar{\tau}(\bar{y}_1,\bar{y}_2)+\bar{\tau}(\bar{y}_2,\bar{y}_3)=\bar{\tau}(\bar{y}_1,\bar{y}_3)$, i.e., $\bar{p},\bar{y}_1,\bar{y}_3$ is a comparison triangle for $p,y_1,y_3$.

\underline{Proof}: For any $t_1,t_3$ we can draw $\bar{\Delta}_{12}$, $\bar{\Delta}_{23}$ simultaneously in \AdSn. We indirectly assume there are $t_1,t_3$ such that the inequality (\ref{pop:compStack:eq:Convex}) is strict. 
We first want to show that there are such $t_1,t_3$ where $\bar{\Delta}_{12}$, $\bar{\Delta}_{23}$ can be drawn simultaneously in the warped product picture $\AdS'$. As a first case, let inequality (\ref{pop:compStack:eq:Convex}) always be strict. Then we take the limit as $t_1\nearrow t_2$ and $t_3\searrow t_2$, as the sides are monotonous the limits of the sides are both finite, thus for $t_1,t_3$ close enough to $t_2$, we have that $\bar{y}_1,\bar{y}_3$ are realizable in $\AdS'$ (by triangle equality of angles in \AdSn, we take an angle a little bit bigger than the limits, then small distances can be realized at that angle or lower from a prescribed side $\bar{p}\bar{y}_2$). The other case is where inequality (\ref{pop:compStack:eq:Convex}) has equality for $t_1,t_3$ close to $t_2$, but one of the sides changes when taking $t_1$ resp.\ $t_3$ further away from $t_2$. W.l.o.g., say the right side changes, so there is a $t_3^0>t_3$ such that $\tilde{\ma}_{y_2}(p,\gamma(t_3^0))>\tilde{\ma}_{y_2}(p,\gamma(t_3))$. Note that as long as (\ref{pop:compStack:eq:Convex}) has equality, we have that $\tau(\bar{y}_1,\bar{y}_3)=\tau(\bar{y}_1,\bar{y}_2)+\tau(\bar{y}_2,\bar{y}_3)<L(\gamma)=\pi$. As $\tilde{\ma}_{y_2}(p,\gamma(t_3))$ is continuous in $t_3$, there also exists a $t_3^0$ such that $\tilde{\ma}_{y_2}(p,\gamma(t_3^0))>\tilde{\ma}_{y_2}(p,\gamma(t_3))$ and $\tau(\bar{y}_1,\bar{y}_3)<\pi$. In particular, it is realizable, with $\bar{y}_1^0,\bar{y}_3^0$ on a vertical line, and all of $\bar{y}_1^0,\bar{y}_3^0,\bar{p}$ to the left of the vertical line with $x$-coordinate $x(\bar{y}_2)$. In any case, we now fix this realization, i.e.\ fix the points $\bar{y}_1^0,\bar{y}_2,\bar{y}_3^0$ and $\bar{p}$. 

Now consider $t_1^*<t_1^0$ and $t_3^*>t_3^0$. On the side $\bar{y}_2\bar{p}$, we can construct comparison points for $\gamma(t_1^*)$ and $\gamma(t_3^*)$, call them $\bar{y}_1^*,\bar{y}_3^*$, if they are still contained in $\AdS'$. Note that $\bar{y}_1^*,\bar{y}_3^*$ vary continuously with $t_1^*,t_3^*$, so if $\bar{y}_1^*,\bar{y}_3^*$ are not defined for some $t_i^*$ we get a $t_1^{\min},t_3^{\max}$ such that it is defined before that value, and $\bar{y}_1^*$ or $\bar{y}_3^*$ converge to infinity in the limit to $t_1^{\min}$ resp.\ $t_3^{\max}$, otherwise set $t_1^{\min}=-\frac{\pi}{2}$ and / or $t_3^{\max}=\frac{\pi}{2}$, then similar limits hold. As $\tilde{\ma}_{y_2}(p,\gamma(t_3^*))>\tilde{\ma}_{y_2}(p,\gamma(t_3^0))$, we get that $\bar{y}_3^*$ is to the left of the (extended) geodesic connecting $\bar{y}_2t_3^0$, and as $\tau(\bar{y}_2,\bar{y}_3^*)>\tau(\bar{y}_2,\bar{y}_3^0)$, it has much more negative $x$-coordinate. As the (extended) geodesic connecting $\bar{y}_2t_3^0$ is going to the left, it has $x$-coordinate converging to $-\infty$ towards the boundary, and thus so has $\bar{y}_3^*$, similarly $\bar{y}_1^*$. In particular, as $\bar{y}_1^*,\bar{y}_3^*$ depend continuously on $t_1^*,t_3^*$, there are parameters such that $\bar{y}_1^*,\bar{y}_3^*$ have the same $x$-coordinate as $p$, thus by triangle equality along distance realizers and strict triangle inequality we have that
\begin{equation*}
\tau(\bar{y}_1^*,\bar{p})+\tau(\bar{p},\bar{y}_3^*)=\tau(\bar{y}_1^*,\bar{y}_3^*)>\tau(\bar{y}_1^*,\bar{y}_2)+\tau(\bar{y}_2,\bar{y}_3^*)\,,
\end{equation*}
in contradiction to the fact that $\gamma(t_1^*),\gamma(t_2),\gamma(t_3^*)$ lie on a distance realizer. Thus we get the claim. See Figure \ref{pop:compStack:figMainargument} for a visualisation of the construction.
\end{proof}

\begin{pop}[Angle = comparison angle]
\label{pop:angle=comparisonangle}
Let $X$ be a timelike geodesically connected \LpLS with global timelike curvature bounded below by $K=-1$ and $\gamma:(-\frac{\pi}{2},\frac{\pi}{2})\to X$ be a complete timelike \AdSn-line and $x:=\gamma(t_0)$ a point on it. We split $\gamma$ into the future part $\gamma_+=\gamma|_{[t_0,\frac{\pi}{2})}$ and the past part $\gamma_-=\gamma|_{(-\frac{\pi}{2},t_0]}$. Let $p\in X$ be a point not on $\gamma$ with $x$ and $p$ timelike related and $\alpha:x\leadsto p$ a connecting distance realiser. Then for all $s\neq t$ such that $p$ and $\gamma(s)$ are timelike related, we have:
\begin{equation*}
\tilde{\ma}_x(p,\gamma(s))=\ma_x(\alpha,\gamma_+)=\ma_x(\alpha,\gamma_-)\,,
\end{equation*}
i.e.\ the comparison angle is equal to the angle, and the same in both directions.
\begin{figure}
\begin{center}
\includegraphics{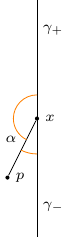}
\end{center}
\caption{Illustration: These angles are the same, and have the same value as if they are considered as comparison angles. }
\label{pop:angle=comparisonangle:figStacking_angles}
\end{figure}
\end{pop}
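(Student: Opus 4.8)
The plan is to invoke the stacking principle (Prop.~\ref{pop:compStack}) twice --- once with the off-line point $p$ itself, and once with a point $\alpha(u)$ on the connecting distance realiser --- and then to pin down the comparison angles $\tilde{\ma}_x(\alpha(u),\gamma_\pm(v))$ by squeezing them between two bounds coming from triangle comparison. We assume $x\ll p$ without loss of generality: reversing the time orientation interchanges $\gamma_+$ with $\gamma_-$ and leaves the assertion invariant. The size bounds for $K=-1$ needed to form all the comparison angles below are automatic: $\tau$ is finite by global hyperbolicity, so by the synthetic Bonnet--Myers theorem~\ref{thm: lor meyers} every value of $\tau$ that occurs is $\le\pi$.

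\emph{Step 1: the comparison angles from $p$ assemble in $\AdS'$.} Since $x\ll p$ and $\tau$ is continuous, $\{s:\gamma(s)\ll p\}$ is a nonempty open sub-interval of $(-\frac\pi2,\frac\pi2)$ containing $t_0$; pick $t_1<t_0<t_3$ inside it, so that $\gamma(t_1),\gamma(t_3)$ are timelike related to $p$. Prop.~\ref{pop:compStack}, applied with $t_2=t_0$, realises the comparison triangles for $\Delta(p,\gamma(t_1),x)$ and $\Delta(p,x,\gamma(t_3))$ simultaneously in $\AdS'$ with collinear images $\overline{\gamma(t_i)}=(t_i,0)$ and a common apex $\bar p$. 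As a comparison point is determined, once its side is fixed, by its $\bar\tau$-distances to two points of the line $\{x=0\}$, this $\bar p$ is independent of the chosen triple; and since every admissible $s\ne t_0$ lies in such a triple together with $t_0$, the triangle $\Delta(\bar p,(t_0,0),(s,0))$ is a comparison triangle for $\Delta(p,x,\gamma(s))$ for all admissible $s$. Hence $\tilde{\ma}_x(p,\gamma(s))$ is the hyperbolic angle at $(t_0,0)$ between the $\AdS'$-geodesic to $\bar p$ and the vertical line $\{x=0\}$. By the explicit description of the geodesics of $\AdS'$ (Lem.~\ref{lem:geodInAdS'}, in which this angle is precisely the parameter $\omega$) together with the reflection $\lambda\mapsto-\lambda$, this value is the same whether $(s,0)$ lies above or below $(t_0,0)$, and is independent of $s$; call it $\theta$. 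This proves the first two equalities in the statement.

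\emph{Step 2: squeezing.} Fix a small $u>0$; then $\alpha(u)\notin\gamma$, since otherwise two timelike distance realisers would agree at a point and, by non-branching (Thm.~\ref{thm:non-branching}), $\alpha$ would run along $\gamma$, contradicting $p\notin\gamma$. For $v>0$ small enough that $\gamma(t_0+v)\ll\alpha(u)$ (hence also $\gamma(t_0+v)\ll p$), apply the defining inequality of timelike curvature bounded below by $K=-1$ (Def.~\ref{def:triComp}) to the triangle $\Delta(x,\gamma(t_0+v),p)$ with $\alpha(u)$ on the side $xp$: the angle at $\bar x$ in the comparison triangle is $\tilde{\ma}_x(\gamma(t_0+v),p)=\theta$ by Step~1, so feeding $\tau(\alpha(u),\gamma(t_0+v))\le\bar\tau(\overline{\alpha(u)},\overline{\gamma(t_0+v)})$ into the law of cosines (Lem.~\ref{lem:LOC}, sign $\sigma=-1$) gives $\tilde{\ma}_x(\alpha(u),\gamma(t_0+v))\ge\theta$. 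The same computation on $\Delta(\gamma(t_0-v),x,p)$, now with sign $\sigma=+1$ at $\bar x$, gives $\tilde{\ma}_x(\alpha(u),\gamma(t_0-v))\le\theta$. Finally, re-running Step~1 with the off-line point $\alpha(u)$ in place of $p$ shows that $\tilde{\ma}_x(\alpha(u),\gamma(s))$ is independent of the admissible $s$; combined with the two previous inequalities this forces $\tilde{\ma}_x(\alpha(u),\gamma(s))=\theta$ for every admissible $s$ and every small $u>0$. Letting $u$ and the relevant $v$ go to $0$ in the definition of the upper angle yields $\ma_x(\alpha,\gamma_+)=\ma_x(\alpha,\gamma_-)=\theta$, which together with Step~1 is the assertion.

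\emph{Where the difficulty lies.} The substance is entirely in Step~1: that the comparison triangles for the various $\Delta(p,x,\gamma(s))$ glue into one picture in $\AdS'$ with a common apex and collinear images of $\gamma$. This is exactly where the hypothesis that $\gamma$ is an \AdSn-line --- of length exactly $\pi$ and globally maximising --- enters, through Prop.~\ref{pop:compStack}; the accompanying elementary fact, that in $\AdS'$ a timelike geodesic through a point of a vertical line makes equal hyperbolic angles with the two halves of that line, is small but indispensable. The rest is bookkeeping: checking that the admissible pairs $(u,v)$ are cofinal in the domains defining the upper angles $\ma_x(\alpha,\gamma_\pm)$, and tracking the change of sign $\sigma$ in the law of cosines between the two sub-cases.
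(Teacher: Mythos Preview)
Your proof is correct and follows essentially the same architecture as the paper's: use the stacking principle (Prop.~\ref{pop:compStack}) to make the comparison angle $\tilde\ma_x(\cdot,\gamma(s))$ independent of $s$, then exploit the sign flip of $\sigma$ across $t_0$ together with the curvature bound to pin the comparison angle $\tilde\ma_x(\alpha(u),\gamma(\cdot))$ to the same constant for all small $u$. The only real difference is packaging: the paper invokes the monotonicity formulation of the curvature bound (Def.~\ref{def:monotonicityComp} via Thm.~\ref{thm:equivTriCompAndMonComp}) to say that the signed comparison angle $\tilde\ma_x^{\mathrm S}(\alpha(u),\gamma(t))$ is increasing in $u$, and then reads off ``increasing and decreasing, hence constant'' from the sign change; you instead apply triangle comparison (Def.~\ref{def:triComp}) directly inside $\Delta(x,\gamma(t_0\pm v),p)$ and push the resulting $\tau$-inequality through the law of cosines by hand. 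These are two phrasings of the same inequality.

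Two small cleanups. First, the sentence ``This proves the first two equalities in the statement'' at the end of Step~1 is not accurate: Step~1 only shows that $\tilde\ma_x(p,\gamma(s))$ is independent of $s$; the equalities with $\ma_x(\alpha,\gamma_\pm)$ come only after Step~2. Second, your appeal to global hyperbolicity and Thm.~\ref{thm: lor meyers} to guarantee the size bounds imports hypotheses not present in the statement of this proposition; the paper's proof, like yours, tacitly relies on the comparison triangles existing, which in the paper's intended application is ensured by the ambient assumptions of the main theorem. Also, in Step~2 the relevant inequality from triangle comparison is $\tau(\gamma(t_0+v),\alpha(u))\le\bar\tau(\overline{\gamma(t_0+v)},\overline{\alpha(u)})$ (you have the arguments reversed, though the bound holds in both orders).
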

\begin{proof}
First, we check that the comparison angle $\tilde{\ma}_x(p,\gamma(s))$ is constant in $s$: For $s_1$ and $s_2$ for which this is defined, we have three parameters on $\gamma$ involved: $s_1,s_2,t_0$. The previous result (Prop.\ \ref{pop:compStack}) tells us we can construct a comparison situation for all three triangles at once. As the comparison situations have the comparison angle $\tilde{\ma}_x(p,\gamma(s_1))$ resp.\ $\tilde{\ma}_x(p,\gamma(s_2))$ as the angle in $\bar{x}$, they are equal.

Now we look at the angle $\ma_x(\alpha,\gamma_\pm)$: We assume $p\ll x$. We already know $\tilde{\ma}_x(\alpha(s),\gamma(t))$ is constant in $t$. We now have to look at its dependence on $s$: By Thm.\ \ref{thm:equivTriCompAndMonComp}, $\tilde{\ma}_x^{\mathrm{S}}(\alpha(s),\gamma(t))$ is monotonically increasing in $s$. Note now that for $t<t_0$, the sign of this angle is $\sigma=-1$, and for $t>t_0$, the  sign of this angle is $\sigma=+1$. So choose some $t_-<t_0$ and $t_+>t_0$ for which all the necessary angles exist (i.e.\ $\gamma(t_-)\ll p$ and $t_+>t_0$). Then the above applied to $\alpha(s)$ instead of $p$ gives that $\tilde{\ma}_x(\alpha(s),\gamma(t_-))=\tilde{\ma}_x(\alpha(s),\gamma(t_+))$ is both a monotonically decreasing and increasing function in $s$. Thus it is constant, and $\tilde{\ma}_x(\alpha(s),\gamma(t))=\ma_x(\alpha,\gamma_-)=\ma_x(\alpha,\gamma_+)$, which includes the desired equalities.
\end{proof}
Note the situation we are in is very rigid: as soon as a side of a triangle is part of an \AdSn-line, the curvature bound inequality of one-sided triangle comparison (different version of triangle comparison, see e.g \cite[Def.\ 3.2]{equivDefs}) where the point is on the line $\gamma$ automatically has equality:
\begin{cor}\label{cor:sides_equal}
Let $X$ be a timelike geodesically connected, globally causally closed Lorentzian pre-length space with global timelike curvature bounded below by $K=-1$ and let $\gamma:\R \to X$ be a timelike \AdSn-line. Then for any point $p\in X$ and two points $x_1=\gamma(t_1)$ and $x_2=\gamma(t_2)$ which are timelike related to $p$, we get an unordered timelike triangle $\Delta=\Delta(x_1,x_2,p)$. Let $q_1,q_2$ be any points on $\Delta$, one of them lying on the side $x_1x_2$. We form a comparison triangle $\bar{\Delta}$ for $\Delta$ and find comparison points $\bar{q}_1,\bar{q}_2$ for $q_1,q_2$. Then $q_1\leq q_2$ if and only if $\bar{q}_1\leq\bar{q}_2$; and $\tau(q_1,q_2)=\bar{\tau}(\bar{q}_1,\bar{q}_2)$.
\end{cor}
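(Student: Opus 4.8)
The plan is to reduce the whole statement to a single sub-triangle whose comparison geometry is forced by Proposition~\ref{pop:angle=comparisonangle}, and first I would dispose of degenerate configurations. If $p$ lies on $\gamma$, then by timelike non-branching (Theorem~\ref{thm:non-branching}) all three sides of $\Delta$ lie on $\gamma$ and the claim is immediate; so assume $p\notin\gamma$. By non-branching the side $x_1x_2$ is the $\gamma$-segment between the parameters $t_1,t_2$ of $x_1=\gamma(t_1)$, $x_2=\gamma(t_2)$, hence $q_1=\gamma(s)$ for some $s$ between $t_1$ and $t_2$, and the comparison point $\bar{q}_1$ is the point of $\bar{x}_1\bar{x}_2$ at arclength $|s-t_1|$ from $\bar{x}_1$. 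If $q_2$ also lies on $x_1x_2$, then both $\tau(q_1,q_2)$ and $\bar\tau(\bar{q}_1,\bar{q}_2)$ are differences of arclengths along distance realizers and the causal relation is read off the parameters, so we are done. Hence assume $q_2$ lies on one of the two sides through $p$; let $x_i$ ($i\in\{1,2\}$) be the endpoint shared by that side and by $x_1x_2$, and let $\alpha$ be the distance realizer along that side, so $q_2$ is a point of $\alpha$. We may also assume $q_1\neq x_i\neq q_2$ (otherwise a vertex is involved and we are back in the previous case or in a triviality) and $q_2\notin\gamma$ (if $q_2=\gamma(u)$, non-branching forces $\alpha$ to coincide with a $\gamma$-segment near $x_i$, hence $p\in\gamma$, which is excluded).

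The core step is the case where $q_1$ and $q_2$ are timelike related, so that $\Delta(x_i,q_1,q_2)$ is a timelike triangle satisfying the size bounds. The key is to apply Proposition~\ref{pop:angle=comparisonangle} at the on-line point $x_i$ twice. Applied with the off-line point $p$ and the realizer $\alpha$, it gives $\ma_{x_i}(\alpha,\gamma_+)=\ma_{x_i}(\alpha,\gamma_-)=\tilde{\ma}_{x_i}(p,x_{3-i})$, which is by definition the angle of the comparison triangle $\bar\Delta$ at $\bar{x}_i$. Applied with the off-line point $q_2$ and the sub-realizer $x_i\leadsto q_2$ of $\alpha$ — which has the same germ at $x_i$ as $\alpha$ — it gives $\tilde{\ma}_{x_i}(q_1,q_2)=\tilde{\ma}_{x_i}(\gamma(s),q_2)=\ma_{x_i}(\alpha,\gamma_\pm)$. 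Thus the comparison angle $\tilde{\ma}_{x_i}(q_1,q_2)$ equals the angle of $\bar\Delta$ at $\bar{x}_i$. Now the comparison triangle of $\Delta(x_i,q_1,q_2)$ has its two sides at $\bar{x}_i$ of lengths $\tau(x_i,q_1)=|s-t_i|$ and $\tau(x_i,q_2)$, i.e.\ the lengths of the sub-segments $\bar{x}_i\bar{q}_1$ and $\bar{x}_i\bar{q}_2$ of $\bar\Delta$, with the same included angle and the same sign $\sigma$ (the sign is fixed by the causal configuration, which agrees for $\Delta(x_i,q_1,q_2)$ and for the triangle $\bar{x}_i\bar{q}_1\bar{q}_2$ inside $\bar\Delta$). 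By the law of cosines in \AdS (Lemma~\ref{lem:LOC}) these two triangles are congruent; comparing the remaining sides and using that all $\tau$-values are $<\pi$, where $\cos$ is injective, gives $\tau(q_1,q_2)=\bar\tau(\bar{q}_1,\bar{q}_2)$, while $q_1\leq q_2\iff\bar{q}_1\leq\bar{q}_2$ because the configurations agree.

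For the remaining case, where $q_1$ and $q_2$ are causally unrelated or merely null related, I would argue by continuity along $\alpha$: as $q_2$ ranges over $\alpha$, the set of parameters for which it is timelike related to $q_1$ is a union of at most two relatively open intervals, on which the core step already yields $\tau(q_1,\cdot)=\bar\tau(\bar{q}_1,\cdot)$; continuity of the time separation (global hyperbolicity, or closedness of $\leq$ together with push-up) forces these intervals to coincide with the corresponding ones for $\bar\Delta$, and on the complementary set one has $\tau=0=\bar\tau$ and the causal relation is pinned down by closedness of $\leq$. The conceptual content is entirely the double use of Proposition~\ref{pop:angle=comparisonangle} (which in turn rests on the stacking principle, Proposition~\ref{pop:compStack}); the part that needs genuine care is the bookkeeping of signs and causal orderings in the law-of-cosines step, together with this last limiting argument for causally unrelated pairs.
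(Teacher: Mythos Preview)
Your proof is correct and follows essentially the same route as the paper: reduce to a sub-triangle $\Delta(x_i,q_1,q_2)$, invoke Proposition~\ref{pop:angle=comparisonangle} twice at the on-line vertex $x_i$ (once for $p$, once for $q_2$) to see that the comparison angle $\tilde\ma_{x_i}(q_1,q_2)$ equals the angle of $\bar\Delta$ at $\bar x_i$, then conclude by SAS via the law of cosines, with a continuity argument for the causal-relation equivalence. The paper's version is terser---it treats one representative case and leaves the degenerate configurations and the non-timelike case to the reader---whereas you spell out the degenerate cases and the limiting argument for causally unrelated or null related $q_1,q_2$ more carefully; but the conceptual content is identical.
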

\begin{proof}
We distinguish which sides the $q_i$ lie on: Note we assumed one of them is on the side $x_1x_2$. We only prove the case where $q_1$ is on the side $\gamma_+$ connecting $x_1x_2$ (say $q_1=\gamma_+(s)$) and $q_2$ is on the side $\beta$ connecting $x_1,p$ (say $q_2=\beta(t)$), the proof of the other cases is easily adapted.

Now the comparison triangle $\tilde{\Delta}$ for $\Delta(x_1,q_1,q_2)$ has angle $\tilde{\ma}_{x_1}(q_1,q_2)=\ma_{x_1}(\gamma_+,\beta)$ at the point corresponding to $x_1$, and the comparison triangle for $\Delta=\Delta(x_1,p,x_2)$ has the same angle $\tilde{\ma}_{x_1}(p,x_2)=\ma_{x_1}(\gamma_+,\beta)$ at the point corresponding to $x_1$. As the comparison points $\bar{q}_1$ and $\bar{q}_2$ lie on the sides enclosing the angle, also $\Delta(\bar{x}_1,\bar{q}_1,\bar{q_2})$ has the angle $\ma_{x_1}(\gamma_+,\beta)$ at $\bar{x}_1$. Now the triangles $\tilde{\Delta}$ and $\Delta(\bar{x}_1,\bar{q}_1,\bar{q_2})$ have two sides and the angle between them equal, thus also the opposite side is equal, establishing $\tau(q_1,q_2)=\bar{\tau}(\bar{q}_1,\bar{q}_2)$. A continuity argument (moving $q_1$ a bit into the past) then gives that $q_1\leq q_2$ if and only if $\bar{q}_1\leq\bar{q}_2$.
\end{proof}

In the following, unless we specify the assumptions on $X$, we always take $X$ to be as in the local splitting.

Although in the model for this paper, the splitting theorem, it was possible to circumvent the Busemann functions, this is not possible here: In the warped product the Busemann function becomes the $t$-coordinate and as the radius function varies, this is important when making comparison situations.

\begin{dfn}
We define the \emph{past Busemann function} w.r.t.\ $\gamma$, $b_-:I(\gamma)\to (-\frac{\pi}{2},\frac{\pi}{2})$ as $b_-(x)=\frac{\pi}{2}-\lim_{t\to-\frac{\pi}{2}}\tau(\gamma(t),x) = \lim_{t\to\frac{\pi}{2}}(t-\tau(\gamma(-t),x))$. Similarly, the \emph{future Busemann function} w.r.t.\ $\gamma$ is defined by setting $b_+(x)=+\frac{\pi}{2}-\lim_{t\to+\frac{\pi}{2}}\tau(x,\gamma(t))=\lim_{t\to+\frac{\pi}{2}}(t-\tau(x,\gamma(t)))$. Note that the arguments in both limits are monotonically decreasing.
\end{dfn}

\begin{lem}
We have that $b_++b_-\geq0$.
\end{lem}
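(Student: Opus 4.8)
The plan is to obtain the inequality directly from the reverse triangle inequality, using only that $\gamma$ is a $\tau$-arclength parametrised distance realiser with $L(\gamma)=\pi$. The first observation is the identity $\tau(\gamma(s),\gamma(t))=t-s$ for all $-\tfrac\pi2<s<t<\tfrac\pi2$: since $\gamma$ maximises the time separation between any two of its points and is parametrised by $\tau$-arclength, $\tau(\gamma(s),\gamma(t))$ equals the $\tau$-length of $\gamma|_{[s,t]}$, which is $t-s$; in particular this is always $<\pi$.

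Next I would fix $x\in I(\gamma)$. Since $x\in I^+(\gamma)\cap I^-(\gamma)$, there are parameters $s_0<t_0$ with $\gamma(s_0)\ll x\ll\gamma(t_0)$, and because $\gamma$ is a timelike distance realiser one has $\gamma(s)\ll\gamma(s_0)$ for $s<s_0$ (as $\tau(\gamma(s),\gamma(s_0))=s_0-s>0$) and likewise $\gamma(t_0)\ll\gamma(t)$ for $t>t_0$; by transitivity of $\ll$, $\gamma(s)\ll x\ll\gamma(t)$ whenever $s<s_0$ and $t>t_0$. For such $s,t$ the reverse triangle inequality gives
\[
t-s=\tau(\gamma(s),\gamma(t))\geq\tau(\gamma(s),x)+\tau(x,\gamma(t)),
\]
which rearranges to $\bigl(t-\tau(x,\gamma(t))\bigr)+\bigl(-s-\tau(\gamma(s),x)\bigr)\geq 0$.

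Finally I would let $t\to\tfrac\pi2$ and $s\to-\tfrac\pi2$. Using the second forms of the definitions — $b_+(x)=\lim_{t\to\pi/2}\bigl(t-\tau(x,\gamma(t))\bigr)$ and, after the substitution $s=-t$, $b_-(x)=\lim_{s\to-\pi/2}\bigl(-s-\tau(\gamma(s),x)\bigr)$ — together with the fact, already noted after the definition, that the arguments of these limits are monotone (so the limits exist, and are finite because fixing $s=s_0$ in the displayed estimate bounds $t-\tau(x,\gamma(t))$ below by $s_0+\tau(\gamma(s_0),x)$), the estimate passes to the limit and yields $b_+(x)+b_-(x)\geq 0$. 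I do not expect a real obstacle here: every step is immediate from the definitions, transitivity of $\ll$, and the reverse triangle inequality, and the only point worth a remark is the finiteness of the limits defining $b_\pm$, which follows from the monotonicity just mentioned together with the uniform bound $\tau(\gamma(s),x)+\tau(x,\gamma(t))\leq t-s<\pi$.
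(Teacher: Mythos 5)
Your proof is correct and is essentially the paper's own argument: both apply the reverse triangle inequality to $\gamma(s)\ll x\ll\gamma(t)$, use $\tau(\gamma(s),\gamma(t))=t-s\to\pi$, and pass to the limit in the definitions of $b_\pm$. The only difference is cosmetic bookkeeping (your rearrangement versus the paper's subtraction of $\pi$), plus your welcome remark on monotonicity ensuring the limits exist and are finite.
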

\begin{proof}
Let $x\in I(\gamma)$. We dive into the limits: We have that for $t_1$ small enough and $t_2$ large enough that 
\begin{equation*}
\tau(\gamma(t_1),\gamma(t_2))-\pi\geq \underbrace{\tau(\gamma(t_1),x)-\frac{\pi}{2}}_{\to-b_-(x)}+\underbrace{\tau(x,\gamma(t_2))-\frac{\pi}{2}}_{\to-b_+(x)}\,,
\end{equation*}
and that the left hand side converges to $0$ as $t_1\to-\frac{\pi}{2}$ and $t_2\to\frac{\pi}{2}$.
\end{proof}

Next, we show that any future directed asymptote to $\gamma$ (which we now know to be timelike) has the right length.

\begin{pop}\label{pop:asyRightLength}
Let $x \in I(\gamma)$ and let $\alpha$ be a future directed timelike asymptotic ray to $\gamma$ at $x$, and let $t=b_+(x)$. Then $L_{\tau}(\alpha) = \frac{\pi}{2}-t$. 
\end{pop}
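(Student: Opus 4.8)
The plan is to rewrite $\tfrac{\pi}{2}-t=\tfrac{\pi}{2}-b_+(x)=\lim_{r\to\frac{\pi}{2}}\tau(x,\gamma(r))$ — a monotonically increasing limit, since $r\mapsto r-\tau(x,\gamma(r))$ is monotonically decreasing — and then to prove the two inequalities $L_\tau(\alpha)\le\tfrac{\pi}{2}-t$ and $L_\tau(\alpha)\ge\tfrac{\pi}{2}-t$ separately. Throughout, let $\alpha_n\colon[0,a_n]\to X$, $x\leadsto\gamma(t_n)$, with $t_n\nearrow\tfrac{\pi}{2}$ and $\alpha_n\to\alpha$, be the defining sequence from Definition \ref{def:asymptotes}; by Proposition \ref{pop:asyInext} (and its proof) we have $a_n\to+\infty$ and $\alpha$ is future inextensible on $[0,+\infty)$, and by Proposition \ref{pop:TCRCHoldsonI(gamma)} it is timelike.

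For the upper bound: each $\alpha_n$ is a maximiser, so $\tau(x,\alpha_n(s))\le\tau(x,\gamma(t_n))\le\tfrac{\pi}{2}-t$ for every $s$. Passing to the limit $n\to\infty$ (continuity of $\tau$ by global hyperbolicity) gives $\tau(x,\alpha(s))\le\tfrac{\pi}{2}-t$ for every $s$, and taking the supremum over $s$ yields $L_\tau(\alpha)=\sup_s\tau(x,\alpha(s))\le\tfrac{\pi}{2}-t$. (This also follows from the fact that $b_+$ grows at least at unit rate along future timelike curves.)

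The lower bound is the heart of the matter. Fix $\varepsilon>0$ and choose $N$ with $\tau(x,\gamma(t_N))>\tfrac{\pi}{2}-t-\varepsilon$ (note $t_N>t$ for $N$ large, since $t<\tfrac{\pi}{2}$). For $n$ large, let $s_n$ be the $d$-arclength parameter on $\alpha_n$ at which the remaining piece has $\tau$-length $t_n-t_N$; then $\tau(x,\alpha_n(s_n))=\tau(x,\gamma(t_n))-(t_n-t_N)\to t_N-t\ge\tau(x,\gamma(t_N))>\tfrac{\pi}{2}-t-\varepsilon$. Thus, provided the $s_n$ stay bounded, a subsequence converges to some finite $s_*$, hence $\alpha_n(s_n)\to\alpha(s_*)$ and $L_\tau(\alpha)\ge\tau(x,\alpha(s_*))=\lim_n\tau(x,\alpha_n(s_n))\ge\tfrac{\pi}{2}-t-\varepsilon$; letting $\varepsilon\to0$ finishes the proof. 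So everything is reduced to showing that $\alpha_n(s_n)$ does not escape every compact set.

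This last point I would establish by comparison in $\AdS'$. Fix $t_-$ with $\gamma(t_-)\ll x$; for large $n$ the timelike triangle $\Delta(\gamma(t_-),x,\gamma(t_n))$ has its longest side lying on the \AdSn-line $\gamma$, so by Corollary \ref{cor:sides_equal} (equivalently Proposition \ref{pop:angle=comparisonangle}) it has an ``\AdSn-rigid'' comparison configuration: realising it in $\AdS'$ with $\overline{\gamma(t_-)}=(t_-,0)$, $\overline{\gamma(t_n)}=(t_n,0)$ and $\overline{\alpha_n}$ the comparison geodesic segment from the vertex $\overline{x}_n$ to $(t_n,0)$, every $\tau$-distance between a point of $\alpha_n$ and a point of $\gamma$ equals the corresponding $\overline{\tau}$-distance in $\AdS'$. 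Using the explicit formula for $\tau_{\AdS'}$ (Lemma \ref{lem:tauInAdS'}) together with $\overline{\tau}(\overline{x}_n,(t_n,0))=\tau(x,\gamma(t_n))\to\tfrac{\pi}{2}-t$, $\overline{\tau}((t_-,0),\overline{x}_n)=\tau(\gamma(t_-),x)$ fixed and $t_n\to\tfrac{\pi}{2}$, one checks that the $t$-coordinate of $\overline{x}_n$ tends to $t$ while its $x$-coordinate stays bounded; since a timelike geodesic in $\AdS'$ whose $x$-coordinate stays bounded must be vertical (Lemma \ref{lem:geodInAdS'}), the segments $\overline{\alpha_n}$ converge, locally in the $t$-coordinate, to a vertical geodesic ray of $\tau$-length $\tfrac{\pi}{2}-t$. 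In particular the comparison point of $\alpha_n(s_n)$, sitting at $\overline{\tau}$-arclength $\to t_N-t<\tfrac{\pi}{2}-t$ along $\overline{\alpha_n}$, converges to a point of $\AdS'$ with $t$-coordinate $t_N<\tfrac{\pi}{2}$; transporting the resulting estimate $\overline{\tau}(\overline{\alpha_n(s_n)},(r,0))\ge\delta>0$ for a fixed $r<\tfrac{\pi}{2}$ back to $X$ via the rigidity shows $\alpha_n(s_n)\ll\gamma(r)$, so $\alpha_n(s_n)\in J(x,\gamma(r))$, a compact causal diamond. Hence the $s_n$ are bounded (causal curves contained in a fixed compact set have uniformly bounded $d$-length), completing the argument. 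The main obstacle is exactly this step: the limit curve theorem alone allows $\tau$-length to be lost in the limit $\alpha_n\to\alpha$ over the non-compact domain, and it is the \AdSn-comparison rigidity of triangles adjacent to $\gamma$ that rules this out.
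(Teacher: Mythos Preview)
Your argument is correct, and it is genuinely different from the paper's.  The paper proves the lower bound by contradiction via \emph{angles}: assuming $L_\tau(\alpha)<\tfrac{\pi}{2}-t$, it invokes continuity of angles (Prop.~\ref{pop:continuityOfAngles}, which needs timelike geodesic prolongation) to get $\ma_x(\alpha,\alpha_n)\to 0$, then for each $n$ finds a point $y_n\in\alpha$ null related to $\gamma(t_n)$ and applies the law of cosines to the nearly degenerate triangle $\Delta(x,y_n,\gamma(t_n))$ to force $\tau(x,\gamma(t_n))-\tau(x,y_n)\to 0$, contradicting the assumed gap.  You instead exploit the one-sided rigidity already proven for triangles adjacent to the \AdSn-line (Cor.~\ref{cor:sides_equal}/Prop.~\ref{pop:compStack}): realising $\Delta(\gamma(t_-),x,\gamma(t_n))$ in $\AdS'$ with the $\gamma$-side vertical, the comparison vertex $\bar x$ is in fact \emph{fixed} (by stacking) at $t$-coordinate $b_+(x)$, the comparison sides $\bar\alpha_n$ swing to the vertical through $\bar x$, and the comparison point for $\alpha_n(s_n)$ converges below level $t_N$; transporting $\bar\tau(\overline{\alpha_n(s_n)},(r,0))\ge\delta$ back via Cor.~\ref{cor:sides_equal} traps $\alpha_n(s_n)$ in the compact diamond $J(x,\gamma(r))$.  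Your route thus bypasses Prop.~\ref{pop:continuityOfAngles} and hence the prolongation hypothesis at this step, at the cost of relying on the stacking machinery; the paper's route is logically independent of Prop.~\ref{pop:compStack} but pays with the angle-continuity assumption.  One cosmetic remark: your sentence ``a timelike geodesic in $\AdS'$ whose $x$-coordinate stays bounded must be vertical'' is not quite the reason $\bar\alpha_n$ becomes vertical---rather, by stacking $\bar x_n=\bar x$ is constant and $(t_n,0)$ converges (in the ambient $\AdS$ picture) to the tip, through which only vertical geodesics pass; the monotonicity of the $x$-coordinate along geodesics then indeed keeps it in $[0,x(\bar x)]$.
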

\begin{proof}
By construction, $\alpha:[0,b) \to X$ arises as a locally uniform limit of timelike maximisers $\alpha_n:[0,a_n] \to X$ from $x$ to $z_n:=\gamma(t_n)$, where $t_n \to +\frac{\pi}{2}$. For each compact subinterval of the domain of $\alpha$ we can apply Lem.\ \ref{lem:limitRealizer} to get $L(\alpha|_{[0,c]})=\lim_nL(\alpha_n|_{[0,c]})\leq \lim_n L(\alpha_n)$, thus $L:=L_{\tau}(\alpha) \leq \frac{\pi}{2}-t$. Now indirectly suppose that $L:=L_{\tau}(\alpha) < \frac{\pi}{2}-t$.  By continuity of angles (see Prop.\ \ref{pop:continuityOfAngles}), $\omega_n:=\ma_x(\alpha,\alpha_n) \to 0$ for $n \to \infty$. 

Now note that for any $t_n$ with $n \geq N$, $\partial J^-(z_n) \cap \alpha = (J^-(z_n)\setminus I^-(z_n)) \cap \alpha$ is non-empty: Certainly, $x \in I^-(z_n)$, so if this intersection were empty, then $\alpha$ would be imprisoned in the compact set $J^+(x) \cap J^-(z_n)$, which cannot happen by Prop.\ \ref{pop:asyInext}. So we find a point $y_n \in \alpha$ that is null-related to $z_n$, i.e. $y_n < z_n$ and $\tau(y_n,z_n) = 0$.

We now consider the triangle given by the vertices $x,y_n,z_n$. Two of the sides have natural curves: From $x$ to $y_n$ set $\nu_n$ to be the part of $\alpha$ from $x$ to $y_n$, and from $x$ to $z_n$ take $\alpha_n$. We name the side-lengths: $c_n=\tau(x,z_n)=L(\alpha_n)$, $a_n=\tau(x,y_n)=L(\nu_n)$ and $b_n=\tau(y_n,z_n)=0$. We know a lot about this triangle as $n\to+\infty$:
\begin{itemize}
\item By the definition of $t=b_+(x)$, $c_n\to \frac{\pi}{2}-t$. Note this is the longest side-length and $c_n$ is bounded away from $\pi$ as $b_+(x)\geq -b_-(x)>-\frac{pi}{2}$.
\item $b_n=0$ by the choice of $y_n$.
\item $a_n=\tau(x,y_n)<L(\alpha)$, so $\limsup_{n} a_n \leq L<\frac{\pi}{2}-t$.
\item Our curvature assumption gives that $\bar{\omega}_n:=\tilde{\ma}_x(y_n,z_n)\leq \omega_n\to 0$.
\end{itemize}
But this cannot be: we claim we can take a comparison situation $\Delta(\bar{x},\bar{y}_n,\bar{z}_n)$ which converges as $n\to+\infty$: Fix $\bar{x}$, choose $\bar{z}_n$ along a fixed distance realizer $\bar{\alpha}_n$ (independent of $n$!) through $\bar{x}$, take $\bar{y}_n$ on a distance realizer $\bar{\alpha}$ (dependent on $n$!) through $\bar{x}$ at the correct angle $\ma_{\bar{x}}(\bar{\alpha}_n,\bar{\alpha})=\bar{\omega}_n$. Then $\bar{x}$ and $\bar{z}_n$ automatically converge, and $\bar{y}_n$ does so too as $\bar{\omega}_n$ converges. 

Note the limiting situation is degenerate, as $\bar{\omega}_n\to0$, $c_n$ is bounded away from $0$ and $a_n>0$ is increasing, so bounded away from $0$ as well. Thus, the triangle inequality of angles has equality in the limit, so
\begin{equation*}
0=\lim_n (c_n-a_n-b_n) = \left(\frac{\pi}{2}-t\right) - L - 0
\end{equation*}
which contradicts $L<\frac{\pi}{2}-t$.
\end{proof}

To conclude this subsection, we show that any future directed and any past directed (timelike) asymptote to $\gamma$ from a common point fit together to give an \AdSn-line.

\begin{pop}[Asymptotic \AdSn-lines]
\label{pop:asymptoticLines}
Let $p \in I(\gamma)$ and consider any future and past asymptotes $\sigma^+:[0,a_+) \to X$ and $\sigma^-:(a_-,0] \to X$ from $p$ to $\gamma$. Then $\sigma :=\sigma^- \sigma^+: (a_-,a_+) \to X$ is a complete, timelike distance realizer of length $\pi$. In particular $b_++b_-=0$ and it can be reparametrized in $\tau$-arclength to be an \AdSn-line. The reparametrization is called a (full) \emph{asymptote} $\sigma$ to $\gamma$ through $p$. In particular, for an asymptote $\sigma$ we have $b_+(\sigma(t))=t$ ("the Busemann parametrization is a $\tau$-arclength parametrization"). 
\end{pop}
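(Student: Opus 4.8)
The plan is to obtain the statement by combining Proposition \ref{pop:asyRightLength} (and its time-dual) with the rigidity that having the \AdSn-line $\gamma$ as a side of comparison triangles forces. \emph{Step 1 (lengths).} By Proposition \ref{pop:asyRightLength}, $L_\tau(\sigma^+) = \frac{\pi}{2} - b_+(p)$; applying the same proposition in the time-dual of $X$ (where past asymptotes to $\gamma$ become future asymptotes to the time-reversed line and $b_-$ plays the role of $b_+$) gives $L_\tau(\sigma^-) = \frac{\pi}{2} - b_-(p)$. Since $\tau$-length is additive, $L_\tau(\sigma) = \pi - (b_+(p) + b_-(p))$, which by the inequality $b_+ + b_- \geq 0$ proven above is $\leq \pi$, with equality precisely when $(b_+ + b_-)(p) = 0$. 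Thus everything reduces to showing that $\sigma$ is a distance realiser and that $(b_+ + b_-)(p) = 0$.

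\emph{Step 2 (Busemann grows at unit rate along the asymptotes).} If $\sigma^+$ is the locally uniform limit of maximisers $\alpha_n : p \leadsto \gamma(t_n)$ with $t_n \to \frac{\pi}{2}$, then for each $t$ in the $\tau$-arclength domain of $\sigma^+$ the tail $\sigma^+|_{[t,a_+)}$ is the locally uniform limit of $\alpha_n|_{[t,\,\cdot\,]}$ (Lemma \ref{lem:limitRealizer}), whose basepoints $\alpha_n(t)$ converge to $\sigma^+(t)$; hence $\sigma^+|_{[t,a_+)}$ is a future co-ray to $\gamma$ at $\sigma^+(t)$ (Definition \ref{def:asymptotes}), timelike by the analogue of Proposition \ref{pop:TCRCHoldsonI(gamma)}, and the proof of Proposition \ref{pop:asyRightLength} adapts to co-rays to give $L_\tau(\sigma^+|_{[t,a_+)}) = \frac{\pi}{2} - b_+(\sigma^+(t))$. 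Comparing with $L_\tau(\sigma^+|_{[t,a_+)}) = a_+ - t = \frac{\pi}{2} - b_+(p) - t$ yields $b_+(\sigma^+(t)) = b_+(p) + t$; symmetrically $b_-(\sigma^-(t)) = b_-(p) - t$. In particular $b_+(\sigma^+(t)) \to \frac{\pi}{2}$ as $t \to a_+$ and $b_-(\sigma^-(t)) \to \frac{\pi}{2}$ as $t \to a_-$, i.e.\ the two ends of $\sigma$ run into the conformal boundary of $\gamma$.

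\emph{Step 3 (rigidity; the heart of the proof).} Fix $t_1 < 0 < t_2$ in the domain of $\sigma$. The reverse triangle inequality through $p$ gives $\tau(\sigma^-(t_1),\sigma^+(t_2)) \geq (t_2 - t_1) = L_\tau(\sigma|_{[t_1,t_2]})$, and one must rule out strict inequality. Here the hypothesis that $\gamma$ is an \AdSn-line is used: in any triangle having a side on $\gamma$, Corollary \ref{cor:sides_equal} and Proposition \ref{pop:angle=comparisonangle} make the comparison situation rigid (angles coincide with comparison angles, realisable in $\AdS'$ with the $\gamma$-side vertical). Feeding this into the bookkeeping of Proposition \ref{pop:compStack} — together with continuity of angles (Proposition \ref{pop:continuityOfAngles}) along the approximating maximisers of $\sigma^\pm$ and the hyperbolic law of cosines (Lemma \ref{lem:LOC}) — a strict inequality $\tau(\sigma^-(t_1),\sigma^+(t_2)) > t_2 - t_1$ would, via the chained estimate $\tau(\gamma(s),\gamma(t)) \leq \tau(\gamma(s),\sigma^-(t_1)) + \tau(\sigma^-(t_1),\sigma^+(t_2)) + \tau(\sigma^+(t_2),\gamma(t))$ and pushing the vertices corresponding to $\gamma(s),\gamma(t)$ (as $s\to-\frac{\pi}{2}$, $t\to+\frac{\pi}{2}$) out to the conformal boundary in $\AdS'$, produce a genuine realiser between points of $\gamma$ that is strictly longer than the corresponding arc of $\gamma$, contradicting that $\gamma$ is maximising and of maximal length $\pi$. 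Hence equality holds for all $t_1,t_2$, so $\sigma$ is a distance realiser; letting $t_1 \to a_-$, $t_2 \to a_+$ and inserting Step 2 forces $\tau(\sigma^-(t_1),\sigma^+(t_2)) \to \pi$, so $L_\tau(\sigma) = \pi$ and $(b_+ + b_-)(p) = 0$, whence also $b_+ + b_- = 0$.

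\emph{Step 4 (completeness and conclusion) and the main obstacle.} By Proposition \ref{pop:asyInext} and its time-dual, $\sigma^+$ is future inextensible and $\sigma^-$ is past inextensible; since $\sigma$ is now a distance realiser, Theorem \ref{thm:non-branching} together with timelike geodesic prolongation shows $\sigma$ is a genuine geodesic through $p$, so its $\tau$-arclength reparametrisation is defined on an open interval of length $\pi$, which one normalises to $(-\frac{\pi}{2},\frac{\pi}{2})$: this is the claimed \AdSn-line. The identity $b_+(\sigma(t)) = t$ then follows because $t \mapsto b_+(\sigma(t))$ is affine of unit slope (Step 2, now valid on the whole line as $\sigma$ is maximising) and takes the values $\pm\frac{\pi}{2}$ at the two endpoints, and $b_-(\sigma(t)) = -t$ is just $b_+ + b_- = 0$. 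The main obstacle is Step 3: extracting, from the bare hypothesis that $\gamma$ is a maximising \AdSn-line, the fact that the future and past asymptotes at $p$ glue into a length-$\pi$ realiser. The difficulty is that $b_+ + b_- \geq 0$ is essentially automatic from the reverse triangle inequality, so an essential use of both the lower curvature bound and the maximality of $\gamma$ is unavoidable, and it must be carried out — as in the stacking principle — through the comparison geometry in $\AdS'$ with auxiliary vertices escaping to the conformal boundary.
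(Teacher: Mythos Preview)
Your Step 3 has a genuine gap. The chained estimate you write, $\tau(\gamma(s),\gamma(t)) \leq \tau(\gamma(s),\sigma^-(t_1)) + \tau(\sigma^-(t_1),\sigma^+(t_2)) + \tau(\sigma^+(t_2),\gamma(t))$, points the wrong way: the reverse triangle inequality gives $\geq$, not $\leq$. With the correct sign, a strict inequality $\tau(\sigma^-(t_1),\sigma^+(t_2)) > t_2 - t_1$ does \emph{not} force the right-hand sum to exceed $t-s$, so no contradiction with $\gamma$ being maximising arises. Even if you patch this by computing the outer terms via Corollary~\ref{cor:sides_equal} and passing $s\to-\tfrac{\pi}{2}$, $t\to\tfrac{\pi}{2}$, you obtain only $\tau(\sigma^-(t_1),\sigma^+(t_2)) \leq (t_2-t_1) + (b_+ + b_-)(p)$; the unwanted term $(b_+ + b_-)(p)\geq 0$ is precisely what you are trying to kill, and your subsequent claim that ``inserting Step~2 forces $\tau(\sigma^-(t_1),\sigma^+(t_2))\to\pi$'' does not follow from anything established.

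The paper's argument avoids this circularity by a single direct use of the curvature bound. It takes the triangle $\Delta(\gamma(-r_n),p,\gamma(r_n))$ with the approximating maximisers $\sigma_n^\pm$ as the two non-$\gamma$ sides, picks $q_-^n=\sigma_n^-(s)$, $q_+^n=\sigma_n^+(t)$ on those sides, and applies triangle comparison (Definition~\ref{def:triComp}, not Corollary~\ref{cor:sides_equal}, since neither point lies on the $\gamma$-side) to get $\tau(q_-^n,q_+^n)\leq\bar\tau(\bar q_-^n,\bar q_+^n)$. The stacking principle (Proposition~\ref{pop:compStack}) realises the comparison triangle in $\AdS'$ with the $\gamma$-side vertical; since that side has length $2r_n\to\pi$, the whole comparison triangle is forced to degenerate to a single vertical segment, so $\bar\tau(\bar q_-^n,\bar q_+^n)\to t-s$. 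This simultaneously yields that $\sigma$ is a realiser, that its length is $\pi$, and (reading off the $t$-coordinate of $\bar p$) that $b_+(p)=-b_-(p)$. Your Steps~1, 2 and~4 are then redundant; the degeneration of the comparison triangle does all the work at once.
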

\begin{proof}
Let $\sigma_n^+$ and $\sigma_n^-$ be two sequences of timelike maximisers from $p$ to $\gamma(r_n)$ and $\gamma(-r_n)$, respectively, such that $\sigma^+$ and $\sigma^-$ arise as limits of these sequences as $r_n \to \infty$. To show that $\sigma$ is a line, it is sufficient to show that for any $s < 0<t$, $\tau(\sigma(s),\sigma(t)) = L_{\tau}(\sigma|_{[s,t]})$. To see this, let $q_+:=\sigma(t)$ and $q_-:=\sigma(s)$, and $q_+^n:=\sigma_n^+(t)$, $q_-^n:=\sigma_n^-(s)$. Then $q_{\pm} = \lim_n q_{\pm}^n$. Consider the triangle $\Delta(q_-^n,p,q_+^n)$ with sides $\sigma_n^-$, $\sigma_n^+$ and a part of $\gamma$. Consider a comparison triangle with points $\overline{q_{\pm}^n}$ corresponding to $q_{\pm}^n$. Sending $n \to \infty$, we see that the stacked comparison triangles in $\AdS'$ converge to a vertical line (here we use Prop.\ \ref{pop:compStack} and that $\gamma$ has length $\pi$), hence our curvature bound gives
\begin{align*}
    \tau(q_-,q_+) = \lim_{n \to \infty} \tau(q_-^n,q_+^n) \leq \lim_{n \to \infty} \overline{\tau}(\overline{q_-^n},\overline{q_+^n}) = \lim_n L_{\tau}(\sigma_n^-\sigma_n^+|_{[s,t]})= L_{\tau}(\sigma|_{[s,t]}),
\end{align*}
which is what we wanted to show, as the other inequality is trivial.

$a_+-a_-=\pi$ also follows as the stacked comparison triangles in $\AdS'$ converge to a vertical line. For the future and past Busemann functions, we also look at this picture: we get a vertical line $\bar{\gamma}:(-\frac{\pi}{2},\frac{\pi}{2})\to\AdS'$ and $\bar{p}\in\AdS'$ such that any $\Delta(\bar{\gamma}(t_1),\bar{p},\bar{\gamma}(t_2))$ is a comparison triangle. In particular, $b_+(p)=\frac{\pi}{2}-\lim_{t_2\to\frac{\pi}{2}}\bar{\tau}(\bar{p},\bar{\gamma}(t_2)) = t(p) =\cdots= -b_-(p)$ is the $t$-coordinate.
\end{proof}

\subsection{\texorpdfstring{\AdSn-Parallel lines}{AdS-Parallel lines}}
\label{subsec:lines:parallel}

This subsection introduces the notion of parallelism for complete timelike \AdSn-lines. In the following, we call a map $f:Y_1 \to Y_2$ between Lorentzian pre-length spaces $(Y_1,d_1,\ll_1,\leq_1,\tau_1)$ and $(Y_2,d_2,\ll_2,\leq_2,\tau_2)$ $\tau$-\emph{preserving} if for all $p,q \in Y_1$, $\tau_1(p,q) = \tau_2(f(p),f(q))$, and we call $f$ $\leq$-\emph{preserving} if $p \leq_1 q$ if and only if $f(p) \leq_2 f(q)$.

\begin{dfn}[Realizing w.r.t.\ Busemann]
Let $\gamma$ be a \AdSn-line and $x\in I(\gamma)$. Then a point $\bar{x}\in\AdS'$ is  \emph{future-Busemann-realized} if $t(\bar{x})=b_+(x)$. Similarly, for an asymptote $\alpha:I\to X$ to $\gamma$ parametrized in $\tau$-arclength parametrization, $\bar{\alpha}:I\to\AdS'$ is \emph{Busemann-realized} if it is vertical (i.e.\ $(\bar{\alpha}(t))_x$ is constant) and $t(\bar{\alpha}(t))=b_+(\alpha(t))$. Note that by Prop.\ \ref{pop:asymptoticLines}, this is always possible, and $\alpha$ is in $\tau$-arclength parametrization.
\end{dfn}

\begin{dfn}[\AdSn-Parallel lines]
\label{def:parallelLines}
Let $\alpha,\beta$ be two \AdSn-lines in a Lorentzian pre-length space $X$. They are called \emph{(\AdSn-)parallel} if there exists a $\tau$- and $\leq$-preserving map $f:(\alpha((-\frac{\pi}{2},\frac{\pi}{2}))\cup \beta((-\frac{\pi}{2},\frac{\pi}{2})))\to (-\frac{\pi}{2},\frac{\pi}{2})\times_{\cos}\mb{R}$ such that $f\circ\alpha$ and $f\circ\beta$ are Busemann-realizations of $\alpha$ and $\beta$. We call such a map $f$ a \emph{(\AdSn-)parallel realisation} of $\alpha$ and $\beta$. The \emph{spacelike distance} between $\alpha$ and $\beta$ is the $x$-coordinate difference in the realization. Note that one can similarly define the spacelike distance between an \AdSn-line and a point (with the stacking principle \ref{pop:compStack} ensuring that it always works out).
\end{dfn}

\begin{rem}
Note that by post-composing this by a translation in the $x$-direction, we can always achieve that $f(\alpha(\R))=\{(t,0):t\in\R\}\subseteq\AdS'$ and $f(\beta(\R))=\{(t,c):t\in\R\}$ for some $c\geq0$.
\end{rem}
\begin{dfn}
Let $X$ be a \LpLS with continuous time separation $\tau$ satisfying $\tau(x,x) = 0$ for all $x \in X$. Let $\alpha,\beta:(-\frac{\pi}{2},\frac{\pi}{2})\to X$ be two $\tau$-arclength parametrised \AdSn-lines. 
For each $s<t$ with $\alpha(s)\leq\beta(t)$ there is a $c\geq0$ such that the \AdSn-parallel lines $\bar{\alpha},\bar{\beta}:(-\frac{\pi}{2},\frac{\pi}{2})\to\AdS'$ with distance $c$ so that the partial map $\alpha(s)\mapsto\bar{\alpha}(s)$ and $\beta(t)\mapsto\bar{\beta(t)}$ is (Busemann- and) $\tau$-preserving. Set:
\begin{itemize}
\item $c_{\alpha\beta}(s,t)$ to be that $c$,
\item $c_{\beta\alpha}(s,t)$ to be the analogous thing with $\alpha(s)$ and $\beta(t)$ reversed.
\end{itemize}
Similarly, for each $s$, we find a $c\geq0$ such that the closure of the set $\{t:\alpha(s)\leq\beta(t)\}$ is the same as $\{t:\bar{\alpha}(s)\leq\bar{\beta}(t)\}$. Set:
\begin{itemize}
\item $c_{\alpha+}^N(s)$ to be that $c$,
\item $c_{\beta+}^N(s)$ to be the analogous thing with $\alpha(s)$ and $\beta(t)$ reversed.
\end{itemize}
By Lem.\ \ref{lem:tauInAdS'}, we get explicit formulae:
%\begin{align*}
%\tau&=\arccos(\sin(t_1)\sin(t_2)+\cos(t_1)\cos(t_2)\cosh(x_2-x_1))\\
%\cos(\tau)&=\sin(t_1)\sin(t_2)+\cos(t_1)\cos(t_2)\cosh(x_2-x_1)\\
%\cos(\tau)-\sin(t_1)\sin(t_2)&=\cos(t_1)\cos(t_2)\cosh(x_2-x_1)\\
%\frac{\cos(\tau)-\sin(t_1)\sin(t_2)}{\cos(t_1)\cos(t_2)}&=\cosh(x_2-x_1)\\
%\arcosh\left(\frac{\cos(\tau)-\sin(t_1)\sin(t_2)}{\cos(t_1)\cos(t_2)}\right)&=x_2-x_1
%\end{align*}

\begin{itemize}
\item $c_{\alpha\beta}(s,t)=\arcosh\left(\frac{\cos(\tau(\alpha(s),\beta(t)))-\sin(s)\sin(t)}{\cos(s)\cos(t)}\right)$ if $\alpha(s)\leq\beta(t)$ (otherwise undefined, note the formula is strictly decreasing in $\tau(\alpha(s),\beta(t))$)),
\item $c_{\beta\alpha}(s,t)=\arcosh\left(\frac{\cos(\tau(\beta(t),\alpha(s)))-\sin(s)\sin(t)}{\cos(s)\cos(t)}\right)$ if $\beta(t)\leq\alpha(s)$ (otherwise undefined), 
\item $c_{\alpha+}^N(s)=\arcosh\left(\frac{1-\sin(s)\sin(t)}{\cos(s)\cos(t)}\right)$ for $t=\inf\{t:\alpha(s)\leq\beta(t)\}$ (note the formula is strictly increasing in $t$),
\item $c_{\beta+}^N(s)=\arcosh\left(\frac{1-\sin(s)\sin(t)}{\cos(s)\cos(t)}\right)$ for $t=\inf\{t:\beta(s)\leq\alpha(t)\}$. 
\end{itemize}
We say the \emph{$c$-criterion for parallel \AdSn-lines} is satisfied if all these are constant where defined, have the same value, and the infima are minima.
\end{dfn}

The following is easy to see from the fact that most timelike geodesics have the $x$-coordinates go to infinity before they leave $\AdS'$ (see Lem.\ \ref{lem:geodInAdS'}):
\begin{lem}[Communicability]\label{lem:communicability}
Let $X$ be globally causally closed and $\alpha,\beta$ be \AdSn-parallel. Then for each $s\in(-\frac{\pi}{2},\frac{\pi}{2})$ there is a $t\in(-\frac{\pi}{2},\frac{\pi}{2})$ such that $\alpha(s)\ll\beta(t)$ (and similarly a $t$ with $\alpha(s)\gg\beta(t)$).
\end{lem}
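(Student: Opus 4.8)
The plan is to pull the whole configuration back into $\AdS'$ via the parallel realisation and read the claim off from the explicit formulas. By hypothesis there is a $\tau$- and $\leq$-preserving map $f$ on $\alpha((-\frac{\pi}{2},\frac{\pi}{2}))\cup\beta((-\frac{\pi}{2},\frac{\pi}{2}))$ with $f\circ\alpha$, $f\circ\beta$ Busemann-realisations; after post-composing $f$ with a translation in the $x$-direction (the Remark following Def.\ \ref{def:parallelLines}) I may assume $f$ sends the image of $\alpha$ into the vertical line $\{x=0\}\subseteq\AdS'$ and the image of $\beta$ into $\{x=c\}$ for some $c\geq0$. First I would note that these images are in fact the \emph{full} vertical lines: along a vertical segment of $\AdS'$ the time separation equals the difference of the $t$-coordinates (Lem.\ \ref{lem:tauInAdS'}), and $\alpha,\beta$ are $\tau$-arclength parametrised lines of length $\pi$ on $(-\frac{\pi}{2},\frac{\pi}{2})$, so $\tau$-preservation of $f$ forces $r\mapsto t(f(\alpha(r)))$ and $r\mapsto t(f(\beta(r)))$ to be increasing isometries of $(-\frac{\pi}{2},\frac{\pi}{2})$, hence the identity; thus $f(\alpha(r))=(r,0)$ and $f(\beta(r))=(r,c)$ for all $r$.

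Next, fix $s\in(-\frac{\pi}{2},\frac{\pi}{2})$. By Lem.\ \ref{lem:tauInAdS'}, for $t\in(s,\frac{\pi}{2})$ one has $(s,0)\ll_{\AdS'}(t,c)$ precisely when $\sin(s)\sin(t)+\cos(s)\cos(t)\cosh(c)<1$; as $t\to\frac{\pi}{2}$ the left-hand side tends to $\sin(s)<1$, so this holds for all $t$ sufficiently close to $\frac{\pi}{2}$. Fixing such a $t$, the $\tau$-preservation of $f$ gives $\tau(\alpha(s),\beta(t))=\bar\tau((s,0),(t,c))>0$, i.e.\ $\alpha(s)\ll\beta(t)$, which is the first assertion. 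The second is symmetric: taking instead $t\to-\frac{\pi}{2}$, where $\sin(s)\sin(t)+\cos(s)\cos(t)\cosh(c)\to-\sin(s)<1$ and $t<s$, produces a $t$ with $\beta(t)\ll\alpha(s)$. This matches the geometric heuristic behind the statement and explains the reference to Lem.\ \ref{lem:geodInAdS'}: a timelike geodesic issuing from $(s,0)$ with a small nonzero hyperbolic angle to the vertical has $x$-coordinate running monotonically off to $+\infty$ (or $-\infty$) before it leaves $\AdS'$, so it must cross the vertical line $\{x=c\}$ at an interior point, which is then timelike related to $(s,0)$.

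I do not anticipate a real obstacle; the computation is a one-line limit. The only step that needs genuine care is the identification of the images of $f\circ\alpha$ and $f\circ\beta$ with the full vertical lines $\{(t,0):t\in(-\frac{\pi}{2},\frac{\pi}{2})\}$ and $\{(t,c):t\in(-\frac{\pi}{2},\frac{\pi}{2})\}$, since without it one could not conclude that the point $(t,c)$ produced above actually equals $f(\beta(t))$ for an admissible parameter $t$; but this is precisely what the Busemann-realisation condition together with the $\tau$-preservation of $f$ provides, as indicated above.
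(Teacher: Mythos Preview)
Your proof is correct and takes a more direct route than the paper's. The paper argues by contradiction: it introduces the set $I=\{(s,t):\alpha(s)\ll\beta(t)\}$, notes it is non-empty, and shows $\mathrm{pr}_1(I)=(-\frac{\pi}{2},\frac{\pi}{2})$ by assuming some $s_+$ is missing and tracking a sequence $(s_n,t_n)\in I$ with $s_n\nearrow s_+$, $t_n\nearrow\frac{\pi}{2}$; along such a sequence $\tau(\alpha(s_n),\beta(t_n))\to0$ and the formula for $c_{\alpha\beta}(s_n,t_n)$ forces $c_{\alpha\beta}(s_n,t_n)\to+\infty$, contradicting its constancy (which encodes \AdSn-parallelism). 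You instead pull the configuration into $\AdS'$ via the parallel realisation, identify $f(\alpha(r))=(r,0)$ and $f(\beta(r))=(r,c)$, and read off the required $t$ directly from the limit $\sin(s)\sin(t)+\cos(s)\cos(t)\cosh(c)\to\sin(s)<1$ as $t\to\frac{\pi}{2}$ together with Lem.\ \ref{lem:tauInAdS'}. Your argument is shorter and, notably, never invokes the global causal closedness hypothesis; in the paper's proof that hypothesis is used only to write $J=\bar I$, which your direct construction bypasses entirely. The one point you flagged as needing care---that the images of $f\circ\alpha$ and $f\circ\beta$ are the \emph{full} vertical lines---is indeed necessary and is handled correctly by your length/$\tau$-preservation argument.
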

\begin{proof}
We consider the sets $I=\{(s,t):\alpha(s)\ll\beta(t)\}$ (open) and $J=\{(s,t):\alpha(s)\leq\beta(t)\}=\bar{I}$ (as $X$ is globally causally closed). They are non-empty: as $\beta\cap I^+(\alpha)\neq\emptyset$, we find $\alpha(s_0)\ll\beta(t_0)$, so $(s_0,t_0)\in I$. We claim that the first projection of $I$ is full, i.e.\ $pr_1(I)=\{s:\exists t:\alpha(s)\ll\beta(t)\}=(-\frac{\pi}{2},\frac{\pi}{2})$. We know $s_0$ is in the left hand side, and with it all smaller values $s<s_0$. So let indirectly $s_n\nearrow s_+<\frac{\pi}{2}$ and $t_n\nearrow \frac{\pi}{2}$ such that $(s_n,t_n)\in I$, but $s_+\not\in pr_1(I)$. Then by continuity of $\tau$, $\tau(\alpha(s_n),\beta(t_n))\to0$. Looking at the formula for $c_{\alpha\beta}(s_n,t_n)$, one sees that the enumerator stays away from $0$, whereas the denominator approaches $0$. Thus $c_{\alpha\beta}(s_n,t_n)\nearrow+\infty$ which cannot be as it is constant, so $pr_1(I)=(-\frac{\pi}{2},\frac{\pi}{2})$. 
\end{proof}

\begin{lem}[The $c$-criterion for \AdSn-parallel lines]\label{lem:c-cri}
Let $X$ be a \LpLS with continuous time separation $\tau$ satisfying $\tau(x,x) = 0$ for all $x \in X$. Let $\alpha,\beta:(-\frac{\pi}{2},\frac{\pi}{2})\to X$ be two $\tau$-arclength parametrised \AdSn-lines. Then they satisfy the $c$-criterion for parallel \AdSn-lines if and only if they are parallel \AdSn-lines. If this is the case, the constant $c$ is the distance between $\alpha$ and $\beta$.

If $X$ is additionally globally causally closed and $\alpha \cap I^+(\beta) \neq \emptyset$ and $\beta \cap I^+(\alpha) \neq \emptyset$, the $c$-criterion simplifies to only checking $c_{\alpha\beta}$ and $c_{\beta\alpha}$ for being constant and equal.
\end{lem}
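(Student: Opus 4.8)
The plan is to pass back and forth between a parallel realisation and the four functions $c_{\alpha\beta},c_{\beta\alpha},c^N_{\alpha+},c^N_{\beta+}$, using the closed forms for $\tau_{\AdS'}$ and for $\leq_{\AdS'}$ from Lemma \ref{lem:tauInAdS'}. For the easy direction ``parallel $\Rightarrow$ $c$-criterion'': given a parallel realisation $f$, post-compose with a translation in the $x$-direction (the remark after Definition \ref{def:parallelLines}) so that $f\circ\alpha(t)=(t,0)$ and $f\circ\beta(t)=(t,c)$ for some $c\geq0$. Since $f$ is $\tau$- and $\leq$-preserving, for any $s<t$ with $\alpha(s)\leq\beta(t)$ we get $(s,0)\leq_{\AdS'}(t,c)$ and $\tau(\alpha(s),\beta(t))=\tau_{\AdS'}((s,0),(t,c))$; comparing with Lemma \ref{lem:tauInAdS'} and using strict monotonicity of $\cosh$ on $[0,\infty)$ forces $c_{\alpha\beta}(s,t)=c$, and symmetrically $c_{\beta\alpha}\equiv c$. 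Moreover $f$ carries $\{t:\alpha(s)\leq\beta(t)\}$ onto $\{t:(s,0)\leq_{\AdS'}(t,c)\}$, which by Lemma \ref{lem:tauInAdS'} is a closed half-interval $[t_0(s),\tfrac{\pi}{2})$; hence $c^N_{\alpha+}(s)=c$ with the infimum attained, and likewise $c^N_{\beta+}\equiv c$. So the $c$-criterion holds with common value the $x$-difference $c$, which is by definition the distance between $\alpha$ and $\beta$.

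For the converse ``$c$-criterion $\Rightarrow$ parallel'', let $c\geq0$ be the common value and set $f(\alpha(t)):=(t,0)$, $f(\beta(t)):=(t,c)$ on $\alpha((-\tfrac{\pi}{2},\tfrac{\pi}{2}))\cup\beta((-\tfrac{\pi}{2},\tfrac{\pi}{2}))$. First one checks $f$ is well defined: $\alpha,\beta$ are injective (being $\tau$-arclength distance realisers), and if $\alpha(s)=\beta(t)$, then applying constancy of $c_{\alpha\beta}$ (for $s\leq t$) resp.\ of $c_{\beta\alpha}$ (for $s\geq t$) along $\alpha(s)=\beta(t)\leq\beta(t')$ resp.\ $\alpha(s')\leq\alpha(s)=\beta(t)$ — where the relevant $\tau$-values are arclengths on $\beta$ resp.\ $\alpha$ — forces $s=t$ and then $c=0$, so both prescriptions agree. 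On each line $f$ is a $\tau$- and $\leq$-preserving map onto a vertical line: $\alpha(s)\leq_X\alpha(t)\iff s\leq t$ and $\tau(\alpha(s),\alpha(t))=\max\{t-s,0\}$, the case $s<t$ by the $\tau$-arclength parametrisation and the case $s>t$ because $\alpha(s)\leq_X\alpha(t)$ together with $\alpha(t)\leq_X\alpha(s)$ would violate the reverse triangle inequality, matching $\tau_{\AdS'}((s,0),(t,0))$ by Lemma \ref{lem:tauInAdS'}; likewise for $\beta$. For the cross pairs $\alpha(s),\beta(t)$, the ``infima are minima'' clause gives that $\{t:\alpha(s)\leq_X\beta(t)\}$ is the closed up-set $[t_0(s),\tfrac{\pi}{2})$, and $c^N_{\alpha+}(s)=c$ identifies it with $\{t:(s,0)\leq_{\AdS'}(t,c)\}$ exactly, which yields $\leq$-preservation for these pairs; where $\alpha(s)\leq_X\beta(t)$, the identity $c_{\alpha\beta}(s,t)=c$ together with Lemma \ref{lem:tauInAdS'} gives $\tau(\alpha(s),\beta(t))=\arccos(\sin s\sin t+\cos s\cos t\cosh c)=\tau_{\AdS'}((s,0),(t,c))$, while off this set both time separations vanish by timelikeness. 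The pairs $\beta(s),\alpha(t)$ are symmetric via $c_{\beta\alpha},c^N_{\beta+}$. Hence $f$ is $\tau$- and $\leq$-preserving, and since $f\circ\alpha$ and $f\circ\beta$ are vertical and $\tau$-arclength parametrised they are Busemann-realisations (Proposition \ref{pop:asymptoticLines}), so $\alpha,\beta$ are parallel with distance $c$. The main obstacle is precisely this $\leq$-preservation step: $c_{\alpha\beta},c_{\beta\alpha}$ only control the chronological (positive-$\tau$) relations, so pinning down the null-boundary relations needs the somewhat awkward functions $c^N_{\alpha+},c^N_{\beta+}$ and the ``infima are minima'' condition.

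For the simplification, assume $X$ is globally causally closed with $\alpha\cap I^+(\beta)\neq\emptyset\neq\beta\cap I^+(\alpha)$ and that $c_{\alpha\beta},c_{\beta\alpha}$ are constant and equal to $c$. Global causal closedness makes $\leq_X$ closed, so the up-set $\{t:\alpha(s)\leq_X\beta(t)\}$ is closed, i.e.\ of the form $[t_0(s),\tfrac{\pi}{2})$, and an argument as in Lemma \ref{lem:communicability} (using constancy of $c_{\alpha\beta}$ and $\beta\cap I^+(\alpha)\neq\emptyset$) shows it is non-empty with $t_0(s)>s$, so the infimum is a minimum. Since $\alpha(s)\leq_X\beta(t_0(s))$ lies on the boundary of this up-set, continuity of $\tau$ forces $\tau(\alpha(s),\beta(t_0(s)))=0$, whence $c^N_{\alpha+}(s)=\arcosh\big(\tfrac{1-\sin s\sin t_0(s)}{\cos s\cos t_0(s)}\big)=c_{\alpha\beta}(s,t_0(s))=c$, and symmetrically $c^N_{\beta+}\equiv c$. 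Thus the full $c$-criterion holds and the equivalence already proved applies; note it is exactly the global causal closedness and the communication hypotheses that render the $c^N$-conditions automatic here.
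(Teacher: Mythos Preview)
Your proof is correct and takes essentially the same approach as the paper: define the candidate map $f(\alpha(s))=(s,0)$, $f(\beta(s))=(s,c)$ and verify $\leq$- and $\tau$-preservation via the strict monotonicity of the explicit formulae from Lemma~\ref{lem:tauInAdS'}, with the simplification handled by observing that global causal closedness plus continuity of $\tau$ give $\tau(\alpha(s),\beta(t_0(s)))=0$ and hence $c^N_{\alpha+}(s)=c_{\alpha\beta}(s,t_0(s))$. You are in two places slightly more careful than the paper---you address well-definedness of $f$ when the lines meet, and you correctly note that only the \emph{argument} of Lemma~\ref{lem:communicability} (constancy of $c_{\alpha\beta}$) is available here rather than the lemma itself, since its hypothesis is precisely the parallelism being proved; the paper cites the lemma directly.
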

\begin{proof}
We define $f$ as in the definition of parallel \AdSn-lines with distance $c$: $f(\alpha(s)):=(s,0)$, $f(\beta(s)):=(s,c)$ in $\AdS'$. We will prove that $f$ is $\leq$-preserving if and only if $c_{\alpha+}^N$ and  $c_{\beta+}^N$ are constantly $c$, and under the assumption that this is the case $f$ is $\tau$-preserving if and only if $c_{\alpha\beta}$ and $c_{\beta\alpha}$ are constantly $c$ wherever defined.

As $\alpha$ and $\beta$ are future directed $\tau$-arclength parametrised \AdSn-lines, it is clear that $f$ is $\leq$- and $\tau$-preserving along $\alpha$ and along $\beta$, i.e.\ we only have to check the conditions on $f(\alpha(s))\mathrel{\bar{\leq}} f(\beta(t))$ and their $\tau$-distance, and both also for $\alpha,\beta$ reversed.

So first, for $\leq$-preserving: For a fixed $s$, we describe the set of $t$ such that $\alpha(s)\leq\beta(t)$ resp.\ $f(\alpha(s))\mathrel{\bar{\leq}} f(\beta(t))$, the situation switching $\alpha$ and $\beta$ is analogous (using $c_{\beta+}^N$ instead of $c_{\alpha+}^N$). By transitivity of $\leq$ resp.\ $\bar{\leq}$, these sets will both be an interval of the form $[t_{min},+\frac{\pi}{2})$ or $(t_{min},+\frac{\pi}{2})$, resp.\ $[\tilde{t}_{min},+\frac{\pi}{2})$ (always closed). Note that inserting $\tilde{t}_{min}$ instead of $t$ into the definition of $c_{\alpha+}^N(s)$ yields $c$ by the geometric definition. As the formula of $c_{\alpha+}^N(s)$ is strictly increasing in $t$, we have that $t_{min}=\tilde{t}_{min}$ if and only if $c=c_{\alpha+}^N(s)$. Thus $\{t:\alpha(s)\leq\beta(t)\}=[\tilde{t}_{min},+\frac{\pi}{2})$ if and only if $c_{\alpha+}^N(s)=c$ and the infimum in the definition of $c_{\alpha+}^N(s)$ is achieved. The same argument works out for $\alpha$ and $\beta$ exchanged, thus proving that $f$ is $\leq$-preserving if and only if $c_{\alpha+}^N$ and $c_{\beta+}^N$ are constant and equal to $c$, and the infima appearing are minima.

For $\tau$-preserving, we assume that $f$ is already $\leq$-preserving. We will only prove $\tau(\alpha(s),\beta(t))=\bar{\tau}(f(\alpha(s)),f(\beta(t)))$ for $\alpha(s)\leq\beta(t)$, the case where they are causally unrelated is covered by $\leq$-preserving, and the case where $\alpha$ and $\beta$ are interchanged follows analogously. 
Note that inserting $\bar{\tau}(f(\alpha(s)),f(\beta(t)))$ for $\tau(\alpha(s),\beta(t))$ into the definition of $c_{\alpha\beta}(s,t)$ yields $c$ by the geometric definition. As the formula of $c_{\alpha\beta}(s,t)$ is strictly decreasing in $\tau(\alpha(s),\beta(t))$, we have that $\tau(\alpha(s),\beta(t))=\bar{\tau}(f(\alpha(s)),f(\beta(t)))$ if and only if $c_{\alpha\beta}(s,t)=c$. The same argument works out for $\alpha$ and $\beta$ exchanged, thus proving that if $f$ is $\leq$-preserving, $f$ is $\tau$-preserving if and only if $c_{\alpha\beta}$ and $c_{\beta\alpha}$ are constant and equal to $c$.

For the additional statement, fix $s$. By global causal closedness, the infimum in the formula of $c_{\alpha+}^N$ is automatically a minimum, and it is non-void by Lem.\ \ref{lem:communicability}. For the value of $c_{\alpha+}^N(s)$, let $t_0$ be the minimum. Then for any $t>t_0$ we have $\alpha(s)\ll\beta(t)$, and by continuity of $\tau$ we have that $\tau(\alpha(s),\beta(t_0))=0$. Now note that plugging this into $c_{\alpha\beta}(s,t_0)$ gives the formula for $c_{\alpha+}^N$.
\end{proof}

\begin{lem}[The strong causality trick]\label{lem:strongCausTrick}
Let $X$ be a strongly causal \LpLS with $\tau$ continuous on $U$ and $\tau(x,x)=0$ for all $x \in X$. Let $\alpha,\beta:[0,b)\to U$ be two $\tau$-arclength parametrised timelike distance realisers with $x:=\alpha(0)=\beta(0)$. Assume that for all $s,t$ such that $\alpha(s)$ and $\beta(t)$ are timelike related, the comparison angle $\tilde{\ma}_x(\alpha(s),\beta(t))=0$. Then $\alpha=\beta$.
\end{lem}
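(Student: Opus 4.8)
The plan is to convert the angle hypothesis into an \emph{exact} formula for the time separation between points of $\alpha$ and $\beta$, bootstrap it via continuity of $\tau$ into genuine causal relations for all parameters, and then trap each point of $\alpha$ inside shrinking causal diamonds around the corresponding point of $\beta$. First I would note that, since $\alpha,\beta$ are $\tau$-arclength parametrised, $x=\alpha(0)\ll\alpha(s)$ and $x\ll\beta(t)$ for $s,t>0$, so in the comparison triangle for $(\alpha(s),x,\beta(t))$ the vertex $x$ is always a time endpoint and the sign in the law of cosines (Lem.~\ref{lem:LOC}) is $\sigma=-1$. With $\omega=\tilde{\ma}_x(\alpha(s),\beta(t))=0$ this gives $\cos a_{13}=\cos s\cos t+\sin s\sin t=\cos(t-s)$, hence $\tau(\alpha(s),\beta(t))=|t-s|$ whenever $\alpha(s)\ll\beta(t)$; the reverse triangle inequality $t=\tau(x,\beta(t))\ge\tau(x,\alpha(s))+\tau(\alpha(s),\beta(t))=s+|t-s|$ then forces $s<t$ in that case, and symmetrically $\beta(t)\ll\alpha(s)$ forces $t<s$ and $\tau(\beta(t),\alpha(s))=s-t$.

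Next I would upgrade this to relations holding for \emph{all} parameters. Fix $t\in(0,b)$. Since $\tau$ is lower semicontinuous, $I^-(\beta(t))=\{z:\tau(z,\beta(t))>0\}$ is open, and because $\alpha(s)\to x\ll\beta(t)$ as $s\to 0^+$ we get $\alpha(s)\ll\beta(t)$ for all small $s>0$; transitivity of $\ll$ makes $\{s:\alpha(s)\ll\beta(t)\}$ a subinterval $(0,s^*)$ of $(0,b)$, with $s^*\le t$ by the previous step. If $s^*<t$, continuity of $\tau$ together with the formula $\tau(\alpha(s),\beta(t))=t-s$ valid on $(0,s^*)$ gives $\tau(\alpha(s^*),\beta(t))=t-s^*>0$, so $\alpha(s^*)\ll\beta(t)$, and $I^-(\beta(t))$ being open then forces $\alpha(s)\ll\beta(t)$ for $s$ slightly larger than $s^*$, contradicting maximality of $s^*$. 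Hence $s^*=t$, i.e.\ $\alpha(s)\ll\beta(t)$ for all $0<s<t<b$; interchanging $\alpha$ and $\beta$ yields $\beta(t)\ll\alpha(s)$ for all $0<t<s<b$.

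Finally I would run the strong causality trick. Fix $s\in(0,b)$; for every $\varepsilon\in(0,\min(s,b-s))$ the previous paragraph gives $\beta(s-\varepsilon)\ll\alpha(s)\ll\beta(s+\varepsilon)$, so $\alpha(s)$ lies in the causal diamond $J^+(\beta(s-\varepsilon))\cap J^-(\beta(s+\varepsilon))$. By strong causality, $\beta(s)$ has arbitrarily small causally convex open neighbourhoods $V$ (every causal curve with both endpoints in $V$ stays in $V$); choosing $\varepsilon$ small enough that $\beta(s-\varepsilon),\beta(s+\varepsilon)\in V$ by continuity of $\beta$, the point $\alpha(s)$ lies on a causal curve joining $\beta(s-\varepsilon)$ to $\beta(s+\varepsilon)$ and hence in $V$. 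As the $V$ form a neighbourhood basis at $\beta(s)$ and $X$ is metric, $\alpha(s)=\beta(s)$; together with $\alpha(0)=x=\beta(0)$ this proves $\alpha=\beta$.

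The hard part will be the bootstrapping step: a priori the angle hypothesis looks almost vacuous, since it says nothing unless $\alpha(s)$ and $\beta(t)$ happen to be timelike related, and the real work is to see that openness of $I^-(\beta(t))$ plus continuity of $\tau$ propagates the single relation $x\ll\beta(t)$ into the whole interval $(0,t)$ — where it is essential that one obtains the \emph{exact} value $\tau(\alpha(s),\beta(t))=t-s$ rather than merely an inequality, as it is positivity of $t-s^*$ that produces the contradiction. The concluding strong causality argument is then routine, modulo the standard fact that in the ambient setting a causal diamond between two points of a causally convex neighbourhood is contained in it.
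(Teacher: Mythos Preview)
Your proof is correct and follows essentially the same route as the paper: convert $\omega=0$ via the law of cosines into $\tau(\alpha(s),\beta(t))=|t-s|$ on the timelike-related set, bootstrap via continuity of $\tau$ to obtain $\alpha(s)\ll\beta(t)$ for all $0<s<t<b$ (the paper fixes $s$ and reads off $\inf\{t:\tau(\alpha(s),\beta(t))>0\}=s$ from the limiting law of cosines, you fix $t$ and run an open--closed argument on $\{s:\alpha(s)\ll\beta(t)\}$ --- these are equivalent), and then trap $\alpha(s)$ in shrinking neighbourhoods of $\beta(s)$.

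One small slip in the final step: you place $\alpha(s)$ on a causal \emph{curve} from $\beta(s-\varepsilon)$ to $\beta(s+\varepsilon)$ to invoke causal convexity of $V$, but a Lorentzian \emph{pre}-length space need not be causally path-connected, so such a curve is not guaranteed. The paper sidesteps this by using directly that strong causality means the timelike diamonds $I(\beta(s-\varepsilon),\beta(s+\varepsilon))$ form a neighbourhood basis of $\beta(s)$, and you have already shown $\alpha(s)$ lies in all of them --- a one-line fix.
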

\begin{proof}
We set $f_+(s,t)=\tau(\alpha(s),\beta(t))$ and $f_-(s,t)=\tau(\beta(s),\alpha(t))$. They are both monotonically increasing in $t$ and continuous. We want to describe the set where $f_+>0$ resp.\ $f_->0$. We set $t_s^+=\inf \{t:f_+(s,t)>0\}$, then $\lim_{t\searrow t_s^+} f_+(s,t)=0$. Thus in the law of cosines (Lem.\ \ref{lem:LOC}), we get $\lim_{t\searrow t_s^+}\cosh(\tilde{\ma}_x(\alpha(s),\beta(t)))=1 = \lim_{t\searrow t_s^+} \frac{\cos(f_+(s,t)) - \cos(s) \cos(t)}{\sin(s) \sin(t)}$, so $s=t_s^+$. Analogously, we get $s=\inf \{t:f_-(s,t)>0\}$, in total $f_\pm(s,t)>0$ for $s<t$.

That means that whenever $s_-<t<s_+$ we have that $\alpha(t)\in I(\beta(s_-),\beta(s_+))$. By strong causality the $I(\beta(s_-),\beta(s_+))$ form a neighbourhood basis of the point $\beta(t)$, and $\alpha(t)$ is inside of all of these neighbourhoods. As $X$ is Hausdorff, we get that $\alpha(t)=\beta(t)$ for all $t\in(0,b)$, so $\alpha=\beta$.
\end{proof}

\begin{lem}[\AdSn-Parallel lines are unique]
\label{lem:parallelLinesUnique}
Let $X$ be a strongly causal, timelike geodesically connected \LpLS with $\tau$ continuous on $U$, $\tau(x,x)=0$ for all $x \in X$. Suppose that $X$ has global timelike curvature bounded below by $-1$, let $\alpha:(-\frac{\pi}{2},\frac{\pi}{2})\to U$ be an \AdSn-line and $p\in U$ a point. Then there is at most one \AdSn-parallel line to $\alpha$ through $p$.
\end{lem}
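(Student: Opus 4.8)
The plan is to show that any two \AdSn-lines $\beta_1,\beta_2$ through $p$ that are both \AdSn-parallel to $\alpha$ satisfy $\tau(\beta_1(s),\beta_2(t))=|s-t|$ with matching timelike relations, and then to conclude $\beta_1=\beta_2$ from the strong causality trick (Lemma~\ref{lem:strongCausTrick}). Fix parallel realisations $f_i$, normalised so that $f_i(\alpha(r))=(r,0)$ and $f_i(\beta_i(s))=(s,c_i)$ in $\AdSn'$; here the fact that the $t$-coordinate of $f_i\circ\beta_i$ is its $\tau$-arclength parameter, equivalently $b_+^\alpha(\beta_i(s))=s$, is part of what it means to be a Busemann/parallel realisation (cf.\ Prop.~\ref{pop:asymptoticLines}). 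In particular both lines pass through $p$ at the common parameter $s_0:=b_+^\alpha(p)\in(-\frac\pi2,\frac\pi2)$, so $\beta_1(s_0)=\beta_2(s_0)=p$. Once the identity $\tau(\beta_1(s),\beta_2(t))=|s-t|$ is established, the law of cosines (Lemma~\ref{lem:LOC}), applied at $p$ (the common past endpoint of the future halves, the common future endpoint of the past halves, so $\sigma=-1$ in both cases), forces every comparison angle $\tilde{\ma}_p(\beta_1^{\pm}(s),\beta_2^{\pm}(t))$ with $s,t>0$ to vanish, and Lemma~\ref{lem:strongCausTrick} then gives $\beta_1^+=\beta_2^+$ and $\beta_1^-=\beta_2^-$, hence $\beta_1=\beta_2$.

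For the upper bound, the reverse triangle inequality gives $b_+^\alpha(y)-b_+^\alpha(x)\ge\tau(x,y)$ whenever $x\le y$, and since $b_+^\alpha$ is the identity along each $\beta_i$ this yields $\tau(\beta_1(s),\beta_2(t))\le t-s$ whenever $\beta_1(s)\le\beta_2(t)$. I also need the exact boundary value $\lim_{t\to\frac\pi2}\tau(\beta_1(s),\beta_2(t))=\frac\pi2-s$. The ``$\le$'' half is immediate from the upper bound; for ``$\ge$'' I route through $\alpha$: for $r$ close enough to $\frac\pi2$ one has $\beta_1(s)\ll\alpha(r)$, and then for $t$ close enough to $\frac\pi2$ one has $\alpha(r)\ll\beta_2(t)$ (both via Lemma~\ref{lem:tauInAdS'} applied to $f_1,f_2$), so the reverse triangle inequality together with the model values $\tau(\beta_1(s),\alpha(r))=\bar\tau((s,c_1),(r,0))$ and $\tau(\alpha(r),\beta_2(t))=\bar\tau((r,0),(t,c_2))$ gives $\liminf_{t\to\frac\pi2}\tau(\beta_1(s),\beta_2(t))\ge\bar\tau((s,c_1),(r,0))+(\frac\pi2-r)$, which tends to $\frac\pi2-s$ as $r\to\frac\pi2$. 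Hence $b_+^{\beta_2}(\beta_1(s))=s$ for every $s$.

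The heart of the argument is to run the stacking principle with $\beta_1(s)$ as the external point. Fix $s\ne s_0$; if $\beta_1(s)$ lies on $\beta_2$ then $\beta_1(s)=\beta_2(b_+^{\beta_2}(\beta_1(s)))=\beta_2(s)$ and there is nothing to prove for that $s$, so assume $\beta_1(s)\notin\beta_2$. Proposition~\ref{pop:compStack} applied to the \AdSn-line $\beta_2$ and the point $\beta_1(s)$ produces a comparison realisation $G$ in $\AdSn'$ with $G(\beta_2(t))=(t,0)$ and $G(\beta_1(s))=(\xi,\rho)$ for fixed $\xi,\rho$, which is $\tau$- and $\le$-preserving on the set of $t$ for which $\beta_1(s)$ and $\beta_2(t)$ are timelike related; reading off the behaviour as $t\to\frac\pi2$ identifies $\xi=b_+^{\beta_2}(\beta_1(s))=s$. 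Now $\beta_2(s_0)=p=\beta_1(s_0)$ is timelike related to $\beta_1(s)$ and the separation between them is known exactly, namely $|s-s_0|$ since $\beta_1$ is $\tau$-arclength. Feeding this and $\xi=s$ into the explicit formula of Lemma~\ref{lem:tauInAdS'}, the equation $\bar\tau((s,\rho),(s_0,0))=|s-s_0|$ reads $\cos|s-s_0|=\sin s\sin s_0+\cos s\cos s_0\cosh\rho$, and since $\cos|s-s_0|=\cos s\cos s_0+\sin s\sin s_0$ this forces $\cosh\rho=1$, i.e.\ $\rho=0$. Thus $G(\beta_1(s))=(s,0)$, so for all $t$ timelike related to $\beta_1(s)$ we get $\tau(\beta_1(s),\beta_2(t))=\bar\tau((s,0),(t,0))=|s-t|$, with $\beta_1(s)\ll\beta_2(t)$ exactly when $s<t$. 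Ranging over all $s$ gives the desired identity, and the conclusion follows as in the first paragraph.

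The main obstacle is precisely the lower bound on $\tau(\beta_1(s),\beta_2(t))$: in this sign convention the curvature bound forces $\tau$ to be \emph{below} its comparison value, and the reverse triangle inequality likewise only yields upper bounds on the separation of a point from a line (or of two lines), so one cannot estimate $\tau(\beta_1(s),\beta_2(t))$ from below directly. The way around it is the two-step device above: first use the parallelism with $\alpha$ to pin the $\beta_2$-Busemann value of $\beta_1(s)$ exactly, then feed the one separation that \emph{is} known on the nose, namely $\tau(\beta_1(s),p)$, into the stacking realisation of $\beta_1(s)$ against $\beta_2$, whose rigidity as an honest $\AdSn'$-picture then forces the spacelike offset $\rho$ to vanish.
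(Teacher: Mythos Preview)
Your argument is correct and reaches the same endpoint (vanishing comparison angles at $p$, then Lemma~\ref{lem:strongCausTrick}), but the route differs from the paper's. The paper does not compute $\tau(\beta_1(s),\beta_2(t))$ at all: it places the two realisations on opposite sides of $\bar\alpha$, invokes Prop.~\ref{pop:angle=comparisonangle} (applied to the \AdSn-line $\beta$ with external point $\tilde\beta(t)$) to identify the angle $\omega=\ma_p(\beta_+,\tilde\beta_+)$ with \emph{every} comparison angle $\tilde\ma_p(\beta(r),\tilde\beta(t))$, and then bounds a single such comparison angle by the law of cosines, using only that $\tau(\beta(r),\tilde\beta(t))\ge 0$ and that both $r,t$ can be pushed towards $\frac\pi2$ (communicability via $\alpha$). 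This forces $\cosh\omega\le\frac{1-\cos(r-t_0)\cos(t-t_0)}{\sin(r-t_0)\sin(t-t_0)}\to 1$, hence $\omega=0$.

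Your approach instead applies the stacking principle (Prop.~\ref{pop:compStack}) directly to the pair $(\beta_2,\beta_1(s))$ and pins the comparison point to $(s,0)$ in two steps: the $t$-coordinate from the limit $\lim_{t\to\frac\pi2}\tau(\beta_1(s),\beta_2(t))=\frac\pi2-s$ (obtained by routing through $\alpha$, exactly as the paper uses communicability), and the $x$-coordinate from the anchor $\beta_2(s_0)=p=\beta_1(s_0)$ whose separation from $\beta_1(s)$ is known on the nose. This yields the stronger statement $\tau(\beta_1(s),\beta_2(t))=|s-t|$ for all timelike related pairs, from which the vanishing comparison angles are immediate. Your way trades a short black-box use of Prop.~\ref{pop:angle=comparisonangle} for a more hands-on use of Prop.~\ref{pop:compStack}; it is a bit longer but yields more explicit information and avoids having to check that Prop.~\ref{pop:angle=comparisonangle} applies (which needs $\tilde\beta(t)\notin\beta$). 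Both arguments rely on the same two ingredients---routing through $\alpha$ to reach parameters near $\frac\pi2$, and the rigidity encoded in the stacking principle---and both finish with Lemma~\ref{lem:strongCausTrick}.
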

\begin{proof}
We indirectly assume there are two \AdSn-parallel lines to $\alpha$ through $p$, namely $\beta:(-\frac{\pi}{2},\frac{\pi}{2})\to X$ and $\tilde{\beta}:(-\frac{\pi}{2},\frac{\pi}{2})\to X$, with $p=\beta(t_0)=\tilde{\beta}(\tilde{t}_0)$. We know that $t_0=\tilde{t}_0$ and that both $\alpha,\tilde{\beta}$ have the same distance, as there is only one way (up to isometry) of realizing $\alpha$ vertically in $\AdS'$ and putting $p$ at the right distance.

We construct a comparison situation for all three lines at once: We choose the parallel realisation $f$ for $\alpha$ and $\beta$ given by $f(\alpha(s))=(s,0)$ and $f(\beta(s))=(s,c)$, and similarly we choose $\tilde{f}$ for $\alpha$ and $\tilde{\beta}$ given by $\tilde{f}(\tilde{\beta}(s))=(s,-c)$ and $\tilde{f}(\alpha(s)) = (s,0)$, i.e.\ we realise $\beta$ and $\tilde{\beta}$ on opposite sides of $\alpha$. 

We calculate the angle $\omega:=\ma_p(\beta|_{[t_0,\frac{\pi}{2})},\tilde{\beta}|_{[t_0,\frac{\pi}{2})})$: By Prop.\ \ref{pop:angle=comparisonangle}, this is equal to any comparison angle $\tilde{\ma}_p(\beta(s),\tilde{\beta}(t))$ as long as $\beta(s)\ll\tilde{\beta}(t)$ or conversely. 

We know by Lem.\ \ref{lem:communicability} that given $r\in(-\frac{\pi}{2},\frac{\pi}{2})$, we find an $s\in (-\frac{\pi}{2},\frac{\pi}{2})$ such that $\beta(r)\ll\alpha(s)$ and a $t\in (-\frac{\pi}{2},\frac{\pi}{2})$ such that $\alpha(s)\ll\tilde{\beta}(t)$. In particular, for all $r\in(-\frac{\pi}{2},\frac{\pi}{2})$ there is a $t\in(-\frac{\pi}{2},\frac{\pi}{2})$ such that $\beta(r)\ll\tilde{\beta}(t)$. We now look at the law of cosines to estimate the comparison angle $\omega=\tilde{\ma}_p(\beta(r),\tilde{\beta}(t))$: $\cosh(\omega)=\frac{\cos(\tau(\beta(r),\tilde{\beta}(t)))-\cos(r-t_0)\cos(t-t_0)}{\sin(r-t_0)\sin(t-t_0)}\leq \frac{1-\cos(r-t_0)\cos(t-t_0)}{\sin(r-t_0)\sin(t-t_0)}$ which converges to $1$ as $s$ and $t$ both approach the maximal value $\frac{\pi}{2}$. In particular, $\omega=0$. We can now apply the strong causality trick (Lem.\ \ref{lem:strongCausTrick}) to get $\beta=\tilde{\beta}$, so there is only one line parallel to $\alpha$ through $p$.
\end{proof}

Let us now show for $X$ as in the main theorem that asymptotic \AdSn-lines to $\gamma$ constructed in Prop.\ \ref{pop:asymptoticLines} are parallel to $\gamma$.

\begin{lem}
\label{lem:asymptotesAreParallelToGamma}
Let $\alpha: (-\frac{\pi}{2},\frac{\pi}{2}) \to X$ be a complete timelike asymptotic \AdSn-line to $\gamma$. Then $\alpha,\gamma$ are \AdSn-parallel.
\end{lem}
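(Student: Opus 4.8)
The plan is to verify the $c$-criterion for parallel \AdSn-lines (Lemma \ref{lem:c-cri}) for the pair $(\alpha,\gamma)$. Since $X$ is globally hyperbolic, hence globally causally closed, the additional statement of Lemma \ref{lem:c-cri} reduces this to checking that $c_{\alpha\gamma}$ and $c_{\gamma\alpha}$ are constant and equal, provided $\alpha\cap I^+(\gamma)\neq\emptyset$ and $\gamma\cap I^+(\alpha)\neq\emptyset$. Now $\alpha=\sigma^-\sigma^+$ arises from past/future asymptotes to $\gamma$ from a common point $p\in I(\gamma)$ (Prop.\ \ref{pop:asymptoticLines}), and since the Busemann parametrisation of $\alpha$ is a $\tau$-arclength parametrisation (Prop.\ \ref{pop:asymptoticLines}), $p=\alpha(s_\ast)$ with $s_\ast=b_+(p)$. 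From $p\in I^+(\gamma)$ we get $\gamma(s)\ll p$ for some $s$, so $p\in\alpha\cap I^+(\gamma)$; from $p\in I^-(\gamma)$ we get $p\ll\gamma(s')$ for some $s'$, so $\gamma(s')\in\gamma\cap I^+(\alpha)$. (If $\alpha=\gamma$ there is nothing to prove; otherwise, by timelike non-branching, Thm.\ \ref{thm:non-branching}, $\alpha$ and $\gamma$ can share at most a degenerate parameter set, so below we may feed the stacking principle only parameters with $\alpha(t)\notin\gamma$, resp.\ $\gamma(s)\notin\alpha$.)

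The central object is the \emph{spacelike distance to $\gamma$}. By the stacking principle (Prop.\ \ref{pop:compStack}) we may realise $\gamma$ in $\AdS'$ as the vertical line $s\mapsto(s,0)$, and to each $q$ off $\gamma$ that is causally related to some point of $\gamma$ this assigns a single comparison point $\bar q\in\AdS'$ such that, for all relevant $s$, $\bar\tau$ between $(s,0)$ and $\bar q$ reproduces the corresponding $\tau$-value, with causal relations preserved as well (Cor.\ \ref{cor:sides_equal}). Letting $s\to\tfrac\pi2$ in the formula of Lemma \ref{lem:tauInAdS'} identifies the first coordinate of $\bar q$ as $b_+(q)$, so write $\bar q=(b_+(q),\delta(q))$. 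Since $\alpha$ is an asymptote, $b_+(\alpha(t))=t$, hence $\overline{\alpha(t)}=(t,\delta(\alpha(t)))$. Substituting the resulting expressions for $\tau(\alpha(s),\gamma(t))$ and $\tau(\gamma(s),\alpha(t))$ into the explicit formulas defining $c_{\alpha\gamma}$ and $c_{\gamma\alpha}$ and simplifying via Lemma \ref{lem:tauInAdS'}, the $\sin$/$\cos$ factors cancel and one obtains $c_{\alpha\gamma}(s,t)=\delta(\alpha(s))$ and $c_{\gamma\alpha}(s,t)=\delta(\alpha(t))$. Thus the whole statement reduces to: $t\mapsto\delta(\alpha(t))$ is constant.

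To establish this, I would apply the stacking principle a second time, now to the complete timelike \AdSn-line $\alpha$ relative to a point $\gamma(s_0)\notin\alpha$: realising $\alpha$ vertically gives $\tau(\gamma(s_0),\alpha(t))=\bar\tau\big((b_+^{\alpha}(\gamma(s_0)),\delta_{\alpha}(\gamma(s_0))),(t,0)\big)$ on the set $J_{s_0}$ of parameters $t$ for which $\gamma(s_0)$ and $\alpha(t)$ are causally related (here $b_+^{\alpha},\delta_{\alpha}$ are the analogues of $b_+,\delta$ with $\alpha$ in place of $\gamma$). Equating this with the expression from the $\gamma$-stacking, $\tau(\gamma(s_0),\alpha(t))=\bar\tau\big((s_0,0),(t,\delta(\alpha(t)))\big)$, and applying Lemma \ref{lem:tauInAdS'} to both sides, all terms involving $t$ can be solved for, yielding
\begin{equation*}
\cosh\big(\delta(\alpha(t))\big)=\frac{\sin b_+^{\alpha}(\gamma(s_0))-\sin s_0}{\cos s_0}\,\tan t+\frac{\cos b_+^{\alpha}(\gamma(s_0))\,\cosh\big(\delta_{\alpha}(\gamma(s_0))\big)}{\cos s_0}\qquad(t\in J_{s_0}).
\end{equation*}
The left-hand side is independent of $s_0$; as $s_0$ varies over admissible parameters the intervals $J_{s_0}$ cover $(-\tfrac\pi2,\tfrac\pi2)$ and overlap pairwise in nondegenerate subintervals, so — $\tan$ being nonconstant on any interval — the two coefficients must be one global pair $a,b$, i.e.\ $\cosh(\delta(\alpha(t)))=a\tan t+b$ on all of $(-\tfrac\pi2,\tfrac\pi2)$. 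Since $\cosh(\delta(\alpha(t)))\ge1$ for every $t$, letting $t\to\pm\tfrac\pi2$ forces $a=0$; hence $\delta(\alpha(t))\equiv\arcosh(b)=:c$ is constant, and Lemma \ref{lem:c-cri} shows $\alpha$ and $\gamma$ are \AdSn-parallel with spacelike distance $c$.

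I expect the main obstacle to be the book-keeping in that covering step: one must show that for every $t$ there is some $s_0$ with $\gamma(s_0)\notin\alpha$ and $\gamma(s_0)$ causally related to $\alpha(t)$, and that the resulting domains $J_{s_0}$ overlap enough to transport constancy of the coefficients across $(-\tfrac\pi2,\tfrac\pi2)$. This is where one must use that $\alpha$ passes through $p\in I^+(\gamma)\cap I^-(\gamma)$ and is inextensible (Prop.\ \ref{pop:asyInext}), together with global causal closedness to move between $J^\pm(\gamma)$ and $\overline{I^\pm(\gamma)}$, so that every $\alpha(t)$ is causally related to points of $\gamma$ arbitrarily near both ends of its domain. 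The remaining ingredients are direct invocations of the stacking principle, Cor.\ \ref{cor:sides_equal} and Lemma \ref{lem:c-cri}, plus routine trigonometric and hyperbolic manipulations via Lemma \ref{lem:tauInAdS'}.
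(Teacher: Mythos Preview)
Your approach is genuinely different from the paper's, and it contains a circular dependency. The crux of your argument is the identification of the first coordinate of the stacking realisation $\overline{\alpha(t)}$ as $t$; you obtain this from $b_+(\alpha(t))=t$, citing Prop.~\ref{pop:asymptoticLines}. But $b_+$ is only defined on $I(\gamma)$, and at this stage you do not know that $\alpha(t)\in I(\gamma)$ for all $t$ --- that is precisely Cor.~\ref{cor:asymptotesStayInI(gamma)}, which is deduced \emph{from} the present lemma. For $t>s_\ast$ you only know $\alpha(t)\in I^+(\gamma)$ (since $\alpha(t)\in I^+(p)$), so the limit $s\to\tfrac{\pi}{2}$ defining $b_+(\alpha(t))$ is not available; switching to $s\to-\tfrac{\pi}{2}$ gives $t_0(t)=-b_-(\alpha(t))$, but the reverse triangle inequality only yields $-b_-(\alpha(t))\ge t$, not equality. (Prop.~\ref{pop:asymptoticLines} does state $b_+(\sigma(t))=t$, but its proof establishes this only at the base point $p$; the general statement needs the asymptote to stay in $I(\gamma)$.) Without $t_0(t)=t$, your equation becomes $\sin s_0\sin t_0(t)+\cos s_0\cos t_0(t)\cosh\delta(t)=\sin u_0\sin t+\cos u_0\cos t\cosh e_0$ with two unknown functions $t_0(t),\delta(t)$, and the $\tan t$-argument collapses.

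The paper's proof avoids this entirely by never invoking the stacking principle for the individual points $\alpha(t)$. Instead it works with the approximating maximisers $\alpha_n^{\pm}$ from $p$ to $\gamma(r_n),\gamma(t_n)$: the point $\alpha_n(t)$ lies on a \emph{side} of the triangle $\Delta(\gamma(r_n),p,\gamma(t_n))$, so Cor.~\ref{cor:sides_equal} gives the exact equality $\tau(\alpha_n(t),\gamma(s))=\bar\tau(\bar\alpha_n(t),\bar\gamma(s))$ and matching causal relations for every $n$. Since the stacked comparison triangles have $\bar\gamma$ vertical and the other two sides converge (as $r_n\to-\tfrac{\pi}{2}$, $t_n\to\tfrac{\pi}{2}$) to the vertical line through $\bar p$, passing to the limit produces the parallel realisation $\alpha(t)\mapsto(t,c)$, $\gamma(s)\mapsto(s,0)$ directly --- no $c$-criterion, no Busemann identity along $\alpha$, and no covering argument needed. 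The approximating-segments viewpoint is what simultaneously pins down both coordinates of $\overline{\alpha(t)}$.
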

\begin{figure}
\begin{center}
\includegraphics{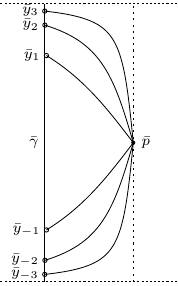}
\end{center}
\caption{Stacking comparison triangles. Every three subsequent $\bar y_k$ together with $\bar{p}$ form two triangles as in Prop.\ \ref{pop:compStack}. }
\label{pop:compStack:figStacking}
\end{figure}
\begin{proof}
By construction, $\alpha$ arises as the limit of timelike maximising segments $\alpha^+_n$ from $p:=\alpha(t_0)$ to $\gamma(r_n)$ and $\alpha^-_n$ from $p$ to $\gamma(t_n)$, where $r_n \to -\frac{\pi}{2}$ and $t_n\to\frac{\pi}{2}$. We will use the stacking principle (cf.\ Prop.\ \ref{pop:compStack}) to show that $\alpha$ and $\gamma$ are parallel. Indeed, the triangles corresponding to $\Delta(\gamma(r_n),p,\gamma(t_n))$ stack in $\AdS'$ (see Figure \ref{pop:compStack:figStacking}), and we have that the sides corresponding to $\alpha^+_n$ and $\alpha^-_n$ in $\AdS'$ converge to the vertical line through $\bar{p}$.

We now argue that the map sending $\alpha((-\frac{\pi}{2},\frac{\pi}{2}))$ and $\gamma((-\frac{\pi}{2},\frac{\pi}{2}))$ to the corresponding limit lines $\bar{\alpha}$ and $\bar{\gamma}$ is a parallel realisation. 

For any $t,s$, consider $\tau(\alpha(t),\gamma(s)) = \lim_n \tau(\alpha_n(t),\gamma(s))$. For $n$ so large that $-r_n < s < r_n$, $\gamma(s)$ is part of the triangle $\Delta(\gamma(-r_n),p,\gamma(r_n))$. From Cor.\ \ref{cor:sides_equal} we conclude that $\alpha_n(t) \ll \gamma(s)$ if and only if $\overline{\alpha}_n(t) \ll \overline{\gamma}(s)$, and $\tau(\alpha_n(t),\gamma(s)) = \overline{\tau}(\overline{\alpha}_n(t),\overline{\gamma}(s))$, where the bars denote the corresponding points on the comparison side. This shows that the realization of $\alpha$ as $\bar{\alpha}$ and $\gamma$ as $\bar{\gamma}$ is a parallel realisation.
\end{proof}

\begin{lem}[Asymptotes are asymptotes through any point on them] \label{lem:shiftingAsymptotes}
Let $\alpha: (-\frac{\pi}{2},\frac{\pi}{2}) \to X$ be an asymptote to $\gamma$ through $\alpha(t_0)$. Then the asymptote to $\gamma$ through $\alpha(s)$ is  also given by $\alpha$.
\end{lem}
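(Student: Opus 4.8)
The plan is to show that \emph{any} full asymptote $\beta$ to $\gamma$ through $\alpha(s)$ must coincide with $\alpha$, by recognising both $\alpha$ and $\beta$ as \AdSn-parallel lines to $\gamma$ passing through the common point $\alpha(s)$ at the same parameter, and then invoking uniqueness of \AdSn-parallels. First I would check that the asymptote through $\alpha(s)$ is even well-defined: by Lemma~\ref{lem:asymptotesAreParallelToGamma}, $\alpha$ is \AdSn-parallel to $\gamma$, so the communicability Lemma~\ref{lem:communicability} yields $t_-,t_+\in(-\tfrac{\pi}{2},\tfrac{\pi}{2})$ with $\gamma(t_-)\ll\alpha(s)\ll\gamma(t_+)$, and propagating this along $\gamma$ by transitivity (using that $\gamma$ is timelike) gives $\alpha(s)\in I(\gamma)$ together with $\alpha(s)\ll\gamma(t)$ and $\gamma(-t)\ll\alpha(s)$ for all $t$ near $\tfrac{\pi}{2}$; hence Definition~\ref{def:asymptotes} and Proposition~\ref{pop:asymptoticLines} apply at $\alpha(s)$. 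Fix any resulting full asymptote $\beta:(-\tfrac{\pi}{2},\tfrac{\pi}{2})\to X$, a concatenation of a past and a future asymptote from $\alpha(s)$ reparametrised in $\tau$-arclength.

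Next I would collect the structural facts about $\alpha$ and $\beta$. By Propositions~\ref{pop:asyInext} and~\ref{pop:asymptoticLines}, $\beta$ is a complete timelike \AdSn-line asymptotic to $\gamma$ with $b_+(\beta(t))=t$ for all $t$; by Lemma~\ref{lem:asymptotesAreParallelToGamma} it is \AdSn-parallel to $\gamma$. The curve $\alpha$ is by hypothesis an asymptote to $\gamma$, so Proposition~\ref{pop:asymptoticLines} likewise gives $b_+(\alpha(t))=t$, and Lemma~\ref{lem:asymptotesAreParallelToGamma} makes $\alpha$ \AdSn-parallel to $\gamma$. Writing $\alpha(s)=\beta(t_\ast)$, the identity $b_+(\beta(\cdot))=\mathrm{id}$ forces $t_\ast=b_+(\beta(t_\ast))=b_+(\alpha(s))=s$. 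Thus $\alpha$ and $\beta$ are two \AdSn-parallel lines to $\gamma$ passing through $\alpha(s)$, both at parameter $s$.

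Finally I would conclude: the space $X$ of the local-splitting setup is strongly causal, timelike geodesically connected, has continuous $\tau$ with $\tau(x,x)=0$, and has global timelike curvature bounded below by $-1$, so Lemma~\ref{lem:parallelLinesUnique} applies with $U=X$, the \AdSn-line $\gamma$, and the point $\alpha(s)$, giving that there is at most one \AdSn-parallel line to $\gamma$ through $\alpha(s)$. Hence $\beta=\alpha$, as claimed. I do not expect a real obstacle here; the only point requiring care is the parametrisation bookkeeping, i.e.\ using that for an asymptote the Busemann parametrisation \emph{is} the $\tau$-arclength parametrisation (Proposition~\ref{pop:asymptoticLines}), so that $\alpha$ and $\beta$ genuinely meet $\alpha(s)$ at the same parameter, which is exactly the input Lemma~\ref{lem:parallelLinesUnique} consumes in order to conclude equality as parametrised curves. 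As a fallback that avoids quoting Lemma~\ref{lem:parallelLinesUnique}, one could realise $\alpha$ and $\beta$ simultaneously in $\AdS'$, use communicability to push the relevant comparison points toward $t=\pm\tfrac{\pi}{2}$, squeeze the comparison angle at $\alpha(s)$ between the future halves of $\alpha$ and $\beta$ to $0$ via the law of cosines (Lemma~\ref{lem:LOC}), and apply the strong causality trick (Lemma~\ref{lem:strongCausTrick}) — but this is essentially a re-run of the proof of Lemma~\ref{lem:parallelLinesUnique}, so quoting it directly is cleaner.
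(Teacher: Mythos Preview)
Your proposal is correct and follows essentially the same route as the paper: both $\alpha$ and the new asymptote through $\alpha(s)$ are \AdSn-parallel to $\gamma$ by Lemma~\ref{lem:asymptotesAreParallelToGamma}, and then uniqueness of \AdSn-parallels (Lemma~\ref{lem:parallelLinesUnique}) forces them to agree. You are more careful than the paper in two places: you explicitly verify $\alpha(s)\in I(\gamma)$ via communicability so that the asymptote construction applies, and you check the parameter matching via $b_+$, whereas the paper absorbs the latter into the proof of Lemma~\ref{lem:parallelLinesUnique} itself (there the equality $t_0=\tilde t_0$ is derived from the uniqueness of the realisation of $\gamma$ and the common point in $\AdS'$), so your extra bookkeeping is harmless but not strictly needed.
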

\begin{proof}
Let $\tilde{\alpha}$ be the asymptotic line through $\alpha(s)$. Then the previous result shows that both $\gamma$ and $\alpha$ as well as $\gamma$ and $\tilde{\alpha}$ are parallel and they meet at $\alpha(t_0)=\tilde{\alpha}(t_0)$. Thus, as parallel lines are unique (see Lem.\ \ref{lem:parallelLinesUnique}), we have that $\alpha=\tilde{\alpha}$. 
\end{proof}
\begin{cor}[Asymptotes stay in $I(\gamma)$]
\label{cor:asymptotesStayInI(gamma)}
Asymptotic lines to $\gamma$ from points in $I(\gamma)$ stay in $I(\gamma)$.
\end{cor}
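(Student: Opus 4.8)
The plan is to obtain this as an immediate consequence of \AdSn-parallelism together with communicability. So let $\alpha:(-\frac{\pi}{2},\frac{\pi}{2})\to X$ be an asymptotic \AdSn-line to $\gamma$ issuing from a point $p=\alpha(t_0)\in I(\gamma)$, constructed as in Prop.\ \ref{pop:asymptoticLines}. By Lemma \ref{lem:asymptotesAreParallelToGamma}, $\alpha$ and $\gamma$ are \AdSn-parallel, which is the only substantive input.

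The next step is to recall that, in the setup of the local splitting, $X$ is globally hyperbolic and hence globally causally closed, so Lemma \ref{lem:communicability} applies to the parallel pair $\alpha,\gamma$: for every $s\in(-\frac{\pi}{2},\frac{\pi}{2})$ there are parameters $t_1,t_2$ with $\gamma(t_1)\ll\alpha(s)$ and $\alpha(s)\ll\gamma(t_2)$. Hence $\alpha(s)\in I^+(\gamma(t_1))\cap I^-(\gamma(t_2))\subseteq I^+(\gamma)\cap I^-(\gamma)=I(\gamma)$, and since $s$ was arbitrary this is exactly the claim.

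There is essentially no obstacle here; the only things to double-check are that global hyperbolicity does entail global causal closedness in this setting (so that Lemma \ref{lem:communicability} is applicable) and that the line produced by Prop.\ \ref{pop:asymptoticLines} is precisely the ``complete timelike asymptotic \AdSn-line'' demanded by Lemma \ref{lem:asymptotesAreParallelToGamma}. If one wished to bypass Lemma \ref{lem:communicability}, an alternative is to fix a parallel realisation $f$ with $f(\gamma(s))=(s,0)$ and $f(\alpha(s))=(s,c)$ for a constant $c\ge 0$, to note that $f$ is $\tau$-preserving and therefore $\ll$-preserving, and then to use the explicit chronology description of Lemma \ref{lem:tauInAdS'}: the point $(s,c)$ is chronologically preceded by $(r,0)$ for $r$ close to $-\frac{\pi}{2}$ and chronologically followed by $(r,0)$ for $r$ close to $\frac{\pi}{2}$ (the relevant condition degenerating in the limit to $-\sin s<1$, resp.\ $\sin s<1$), so transporting this back through $f$ again gives $\alpha(s)\in I(\gamma)$.
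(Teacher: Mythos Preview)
Your proof is correct and takes essentially the same approach as the paper: the paper's proof is a one-liner stating that since asymptotic lines are parallel to $\gamma$ (Lemma~\ref{lem:asymptotesAreParallelToGamma}) the claim ``readily follows,'' and you have simply made explicit that the way it follows is via communicability (Lemma~\ref{lem:communicability}). Your alternative via the parallel realisation and Lemma~\ref{lem:tauInAdS'} is also fine and is morally the same computation underlying Lemma~\ref{lem:communicability}.
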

\begin{proof}
Since asymptotic lines to $\gamma$ are parallel to $\gamma$ by Lem.\ \ref{lem:asymptotesAreParallelToGamma}, this readily follows.
\end{proof}

We can piece parallelism and asymptotes together using the following strengthening of Lem.\ \ref{lem:asymptotesAreParallelToGamma}.
\begin{lem}[Two asymptotes are parallel] \label{lem:asymptotesAreParallel}
Let $x,y\in I(\gamma)$ and $\alpha,\beta$ the asymptotes to $\gamma$ through $x$ resp.\ $y$. Then $\alpha,\beta$ are \AdSn-parallel.
\end{lem}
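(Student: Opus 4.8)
The plan is to reduce the statement, via the $c$-criterion for \AdS-parallel lines (Lemma~\ref{lem:c-cri}), to an identification of the time separations $\tau(\alpha(s),\beta(t))$ with the corresponding values in $\AdS'$.

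First, by Lemma~\ref{lem:asymptotesAreParallelToGamma} both $\alpha$ and $\beta$ are \AdS-parallel to $\gamma$. Normalising their parallel realisations so that both send $\gamma$ to the vertical line $\{(t,0):t\}\subseteq\AdS'$, they send $\alpha$ to a vertical line at $x$-coordinate $a\ge 0$ and $\beta$ to a vertical line at $x$-coordinate $b'\in\R$; the natural candidate parallel realisation of $\alpha$ and $\beta$ is the map gluing these two realisations along $\gamma$. It is automatically a Busemann realisation and is $\tau$- and $\le$-preserving when restricted to $\alpha$ or to $\beta$ alone, so only the cross relations between points of $\alpha$ and points of $\beta$ need checking. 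I would also first record that $\alpha$ and $\beta$ communicate in both time directions, i.e.\ $\alpha\cap I^{\pm}(\beta)\neq\emptyset$; this follows by chaining the communicability statement (Lemma~\ref{lem:communicability}) along the parallel pairs $(\alpha,\gamma)$ and $(\gamma,\beta)$, which also gives $\beta(t)\in I(\alpha)$ for every $t$. By the simplified version of the $c$-criterion in Lemma~\ref{lem:c-cri}, it then remains to show that $c_{\alpha\beta}(s,t)$ and $c_{\beta\alpha}(s,t)$ are constant wherever defined and have the same value — equivalently, that $\tau(\alpha(s),\beta(t))=\overline{\tau}\big((s,a),(t,b')\big)$ whenever $\alpha(s)\le\beta(t)$, and symmetrically with $\alpha$ and $\beta$ interchanged.

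To compute $\tau(\alpha(s),\beta(t))$ I would use that $\beta$ arises as a locally uniform limit of timelike maximisers $\beta_n^{+}$ from $y$ to $\gamma(r_n)$ and $\beta_n^{-}$ from $y$ to $\gamma(-r_n)$ with $r_n\to\frac{\pi}{2}$. Assuming $\alpha(s)\ll\beta(t)$ (the causally unrelated case being handled once $\le$ is matched, the case $\beta(t)\ll\alpha(s)$ being symmetric, and say $\beta(t)$ lying on the future half $\beta^{+}$), continuity of $\tau$ gives $\tau(\alpha(s),\beta(t))=\lim_n\tau(\alpha(s),\beta_n(t))$, and for large $n$ the points $\alpha(s),\beta_n(t),\gamma(r_n)$ form a timelike triangle. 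The side-lengths $\tau(\alpha(s),\gamma(r_n))\to\frac{\pi}{2}-s$ and $\tau(\beta_n(t),\gamma(r_n))\to\frac{\pi}{2}-t$ by the Busemann-parametrisation property (Proposition~\ref{pop:asymptoticLines}), so in a convergent family of comparison triangles in $\AdS'$ the comparison point of $\gamma(r_n)$ escapes to the future boundary. To pin down the remaining side I would control the angle at $\alpha(s)$: the maximisers from $\alpha(s)$ to $\gamma(r_n)$ converge to the future ray of $\alpha$ (every full asymptote through $\alpha(s)$ equals $\alpha$, by Lemma~\ref{lem:shiftingAsymptotes} together with uniqueness of parallels, Lemma~\ref{lem:parallelLinesUnique}), so by continuity of angles (Proposition~\ref{pop:continuityOfAngles}) the angle at $\alpha(s)$ in these triangles tends to the angle between the future ray of $\alpha$ and a maximiser $\nu_\infty$ from $\alpha(s)$ to $\beta(t)$; by Proposition~\ref{pop:angle=comparisonangle}, applied to the \AdS-line $\alpha$, this angle is already the comparison angle and is the same towards both rays of $\alpha$. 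Inserting this rigidity into the law of cosines (Lemma~\ref{lem:LOC}) for the limiting (degenerate) comparison configuration forces $\tau(\alpha(s),\beta(t))$ to equal the predicted $\AdS'$ value; running the same argument with the roles interchanged, and for the $c^{N}$-terms and the $\le$-null cases, then yields the full $c$-criterion and hence the \AdS-parallelism of $\alpha$ and $\beta$.

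The step I expect to be the main obstacle is this last computation. Since neither $\alpha$ nor $\beta$ is the distinguished line $\gamma$ of the local-splitting setup, one cannot apply Corollary~\ref{cor:sides_equal} directly to a triangle with a side lying on $\gamma$; instead one must carry out the limiting argument above and verify that the limiting comparison triangle is non-degenerate enough (the two bounded side-lengths staying away from $0$ and from $\pi$) to determine the third side-length from the prescribed angle. A secondary subtlety is that $\gamma$ need not lie spatially between $\alpha$ and $\beta$, i.e.\ $b'$ and $a$ may have the same sign; in that case one routes the estimate through whichever of $\alpha$, $\beta$ separates the other from $\gamma$, using that $X$ together with that line again satisfies the hypotheses of the local-splitting setup.
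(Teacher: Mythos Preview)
Your reduction to the $c$-criterion and the communicability setup are fine, but the core computation has a genuine gap. You propose to determine $\tau(\alpha(s),\beta(t))$ by controlling the angle $\omega$ at $\alpha(s)$ in the triangles $\Delta(\alpha(s),\beta_n(t),\gamma(r_n))$ and then reading off the third side from the law of cosines. The problem is that the only information you have about $\omega$ is Proposition~\ref{pop:angle=comparisonangle} (applied to the \AdSn-line $\alpha$), which expresses $\omega$ as a comparison angle $\tilde\ma_{\alpha(s)}(\alpha(s'),\beta(t))$; but that comparison angle is built from the \emph{unknown} cross-distances $\tau(\alpha(s),\beta(t))$ and $\tau(\alpha(s'),\beta(t))$ themselves. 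So $\omega$ is not determined independently, and the curvature inequality $\bar\omega_n\le\omega_n$ gives you at best a one-sided bound on $\tau(\alpha(s),\beta(t))$, not its value. Relatedly, your ``gluing along $\gamma$'' only fixes $|b'|$, not the sign of $b'$ relative to $a$; the missing sign is exactly the information your angle computation would have to supply, and it does not.

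The paper's argument avoids this circularity by applying triangle comparison not at a vertex via angles, but to the triangle $\Delta(x,y,\gamma(T_n))$ --- whose three side-lengths are \emph{all known} (one is $\tau(x,y)$, the other two are Busemann limits) --- with $\alpha_n(s')$ and $\beta_n(t')$ as points on two of its sides. The curvature bound then gives $\tau(\alpha_n(s'),\beta_n(t'))\le\bar\tau(\bar\alpha_n(s'),\bar\beta_n(t'))$ directly; letting $T_n\to\tfrac{\pi}{2}$ sends the comparison triangle to two vertical lines at distance $c_{\alpha\beta}(s_0,t_0)$, so one obtains the inequality $c_{\alpha\beta}(s',t')\ge c_{\alpha\beta}(s_0,t_0)$ for $s'\ge s_0$, $t'\ge t_0$ (and the analogous past version). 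Shifting the basepoints along $\alpha$ and $\beta$ via Lemma~\ref{lem:shiftingAsymptotes} then gives monotonicity of $c_{\alpha\beta}$ in both directions, forcing it to be constant; the same reasoning handles $c_{\beta\alpha}$ and its equality with $c_{\alpha\beta}$. The decisive idea you are missing is to put $x$ and $y$ as \emph{vertices} of the triangle (so that every side-length is known) rather than trying to recover the cross-distance from an angle.
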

\begin{proof}
We dive into the construction of asymptotes: We first assume $x\ll y$. We set $s_0=b^+(x)$ and $t_0=b^+(y)$, then there are maximisers $\alpha_n$ from $x$ to $\gamma(T_n)$ parametrised in $\tau$-arclength such that $\alpha_n(s_0)=x$  and $\beta_n$ from $y$ to $\gamma(T_n)$ parametrised in $\tau$-arclength such that $\beta_n(t_0)=y$, which converge (pointwise) to the upper parts of $\alpha$ and $\beta$ in $\tau$-arclength parametrization for $T_n \to \infty$, respectively.
Note that $\alpha_n(s_0+\tau(x,\gamma(T_n)))=\beta_n(t_0+\tau(y,\gamma(T_n)))=\gamma(T_n)$.

We try to prove the $c$-criterion (Lem.\ \ref{lem:c-cri}): Clearly, due to our assumptions on $X$, we do not need to consider the null functions. First, we look at $c_{\alpha\beta}(s_0,t_0)$ and $c_{\alpha\beta}(s',t')$ or $c_{\beta\alpha}(s',t')$ for $s_0\leq s'$, $t_0\leq t'$: We require $a:=x=\alpha(s_0)\ll b:=y=\beta(t_0)$. We get points converging to the primed parameters: $a'_n:=\alpha_n(s')\to a':=\alpha(s')$ and $b'_n:=\beta_n(t')\to b':=\beta(t')$ (note that $\alpha_n(s_0)=a$ and $\beta_n(t_0)=b$ anyway). We get a timelike triangle $\Delta_n=\Delta(a, b, c_n:= \gamma(T_n))$ containing the points $(a'_n,b'_n)$. We form a comparison situation for this in $\AdS'$: $\bar{\Delta}_n=\Delta(\bar{a},\bar{b}, \bar{c}_n)$ with comparison points $\bar{a}'_n,\bar{b}'_n$. As $X$ has global curvature bounded below by $K=-1$, we get that $\tau(a'_n,b'_n)\leq\bar{\tau}(\bar{a}'_n,\bar{b}'_n)$ and $\tau(b'_n,a'_n)\leq\bar{\tau}(\bar{b}'_n,\bar{a}'_n)$.

We would now like to let $T_n \to+\frac{\pi}{2}$, so we need control over what the comparison triangle $\bar{\Delta}_n$ converges to. For this, we select which way to realise $\bar{\Delta}_n$ in $\AdS'$. We choose:
\begin{itemize}
\item $\bar{a}=(s_0,0)$ constant in $n$, i.e.\ at the right $t$-coordinate and with $x$-coordinate $0$,
\item $\bar{b}=(t_0,c_{\alpha\beta}(s_0,t_0))$ constant in $n$. Note this has the right $\tau$-distance to $\bar{a}$ by definition of $c_{\alpha\beta}$, and it has the right $t$-coordinate.
\item $\bar{c}_n$ with $\bar{\tau}(\bar{a},\bar{c}_n)=\tau(a,c_n)$ and $\bar{\tau}(\bar{b},\bar{c}_n)=\tau(b,c_n)$. 
\begin{itemize}
\item Note that $\bar{c}_n$ need not be realizable in $\AdS'\subseteq \AdS$, but in $\AdS$ it is.
\item But still, the initial segments of the sides $\bar{\alpha}_n$ connecting $\bar{a}\bar{c}_n$ and $\bar{\beta}_n$ connecting $\bar{b}\bar{c}_n$ are contained in $\AdS'$.
\item Note that the future tip $\bar{c}=(1,0,0)\in\AdS$ of $\AdS'$ satisfies $\bar{\tau}(\bar{a},\bar{c})=\lim_n\tau(a,c_n)$ and $\bar{\tau}(\bar{b},\bar{c})=\lim_n\tau(b,c_n)$.
\item Assuming $c_{\alpha\beta}(s_0,t_0)\neq0$, the function $g:\AdS\to\mb{R}^2$, $g(\bar{p})=(\bar{\tau}(\bar{a},\bar{p}),\bar{\tau}(\bar{b},\bar{p}))$ is continuous near $\bar{c}$ and maps a neighbourhood of $\bar{c}$ to a neighbourhood of $(\frac{\pi}{2}-s_0=\lim_n\tau(a,c_n),\frac{\pi}{2}-t_0=\lim_n\tau(b,c_n))$. In particular, for $n$ large we can assume $\bar{c}_n$ lie in this neighbourhood and $\bar{c}_n\to\bar{c}$.  
\item Assuming $c_{\alpha\beta}(s_0,t_0)=0$, we have that $\bar{b}=(t_0,0)$, and as $\tau(a,c_n)$ stays bounded away from $\pi$, the triangle $\Delta(\bar{a},\bar{b},\bar{c}_n)$ converges (in \AdSn) to a triangle which satisfies the size bounds (thus realizable uniquely up to isometries in $\AdS$, giving one or two realizations given $\bar{a}$ and $\bar{b}$), and $\bar{\tau}(\bar{a},\bar{c}_n)\to \frac{\pi}{2}-s_0$ and $\bar{\tau}(\bar{b},\bar{c}_n)\to \frac{\pi}{2}-t_0$ makes the limit triangle degenerate, thus $\bar{c}_n$ converge to $\bar{c}$ (as this point makes $\bar{a},\bar{b},\bar{c}$ degenerate and is at the right distance).
\item In particular, $\bar{\alpha}_n\to\bar{\alpha}$ and $\bar{\beta}_n\to\bar{\beta}$ in $\tau$-arclength parametrization. Thus the initial segments of $\bar{\alpha}_n$ and $\bar{\beta}_n$ become vertical in the limit.
\end{itemize}
\end{itemize}

In the $\AdS'$ picture, the limit comparison sides have the form $\bar{\alpha}(s)=(s,0)$ and $\bar{\beta}(t)=(t,c_{\alpha\beta}(s_0,t_0))$. By continuity of $\tau$ and $\bar{\tau}$, our curvature assumption gives $\tau(a',b')\leq\bar{\tau}(\bar{a}',\bar{b}')$ in the limit (similarly with arguments flipped). Now we compare the definition of $c_{\alpha\beta}(s',t')$ resp.\ $c_{\beta\alpha}(s',t')$ with the $c$-functions for $\bar{\alpha}$ and $\bar{\beta}$ at the same parameters:
\begin{align*}
c_{\alpha\beta}(s',t')&=\arcosh\left(\frac{\cos(\tau(a',b'))-\sin(s')\sin(t')}{\cos(s')\cos(t')}\right),\\
c_{\bar{\alpha}\bar{\beta}}(s',t')&=\arcosh\left(\frac{\cos(\bar{\tau}(\bar{a}',\bar{b}')-\sin(s')\sin(t')}{\cos(s')\cos(t')}\right)= c_{\alpha\beta}(s_0,t_0),
\end{align*}
whenever defined. The last equality holds because we know $\bar{\alpha}$ and $\bar{\beta}$ are \AdSn-parallel with distance $c_{\alpha\beta}(s_0,t_0)$. Note that $a' \leq b'$ implies $\bar{a}' \leq \bar{b}'$ by the curvature bound and a simple continuity argument, so whenever the first line is defined so is the second. Notice these equations only differ in the $\tau$ term, and we know $\tau(a',b')\leq\bar{\tau}(\bar{a}',\bar{b}')$, so we get $c_{\alpha\beta}(s',t')\geq c_{\alpha\beta}(s_0,t_0)$ whenever the former is defined. 

Similarly, if $b' \leq a'$ we get $c_{\beta\alpha}(s',t')\geq c_{\alpha\beta}(s_0,t_0)$ for all $s'\geq s_0$ and $t'\geq t_0$. We can also do this if $a\gg b$, giving $c_{\alpha\beta}(s',t')\geq c_{\beta\alpha}(s_0,t_0)$ and $c_{\beta\alpha}(s',t')\geq c_{\beta\alpha}(s_0,t_0)$. 

Doing the same construction towards the past, we similarly get $c_{\alpha\beta}(s',t')\geq c_{\alpha\beta}(s_0,t_0)$ and $c_{\beta\alpha}(s',t')\geq c_{\alpha\beta}(s_0,t_0)$ resp.\ $c_{\alpha\beta}(s',t')\geq c_{\beta\alpha}(s_0,t_0)$ and $c_{\beta\alpha}(s',t')\geq c_{\beta\alpha}(s_0,t_0)$ (depending on whether $a\ll b$ or $b\ll a$) for all $s'\leq s_0$ and $t'\leq t_0$.

Now we use Lem.\ \ref{lem:shiftingAsymptotes} to get that the \AdSn-asymptote to $\gamma$ through $\alpha(s)$ is $\alpha$, and similarly the asymptote to $\gamma$ through $\beta(t)$ is $\beta$. In particular, we can use the above argument again for $\alpha(s)$ instead of $x=\alpha(s_0)$ and $\beta(t)$ instead of $y=\beta(t_0)$. The above (to the future, $a\ll b$) then gives:
$c_{\alpha\beta}(s',t')\geq c_{\alpha\beta}(s,t)$ for $s\leq s'$, $t\leq t'$ such that $a\ll b$ and $a'\ll b'$, extending continuously, $c_{\alpha\beta}$ is monotonically increasing where defined. On the other hand, the above (to the past, $a\ll b$) gives: 
$c_{\alpha\beta}(s',t')\geq c_{\alpha\beta}(s,t)$ for $s\geq s'$, $t\geq t'$ such that $a\ll b$ and $a'\ll b'$, extending continuously, $c_{\alpha\beta}$ is monotonically decreasing where defined. Via a two-step process, we see that $c_{\alpha\beta}$ is constant where defined. Similarly, we get that also $c_{\beta\alpha}$ is constant where defined, and that they have the same value.

Thus, the $c$-criterion (Lem.\ \ref{lem:c-cri}) yields that $\alpha$ and $\beta$ are synchronised parallel.
\end{proof}

Thus any two asymptotes to $\gamma$ are parallel. 

\section{Proof of the main result}\label{sec:proof}

Let us summarise what we have shown so far: Let $(X,d,\ll,\leq,\tau)$ be a connected, regularly localisable, globally hyperbolic Lorentzian length space satisfying timelike geodesic prolongation, with proper metric $d$ and global timelike curvature bounded below by $K=-1$ containing an \AdSn-line $\gamma:(-\frac{\pi}{2},\frac{\pi}{2}) \to X$. Then from each point in $I(\gamma) = I^+(\gamma) \cap I^-(\gamma)$ we can construct a (unique) future asymptotic ray to $\gamma$, being timelike and having $b_+$ on them go to $\frac{\pi}{2}$, and a (unique) past asymptotic ray to $\gamma$, being timelike and having $b_-$ on them go to $\frac{\pi}{2}$. $b_+=-b_-$ and future directed and past directed rays from a common point fit together to give a timelike line which is parallel to $\gamma$ and can be parametrized in $\tau$-arclength on $(-\frac{\pi}{2},\frac{\pi}{2})$. The Busemann function selects a "spacelike" slice $\{b^+ = 0\}$ in $X$ containing precisely one point on each of the asymptotes which will provide the metric part of the warped product.

In this section, we first prove that $I(\gamma)$ is a warped product and then that $X=I(\gamma)$, establishing that $X$ is a warped product.

\begin{dfn}[Spacelike slice]
We call the set $S=(b_+)^{-1}(0)$ the \emph{spacelike slice}. For $p,q\in S$ we find the asymptotes $\alpha$, $\beta$ to $\gamma$ through $p$ resp.\ $q$, so $\alpha(0)=p$ and $\beta(0)=q$, then they are \AdSn-parallel. We define $d_S(p,q)$ to be the spacelike distance between $\alpha$ and $\beta$ in the sense of \AdSn-parallel lines, i.e.\ the constant $c$ from the $c$-criterion.
\end{dfn}

\begin{lem}
$(S,d_S)$ is a metric space.
\end{lem}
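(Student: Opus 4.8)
The plan is to verify that $d_S$ satisfies the axioms of a metric, leaning on the theory of \AdSn-parallel lines built up above. To begin with, $d_S$ is well-defined and finite: by Lemma \ref{lem:shiftingAsymptotes} together with Lemma \ref{lem:parallelLinesUnique} the asymptote to $\gamma$ through a given point of $S$ is unique, by Lemma \ref{lem:asymptotesAreParallel} any two such asymptotes are \AdSn-parallel, and by Lemma \ref{lem:c-cri} the spacelike distance $c$ between two \AdSn-parallel lines is a well-defined nonnegative real number, namely the common constant value of the $c$-functions. This already gives $d_S \ge 0$, and symmetry is built in: the $c$-criterion requires $c_{\alpha\beta}$ and $c_{\beta\alpha}$ to take the same value, i.e.\ the spacelike distance between two \AdSn-parallel lines does not depend on their order, so $d_S(p,q) = d_S(q,p)$.

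For the identity of indiscernibles: if $p = q$ the two asymptotes coincide and the spacelike distance is $0$. Conversely, suppose $d_S(p,q) = 0$, and let $\alpha,\beta$ be the asymptotes through $p = \alpha(0)$ and $q = \beta(0)$, with a parallel realisation $f$ as in Definition \ref{def:parallelLines}. Since $f\circ\alpha$ and $f\circ\beta$ are Busemann-realisations (hence vertical lines whose $t$-coordinate equals the parameter by Prop.\ \ref{pop:asymptoticLines}) and the spacelike distance between them is $0$, they coincide as maps; in particular $f(p) = f(q)$. But $f$ is $\leq$-preserving, so $f(p) = f(q)$ forces $p \leq q$ and $q \leq p$, and since $X$ is globally hyperbolic, hence causal, this gives $p = q$ — equivalently, a $\leq$-preserving map into a causal space is injective.

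It remains to prove the triangle inequality, which is the main point. Let $p,q,r \in S$ with asymptotes $\alpha,\beta,\delta$, and write $c_1 = d_S(p,q)$, $c_2 = d_S(q,r)$, $c_3 = d_S(p,r)$ for the constant values of $c_{\alpha\beta}$, $c_{\beta\delta}$, $c_{\alpha\delta}$. Fix any $s$. Applying communicability (Lemma \ref{lem:communicability}) to the \AdSn-parallel pair $\alpha,\beta$ yields a $u$ with $\alpha(s) \ll \beta(u)$, and applying it to $\beta,\delta$ yields a $t$ with $\beta(u) \ll \delta(t)$; by transitivity $\alpha(s) \ll \delta(t)$, so $c_{\alpha\delta}(s,t)$ is defined. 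Choosing the parallel realisation of $\alpha,\beta$ with $\alpha$ on the line $x=0$ and $\beta$ on $x = c_1$, and the parallel realisation of $\beta,\delta$ translated so that $\beta$ is on $x=c_1$ and $\delta$ on $x = c_1+c_2$, we get $\tau(\alpha(s),\beta(u)) = \overline{\tau}\big((s,0),(u,c_1)\big)$ and $\tau(\beta(u),\delta(t)) = \overline{\tau}\big((u,c_1),(t,c_1+c_2)\big)$, and the three points are causally ordered in $\AdS'$. The reverse triangle inequality in $X$, followed by the reverse triangle inequality in $\AdS'$, then gives
\[
\tau(\alpha(s),\delta(t)) \ \ge\ \tau(\alpha(s),\beta(u)) + \tau(\beta(u),\delta(t)) \ \ge\ \overline{\tau}\big((s,0),(t,c_1+c_2)\big).
\]
Since $c_{\alpha\delta}(s,t)$ is obtained from an expression strictly decreasing in $\tau(\alpha(s),\delta(t))$, substituting this lower bound and simplifying with the explicit formula of Lemma \ref{lem:tauInAdS'} yields $c_3 = c_{\alpha\delta}(s,t) \le c_1 + c_2$. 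The main obstacle is exactly the organisation of this last step: one must produce an intermediate point on $\beta$ with the correct causal relations so that the estimate collapses to the elementary reverse triangle inequality inside the model space $\AdS'$; once communicability supplies such a point, the remaining manipulations are routine with the formulae already established.
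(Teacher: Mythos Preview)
Your treatment of well-definedness, symmetry, and the identity of indiscernibles is fine (the last is in fact cleaner than the paper's invocation of the strong causality trick). The gap is in the triangle inequality: your displayed chain
\[
\tau(\alpha(s),\delta(t)) \ \ge\ \tau(\alpha(s),\beta(u)) + \tau(\beta(u),\delta(t)) \ \ge\ \overline{\tau}\big((s,0),(t,c_1+c_2)\big)
\]
has the second inequality pointing the wrong way. Since $\tau(\alpha(s),\beta(u)) = \overline{\tau}((s,0),(u,c_1))$ and $\tau(\beta(u),\delta(t)) = \overline{\tau}((u,c_1),(t,c_1+c_2))$ by the parallel realisations, what you are asserting is that the sum of the two $\overline{\tau}$-pieces dominates the direct $\overline{\tau}$ in $\AdS'$; but the reverse triangle inequality in $\AdS'$ says exactly the opposite, $\overline{\tau}((s,0),(t,c_1+c_2)) \ge \overline{\tau}((s,0),(u,c_1)) + \overline{\tau}((u,c_1),(t,c_1+c_2))$. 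For a generic $u$ obtained from communicability the claimed inequality is simply false, so the argument does not close.

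The fix is to choose the intermediate point not arbitrarily but geometrically: pick $(u,c_1)$ to be the point where the distance realiser in $\AdS'$ from $(s,0)$ to $(t,c_1+c_2)$ crosses the vertical line $x=c_1$. Then the reverse triangle inequality in $\AdS'$ is an \emph{equality}, and your chain becomes $\tau(\alpha(s),\delta(t)) \ge \overline{\tau}((s,0),(t,c_1+c_2))$ as desired. This is precisely the manoeuvre the paper performs, though the paper runs it at the level of the causal relation rather than $\tau$: it starts from $\bar\alpha(r)\leq\bar\eta(t)$ in the $\AdS'$ picture at spacing $d_1+d_2$, takes the crossing point on $\bar\beta$, transfers the two causal relations back to $X$ via the parallel realisations, and then compares the null $c$-function $c^N_{\alpha+}$. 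Either route works once the intermediate point lies on the model-space geodesic; what does not work is an arbitrary $u$.
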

\begin{proof}
As asymptotes to $\gamma$ are parallel to $\gamma$ and parallel lines are unique, $d_S$ is well-defined.

Let $p,q\in S$. It is obvious from the definition of the distance of two parallel lines that $d_S(p,q)\geq 0$. If $d_S(p,q)=0$, the last step in Lem.\ \ref{lem:strongCausTrick} we get that the asymptotes through $p$ and $q$ are the same curve $\alpha$, so we get that $p=\alpha(0)=q$.

For the triangle inequality, let $p,q,r\in S$. We get asymptotes $\alpha$ through $p$, $\beta$ through $q$ and $\eta$ through $r$, and by Lem.\ \ref{lem:asymptotesAreParallel}, they are pairwise AdSn-parallel. Let $d_1=d_S(p,q)$ and $d_2=d_S(q,r)$ and $d_3=d_S(p,r)$. Then we can make a situation in $\AdS'$ which at the same time is a parallel realization for $\alpha,\beta$ and for $\beta,\eta$: $\bar{\alpha}(t)=(t,0)$, $\bar{\beta}(t)=(t,d_1)$, $\bar{\eta}(t)=(t,d_1+d_2)$. 

Now for any $r,t$ such that $\bar{\alpha}(r)\leq\bar{\eta}(t)$, the distance realizer connecting them crosses $\bar{\beta}$, so we find an $s$ such that $\bar{\alpha}(r)\leq\bar{\beta}(s)\leq\bar{\eta}(t)$. But this means $\alpha(r)\leq\beta(s)\leq\eta(t)$, so they are also causally related in $X$. Let $t'=\inf\{t':\alpha(s)\leq\eta(t')\}$, then $t'\leq t$. Now note that the equation $c_{\alpha+}^N(s)=\arcosh\left(\frac{1-\sin(s)\sin(t)}{\cos(s)\cos(t)}\right)$ is increasing in $t$ if $t>s$, so we know
\begin{equation*}
d_3=\arcosh\left(\frac{1-\sin(s)\sin(t')}{\cos(s)\cos(t')}\right)\leq \arcosh\left(\frac{1-\sin(s)\sin(t)}{\cos(s)\cos(t)}\right)=d_1+d_2
\end{equation*}
as desired, where the last follows as $\bar{\alpha}$ and $\bar{\eta}$ have spacelike distance $d_1+d_2$.
\end{proof}

\begin{dfn}[The warped product map]
We define $f:(-\frac{\pi}{2},\frac{\pi}{2})\times_{\cos} S\to I(\gamma) \subset X$ by $f(s,p)=\alpha_p(s)$, where $\alpha_p$ is the \AdSn-asymptote to $\gamma$ through $p$.
\end{dfn}

\begin{pop}[Local splitting]
\label{thm:localSplitting}
Let $X$ be a connected, regularly localisable, globally hyperbolic Lorentzian length space with proper metric $d$ and global timelike curvature bounded below by $K=-1$ satisfying timelike geodesic prolongation and containing an \AdSn-line $\gamma:(-\frac{\pi}{2},\frac{\pi}{2}) \to X$. Then $I(\gamma) \subset X$ is a causally convex open set that is itself a path-connected, regularly localisable, globally hyperbolic Lorentzian length space of global timelike curvature bounded below by $K=-1$ with the metric, relations and time separation induced from $X$. Moreover, the spacelike slice $S$ is a proper (hence complete), strictly intrinsic metric space, the Lorentzian warped product $(-\frac{\pi}{2},\frac{\pi}{2}) \times_{\cos} S$ (a anti-deSitter suspension) is a path-connected, regularly localisable, globally hyperbolic Lorentzian length space and the splitting map $f:(-\frac{\pi}{2},\frac{\pi}{2}) \times_{\cos} S \to I(\gamma)$ is a $\tau$- and $\leq$-preserving homeomorphism.
\end{pop}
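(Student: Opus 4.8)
The plan is to proceed in four stages: first understand $I(\gamma)$ as a space in its own right, then build the map $f$ and show it preserves $\tau$ and $\leq$, then upgrade $(S,d_S)$ to a proper, strictly intrinsic metric space, and finally feed everything into Prop.\ \ref{pop:PropsLwp} and conclude that $f$ is a homeomorphism. For the first stage I would note that $I^+(\gamma)$ and $I^-(\gamma)$ are unions of chronological futures/pasts, hence open, so $I(\gamma)$ is open and (since it contains $\gamma$) nonempty, and it is causally convex: if $x\in I^+(\gamma)$ and $x\leq w$ then $\gamma(t)\ll x\leq w$ gives $w\in I^+(\gamma)$, dually for $I^-(\gamma)$, so every causal curve between two points of $I(\gamma)$ stays in $I(\gamma)$. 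Consequently the relations, the metric, $\tau$, lengths and distance realizers of $X$ restrict to those of $I(\gamma)$, and $I(\gamma)$ is path-connected because each of its points is joined by a causal curve to the connected set $\gamma$. Global hyperbolicity, regular localisability and global timelike curvature bounded below by $-1$ then pass to $I(\gamma)$: causal diamonds of $I(\gamma)$ coincide with those of $X$ and so are compact, localising neighbourhoods can be shrunk into $I(\gamma)$, and by causal convexity every timelike triangle of $I(\gamma)$, together with points on its sides and the realizers of its sides, is a timelike triangle of $X$, so the global triangle-comparison inequality is inherited.

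Next, for the map $f$: given $p\in I(\gamma)$, the asymptote to $\gamma$ through $p$ extends to a complete \AdSn-line in Busemann (= $\tau$-arclength) parametrization by Prop.\ \ref{pop:asymptoticLines}, is parallel to $\gamma$ by Lem.\ \ref{lem:asymptotesAreParallelToGamma}, and is the unique such line by Lem.\ \ref{lem:parallelLinesUnique}; I call it $\alpha_p$, so $\alpha_p(b_+(p))=p$ and $\alpha_p$ stays in $I(\gamma)$ by Cor.\ \ref{cor:asymptotesStayInI(gamma)}, and for $p\in S$ one has $\alpha_p(0)=p$. Then $f(s,p)=\alpha_p(s)$. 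Injectivity follows from Lem.\ \ref{lem:shiftingAsymptotes}: if $\alpha_p(s)=\alpha_q(t)$ then $\alpha_p$ and $\alpha_q$ are both \emph{the} asymptote through that point, hence equal, so $s=t$ (as $\alpha_p$ is a timelike distance realizer) and $p=\alpha_p(0)=q$; surjectivity holds since for $z\in I(\gamma)$ the asymptote $\sigma$ through $z$ has $\sigma(0)\in S$ and $z=f(b_+(z),\sigma(0))$. For $\tau$- and $\leq$-preservation: given $(s,p)$ and $(t,q)$, Lem.\ \ref{lem:asymptotesAreParallel} gives that $\alpha_p,\alpha_q$ are \AdSn-parallel, and Lem.\ \ref{lem:c-cri} supplies a $\tau$- and $\leq$-preserving Busemann realisation sending $\alpha_p(s)\mapsto(s,0)$ and $\alpha_q(t)\mapsto(t,d_S(p,q))$ into $\AdS'$; since $\AdS'$ is the warped product comparison space for $\cos$ (Lem.\ \ref{lem:AdS'}), Def.\ \ref{def:lorWProduct} turns this into exactly $\tau(f(s,p),f(t,q))=\tau_{(-\frac\pi2,\frac\pi2)\times_{\cos}S}((s,p),(t,q))$, and likewise for $\leq$, hence also for $\ll$.

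For the third stage I would first observe that $b_+$ is continuous on $I(\gamma)$: it is an infimum of continuous functions, hence upper semicontinuous, and the same holds for $b_-$, so $b_+=-b_-$ (Prop.\ \ref{pop:asymptoticLines}) is also lower semicontinuous, whence $S=b_+^{-1}(0)$ is closed in $I(\gamma)$. Next, the $d_S$-topology on $S$ agrees with the subspace topology: fixing $t^{\ast}>0$, the $c$-formula of Lem.\ \ref{lem:c-cri} gives $d_S(p,q)=\arcosh(\cos(\tau(\alpha_p(-t^{\ast}),q))/\cos t^{\ast})$ for $q$ near $p$, so $X$-convergence to $p$ forces $d_S$-convergence; conversely $d_S(p,q_n)\to0$ traps $q_n$ eventually in the compact diamond $J^+(\alpha_p(-t^{\ast}))\cap J^-(\alpha_p(t^{\ast}))$, and any limit point lies in the closed set $S$ at $d_S$-distance $0$ from $p$, hence equals $p$. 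Using that $f$ is $\leq$-preserving, together with the description of $\leq$ in the warped product (Prop.\ \ref{pop:PropsLwpcs}.\ref{pop:PropsLwpcs:descrLeq}), one identifies $\overline B^{d_S}_R(p)=S\cap J^+(\alpha_p(\eta^{-1}(-R)))\cap J^-(\alpha_p(\eta^{-1}(R)))$, where $\eta$ is the conformal time of $\AdS'$; this is a closed subset of a compact causal diamond of $I(\gamma)$, so $(S,d_S)$ is proper and therefore complete. Finally, properness of $S$ makes $(-\frac\pi2,\frac\pi2)\times_{\cos}S$ globally hyperbolic by Prop.\ \ref{pop:PropsLwp}.\ref{pop:PropsLwp:globHyp}; choosing $s_0$ close to $\frac\pi2$ so that $(-s_0,p)\ll(s_0,q)$ in the warped product, a timelike distance realizer between these points has, by Prop.\ \ref{pop:PropsLwp}.\ref{pop:PropsLwp:geodesics}, second component a distance realizer in $S$ from $p$ to $q$, so $(S,d_S)$ is strictly intrinsic.

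In the last stage, Prop.\ \ref{pop:PropsLwp} (applied with $S$ proper and strictly intrinsic, and $\cos$ being $1$-Lipschitz on $(-\frac\pi2,\frac\pi2)$) yields that $(-\frac\pi2,\frac\pi2)\times_{\cos}S$ is a path-connected, regularly localisable, globally hyperbolic Lorentzian length space; and since both it and $I(\gamma)$ (being open in the strongly causal space $X$) are strongly causal, timelike diamonds form a basis for the topology on each side, so the $\ll$-preserving bijection $f$ carries basic open sets to basic open sets in both directions and is a homeomorphism. I expect the genuine obstacle to be the third stage: the continuity of $b_+$ and, above all, the identification of $d_S$-balls with causal diamonds of $I(\gamma)$ — which is what forces properness and which itself hinges on $f$ being $\leq$-preserving and on the conformal-time description of $\AdS'$ — together with the slightly indirect argument for strict intrinsicness of $S$ that routes through the global hyperbolicity of the warped product and Prop.\ \ref{pop:PropsLwp}.\ref{pop:PropsLwp:geodesics}.
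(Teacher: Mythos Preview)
Your argument is essentially correct and closely parallels the paper's, but you reorder two steps in a way that creates a small circularity. The one genuine difference is how you obtain properness of $S$: you do it \emph{directly}, by showing $b_+$ is continuous (so $S$ is closed), that the $d_S$-topology agrees with the subspace topology, and then identifying closed $d_S$-balls with $S$ intersected with a compact causal diamond of $I(\gamma)$ via the conformal-time formula. The paper instead first proves that $f$ is a homeomorphism (using only that both sides are strongly causal and that $f$ preserves $\ll$, hence timelike diamonds), then transports compact causal diamonds from $I(\gamma)$ to the warped product to get global hyperbolicity there, and finally invokes the \emph{converse} direction of Prop.\ \ref{pop:PropsLwp}.\ref{pop:PropsLwp:globHyp} (valid since $\int_{-\pi/2}^{\pi/2}\sec=\infty$) to conclude $S$ is proper. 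Your route is more explicit and self-contained; the paper's is shorter because it reuses the homeomorphism.

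The gap is in your strict-intrinsicness step. You write that global hyperbolicity of the warped product gives a timelike distance realizer from $(-s_0,p)$ to $(s_0,q)$, whose $S$-component is then a $d_S$-minimiser by Prop.\ \ref{pop:PropsLwp}.\ref{pop:PropsLwp:geodesics}. But global hyperbolicity alone does not produce distance realizers in a Lorentzian \emph{pre}-length space; one needs localisability or intrinsicness, and Prop.\ \ref{pop:PropsLwp}(v),(vi) only supply these \emph{after} $S$ is known to be strictly intrinsic. Indeed, by Prop.\ \ref{pop:PropsLwp}.\ref{pop:PropsLwp:geodesics} a distance realizer in $I\times_{\cos}S$ exists \emph{iff} its $S$-projection is a $d_S$-realizer, so the existence you assert is exactly equivalent to what you are trying to prove. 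The fix is the paper's ordering: move your stage-4 homeomorphism argument (which needs only strong causality of the warped product, available unconditionally from Prop.\ \ref{pop:PropsLwp}(ii)) before strict intrinsicness; then take a distance realizer in $I(\gamma)$ between $\alpha_p(-s_0)$ and $\alpha_q(s_0)$, push it through the now-continuous $f^{-1}$ to get a continuous maximiser in the warped product, and apply Prop.\ \ref{pop:PropsLwp}.\ref{pop:PropsLwp:geodesics} to its $S$-component.
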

\begin{proof}
First, it is clear that $I(\gamma)$ is path-connected, causally convex in $X$ and has global timelike curvature bounded below by $K=-1$. It is hence causally path-connected since $X$ is and it is trivially locally causally closed. Moreover, if $x \in I(\gamma)$ and $U$ is a regular localising neighbourhood of $x$ in $X$, then $U \cap I(\gamma)$ is a regular localising neighbourhood of $x$ in $I(\gamma)$, hence $I(\gamma)$ is regularly localisable. By causal convexity, the time separation between causally related points in $I(\gamma)$ is achieved as the supremum of lengths of causal curves running between them which have to stay inside $I(\gamma)$. The causal diamonds in $I(\gamma)$ are precisely those in $X$, since they must be contained in $I(\gamma)$. Finally, $I(\gamma)$ is non-totally imprisoning (it inherits this from $X$), thus we have shown all the claims on $I(\gamma)$.

Next, we argue that $f$ is $\tau$- and $\leq$-preserving. For now, denote the causal relation and time separation function in $(-\frac{\pi}{2},\frac{\pi}{2})\times_{\cos}S$ by $\mathrel{\bar{\leq}}$ and $\bar{\tau}$. Let $(s_0,p),(t_0,q)\in (-\frac{\pi}{2},\frac{\pi}{2})\times_{\cos}S$, we need to check $(s_0,p)\mathrel{\bar{\leq}} (t_0,q)\Leftrightarrow f(s_0,p)\leq f(t_0,q)$ and $\bar{\tau}((s_0,p),(t_0,q))=\tau(f(s_0,p),f(t_0,q))$. Let $\alpha$ be the \AdSn-asymptote to $\gamma$ through $p$ and let $\beta$ be the \AdSn-asymptote to $\gamma$ through $q$. Then $\alpha$ and $\beta$ are \AdSn-parallel with distance $d_S(p,q)$, so there is a \AdSn-parallel realisation $\tilde{f}:\alpha\cup\beta\to\AdS'$ defined by $\tilde{f}(\alpha(s))=(s,0)$ and $\tilde{f}(\beta(t))=(t,d_S(p,q))$. In particular, $\alpha(s)\leq_X \beta(t)\Leftrightarrow \tilde{f}(\alpha(s))\leq \tilde{f}(\beta(t))$ and if this is true, $\tau(\alpha(s),\beta(t))=\bar{\tau}(\tilde{f}(\alpha(s)),\tilde{f}(\beta(t)))$. 

But the definition of $\leq$ and $\tau$ in the Lorentzian warped product just asks us to realize the points by $(s,0)$ and $(t,d_S(p,q))$, so the $\tau$-distance and causal relation are the same as in the parallel realization, and thus the same as in $X$, so $f$ is $\tau$- and $\leq$-preserving. $f$ is injective as \AdSn-asymptotes to $\gamma$ are \AdSn-parallel to $\gamma$ and \AdSn-parallel lines don't meet (Lemma \ref{lem:parallelLinesUnique}), and surjective since any point in $I(\gamma)$ lies on an \AdSn-asymptote.

From the discussion above, it is easy to see that $f$ maps timelike (and causal) diamonds in $I(\gamma)$ to timelike (and causal) diamonds in $(-\frac{\pi}{2},\frac{\pi}{2}) \times_{\cos} S$. As both sides are strongly causal, this implies that $f$ is a continuous open bijection, hence a homeomorphism. Since $(-\frac{\pi}{2},\frac{\pi}{2}) \times_{\cos} S$ is always non-totally imprisoning (cf.\ Prop.\ \ref{pop:PropsLwp}.\ref{pop:PropsLwp:nonTotImpr}) and its causal diamonds are compact (as continuous images of compact causal diamonds in $I(\gamma)$), we conclude that $(-\frac{\pi}{2},\frac{\pi}{2}) \times_{\cos} S$ is globally hyperbolic, hence $S$ is proper by Prop.\ \ref{pop:PropsLwp}.\ref{pop:PropsLwp:globHyp}. To see that $S$ is a strictly intrinsic space, fix $p,q \in S$ and connect any two timelike related points on the corresponding asymptotes by a distance realiser in $I(\gamma)$. The image of that distance realiser under $f$ is a continuous distance realiser in $(-\frac{\pi}{2},\frac{\pi}{2}) \times_{\cos} S$, hence by Prop.\ \ref{pop:PropsLwp}.\ref{pop:PropsLwp:geodesics} the projection onto $S$ gives a distance minimiser in $S$ between $p$ and $q$.

Finally, we need to prove the remaining claimed properties of $(-\frac{\pi}{2},\frac{\pi}{2}) \times_{\cos} S$. Path-connectedness is inherited from $I(\gamma)$ via $f$, and warped products are always globally causally closed (cf.\ Prop.\ \ref{pop:PropsLwp}.\ref{pop:PropsLwp:nonTotImpr}). Note that $I(x,y)$ for $x,y \in I(\gamma)$ are regular localising neighbourhoods in $I(\gamma)$. Since $f(I(x,y)) = I(f(x),f(y))$ and the $d$-lengths of causal curves in timelike diamonds can always be uniformly bounded in warped products (cf.\ \ref{pop:PropsLwp}.\ref{pop:PropsLwp:Lip}), timelike diamonds in $(-\frac{\pi}{2},\frac{\pi}{2}) \times_{\cos} S$ are in fact (regular) localising neighbourhoods: For the local time separation, take the restriction of $\bar{\tau}$, and note that maximisers in $I(\gamma)$ (or in any $I(x,y) \subset I(\gamma)$) map to continuous maximisers in $(-\frac{\pi}{2},\frac{\pi}{2}) \times_{\cos} S$, which are always Lipschitz reparametrisable and are hence causal curves (cf.\ Prop.\ \ref{pop:PropsLwp}.\ref{pop:PropsLwp:Lip}).
\end{proof}

\begin{pop}[Globalize the splitting]
\label{pop:globalizeSplitting}
Let $(X,d,\ll,\leq,\tau)$ satisfy the assumptions in Prop.\ \ref{thm:localSplitting} and such that for each pair of points $x\ll z$ in $X$ we find $y \in X$ such that $\Delta(x,y,z)$ is a non-degenerate timelike triangle. Then $X= I(\gamma)$.
\end{pop}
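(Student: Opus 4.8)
The plan is to show that $X \setminus I(\gamma)$ is empty by ruling out, in turn, the possibility of points in $I^+(\gamma) \setminus I^-(\gamma)$, in $I^-(\gamma) \setminus I^+(\gamma)$, and in neither (which would contradict connectedness, exactly as in the proof of Lemma \ref{lem:AdS'}). The crux is the first two cases, which are symmetric under time reversal, so I would focus on producing a contradiction from a point $p \in I^+(\gamma) \setminus I^-(\gamma)$. Since $p \notin I^-(\gamma)$, we have $\gamma(t) \not\ll p$ for all $t$, yet $\gamma(t) \ll p$ would follow (for $t$ close to $-\frac{\pi}{2}$) if the "spacelike distance" between $p$ and $\gamma$ were finite; so the strategy is to use the stacking principle together with the non-degeneracy hypothesis to force $p$ to have finite spacelike distance to $\gamma$, and then derive a contradiction.

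Concretely, pick $x \in \gamma$ with $x \ll p$ (possible since $p \in I^+(\gamma)$), and use the non-degeneracy assumption applied to the pair $x \ll p$ to obtain a point $w$ making $\Delta(x,w,p)$ a non-degenerate timelike triangle; this guarantees the relevant angles and comparison situations are genuinely two-dimensional. Next I would invoke the stacking principle (Prop.\ \ref{pop:compStack}) for the point $p$ against the \AdSn-line $\gamma$: the comparison triangles $\Delta(p, \gamma(t_i))$ stack in $\AdS'$ with the $\bar\gamma(t_i)$ on a single vertical line, producing a well-defined "spacelike distance" $c$ between $p$ and $\gamma$ and a Busemann-realized $\bar p = (t(p), c) \in \AdS'$ with $t(p) = b_+(p) = -b_-(p)$ (the argument of Prop.\ \ref{pop:asymptoticLines} applies verbatim once one has the stacking, constructing the asymptotic \AdSn-line through $p$ and showing $b_+ + b_- = 0$ at $p$). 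But in $\AdS'$ the point $(t(p), c)$ satisfies $(t, 0) \ll_{\AdS'} (t(p), c)$ for all $t$ sufficiently close to $-\frac{\pi}{2}$ (by Lem.\ \ref{lem:tauInAdS'}, since $\tau_{\AdS'}((t,0),(t(p),c)) \to \pi$ as $t \to -\frac{\pi}{2}$ whenever $t(p) > -\frac{\pi}{2}$, which holds because $b_+(p) \geq -b_-(p) > -\frac{\pi}{2}$). Transporting this back through the curvature bound and Cor.\ \ref{cor:sides_equal} (or directly: the comparison triangle realization is $\tau$- and $\leq$-faithful along $\gamma$ since a side lies on an \AdSn-line), we get $\gamma(t) \ll p$ for such $t$, contradicting $p \notin I^-(\gamma)$.

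The main obstacle I anticipate is making the stacking/Busemann machinery go through for an arbitrary point $p \in I^+(\gamma) \setminus I^-(\gamma)$ rather than for a point already known to be in $I(\gamma)$: Prop.\ \ref{pop:compStack} only needs $p$ to be timelike related to the relevant $\gamma(t_i)$, and since $p \in I^+(\gamma)$ there are plenty of $t$ with $\gamma(t) \ll p$ (namely all $t < s_+ := \inf \gamma^{-1}(I^+(p))$ together with — here is the subtle point — possibly nothing from the future side if $p \notin I^-(\gamma)$). So one works on the past side only: the $\gamma(t)$ with $t < s_+$ are all timelike before $p$, one stacks those comparison triangles, and the limit $t \to -\frac{\pi}{2}$ gives a bottom tip of $\AdS'$ at $\tau$-distance $\pi$ from $\bar p$ in $\AdS$. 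The non-degeneracy hypothesis is what ensures $c < \infty$ (the stacked triangles don't collapse to a segment, so $\bar p$ genuinely sits off the vertical axis at finite $x$-coordinate, i.e.\ the construction doesn't degenerate); without it $X$ could be locally one-dimensional and $\gamma$ itself could fail to be surrounded. Once finiteness of $c$ is secured, Lem.\ \ref{lem:tauInAdS'} does the rest and the contradiction with $p \notin I^-(\gamma)$ is immediate. The remaining case — every point of $X \setminus I(\gamma)$ lies in neither $I^+(\gamma)$ nor $I^-(\gamma)$ — is handled exactly as in Lem.\ \ref{lem:AdS'}: such a point $p$ satisfies $\gamma(s) \ll p \ll \gamma(t)$ for suitable $s<t$ by strong causality, forcing $\gamma(s), \gamma(t)$ themselves out of $I(\gamma)$, whence $I(\gamma(s),\gamma(t)) \subseteq X \setminus I(\gamma)$ is open; thus $I(\gamma)$ and $X \setminus I(\gamma)$ are both open, contradicting connectedness of $X$.
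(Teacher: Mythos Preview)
Your three-case decomposition matches the paper's. The third case (connectedness) is indeed handled as in Lem.~\ref{lem:AdS'}, though your summary of it is garbled: the points $p_- \ll p \ll p_+$ produced by strong causality are \emph{not} on $\gamma$ (writing $\gamma(s) \ll p \ll \gamma(t)$ would directly contradict $p \notin I^\pm(\gamma)$); one then argues $p_\pm \notin I^\pm(\gamma)$ and hence $I(p_-,p_+) \subseteq X \setminus I(\gamma)$.

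The first case contains both a directional error and a genuine gap. The error: $p \notin I^-(\gamma)$ means $p \not\ll \gamma(t)$ for all $t$, not $\gamma(t) \not\ll p$; the latter would be $p \notin I^+(\gamma)$. Since you are assuming $p \in I^+(\gamma)$, the relation $\gamma(t) \ll p$ already holds for many $t$, and deriving it is no contradiction. What you actually need is some $t$ with $p \ll \gamma(t)$.

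Even with the directions repaired, the argument does not close. Prop.~\ref{pop:compStack} and Cor.~\ref{cor:sides_equal} only relate $p$ to points $\gamma(t)$ that are \emph{already} timelike related to $p$---here, only those with $\gamma(t) \ll p$, i.e.\ $t < s_-$. Every comparison triangle you can form has its $\gamma$-vertices below $s_-$, and Cor.~\ref{cor:sides_equal} says nothing about $\tau(p,\gamma(t'))$ for $t' > s_-$. So even if the realisation places $\bar p$ inside $\AdS'$ with $\bar p \ll \bar\gamma(t')$ for $t'$ near $+\tfrac{\pi}{2}$, you cannot transport that relation back to $X$: the curvature bound gives only $\tau \leq \bar\tau$, the wrong direction. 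For the same reason the asymptote/Busemann machinery you invoke is unavailable: with $p \notin I^-(\gamma)$ there is no future asymptote from $p$, hence no asymptotic line through $p$ and no value $b_+(p)$. Finally, the non-degeneracy hypothesis is not what bounds $c$; it enters only through the synthetic Bonnet--Myers theorem.

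The paper's route is different and stays inside $I(\gamma)$, where the local splitting (Prop.~\ref{thm:localSplitting}) already identifies $I(\gamma)$ with a $\cos$-warped product. One takes a distance realiser $\alpha$ from some $\gamma(t) \ll p$ towards $p$; it starts in $I(\gamma)$ but must exit at some parameter $s_t$. Just before exit $\alpha(s_t-\delta)$ lies in $I(\gamma)$, and via the warped-product picture Lemma~\ref{lem:causalBdryAtPi} forces $\tau(\gamma(r),\alpha(s_t-\delta))$ arbitrarily close to $\pi$ as $r \to -\tfrac{\pi}{2}$. Choosing any $p_+ \gg p$ then yields $\tau(\gamma(r),p_+) > \pi$, which Thm.~\ref{thm: lor meyers} (and here is where non-degeneracy is used) upgrades to $+\infty$, contradicting finiteness of $\tau$ in a globally hyperbolic space.
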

\begin{proof}
Using Thm.\ \ref{thm:localSplitting}, we only need to prove $I(\gamma)=X$. We indirectly assume a point in $X \setminus I(\gamma)$: First, if there is a point $p\in I^+(\gamma)\setminus I^-(\gamma)$, we find a $t$ such that $\gamma(t)\ll p$, and a timelike distance realizer $\alpha_t$ connecting them. Inside a localizing neighbourhood of $p$, we have a point $p\ll p_+$ and set $\varepsilon=2\tau(p,p_+)>0$. $\alpha_t$ is initially contained in $I(\gamma)$, so we find a point $\alpha_t(s_t)\in\partial I(\gamma)$, i.e.\ in $\partial I^-(\gamma)$. 

By Lem.\ \ref{lem:causalBdryAtPi} we get that for all $\varepsilon>0$ there is $\delta>0$ small enough such that $\tau(\gamma(r),\alpha_t(s_t-\delta))\to\pi-\varepsilon$ as $r\to-\frac{\pi}{2}$. In particular, $\tau(\gamma(r),p_+)>\pi$. By the synthetic Bonnet-Myers theorem \ref{thm: lor meyers}, we know that this forces $\tau(\gamma(r),p_+)=+\infty$, contradicting finiteness of $\tau$ (see \cite[Thm.\ 3.28]{saemLorLen}).

The situation where there is a point $p\in I^-(\gamma)\setminus I^+(\gamma)$ is analogous. 

Finally, assume that all points in $X \setminus I(\gamma)$ are in neither $I^+(\gamma)$ nor in $I^-(\gamma)$. Take any such point $p$, then by strong causality, we have $p_-\ll p\ll p_+$. Then also $p_-,p_+$ are neither in $I^+(\gamma)$ nor in $I^-(\gamma)$. Thus, $I(p_-,p_+)\subseteq X \setminus I(\gamma)$ is an open neighbourhood of $p$. This works for any $p$, so both $I(\gamma)$ and $X \setminus I(\gamma)$ are open, contradicting connectedness of $X$.
\end{proof}

\begin{thm}\label{thm:globalsplitting}
Let $(X,d,\ll,\leq,\tau)$ be a connected, regularly localisable, globally hyperbolic Lorentzian length space with proper metric $d$ and global timelike curvature bounded below by $K=-1$ satisfying timelike geodesic prolongation and containing a $\tau$-arclength parametrized distance realizer $\gamma:(-\frac{\pi}{2},\frac{\pi}{2}) \to X$. Assume that for each pair of points $x\ll z$ in $X$ we find $y \in X$ such that $\Delta(x,y,z)$ is a non-degenerate timelike triangle.

Then there is a proper (hence complete), strictly intrinsic metric space  $S$ such that the Lorentzian warped product $(-\frac{\pi}{2},\frac{\pi}{2}) \times_{\cos} S$ is a path-connected, regularly localisable, globally hyperbolic Lorentzian length space and there is a map $f:(-\frac{\pi}{2},\frac{\pi}{2}) \times_{\cos} S \to I(\gamma)$ which is a $\tau$- and $\leq$-preserving homeomorphism.
\end{thm}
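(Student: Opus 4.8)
The plan is to reduce the theorem to the ``Local splitting'' Proposition~\ref{thm:localSplitting}, whose conclusion is word for word the conclusion asserted here. The only gap to bridge is that Proposition~\ref{thm:localSplitting} assumes $X$ contains an \AdSn-line $\gamma:(-\frac{\pi}{2},\frac{\pi}{2})\to X$, whereas here $\gamma$ is only given as a $\tau$-arclength parametrized distance realizer on $(-\frac{\pi}{2},\frac{\pi}{2})$. Since such a $\gamma$ is automatically timelike, has length $\pi$ and is in $\tau$-arclength parametrization on $(-\frac{\pi}{2},\frac{\pi}{2})$, the one thing I must verify is that $\gamma$ is inextendible as a causal curve at both ends, i.e.\ that it is a line.

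To see this, suppose $\gamma$ were future extendible, so that $q:=\lim_{t\to\frac{\pi}{2}}\gamma(t)$ exists in $X$. Continuity of $\tau$ on the globally hyperbolic space $X$ gives $\tau(\gamma(s),q)=\frac{\pi}{2}-s>0$ for all $s$, so $\gamma$ extends to a timelike distance realizer up to and including $q$. By timelike geodesic prolongation this realizer can be continued past $q$ to a point $q'$ with $q\ll q'$, say $\eta:=\tau(q,q')>0$. Then for $s_0\in(-\frac{\pi}{2},-\frac{\pi}{2}+\eta)$ the reverse triangle inequality yields $\tau(\gamma(s_0),q')\ge(\frac{\pi}{2}-s_0)+\eta>\pi=D_{-1}$. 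The space $X$ satisfies the hypotheses of the synthetic Lorentzian Bonnet-Myers Theorem~\ref{thm: lor meyers}: it is strongly causal, locally causally closed, regular and geodesic (being a regularly localisable globally hyperbolic Lorentzian length space), has global timelike curvature bounded below by $K=-1<0$, and satisfies the assumed non-degeneracy condition. Hence $\tau(\gamma(s_0),q')>D_{-1}$ forces $\tau(\gamma(s_0),q')=+\infty$, contradicting finiteness of $\tau$ on a globally hyperbolic Lorentzian length space (see \cite[Thm.\ 3.28]{saemLorLen}). So $\gamma$ is future inextendible, and the time-reversed argument (or reversing the time orientation of $X$) shows it is past inextendible as well.

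Thus $\gamma$ is a doubly inextendible, timelike, maximizing curve of length $\pi$, parametrized in $\tau$-arclength on $(-\frac{\pi}{2},\frac{\pi}{2})$ --- that is, an \AdSn-line. Applying Proposition~\ref{thm:localSplitting} to $X$ and $\gamma$ then produces the proper, strictly intrinsic metric space $S$ (the spacelike slice $(b_+)^{-1}(0)$), makes $(-\frac{\pi}{2},\frac{\pi}{2})\times_{\cos}S$ a path-connected, regularly localisable, globally hyperbolic Lorentzian length space, and furnishes the $\tau$- and $\leq$-preserving homeomorphism $f:(-\frac{\pi}{2},\frac{\pi}{2})\times_{\cos}S\to I(\gamma)$, which is exactly the claim. (If desired, Proposition~\ref{pop:globalizeSplitting} further gives $X=I(\gamma)$, which is needed for the corollaries but not for the present statement.) I expect the inextendibility step to be the only genuine obstacle; it is also precisely where the non-degeneracy hypothesis and timelike geodesic prolongation enter --- the former to license the use of Bonnet-Myers, the latter to push the extension strictly beyond the limit point $q$ so that the distance genuinely exceeds $\pi$.
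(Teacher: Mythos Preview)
Your proof is correct and follows the same reduction as the paper: invoke Proposition~\ref{thm:localSplitting} once $\gamma$ is known to be an \AdSn-line. The paper's own proof is a one-liner citing Propositions~\ref{thm:localSplitting} and~\ref{pop:globalizeSplitting} without further comment, so you have actually been more careful than the paper in spelling out why the distance realizer $\gamma$ is doubly inextendible (the paper tacitly identifies ``$\tau$-arclength parametrized distance realizer on $(-\frac{\pi}{2},\frac{\pi}{2})$'' with ``\AdSn-line''). Your Bonnet--Myers argument for inextendibility is sound and is precisely the mechanism the paper uses elsewhere (e.g.\ in the proof of Proposition~\ref{pop:globalizeSplitting} and Lemma~\ref{lem:AdS'}). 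Your closing observation that Proposition~\ref{pop:globalizeSplitting} is not needed for the statement as written---the conclusion targets $I(\gamma)$, not $X$---is also accurate; the paper nonetheless cites it, presumably with the corollaries in mind.
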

\begin{proof}
With Prop.\ \ref{pop:globalizeSplitting}, this follows from Prop.\ \ref{thm:localSplitting} immediately.
\end{proof}

Recall the notion of Cauchy sets and (Cauchy) time functions on Lorentzian pre-length spaces from \cite[Sec.\ 5.1]{timeFncLLS}: A Cauchy set is any subset that is met exactly once by doubly inextendible causal curves, and a Cauchy time function is a continuous function $t:X \to \R$ such that $x < y$ implies $t(x) < t(y)$ and the image under $t$ of any doubly inextendible causal curve is all of $\R$.

\begin{cor}\label{cor:formCauchySets} Let $(X,d,\ll,\leq,\tau)$ satisfy the assumptions of Theorem \ref{thm:globalsplitting} and let $f:(-\frac{\pi}{2},\frac{\pi}{2}) \times_{\cos} S \to X$ be the splitting. Then the sets $S_t:=f(\{t\} \times S)$ are Cauchy sets in $X$ that are all homeomorphic to $S$. Moreover, let $\varphi:(-\frac{\pi}{2},\frac{\pi}{2})\to\R$ be a monotonically increasing bijection, then the map $\varphi\circ pr_1 \circ f^{-1}$ is a Cauchy time function. Moreover, all Cauchy sets in $X$ are homeomorphic to $S$.
\end{cor}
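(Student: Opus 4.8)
The plan is to establish everything on the warped‑product side and then transport it along the homeomorphism $f$, which by Thm.\ \ref{thm:globalsplitting} and Prop.\ \ref{pop:globalizeSplitting} is a $\tau$‑ and $\leq$‑preserving homeomorphism of $(-\frac{\pi}{2},\frac{\pi}{2})\times_{\cos}S$ onto $X=I(\gamma)$. First I would check that $pr_1$ is a time function on $(-\frac{\pi}{2},\frac{\pi}{2})\times_{\cos}S$: it is continuous, and by Prop.\ \ref{pop:PropsLwpcs}.\ref{pop:PropsLwpcs:descrLeq} together with Def.\ \ref{def:lorWProduct}, $p<q$ forces $pr_1(p)<pr_1(q)$. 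Hence, for any monotonically increasing bijection $\varphi:(-\frac{\pi}{2},\frac{\pi}{2})\to\R$ (automatically a homeomorphism), $\varphi\circ pr_1$ is a continuous time function with values in $\R$, and it only remains to see that it is surjective along every doubly inextendible causal curve.

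The key computation is that a future directed, future inextendible causal curve $c=(\alpha,\beta)$ in $(-\frac{\pi}{2},\frac{\pi}{2})\times_{\cos}S$ has $\sup\alpha=\frac{\pi}{2}$ (and symmetrically a past inextendible one has $\inf\alpha=-\frac{\pi}{2}$). Reparametrising by the strictly increasing $t$‑coordinate, $c$ becomes $\lambda\mapsto(\lambda,\tilde\beta(\lambda))$ on $[\alpha(0),t_\infty)$ with $t_\infty:=\sup\alpha\leq\frac{\pi}{2}$; if $t_\infty<\frac{\pi}{2}$ then by Prop.\ \ref{pop:PropsLwp}.\ref{pop:PropsLwp:Lip} this curve is uniformly Lipschitz on $[\alpha(0),t_\infty)$ (since $\cos$ is bounded below by a positive constant there), hence of finite length; as the warped‑product metric is complete ($S$ being proper, cf.\ Prop.\ \ref{pop:PropsLwp}), $c$ converges to a point and extends continuously past it, the extension being still causal because $\leq$ is closed (global causal closedness) — contradicting future inextendibility. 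Since $\alpha$ is continuous, its image along a doubly inextendible causal curve is therefore all of $(-\frac{\pi}{2},\frac{\pi}{2})$, so $\varphi\circ pr_1$ is a Cauchy time function on the warped product, and each slice $\{t\}\times S=(\varphi\circ pr_1)^{-1}(\varphi(t))$ is a Cauchy set there (a doubly inextendible causal curve meets it at least once by surjectivity and at most once since $\varphi\circ pr_1$ is strictly increasing along it).

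Transporting along $f$: since $f$ is a $\leq$‑preserving homeomorphism and, just as in the proof of Prop.\ \ref{thm:localSplitting}, a continuous $\leq$‑monotone curve in a globally hyperbolic Lorentzian length space has image contained in causal diamonds and is reparametrisable as a Lipschitz causal curve, $f$ and $f^{-1}$ induce a bijection between doubly inextendible causal curves of the two spaces. Hence $T:=\varphi\circ pr_1\circ f^{-1}$ is a Cauchy time function on $X$, and $S_t=f(\{t\}\times S)=T^{-1}(\varphi(t))$ is a Cauchy set in $X$; moreover $f|_{\{t\}\times S}$ is a homeomorphism onto $S_t$ and $\{t\}\times S\cong S$ (the product metric $D$ induces the product topology, $g$ being a homeomorphism), so $S_t\cong S$. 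Finally, since $X$ is globally hyperbolic, all of its Cauchy sets are mutually homeomorphic by \cite{timeFncLLS}; as $S_0\cong S$ is one of them, every Cauchy set in $X$ is homeomorphic to $S$.

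The main obstacle I expect is the transport of the causal structure through $f$: $f$ is known only to be a $\tau$‑ and $\leq$‑preserving homeomorphism, not bi‑Lipschitz, so one has to argue with some care that (doubly) inextendible causal curves correspond under $f$ and $f^{-1}$. This reduces to the fact — already used for maximisers in the proof of the local splitting — that in the globally hyperbolic setting a $\leq$‑monotone continuous curve with image in a causal diamond admits a Lipschitz reparametrisation; everything else is a short computation in the warped product or a citation to \cite{timeFncLLS}.
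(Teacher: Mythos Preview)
Your argument for the first two claims (that each $S_t$ is a Cauchy set and that $\varphi\circ pr_1\circ f^{-1}$ is a Cauchy time function) is correct but follows a different route from the paper. The paper works directly in $X$ and uses non-total imprisonment: if a doubly inextendible causal curve $\alpha$ missed $S_t$, then (say) $\alpha\subset I^+(S_t)$ and for any parameter $t_0$ the past part $\alpha|_{(a,t_0]}$ would be imprisoned in the compact set $J^-(\alpha(t_0))\cap J^+(S_t)$, a contradiction. This sidesteps the transport issue you flag, since one never needs to push an arbitrary inextendible causal curve through $f$. Your Lipschitz/completeness argument on the warped product side also works, and the transport step is handled essentially as you say; the paper's route is just shorter here.

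The genuine gap is in your final step. You invoke \cite{timeFncLLS} for the claim that in a globally hyperbolic Lorentzian pre-length space all Cauchy sets are mutually homeomorphic --- but this is \emph{false} in general, and the paper says so explicitly right after this corollary, citing \cite[Ex.\ 5.7, Ex.\ 5.8]{timeFncLLS} as counterexamples. So the result you want to quote does not exist. The paper instead builds the homeomorphism by hand from the splitting: for an arbitrary Cauchy set $C$, the map $C\hookrightarrow X\xrightarrow{f^{-1}}(-\frac{\pi}{2},\frac{\pi}{2})\times S\xrightarrow{pr_2}S$ is continuous, and its inverse sends $p\in S$ to the unique point where the asymptote $\alpha_p$ meets $C$; continuity of the inverse uses that $C$ is achronal. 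Thus the conclusion genuinely relies on the warped product structure produced by the theorem, not on any general fact about Cauchy sets.
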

\begin{proof}
As $\{t\} \times S$ is acausal in $(-\frac{\pi}{2},\frac{\pi}{2}) \times_{\cos} S$, no causal curve can meet it twice. Next we argue that any doubly inextendible causal curve $\alpha$ meets $S_t$: Suppose that $\alpha$ does not meet $\{t\} \times S$, so w.l.o.g.\ we may assume that $\alpha \subset I^+(\{t\} \times S)$ (as $X=I^-(\{t\} \times S) \cup \{t\} \times S \cup I^+(\{t\} \times S)$ due to the splitting, and this union is disjoint). Let $t_0 \in (a,b)$, then $\alpha((a,t_0]) \subset J^-(\alpha(t_0)) \cap J^+(\{t\} \times S)$ which is easily seen to be a compact set by considering the corresponding situation in $(-\frac{\pi}{2},\frac{\pi}{2}) \times_{\cos} S$. But this is a contradiction, since $X$ is non-totally imprisoning. Hence $\{t\} \times S$ is a Cauchy set. They are obviously homeomorphic to $S$.

Now for $pr_1 \circ f^{-1}$: It is clearly a time function. Now let $\alpha:(a,b) \to X$ be a doubly inextendible causal curve, we need to show that $pr_1 \circ f^{-1} \circ \alpha((a,b)) = (-\frac{\pi}{2},\frac{\pi}{2})$. Suppose not, so there is some time value $t_0$ that is not attained. W.l.o.g.\ suppose $t_0 \geq 0$, so the image is contained in $(-\frac{\pi}{2},t_0]$. Similarly to before, this would imply that $\alpha|_{[t_0,b)}$ is contained in the compact set $J^+(\alpha(t_0)) \cap J^-(S_{t_0})$, a contradiction.

Finally, let $C$ be any Cauchy set in $X$. Then the projection $C \to S$ (via the splitting) is continuous. Its inverse is given by sending each $p \in S$ to the unique point on $\alpha_p$ meeting $C$. This is a continuous map since $C$ is achronal. This shows that $C$ is homeomorphic to $S$.
\end{proof}

Note that in general, Cauchy sets in globally hyperbolic Lorentzian pre-length spaces need not be homeomorphic, as \cite[Ex.\ 5.7,\ Ex.\ 5.8]{timeFncLLS} show.

\begin{cor}\label{cor:SAlexCurv}
Let $(X,d,\ll,\leq,\tau)$ satisfy the assumptions of Thm.\ \ref{thm:globalsplitting}. Then $(S,d_S)$ has Alexandrov curvature bounded below by $-1$.
\end{cor}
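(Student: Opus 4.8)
The plan is to transfer the timelike curvature bound from the warped product $W := (-\frac{\pi}{2},\frac{\pi}{2}) \times_{\cos} S$ to a metric (Alexandrov) bound on the fibre $S$. First I would record that $W$ itself has global timelike curvature bounded below by $K=-1$: by Theorem~\ref{thm:globalsplitting} the map $f$ is a $\tau$- and $\leq$-preserving homeomorphism onto $I(\gamma)=X$, and $X$ has global timelike curvature bounded below by $-1$; since comparison triangles and comparison points are defined purely in terms of $\tau$ and $\leq$ (and, by Remark~\ref{rem:contvsLipschitztriangles}, it is harmless that $f$ only produces continuous maximisers), the bound passes to $W$. Because Alexandrov lower curvature bounds are a local condition, it then suffices to verify the defining \emph{fat-triangle} inequality for small geodesic triangles in $(S,d_S)$: for a triangle $\triangle(p,q,r)$ with $m$ on the side $[q,r]$ one must show $d_S(p,m)\geq \tilde d(\tilde p,\tilde m)$, where $\tilde p,\tilde q,\tilde r$ is a comparison triangle in $\mb{H}^2_{-1}$, $\tilde m$ the comparison point and $\tilde d$ the distance in $\mb{H}^2_{-1}$.

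The heart of the argument is a dictionary between $d_S$ and $\tau$ furnished by the explicit warped-product time separation. Writing $\eta$ for conformal time on $\AdS'$ (so that $\sin t = \tanh\eta(t)$ and $\cos t = 1/\cosh\eta(t)$), Lemma~\ref{lem:tauInAdS'} together with Definition~\ref{def:lorWProduct} gives, for any causally related $(s,p)\leq(t,q)$ in $W$,
\begin{equation*}
\cosh d_S(p,q) = \cos\!\big(\tau((s,p),(t,q))\big)\,\cosh\eta(s)\cosh\eta(t) - \sinh\eta(s)\sinh\eta(t),
\end{equation*}
so that $\cosh d_S$ is a strictly decreasing function of the timelike separation at fixed time-coordinates. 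Dually, at the infinitesimal level two future timelike unit vectors making hyperbolic angles $\omega_1,\omega_2$ with the vertical direction and whose spatial parts enclose a $d_S$-angle $\phi$ satisfy the suspension identity $\cosh\Omega = \cosh\omega_1\cosh\omega_2 - \sinh\omega_1\sinh\omega_2\cos\phi$ for the hyperbolic angle $\Omega$ between them; this is precisely the law of cosines of $\mb{H}^2_{-1}$ with sides $\omega_1,\omega_2$, included angle $\phi$ and opposite side $\Omega$. These two formulae are what make the $\cos$-suspension carry curvature $-1$ back to its fibre.

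With this dictionary in hand I would lift the small triangle $\triangle(p,q,r)$, with $m$ on $[q,r]$, into $W$: by Proposition~\ref{pop:PropsLwp}.\ref{pop:PropsLwp:geodesics} the $S$-geodesics lift to timelike distance realisers, and choosing the three time-coordinates ordered with conformal-time gaps exceeding the (small) fibre distances turns the lift into a genuine timelike triangle $\Delta(P,Q,R)$ with $M=(s_m,m)$ on the side $QR$. The timelike curvature bound of $W$ then gives the AdS point-side inequality $\tau(P,M)\leq\bar{\tau}(\bar P,\bar M)$ (equivalently, actual angle $\geq$ comparison angle). Re-expressing the left-hand side through the displayed identity, and the right-hand side through the same identity applied to $\AdS'$ viewed as the $\cos$-suspension of $\mb{R}$, converts this inequality into $\cosh d_S(p,m)\geq \cosh \tilde d(\tilde p,\tilde m)$, i.e.\ the required fat-triangle inequality, after which monotonicity and the local-to-global principle for Alexandrov bounds finish the proof.

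The main obstacle is the bookkeeping in this last transfer: the comparison triangle $\bar\Delta(\bar P,\bar Q,\bar R)$ carries its \emph{own} time-coordinates (chosen to realise the three $\tau$-distances), which in general differ from those of the lift, so one cannot apply the identity with matching times directly. The clean way around this is a limiting argument in which the time-coordinates are driven towards the poles along a one-parameter family so that the warping factors cancel and $\bar\Delta$ degenerates exactly onto the $\mb{H}^2_{-1}$ comparison triangle of the fibre, all the while keeping the four points pairwise timelike and invoking continuity/compactness of the comparison-triangle construction; controlling this limit is the real work. A secondary, cosmetic point is that $S$ need not be a manifold, so the suspension angle identity above must be read at the level of synthetic comparison angles rather than smooth tangent vectors. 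Alternatively, once $W$ is known to have timelike curvature bounded below by $-1$, the conclusion also follows from the curvature-comparison theory for Lorentzian warped products in the sense of \cite{generalizedCones}, which identifies the fibre bound of a $\cos$-suspension as exactly Alexandrov curvature $\geq -1$.
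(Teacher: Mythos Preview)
Your final ``alternatively'' clause is exactly what the paper does, and it is the entire proof: once $W=(-\frac{\pi}{2},\frac{\pi}{2})\times_{\cos}S$ is known to have global timelike curvature bounded below by $-1$ (which you argue correctly, invoking Remark~\ref{rem:contvsLipschitztriangles} to handle the continuous maximisers produced by $f$), the paper simply applies \cite[Thm.\ 5.7]{generalizedCones}. The one ingredient you do not make explicit but the paper does is why that theorem is applicable with the value $-1$: the reference warped product $(-\frac{\pi}{2},\frac{\pi}{2})\times_{\cos}\mb{M}^2(-1)$ is an open piece of $(2{+}1)$-dimensional anti-de Sitter space, hence has timelike curvature bounded above \emph{and} below by $-1$, which is precisely the hypothesis needed to read off the fibre bound.

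Everything preceding your ``alternatively'' is an attempt to reprove (a special case of) that cited theorem by hand. The dictionary you set up is correct and is indeed what underlies the general result, but the proposal is not a proof as written: you yourself flag that the time-coordinates of the lifted triangle and of its $\AdS'$-comparison triangle need not match, and that ``controlling this limit is the real work''. That work is nontrivial and is exactly what \cite[Thm.\ 5.7]{generalizedCones} packages. So the honest status of your write-up is: the first paragraph and the last sentence together constitute the paper's proof; the middle is a sketch of an independent argument with an acknowledged gap. If you want a self-contained proof, the cleanest route is to carry out the limiting/degeneration argument you outline rigorously---but then you are reproving the cited theorem, not using it.
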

\begin{proof}
By Rem.\ \ref{rem:contvsLipschitztriangles}, $(-\frac{\pi}{2},\frac{\pi}{2})\times_{\cos}S$ has timelike curvature bounded below by $-1$. Note that $(-\frac{\pi}{2},\frac{\pi}{2})\times_{\cos}\mathbb{M}^2(-1)$ is a part of $2+1$-dimensional anti-deSitter space (compare to Lem.\ \ref{lem:AdS'}) and thus has timelike curvature bounded above and below by $K=-1$, so we can use \cite[Thm.\ 5.7]{generalizedCones} to get the desired curvature bound on $S$.
\end{proof}

As each strongly causal spacetime can be regarded as a regular Lorentzian length space, we can extend the main result to spacetimes:
\begin{cor}\label{cor:spacetimes}
Let $(M,g)$ be a connected globally hyperbolic spacetime of dimension $n\geq 2$ with smooth timelike sectional curvature bounded above\footnote{This might seem the wrong direction, but this is due to the signature of Lorentzian metrics.} by $K=-1$ and containing a timelike distance realizer of length $\pi$. Furthermore assume along each timelike distance realizer, there are no conjugate points of degree $n-1$. Then there is a spacelike Cauchy surface $S$ in $M$, endowed by a metric from the Riemannian metric $g|_S$ and a map $f:(-\frac{\pi}{2},\frac{\pi}{2})\times_{\cos} S\to M$ which is $\tau$-preserving and a $C^1$ diffeomorphism, restricting to the identity $\{0\}\times S\to S$.
\end{cor}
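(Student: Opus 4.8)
The plan is to deduce this from the main theorem \ref{thm:globalsplitting} by checking that a spacetime $(M,g)$ as in the hypotheses, viewed as a Lorentzian length space, satisfies all the assumptions there, and then to upgrade the homeomorphism $f$ coming from \ref{thm:globalsplitting} to a $C^1$ diffeomorphism that is an isometry. First I would recall (e.g.\ from \cite{saemLorLen}, \cite{chruscielGrant}) that every globally hyperbolic spacetime, being causally plain and strongly causal, is canonically a regular, regularly localisable, globally hyperbolic Lorentzian length space, with $d$ the distance of an auxiliary complete Riemannian metric (which can be taken proper); the time separation $\tau$ is the usual Lorentzian one. The smooth timelike sectional curvature bound $\le K=-1$ translates, via the standard comparison theory (Jacobi field/Rauch arguments), into a \emph{synthetic} timelike curvature bound below by $K=-1$ in the sense of triangle comparison on a neighbourhood of each point; since $M$ is connected and globally hyperbolic with a time function, the local-to-global lemma quoted in the excerpt promotes this to a global bound. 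The ``no conjugate points of degree $n-1$ along timelike distance realizers'' hypothesis is exactly what gives timelike geodesic prolongation: a timelike distance realizer can be extended past its endpoints as a distance realizer precisely because the absence of such conjugate points means it does not stop being maximising there. Finally the non-degeneracy condition (for each $x\ll z$ there is $y$ making $\Delta(x,y,z)$ non-degenerate) holds trivially when $n\ge 2$: pick any $y$ in the interior of the diamond $I(x,z)$ not on the distance realizer from $x$ to $z$, which exists since $\dim M\ge 2$. Thus \ref{thm:globalsplitting} applies and yields a proper strictly intrinsic metric space $S$ and a $\tau$- and $\le$-preserving homeomorphism $f:(-\frac{\pi}{2},\frac{\pi}{2})\times_{\cos}S\to I(\gamma)=M$ (the last equality by Prop.\ \ref{pop:globalizeSplitting}); by Cor.\ \ref{cor:formCauchySets} the slice $S_0=f(\{0\}\times S)$ is a Cauchy set, and post-composing with a translation we may assume $f$ restricts to the identity on $\{0\}\times S$, where we identify $S$ with $S_0\subset M$ and equip it with the Riemannian distance $d_{g|_{S_0}}$ (which one checks agrees with $d_S$, since the spacelike distance between parallel \AdSn-lines is realised by genuine spacelike geodesics of $M$ orthogonal to the asymptotes).

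The remaining work is purely smooth/local and concerns regularity of $f$. The key point is that the \AdSn-asymptotes $\alpha_p$ through points $p\in S_0$ are, in the smooth setting, honest timelike geodesics of $M$ (being distance realizers in a spacetime), and they depend smoothly — or at least $C^1$ — on the initial data. Concretely, the Busemann function $b_+$ is, by Prop.\ \ref{pop:asymptoticLines}, a $\tau$-arclength (hence unit-speed) parameter along each asymptote, and the construction of the splitting shows that the asymptotes foliate $M$ by timelike geodesics orthogonal to the level sets $S_t=\{b_+=t\}$; this is the synthetic incarnation of the statement that $M$ is, in a neighbourhood of the central distance realizer, the normal exponential flow of $S_0$. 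So I would argue that $b_+$ is $C^2$ (it is a Busemann-type function built from $\tau$, which is smooth off the null cut locus; the curvature bound and the existence of the line $\gamma$ prevent cut points from occurring in $I(\gamma)$), that its level sets $S_t$ are smooth spacelike hypersurfaces, and that $f(s,p)=\exp_{p}\!\big(s\,n(p)\big)$ essentially, with $n$ the future unit normal field along $S_0$; smoothness of $\exp$ and of $n$ then gives that $f$ is a $C^1$ (in fact smooth) diffeomorphism. That $f$ is an \emph{isometry} between $(-\frac{\pi}{2},\frac{\pi}{2})\times_{\cos}S_0$ with metric $-dt^2+\cos^2 t\,g|_{S_0}$ and $(M,g)$ then follows from $\tau$-preservation: two continuous spacetimes with the same time separation function on all pairs of points have the same metric (the metric is recoverable from $\tau$ infinitesimally along timelike geodesics), and the warped-product metric on the left is exactly the one whose time separation is $\bar\tau$ by Prop.\ \ref{pop:PropsLwp} and Lem.\ \ref{lem:tauInAdS'}; alternatively one computes directly that the Gauss lemma forces $f^*g = -ds^2 + (\text{second fundamental form flow})$, and the curvature-rigidity pins the warping factor to $\cos s$.

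I expect the main obstacle to be precisely this last regularity/isometry step: the main theorem only delivers a \emph{homeomorphism}, and bootstrapping it to a $C^1$ diffeomorphism requires showing that the synthetically-constructed asymptotes coincide with smooth geodesics and vary $C^1$-smoothly in their foot point, i.e.\ that there are no conjugate points along them inside $I(\gamma)$ (so the normal exponential map is a local diffeomorphism everywhere on the relevant domain). This is where the ``no conjugate points of degree $n-1$'' hypothesis must be used a second time, together with the observation that the presence of the full line $\gamma$ of length $\pi$ together with the curvature bound forces the comparison situation to be \emph{maximally degenerate}, which rules out focal/cut behaviour transverse to the flow; making this rigorous is the technical heart of the argument. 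The identification $d_S=d_{g|_{S_0}}$ and the smoothness of $b_+$ are comparatively routine once one knows the asymptote foliation is smooth, but they should be stated carefully. Everything else — verifying the hypotheses of \ref{thm:globalsplitting}, and reading off from Cor.\ \ref{cor:formCauchySets} that $S_0$ is a Cauchy surface — is immediate.
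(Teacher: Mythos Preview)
Your overall strategy matches the paper's, but you have misidentified where the ``no conjugate points of degree $n-1$'' hypothesis enters, and this leaves a real gap.

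Timelike geodesic prolongation is \emph{automatic} for smooth spacetimes and does not need the conjugate-point hypothesis: a timelike distance realizer is a geodesic, geodesics extend by ODE theory, and in a convex normal neighbourhood of each endpoint the extension is still a distance realizer. Conversely, your non-degeneracy argument (``pick any $y\in I(x,z)$ not on the distance realizer'') is not valid as stated: if $x$ and $z$ were conjugate of degree $n-1$ along a realizer, an $(n-1)$-parameter family of distance realizers from $x$ to $z$ could sweep out a full neighbourhood inside $I(x,z)$, so that \emph{every} nearby $y$ lies on some realizer and $\Delta(x,y,z)$ degenerates. The paper uses the conjugate-point hypothesis precisely here: fixing a realizer from $x$ to $z$ with midpoint $m$, one takes $n-1$ mutually orthogonal spacelike curves $\beta_k$ through $m$; if $\Delta(x,\beta_k(s),z)$ were degenerate for all small $s$ and all $k$, the concatenated realizers through $\beta_k(s)$ would give geodesic variations producing $n-1$ independent Jacobi fields vanishing at both endpoints, contradicting the hypothesis. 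So at least one $\beta_k(s)$ works as $y$.

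For the regularity upgrade your plan is in the right spirit but the paper's route is somewhat different and more concrete: rather than arguing that $b_+$ is $C^2$ directly, it shows $S=f(\{0\}\times S)$ is a $C^2$ spacelike hypersurface by squeezing it locally between the $\tau$-level sets $S_-^\lambda=\{q:\tau(q,\alpha(\lambda))=\tau(p,\alpha(\lambda))\}$ and $S_+^\lambda=\{q:\tau(\alpha(-\lambda),q)=\tau(\alpha(-\lambda),p)\}$ for the asymptote $\alpha$ through $p$ (these are $C^1$ and tangent at $p$ precisely because of the absence of conjugate points), then identifies $d_S$ with the Riemannian distance of $g|_S$ via a two-dimensional comparison, and finally recognises $f^{-1}$ as the normal exponential map of $S$, which is a $C^1$ diffeomorphism because $S$ has no focal points. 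Your proposed shortcut ``same $\tau$ implies same $g$'' is morally right but would need care, since at that stage $S$ is only known to be Lipschitz.
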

\begin{proof}
We can regard our strongly causal spacetime $M$  as a connected regular Lorentzian length space $X$, see \cite[Ex.\ 3.24]{saemLorLen}, and when choosing the background metric from a complete Riemannian metric, $d$ will be proper. We use \cite[Thm.\ 3.2]{smoothAndSynthetic} to get that $M$ having smooth timelike sectional curvature bounded above by $K=-1$ implies $X$ having synthetic timelike sectional curvature bounded below by $K=-1$. 
For the non-degeneracy condition: Let $x\ll z\in M$ and connect them by a geodesic $\alpha$ and set $m=\alpha(\frac12)$ to be the midpoint. As $M$ is $n$-dimensional, we find $n-1$ $C^1$ spacelike curves $\beta_k$ starting in $x$ all orthogonal to each other, and look at whether $\tau(x,z)=\tau(x,\beta_k(s))+\tau(\beta_k(s),y)$ for $s$ small enough that they are timelike related. If this holds for small enough $s$, the concatenation of a distance realizer from $x$ to $\beta_k(s)$ with a distance realizer from $\beta_k(s)$ to $z$ is a distance realizer, hence a geodesic variation, making $x,z$ conjugate points. By assumption, this cannot happen $n-1$ times, so set $y=\beta_k(s)$ for the other $k$ and $s$ small enough, then this will satisfy the required inequality.

Now we can apply Thm.\ \ref{thm:globalsplitting} to get $S$ and the splitting map $f$. Identify $S$ with $f(\{0\}\times S)$, then by Cor.\ \ref{cor:formCauchySets}, $S$ can be seen as a Cauchy set in $M$, and as such will be a Lipschitz hypersurface. 

To see it is a spacelike submanifold, take a point $p\in S$ and a cylindrical coordinate system $\varphi$ around $p$. Take the \AdS-asymptote $\alpha$ to $\gamma$ through $p$ (making $p=\alpha(0)$). Take the two $\tau$-level sets $S_-^{\lambda}=\{q:\tau(q,\alpha(\lambda))=\tau(p,\alpha(\lambda))\}$ and $S_+^{\lambda}=\{q:\tau(\alpha(-\lambda),q)=\tau(\alpha(-\lambda),p)\}$. We have that $\varphi(S_{\pm}^{\lambda})$ and $\varphi(S)$ are graphs of functions $s_{\pm}^{\lambda}$, $s$ in the $t$-coordinate, with $s_-^{\lambda}$ increasing in $\lambda$, $s_+^{\lambda}$ decreasing in $\lambda$ and  $s_-^{\lambda}\leq s\leq s_+^{\lambda}$. As $\alpha$ does not have any conjugate points and $\alpha(-\lambda),p,\alpha(\lambda)$ form a degenerate situation, $s_{\pm}^{\lambda}$ are $C^1$ and touching in $\varphi(p)$, making also $s$ differentiable in $\varphi(p)$. As for $\lambda\to\frac{\pi}{2}$, $(s_{+}^{\lambda}-s_{-}^{\lambda})''\to 0$ (they actually approach $s$), $s$ is even twice differentiable in $\varphi(p)$.
As this holds for all $p$, $S$ is a $C^2$ submanifold.

To see $d:S\times S\to\mb{R}$ is the same as the distance induced by the Riemannian metric $g|_S$, first note that in the Lorentzian warped product comparison space for $\cos$, at $S_0=\{(t=0,x):x\in\mb{R}\}$ we have that the distance induced by the Riemannian metric $g|_{S_0}$ is just $d(x,y)=|x-y|$. Now one sees the length of a vector $v\in T_p S$ via the two-dimensional subset generated with $v$ and the \AdS-asymptote through $p$, being $\tau$-isometric to the Lorentzian warped product comparison space for $\cos$, see also \cite{BEE96}. In particular, lengths of curves are the same, and as $S$ is strictly intrinsic we have $d= d_g$. This promotes $(-\frac{\pi}{2},\frac{\pi}{2})\times_{\cos} S$ to a $C^1$ spacetime.

Now just notice that $f^{-1}$ is the normal exponential map of $S$. As $S$ has no focal points in $M$ ($\tau(p,\alpha(t))$ stays maximal among $\tau(S,\alpha(t))$ for $\alpha$ an asymptote to $\gamma$ through $p$), $f^{-1}$ is a $C^1$ diffeomorphism.
\end{proof}
\begin{rem}
For higher differentiability of $S$, one can note that $s_{+}^{\lambda}-s_{-}^{\lambda}\to 0$ in all derivatives.
\end{rem}

\noindent{\em Acknowledgements.} Tobias Beran acknowledges the support of the University of Vienna. I would like to thank Argam Ohanyan for inspiration and Felix Rott for helpful discussions and valuable feedback. Furthermore, I have to thank Willi Kepplinger for help with the smooth case and the referee for many helpful comments and improving the readability.
%%%bibliography
\nocite{saemLorLen,BBI,generalizedCones,semiRiemAlexBounds,AngLLS}
\nocite{harrisTriangleComparison,triSplitting,alexPatchAndBonnet}
\nocite{didierAngles,timeFncLLS}
\bibliography{references} 
\bibliographystyle{plain}
\end{document}